\def\blfootnote{\xdef\@thefnmark{}\@footnotetext}
\def\l@subsection{\@tocline{2}{0pt}{2pc}{6pc}{}} \makeatother
\newtheorem{thm}{Theorem}[section]
\newtheorem{cor}[thm]{Corollary}
\newtheorem{lem}[thm]{Lemma}
\newtheorem{prop}[thm]{Proposition}
\newtheorem{ques}[thm]{Question}
\theoremstyle{definition}
\newtheorem{defn}[thm]{Definition}
\theoremstyle{remark}
\newtheorem{rem}[thm]{Remark}
\newtheorem{ex}[thm]{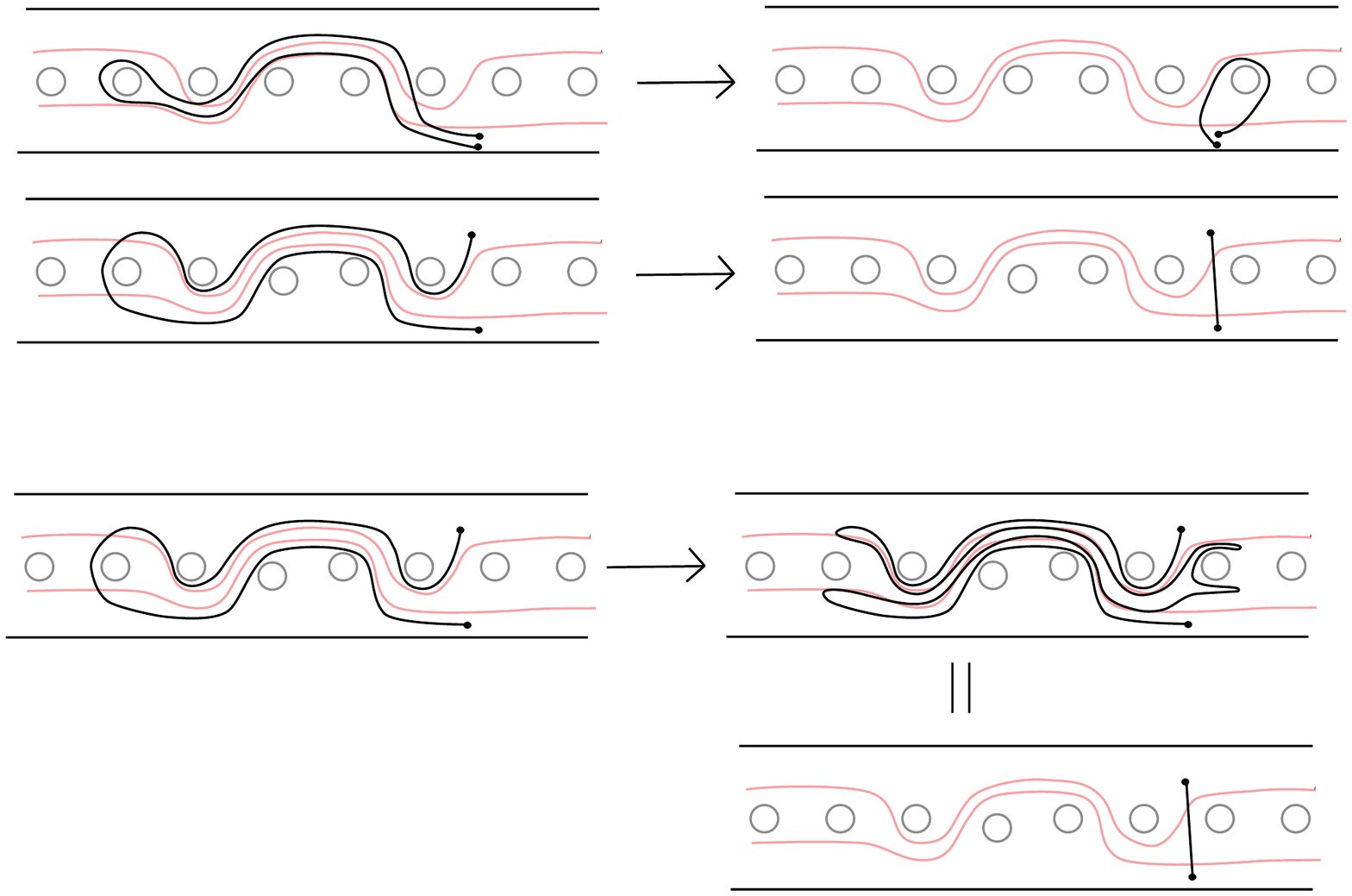}
\newtheorem{claim}[thm]{Claim}
\newtheorem{conv}[thm]{Convention}
\newfont{\eufm}{eufm10}
\renewcommand{\phi}{\varphi}
\newcommand{\N}{\mathbb N}
\newcommand{\Z}{\mathbb Z}
\newcommand{\R}{\mathbb R}
\newcommand{\scc}{\mathcal{C}}
\newcommand{\MCG}{\mathrm{Map}}
\newcommand{\intr}{\mathrm{int}}
\newcommand{\pmap}{\operatorname{PMap}}
\def\mc {\mathcal}
\begin{document}

\title{Infinite-type loxodromic isometries of the relative arc graph}
\author{Carolyn Abbott}
\address{Department of Mathematics\\Brandeis University\\Waltham, MA 02453}
\email{carolynabbott@brandeis.edu}

\author{Nicholas Miller}
\address{Department of Mathematics\\University of California, Berkeley\\Berkeley, CA 94720}
\email{nickmbmiller@berkeley.edu}

\author{Priyam Patel}
\address{Department of Mathematics\\University of Utah\\Salt Lake City, UT 84112}
\email{patelp@math.utah.edu}

\date{}

\begin{abstract}
An infinite-type surface $\Sigma$ is of type $\mathcal{S}$ if it has an isolated puncture $p$ and admits shift maps. This includes all infinite-type surfaces with an isolated puncture outside of two sporadic classes. Given such a surface, we construct an infinite family of intrinsically infinite-type mapping classes that act loxodromically on the relative arc graph $\mathcal{A}(\Sigma, p)$. J. Bavard produced such an element for the plane minus a Cantor set, and our result gives the first examples of such mapping classes for all other surfaces of type $\mathcal{S}$. The elements we construct are the composition of three shift maps on $\Sigma$, and we give an alternate characterization of these elements as a composition of a pseudo-Anosov on a finite-type subsurface of $\Sigma$ and a standard shift map. We then explicitly find their limit points on the boundary of $\mathcal{A}(\Sigma,p)$ and  their limiting geodesic laminations.
Finally, we show that these infinite-type elements can be used to prove that $\MCG(\Sigma,p)$ has an infinite-dimensional space of quasimorphisms.
\end{abstract}

\maketitle
\tableofcontents

\section{Introduction}

 A surface $\Sigma$ is of finite-type if $\pi_1(\Sigma)$ is finitely generated, and otherwise $\Sigma$ is of infinite-type. Recently, there has been a surge of interest in infinite-type surfaces and their mapping class groups $\MCG(\Sigma)$, which arise naturally in a variety of contexts in low-dimensional topology, dynamics, and even descriptive set theory. 
See \cite{survey} for a survey of recent results on infinite-type mapping class groups. 

For finite-type surfaces $\Sigma$,  Nielsen and Thurston \cite{Nielsen, Thurston} give a powerful classification of the elements of $\MCG(\Sigma)$: every element is  periodic, reducible, or pseudo-Anosov.
The action of $\MCG(\Sigma)$ by isometries on the (infinite-diameter and hyperbolic) curve graph $\mathcal{C}(\Sigma)$ captures a coarser classification of the elements of $\MCG(\Sigma)$ since elements are either \textit{elliptic} or \textit{loxodromic}.
These two classifications, which are both interesting in their own right, have a strong relationship; the loxodromic elements are exactly the pseudo-Anosovs. In this way, the most interesting and complex mapping classes correspond to the dynamically richest actions. 

The situation for infinite-type surfaces is more complicated for a few reasons. First, the exact analog of the Nielsen--Thurston classification is no longer valid in this setting since some elements are neither periodic, reducible, nor pseudo-Anosov in the traditional sense. Second, the curve graph of an infinite-type surface has finite diameter unlike for finite-type surfaces.
This paper is motivated by one of the biggest open problems for infinite-type surfaces, which is to give an analog of the Nielsen--Thurston classification for infinite-type mapping classes. We work towards this goal by studying the action of $\MCG(\Sigma)$ on a different hyperbolic graph. 

When $\Sigma$ is an infinite-type surface with at least one isolated puncture $p$, the \textit{relative arc graph}, $\mathcal{A}(\Sigma,p)$, plays the role of $\mathcal{C}(\Sigma)$ and is defined as follows: the vertices correspond to isotopy classes of simple arcs that begin and end at $p$ and edges connect vertices for arcs admitting disjoint representatives. The subgroup $\MCG(\Sigma, p)$ of $\MCG(\Sigma)$ that fixes the isolated puncture $p$ acts on $\mathcal{A}(\Sigma, p)$ by isometries. 
This graph was first defined by D. Calegari \cite{Calegari}, who initiated its study by asking whether, for the plane minus a Cantor set, this graph was infinite diameter and whether any element of $\MCG(\Sigma,p)$ acted loxodromically.
In \cite{Bavard}, J. Bavard carried out Caelgari's program for the plane minus a Cantor set and, for that surface, showed that $\mathcal{A}(\Sigma,p)$ is both infinite-diameter and hyperbolic.
Aramayona--Fossas--Parlier \cite{AFP} then showed that these properties for $\mathcal{A}(\Sigma, p)$ hold more generally for any infinite-type surface with at least one isolated puncture. 

Given that the trichotomy of the Nielsen--Thurston classification does not exactly hold for infinite-type surfaces, it is necessary to redefine reducible, and therefore irreducible, mapping classes in this setting. One of the most promising ways to motivate a new definition is to classify the elements of infinite-type mapping class groups that are loxodromic with respect to the action of $\MCG(\Sigma)$ on a hyperbolic graph since  these elements correspond to infinite-order irreducibles in the finite-type setting. In order to classify these elements, we must first construct them.

When $\Sigma$ is the sphere minus a Cantor set with an isolated puncture $p$ (i.e., $\Sigma$ is the plane minus a Cantor set), Bavard \cite{Bavard} constructed an \emph{intrinsically infinite-type} mapping class that is loxodromic with respect to the action of $\MCG(\Sigma)$ on $\mathcal{A}(\Sigma, p)$, and for several years, this was the only known such example. 
In this paper, we give a new construction of mapping classes that are loxodromic with respect to the action of $\MCG(\Sigma, p)$ on the relative arc graph $\mathcal{A}(\Sigma, p)$ for a large class of infinite-type surfaces. 

\begin{thm}\label{thm:loxomain}
For any surface $\Sigma$ of type $\mathcal{S}$, there is an infinite family of intrinsically infinite-type homeomorphisms $\{g_n\}_{n\in\N}$ in $\MCG(\Sigma,p)$ such that each $g_n$ is  loxodromic with respect to the action of $\MCG(\Sigma,p)$ on $\mathcal{A}(\Sigma,p)$. 
\end{thm}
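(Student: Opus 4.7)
The plan is to construct each $g_n$ explicitly as a composition of three carefully chosen shift maps on $\Sigma$ and then verify the two required properties --- intrinsic infinite-type and loxodromicity on $\mathcal{A}(\Sigma, p)$ --- by invoking the alternate characterization promised in the abstract, namely that $g_n$ is isotopic to $\phi_n \circ s$ for some pseudo-Anosov $\phi_n$ on a finite-type subsurface $S_n \subset \Sigma$ and a standard shift map $s$.

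I would begin with the construction. Since $\Sigma$ is of type $\mathcal{S}$, it admits shift maps by hypothesis; pick three shift maps $\sigma_1, \sigma_2, \sigma_3$ whose supports and end-translation axes are arranged so that their composition cancels away from a finite-type subsurface $S_n$ containing $p$ but composes non-trivially on $S_n$, with $n$ parametrizing how many times one of the shifts is iterated before being composed with the others. The essential computation is then to show that the resulting mapping class $g_n = \sigma_1^n \sigma_2 \sigma_3^n$ (or an analogous formula) factors as $\phi_n \circ s$, where $\phi_n \in \MCG(S_n, p)$ is pseudo-Anosov on the finite-type subsurface $S_n$ and $s$ is a single standard shift on $\Sigma$ that sends $S_n$ (up to isotopy) to a nearby copy of itself. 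This decomposition is verified by a direct analysis of the action on ends and on a suitable pair of transverse multicurves in $S_n$.

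Granting the decomposition, intrinsic infinite-type is immediate. Any homeomorphism supported on a finite-type subsurface fixes all but finitely many ends of $\Sigma$ and permutes the rest through a finite cycle, whereas the shift factor $s^k$ appearing in $g_n^k$ has an infinite orbit on the end space of $\Sigma$. Hence no power of $g_n$ can be isotoped into the mapping class group of any finite-type subsurface, proving the intrinsic infinite-type claim.

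For loxodromicity, the strategy is to transfer the loxodromic behavior of the pseudo-Anosov factor $\phi_n$ on the finite-type arc graph $\mathcal{A}(S_n, p)$ (guaranteed by Masur--Minsky) to the relative arc graph $\mathcal{A}(\Sigma, p)$. I would define a coarse subsurface projection $\pi : \mathcal{A}(\Sigma, p) \to \mathcal{A}(S_n, p)$ by taking an arc $\alpha$ based at $p$, placing it in minimal position with $\partial S_n$, and returning the component of $\alpha \cap S_n$ containing $p$. Provided $S_n$ is chosen so that every simple arc based at $p$ must enter $S_n$ in an essential way --- which can be arranged by making $S_n$ a witness for $\mathcal{A}(\Sigma, p)$ in the sense of Bavard and Aramayona--Fossas--Parlier --- the projection is well-defined and coarsely Lipschitz. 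Fix a starting arc $\alpha_0$ whose projection to $\mathcal{A}(S_n, p)$ is transverse to the unstable foliation of $\phi_n$. Since $\phi_n$ translates along a quasi-axis in $\mathcal{A}(S_n, p)$ with positive translation length, the sequence $\phi_n^k \pi(\alpha_0)$ goes to infinity linearly in $k$, and pulling back via the coarse Lipschitz property of $\pi$ gives linear growth of $d_{\mathcal{A}(\Sigma, p)}(\alpha_0, g_n^k \alpha_0)$.

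The main obstacle will be controlling the interaction between the shift factor $s$ and the projection $\pi$. Because $s$ does not fix $S_n$ pointwise (it shifts $S_n$ to a nearby copy), the identity $\pi(g_n^k \alpha_0) = \phi_n^k \pi(\alpha_0)$ does not hold on the nose; instead one has $\pi(g_n \alpha) \approx \phi_n \pi(\alpha)$ only after identifying $S_n$ with $s(S_n)$ via a canonical homeomorphism. Handling this requires a careful bookkeeping argument showing that $\pi$ is \emph{coarsely equivariant} with respect to $\phi_n$ along the orbit of $\alpha_0$, so that the iterated projections behave like iterates of $\phi_n$ in $\mathcal{A}(S_n, p)$ up to bounded error. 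Once this equivariance-up-to-shift is established, the loxodromic behavior of $\phi_n$ on the finite-type arc graph propagates to give a quasi-axis for $g_n$ in $\mathcal{A}(\Sigma, p)$, completing the proof.
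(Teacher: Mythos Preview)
Your approach diverges substantially from the paper's, and there is a genuine gap in the loxodromicity argument that you yourself flag but do not resolve.

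The paper does \emph{not} use subsurface projection at all. Instead it works first on the biinfinite flute $S$, develops a combinatorial code for arcs, proves a ``Loop Theorem'' characterizing when subsegments become trivial under a shift, and then builds a Lipschitz function $\phi_n\colon \mathcal{A}(S,p)\to\mathbb{Z}_{\geq 0}$ (the ``starts like'' function) recording the largest $i$ for which an arc fellow-travels half of $\alpha_i^{(n)}=g_n^i(\alpha_0)$. The key technical result is a highways argument (Theorem~\ref{thm:startslike}) showing that if $\delta$ starts like $\alpha_i^{(n)}$ and $\gamma$ is disjoint from $\delta$, then $\gamma$ starts like $\alpha_{i-1}^{(n)}$. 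This forces $|\phi_n(\gamma)-\phi_n(\delta)|\leq 1$ along edges, hence $d(\alpha_0,\alpha_i^{(n)})\geq i$, giving the quasi-geodesic axis directly. The passage to $\Sigma$ is then a single appeal to the $(2,0)$--quasi-isometric embedding $\mathcal{A}(S,p)\hookrightarrow\mathcal{A}(\Sigma,p)$.

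Your projection strategy has a real obstruction, not just a bookkeeping one. Writing $g_n=\phi_n\circ s$, the iterate $g_n^k$ is \emph{not} $\phi_n^k\circ s^k$; rather
\[
g_n^k = \phi_n\,(s\phi_n s^{-1})\,(s^2\phi_n s^{-2})\cdots(s^{k-1}\phi_n s^{-(k-1)})\,s^k,
\]
a product of pseudo-Anosovs supported on the \emph{distinct, overlapping} subsurfaces $S_n, s(S_n),\dots,s^{k-1}(S_n)$. Projecting to the fixed subsurface $S_n$ sees only the first factor; the later factors act on regions that have been shifted away from $S_n$. To salvage the argument you would need a projection to a moving family of subsurfaces together with a uniform comparison between projections to $s^j(S_n)$ and $s^{j+1}(S_n)$, and it is not at all clear that the errors stay bounded as $k\to\infty$. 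This is precisely the difficulty that the paper's combinatorial machinery is built to avoid.

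A second, smaller gap: your argument for intrinsic infinite-type shows $g_n\notin\MCG_c(\Sigma)$, but Definition~\ref{def:intrinsic} requires $g_n\notin\overline{\MCG_c(\Sigma)}$. When the attached pieces $\Sigma_0$ have trivial end space, $g_n$ can lie in $\pmap(\Sigma)$, and ruling out the closure requires the semidirect product decomposition $\pmap(\Sigma')=\overline{\MCG_c(\Sigma')}\rtimes\mathbb{Z}$ from \cite{APV}, as the paper does in Theorem~\ref{thm:gnlox}.
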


\begin{figure}[h]
\begin{center}
\begin{overpic}[width=4in,trim = 2in 5.85in 1.25in 0.15in, clip=true, totalheight=0.4\textheight]{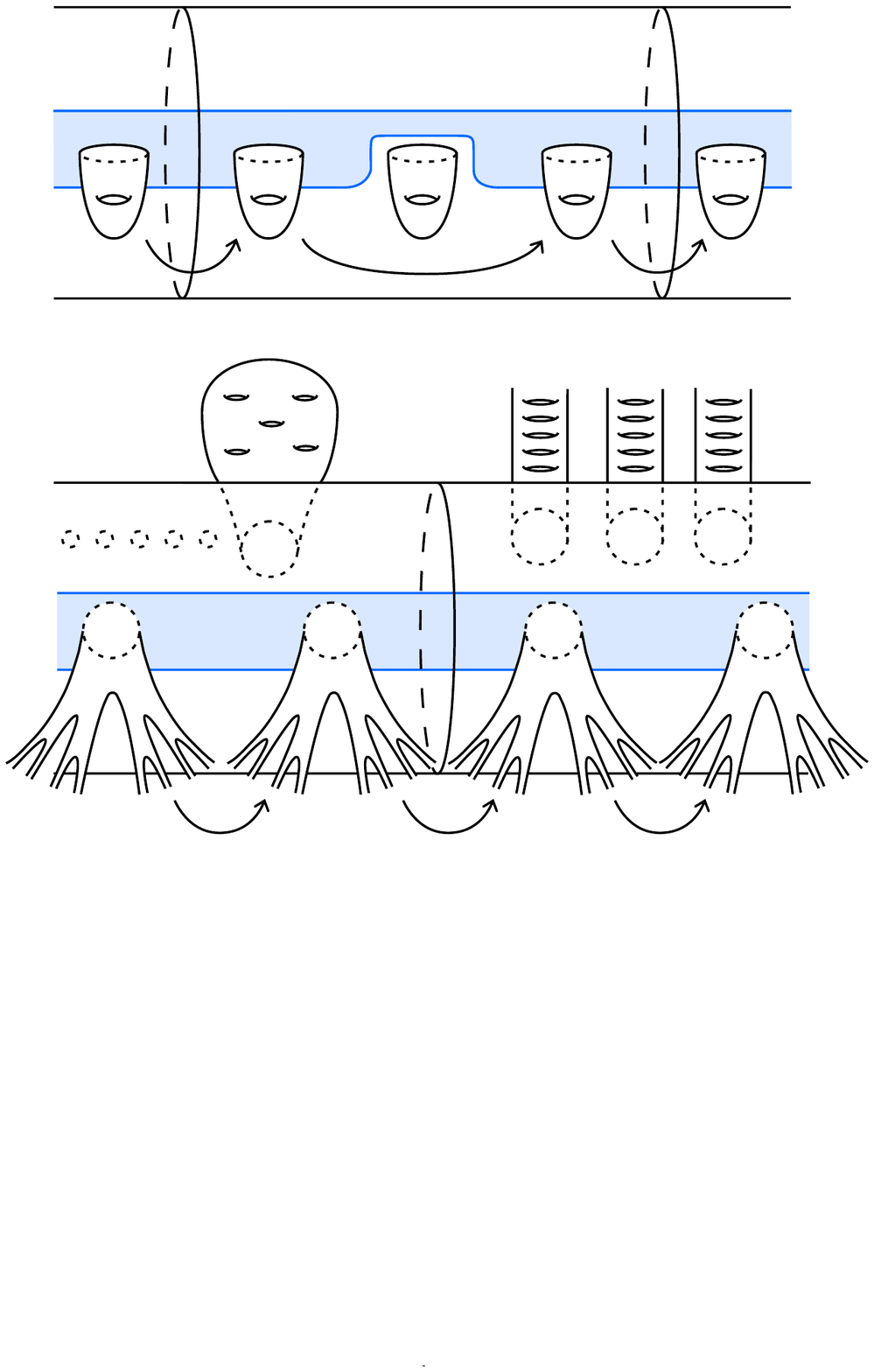}
\end{overpic}
\caption{A handleshift on an infinite-type surface (above) and a shift map on an infinite-type surface (below).}\label{fig:exs}
\end{center}
\end{figure}

 Each mapping class in our construction is the composition of three homeomorphisms called \textit{shift maps}. Shift maps are generalizations of the \textit{handleshift} homeomorphisms constructed by the third author and N. Vlamis in \cite{PatelVlamis} (see Figure~\ref{fig:exs} for examples of both). Roughly, an infinite-type surface $\Sigma$ with an isolated puncture $p$ is of type $\mathcal{S}$ if it there is a proper embedding of the biinfinite flute surface containing $p$ into $\Sigma$  such that certain shift maps on the flute surface induce shift maps on $\Sigma$. See Section~\ref{sec:backgroundshiftmaps} for more details and Figure~\ref{fig:typeS}  for some examples of surfaces of type $\mathcal{S}$. In Lemma~\ref{lem:equiv}, we show that a surface with an isolated puncture is of type $\mathcal{S}$ if and only if it admits shift maps. This set of surfaces consists of all infinite-type surfaces with an isolated puncture except a flute surface with finite (possibly zero) genus and a \textit{fluted Loch Ness monster}.  We call these two classes \textit{sporadic surfaces} in this context. See Figure~\ref{fig:sporadic} for examples of sporadic surfaces and Lemma~\ref{lem:sporadic} for a proof of this fact. Since sporadic surfaces are exactly the small class of surfaces with an isolated puncture that do not admit shift maps, different methods will need to be developed in order to  prove an analogue of Theorem~\ref{thm:loxomain} for these surfaces.  It would be interesting to understand how elements of the mapping class groups of  sporadic surfaces act on the relative arc graph.

\begin{figure}[h]
\begin{center}
\begin{overpic}[trim = 1.25in 7.5in .25in 1.5in, clip=true, width=4in]{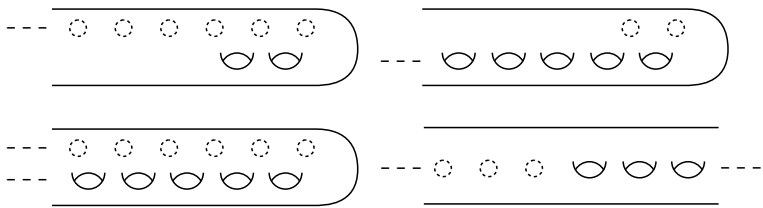}
\end{overpic}
\caption{Examples of sporadic infinite-type surfaces that are not of type $\mathcal{S}$. The first is a flute with finite genus, the other three are fluted Loch Ness Monster surfaces.}\label{fig:sporadic}
\end{center}
\end{figure}

\begin{figure}[h]
\begin{center}
\begin{overpic}[trim = 1.8in 3.5in 1.15in 0.15in, clip=true, totalheight=0.525\textheight,width=4in]{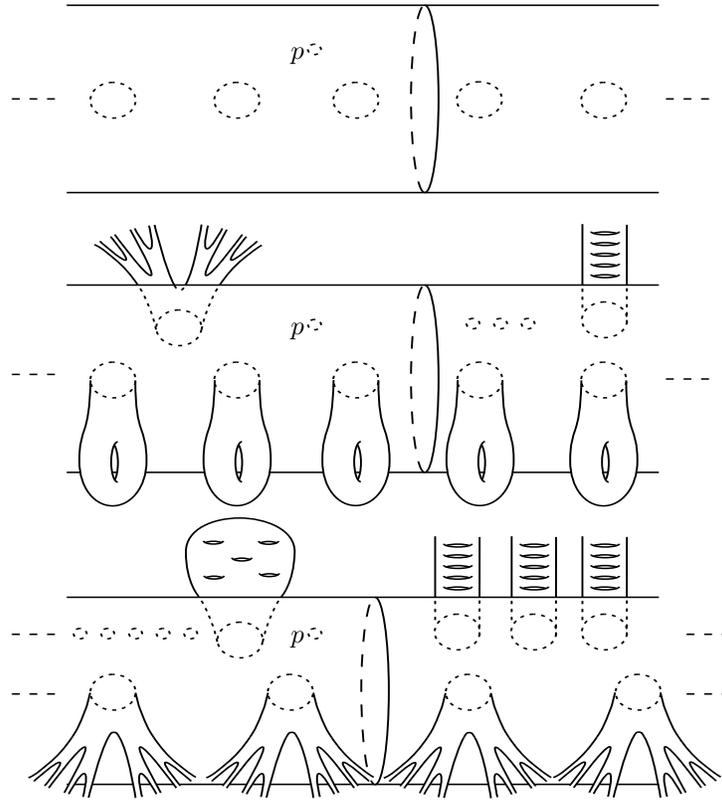}
\put(37,92){$p$}
\put(37,58){$p$}
\put(37,20){$p$}
\end{overpic}
\caption{Examples of surfaces of type $\mathcal{S}$, the first of which is the biinfinite flute surface $S$ itself.}\label{fig:typeS}
\end{center}
\end{figure}

The handleshift homeomorphisms mentioned above have proven to be crucial in understanding various aspects of infinite-type mapping class groups. For example, it is shown in \cite{PatelVlamis} that they are needed to topologically generate the \textit{pure mapping class group} whenever $\Sigma$ has at least two non-planar ends, and in \cite{APV} they are used to show that the pure mapping class groups of such surfaces surject onto $\mathbb{Z}$. With this paper, we emphasize the importance of more general shift maps to the theory of infinite-type mapping class groups. Inspired by Bavard's work in \cite{Bavard}, we choose the shift maps in our construction carefully so that their composition mimics some of the behavior of pseudo-Anosov maps in the finite-type setting. In fact, we show that there is an alternate description of our homeomorphisms as the composition of a pseudo-Anosov homeomorphism on a finite-type subsurface and a \textit{standard shift map} on $\Sigma$ in Theorem~\ref{thm:pAshift}. Additionally, in Section~\ref{sec:lamination}, we use the work of D. Saric \cite{Saric} to prove the following theorem regarding geodesic laminations for the mapping classes constructed in Theorem~\ref{thm:loxomain}.

\begin{restatable}{thm}{GL}\label{thm:geodlam}
If $\Sigma$ is a surface of type $\mathcal{S}$ equipped with its conformal hyperbolic metric that is equal to its convex core, then there exists a simple closed curve $c_0$ on $\Sigma$ such that the sequence $(g_n^i(c_0))_{i\in\mathbb N}$  converges to a geodesic lamination on $\Sigma$. 
\end{restatable}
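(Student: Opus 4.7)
The plan is to exploit the alternate description of $g_n$ from Theorem~\ref{thm:pAshift} as a composition $\psi \circ \sigma$, where $\psi$ is a pseudo-Anosov on a finite-type subsurface $Y \subset \Sigma$ and $\sigma$ is a standard shift map. This decomposition suggests choosing $c_0$ to be a simple closed curve supported in (or naturally associated to) $Y$ whose $\psi$-iterates approximate the stable lamination of $\psi$ in the classical sense. One then needs to track the interaction between the pseudo-Anosov stretching and the translation performed by $\sigma$ as one iterates the composition $g_n = \psi\sigma$.

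First, I would replace each $g_n^i(c_0)$ with its (unique) geodesic representative in the conformal hyperbolic metric on $\Sigma$; since the metric equals its convex core, every essential simple closed curve has such a representative. The key step is to verify the hypotheses of the criterion in Sari\'c's work \cite{Saric} that guarantees a sequence of simple closed geodesics converges, along a subsequence and in the Chabauty/Hausdorff-on-compacta topology, to a geodesic lamination: namely, local finiteness of the sequence of geodesics on compact subsets of $\Sigma$. To establish local finiteness, I would use the fact that the shift map $\sigma$ eventually displaces any compact set off of itself in one direction, so that for any compact $K \subset \Sigma$ only finitely many iterates $g_n^i(c_0)$ can contribute arcs crossing $K$ in a way that is not already controlled by the pseudo-Anosov $\psi$ acting on $Y$.

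Next, on the finite-type subsurface $Y$, the behavior is essentially classical: the portions of $g_n^i(c_0)$ sitting in $Y$ begin to trace out the stable geodesic lamination of $\psi$ as $i \to \infty$, by Nielsen--Thurston theory applied to $\psi$. Outside of $Y$, the shift $\sigma$ carries these geodesic pieces along the flute-shaped region of $\Sigma$ associated with the shift, producing new leaves that accumulate in a controlled way on the ends. Combining these two pieces, one obtains a candidate limiting set whose leaves are geodesics (by closedness of the space of geodesics under Hausdorff convergence on compacta) and are pairwise disjoint (since each $g_n^i(c_0)$ is an embedded simple closed curve and simplicity passes to Hausdorff limits for locally finite families). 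Promoting subsequential convergence to full convergence would follow by identifying the limit uniquely in terms of the stable data of $\psi$ together with the $\sigma$-translated copies, so that any two subsequential limits must coincide.

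The main obstacle I expect is precisely the interface between the finite-type pseudo-Anosov dynamics inside $Y$ and the infinite-type translation dynamics of $\sigma$: one must show that the geodesic representatives do not develop wild oscillations near the accumulation points of the shift (in particular near the isolated puncture $p$ and along the ends that $\sigma$ permutes), and that the limit is an honest geodesic lamination rather than some denser invariant set. Verifying the local finiteness hypothesis of Sari\'c's criterion uniformly as $i \to \infty$ — using both that $\psi$ has bounded dilatation on $Y$ and that $\sigma$ is supported on a proper embedding of the biinfinite flute surface — is the technical heart of the argument.
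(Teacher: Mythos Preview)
Your approach is genuinely different from the paper's, and as written it has a real gap.

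The paper does not use the pseudo-Anosov/shift decomposition of Theorem~\ref{thm:pAshift} at all. Instead, it builds a specific train track $\Theta$ on $\Sigma$ following Sari\'c's construction from a pants decomposition, defines $c_0$ explicitly by ``capping off'' the arc $\alpha_0$ near the puncture $p$, and then uses the combinatorial \emph{starts like} machinery (Proposition~\ref{lem:imageofinit} and Corollary~\ref{cor:inductcor}) to show that the $\Theta$--codes of $c_i$ and $c_{i+1}$ agree on an initial segment of length comparable to $\ell_c(\alpha_i)\to\infty$. This feeds directly into Sari\'c's edge-path convergence criterion (\cite[Proposition~4.9]{Saric}): lifts $\tilde\gamma_i$ of the $c_i$ converge to a bi-infinite edge path $\tilde\gamma$ because every finite subpath of $\tilde\gamma$ eventually appears in all $\tilde\gamma_i$. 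The lamination is then the projection of $G(\tilde\gamma)$.

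Your proposal, by contrast, tries to separate the dynamics into a pseudo-Anosov part on $Y=\Pi_n$ and a shift part. The problem is that $g_n^i = (\psi\sigma)^i$ is not $\psi^i\sigma^i$: the shift $\sigma$ moves pieces of the curve in and out of the fixed subsurface $\Pi_n$ between successive applications of $\psi$, so there is no sense in which the restriction to $Y$ simply sees iterates of $\psi$ converging to its stable lamination. Your local-finiteness argument also breaks down: every $c_i$ passes near the puncture $p$, which lies in the turbulent region and is not displaced by $\sigma$, so on a compact neighborhood of $p$ infinitely many $c_i$ contribute arcs of rapidly growing complexity. You correctly identify the ``interface'' as the main obstacle, but you offer no mechanism to resolve it; the paper's mechanism is precisely the code-stabilization property of the $\alpha_i$, which has no analog in your sketch.
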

 
 \noindent In particular, we produce a train track on $\Sigma$ and show the geodesic lamination from this theorem is weakly carried by this train track. 

We emphasize that the elements arising from our construction are of \textit{intrinsically infinite-type}, that is, they do not lie in the closure of the compactly supported mapping class group $\overline{\MCG_c(\Sigma)}$, where the closure is taken with respect to the compact-open topology on $\MCG(\Sigma)$. These are the first such examples for all surfaces of type $\mathcal{S}$ outside of the plane minus a Cantor set. Additionally, we emphasize that our construction does not rely on, and is not a generalization of, one of the few known methods for constructing pseudo-Anosov mapping classes for finite-type surfaces. 

The most obvious candidates for mapping classes that are loxodromic with respect to the action on $\mathcal{A}(\Sigma, p)$ are those that are pseudo-Anosov on a finite-type subsurface $\Sigma' \subset \Sigma$ containing the special puncture $p$, that extend via the identity map to the rest of $\Sigma$ (these are compactly supported mapping classes).
In \cite{BavardWalker}, Bavard and Walker prove that these types of mapping classes do indeed act loxodromically on a graph that is quasi-isometric to $\mathcal{A}(\Sigma, p)$. In that paper they point out that, though their class of examples is interesting, it will be even more important to construct mapping classes of intrinsically infinite-type that act loxodromically on $\mathcal{A}(\Sigma, p)$; this remark was one of the main points of inspiration for writing this paper. The intrinsically infinite-type elements of $\MCG(\Sigma)$ are more mysterious since tools from finite-type surface theory do not directly generalize when studying these elements. 

\begin{rem} Morales and Valdez \cite{MoralesValdez} have also produced non-compactly supported elements that are loxodromic, but their elements are in the closure of the compactly supported mapping class group. Their method is a generalization of the Thurston--Veech construction of pseudo-Anosovs in the finite-type setting.
\end{rem}

Aside from the motivation provided by a Nielsen--Thurston classification for infinite-type mapping classes, Bestvina–Fujiwara \cite{BestvinaFujiwara} show that constructing elements of $\MCG(\Sigma)$ that act loxodromically on hyperbolic graphs can be used to understand the second bounded cohomology $H^2_b(\MCG(\Sigma), \mathbb{R})$ of $\MCG(\Sigma)$. 
In particular, they show that, for a compact surface $\Sigma$, there exist elements acting loxodromically on $\mathcal{C}(\Sigma)$ that are \textit{weakly properly discontinuous} (WPD). These elements are used to prove that the space of quasimorphisms of $\MCG(\Sigma)$ is infinite-dimensional, which is sufficient to conclude that $H^2_b(\MCG(\Sigma), \mathbb{R})$ is, as well.

 Along these lines, M. Bestvina asked the following question at the AIM workshop on infinite-type surfaces \cite[Problem 4.7]{AIM}: ``For $\Sigma = \mathbb{R}^2-C$ (where $C$ is a Cantor set) is it true that every subgroup of $\MCG(\Sigma)$ has either infinite-dimensional space of quasimorphisms or is amenable?" More generally, we would like to characterize the infinite-type mapping classes that can be used to produce quasimorphisms of $\MCG(\Sigma)$. In Section~\ref{sec:qms}, we show that the elements constructed in Theorem~\ref{thm:loxomain} can be used to give a new proof of the following theorem, originally due to Bavard \cite{Bavard} in the case of a plane minus a Cantor set and Bavard and Walker \cite{BavardWalker} in the general case.

\begin{restatable}{thm}{QM}\label{thm:infiniteqms}
Let $\Sigma$ be a surface of type $\mathcal{S}$. The space of non-trivial quasimorphisms on $\MCG(\Sigma,p)$ is infinite dimensional.
\end{restatable}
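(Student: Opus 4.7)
The plan is to apply the Bestvina--Fujiwara quasimorphism construction: if a group $G$ acts by isometries on a Gromov hyperbolic space and contains two independent loxodromic elements that satisfy the weakly properly discontinuous (WPD) condition, then the space of non-trivial homogeneous quasimorphisms on $G$ is infinite-dimensional \cite{BestvinaFujiwara}. I would apply this to the action of $\MCG(\Sigma,p)$ on the hyperbolic graph $\mathcal{A}(\Sigma,p)$, using elements from the family $\{g_n\}_{n\in\N}$ produced by Theorem~\ref{thm:loxomain}.

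First I would establish independence. Pick two elements $g_i$ and $g_j$ from the family whose fixed points on $\partial \mathcal{A}(\Sigma,p)$ are disjoint. Since the paper explicitly describes the limit points and limiting geodesic laminations associated to each $g_n$ (see Theorem~\ref{thm:geodlam} and the accompanying construction), it suffices to choose two elements whose underlying supporting subsurfaces are arranged differently on $\Sigma$ so that the attracting and repelling points of $g_i$ and $g_j$ are pairwise distinct in $\partial\mathcal{A}(\Sigma,p)$. Disjointness of fixed point sets implies that no nontrivial power of $g_i$ is conjugate in $\MCG(\Sigma,p)$ to any power of $g_j$, which is the independence hypothesis in Bestvina--Fujiwara.

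Next I would verify WPD for each $g_n$: for every $\varepsilon>0$ and every vertex $v\in\mathcal{A}(\Sigma,p)$, there must exist $N$ such that only finitely many $h\in\MCG(\Sigma,p)$ satisfy $d_{\mathcal{A}}(v,hv)<\varepsilon$ and $d_{\mathcal{A}}(g_n^N v, h g_n^N v)<\varepsilon$. The key tool here is the alternative description from Theorem~\ref{thm:pAshift}, which realizes $g_n$ as a pseudo-Anosov $\phi_n$ on a finite-type subsurface $\Sigma_n \subset \Sigma$ composed with a standard shift map $\sigma$. A fellow-traveling element $h$ is forced, via the pseudo-Anosov dynamics of $\phi_n$, to agree with a bounded power of $g_n$ up to finite ambiguity on the arc data visible to $\Sigma_n$ (this is the classical finite-type WPD statement applied to $\phi_n$ acting on arcs through $p$ in $\Sigma_n$); the shift factor $\sigma$ then controls how $h$ can behave on the flute subsurface supporting $\sigma$, so that the remaining freedom is finite.

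The main obstacle is precisely this WPD verification. In the infinite-type setting, mapping classes supported far from any finite collection of arcs yield infinite stabilizers, so WPD cannot be obtained from any naive finiteness argument. The delicate point is combining pseudo-Anosov rigidity on $\Sigma_n$ with rigidity of the shift action on the flute: one must rule out that a fellow-traveler could combine a power of $\sigma$ with a wild homeomorphism supported on the complement in order to cheaply mimic $g_n^N$ along a long orbit segment. The structural decomposition of Theorem~\ref{thm:pAshift}, together with the explicit description of how $g_n$ moves arcs through $p$ (used already in the proof of Theorem~\ref{thm:loxomain}), should localize the required rigidity enough to push through. Once WPD and independence are both established, Bestvina--Fujiwara immediately yields Theorem~\ref{thm:infiniteqms}.
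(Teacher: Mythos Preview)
Your approach has a fatal gap: the elements $g_n$ are \emph{not} WPD, and in fact not even WWPD, with respect to the action on $\mathcal{A}(\Sigma,p)$. This is not merely a technical obstacle to be overcome; it is a structural fact. Rasmussen \cite{Ras} proves that an element of $\MCG(\Sigma,p)$ is WWPD for this action if and only if it stabilizes a finite-type subsurface containing $p$ and restricts to a pseudo-Anosov there. The decomposition $g_n=\overline\phi_n h$ from Theorem~\ref{thm:pAshift} shows precisely that $g_n$ does \emph{not} stabilize any finite-type subsurface: the shift $h$ pushes every finite-type window off to infinity. Your attempted WPD argument breaks exactly where you suspected it might---there are infinitely many mapping classes (supported far out along the flute, or conjugating by powers of the shift combined with homeomorphisms of the attached pieces) that coarsely fix any finite orbit segment, so the ``remaining freedom is finite'' claim is false.

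The paper instead uses the original Bestvina--Fujiwara criterion \cite[Theorem~1]{BestvinaFujiwara}, which predates WPD and requires only a pair of independent loxodromics $h_1,h_2$ with $h_1\not\sim h_2$ (no sufficiently long oriented segment of the axis of $h_1$ can be translated into a neighborhood of the axis of $h_2$ preserving orientation). They take $h_1=g_n$ and $h_2=hg_n^{-1}h^{-1}$ for a suitable conjugator, reducing the problem to showing $g_n\not\sim g_n^{-1}$. This is established not by any finiteness argument but by an asymmetric intersection pairing $I^\pm$ on symmetric arcs: a direct computation gives $I^-(\alpha_0,\alpha_2)=5$ while $I^-(\alpha_2,\alpha_0)=6$, and this asymmetry, propagated via the ``starts like'' function, obstructs any orientation-reversing near-translation of the axis. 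The point is that $\not\sim$ is a much softer condition than WPD and can be checked combinatorially on the arcs themselves.
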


In \cite{BavardWalker}, Bavard and Walker use a weaker condition on loxodromic isometries introduced by Bestvina--Bromberg--Fujiwara \cite{BBF}, called WWPD, to show that homeomorphisms that are pseudo-Anosov on finite-type subsurfaces $\Sigma'$ and extend via the identity to the rest of $\Sigma$ can be used to produce quasimorphisms of $\MCG(\Sigma)$. A. Rasmussen shows in \cite{Ras} that for a surface $\Sigma$ with an isolated puncture $p$, an element of $\MCG(\Sigma,p)$ is WWPD with respect to the action on $\mathcal{A}(\Sigma,p)$ if and only if it stabilizes a finite-type subsurface $\Sigma'$ containing the puncture $p$ and restricts to a pseudo-Anosov on $\Sigma'$.  The elements we construct in Theorem~\ref{thm:loxomain} do not fix any finite-type subsurface and thus are \textit{not} WWPD. Despite this, we are still able to build non-trivial quasimorphisms using a criterion of Bestvina and Fujiwara \cite{BestvinaFujiwara} and an approach similar to that of Bavard in \cite{Bavard} which involves defining an intersection pairings on a specific class of arcs on $\Sigma$. Our construction gives subgroups of $\MCG(\Sigma)$ that do not contain WWPD elements but do have an infinite dimensional space of quasimorphisms. \\

\noindent{\bf Plan of the paper:} 
In order to prove Theorem~\ref{thm:loxomain}, we explicitly compute the images of a particular arc on $\Sigma$ under iterates of each homeomorphism $g_n$ and prove that these images form a quasi-geodesic axis for the action of $\langle g_n\rangle $ on $\mathcal{A}(\Sigma,p)$. Though some of the methods in our paper are inspired by Bavard's work in \cite{Bavard}, we note that there are a variety of additional challenges in proving Theorem~\ref{thm:loxomain} for such a wide class of surfaces. In fact, we first prove the theorem for the biinfinite flute surface $S$ and then use the fact that the inclusion of $\mathcal A(S,p)$ into $\mathcal A(\Sigma,p)$ is a $(2,0)$--quasi-isometric embedding (see Lemma~\ref{lem:AsqiembsAsigma}) to extend the theorem to all surfaces of type $\mathcal{S}$. One of the first challenges in proving Theorem~\ref{thm:loxomain} is rigorously coding arcs on $S$, which we do in Section~\ref{sec:thecode}, in order to quantify how long two arcs on $S$ fellow travel. We then introduce \textit{standard position} for an arc on $S$ in Section~\ref{sec:standardposition} so that we can use the code for an arc to find its image under our shift maps in a well-defined way. Most importantly, we must understand when segments of arcs become trivial under our shift maps, and in Section~\ref{sec:cascading} we introduce a kind of cancellation in the image of the code for a segment which we call \textit{cascading cancellation}. This kind of cancellation will cause technical problems throughout the paper and much of Section~\ref{sec:arcsthatstartlike} is devoted to understanding how to control it.

The rest of Section~\ref{sec:loop} is devoted to proving Theorem~\ref{prop:nontrivialloop} (The Loop Theorem) which answers the question of when a segment in an arc becomes trivial under our shift maps. We define the homeomorphisms $g_n$ of Theorem~\ref{thm:loxomain} in Section~\ref{sec:arcinit}, show that we have \textit{``starts like" functions} in Section~\ref{sec:arcsthatstartlike}, and show that we have \textit{highways} in Section~\ref{sec:highways}. Finally, we prove Theorem~\ref{thm:loxomain} in Section~\ref{sec:loxo}, introduce an intersection pairing for arcs and prove Theorem~\ref{thm:infiniteqms} in Section~\ref{sec:qms}, and prove the convergence to a geodesic lamination from Theorem~\ref{thm:geodlam} in Section~\ref{sec:lamination}. \\

\noindent \textbf{Acknowledgements:} The authors are grateful to Chris Leininger for pointing out the work of Verberne and to Ekta Patel for helping us with computational work related to this project.
The authors would also like to thank Alden Walker and Nick Vlamis for helpful discussions as well as Elizabeth Field for helpful comments on an earlier draft. 
The first author was partially supported by NSF DMS--1803368 and DMS--2106906, the second author was partially supported by NSF DMS--1408458, and the third author was partially supported by NSF DMS--1840190 and DMS--2046889. The authors would also like to thank the American Institute of Mathematics and the organizers of the Infinite-type Surfaces Workshop that was held there, during which some of this work was completed.


\section{Background}\label{background}

\subsection{Space of ends and classification of infinite-type surfaces} 

Central to the classification of infinite-type surfaces is the definition of the space of ends $E(\Sigma)$ of an infinite-type surface $\Sigma$. Informally, an end of $\Sigma$ is a way to escape or go off to infinity in $\Sigma$. More formally we have: 

\begin{defn}
An \textit{exiting sequence} in \( \Sigma \) is a sequence \( \{U_n\}_{n\in\N} \) of connected open subsets of \( \Sigma \) satisfying:
\begin{enumerate}
\item \( U_{n} \subset U_m \) whenever $m<n$;
\item $U_n$ is not relatively compact for any $n \in \N$, that is, the closure of $U_n$ in $\Sigma$ is not compact;
\item the boundary of \(U_n \) is compact for each \( n \in \N \); and
\item any relatively compact subset of $\Sigma$ is disjoint from all but finitely many of the $U_n$’s.
\end{enumerate}
Two exiting sequences \( \{U_n\}_{n\in\N} \) and \( \{V_n\}_{n\in\N} \) are equivalent if for every \( n \in \N \) there exists \( m \in \N \) such that \( U_m \subset V_n \) and \( V_m \subset U_n \). An \textit{end} of \( \Sigma \) is an equivalence class of exiting sequences. 
\end{defn}

 The \textit{space of ends} $E(\Sigma)$, or simply $E$, of $\Sigma$ is the set of ends of $\Sigma$ equipped with a natural topology for which it is totally disconnected, Hausdorff, second countable, and compact. In particular, $E(\Sigma)$ is homeomorphic to a closed subset of the Cantor set. To describe the topology, let \( V \) be an open subset of \( \Sigma \) with compact boundary, define \( \widehat V = \left\{ \left[\{U_n\}_{n\in\N}\right]\in E : U_n \subset V \text{ for some } n\in \N\right\} \) and let \( \mathcal V = \{ \widehat V : V \subset \Sigma \text{ is open with compact boundary}\} \).
The set \( E \) becomes a topological space by declaring \( \mathcal V \) a basis for the topology.

We note that ends can be isolated or not and can be \textit{planar} (if there exists an $i$ such that $U_i$ is homeomorphic to an open subset of the plane $\mathbb{R}^2$) or \textit{nonplanar} (if every $U_i$ has infinite genus). The set of nonplanar ends of $\Sigma$ is a closed subspace of $E(\Sigma)$ and will be denoted by $E_g(\Sigma)$. 

Ker\'ekj\'art\'o \cite{Kerekjarto} and Richards \cite{Richards} showed that the homeomorphism type of an orientable infinite-type surface is determined by the quadruple $$(g, b, E_g(\Sigma), E(\Sigma))$$ where $g \in \mathbb{Z}_{\ge 0} \cup \{\infty\}$ is the genus of $\Sigma$ and $b \in \mathbb{Z}_{\ge 0}$ is the number of (compact) boundary components of $\Sigma$. 
 
 Of particular interest to us is the infinite-type surface called the \textit{biinfinite flute} obtained from an infinite cylinder by deleting a countable discrete sequence of points exiting both ends of the cylinder (see Figure~\ref{fig:typeS}). By the classification theorem of Ker\'ekj\'art\'o and Richards, this surface can also be obtained from $\mathbb{S}^2$ by deleting $\{x_i\}$, $\{y_i\}$, $x$, and $y$, where $\{x_i\}$ and $\{y_i\}$ are countable discrete sequences of points converging to distinct points $x$ and $y$, respectively. Note that $S$ has two special non-isolated ends.  
 
 \subsection{Mapping class groups and arc graphs}
 The \textit{mapping class group}, $\MCG(\Sigma)$, of a surface $\Sigma$ is the group of orientation-preserving homeomorphisms of $\Sigma$ up to isotopy. The natural topology on any group of homeomorphisms is the compact-open topology and $\MCG(\Sigma)$ is endowed with the quotient topology with respect to the compact-open topology on the space of homeomorphisms of $\Sigma$. When $\Sigma$ is a finite-type surface, this topology agrees with the discrete topology on $\MCG(\Sigma)$, but when $\Sigma$ is of infinite type it does not. There are several important subgroups of $\MCG(\Sigma)$: $\MCG_c(\Sigma)$ is the subgroup consisting of mapping classes with compact support, $\pmap(\Sigma)$ is the pure mapping class group consisting of mapping classes which fix the set of ends pointwise, $\overline{\MCG_c(\Sigma)} < \pmap(\Sigma)$ is the closure of the compactly supported mapping class group with respect to the topology described above, and when $\Sigma$ has an isolated puncture $p$, $\MCG(\Sigma,p)$ is the subgroup of mapping classes that fix $p$. 

 When $\Sigma$ is finite-type, $\MCG(\Sigma)$ is algebraically generated by finitely many Dehn twists \cite{Lickorish}. Infinite-type mapping class groups, sometimes called big mapping class groups, are uncountable groups, so there is no countable algebraic generating set. However, one can consider topological generating sets (countable dense subsets of $\MCG(\Sigma)$) and in \cite{PatelVlamis}, Vlamis and the third author prove that for many infinite-type surfaces, Dehn twists are not sufficient in topologically generating even $\pmap(\Sigma)$. They show that in addition to Dehn twists, a new class of homeomorphisms called \textit{handleshifts} (defined in Section~\ref{sec:backgroundshiftmaps}) are often needed to topologically generate $\pmap(\Sigma)$. In a subsequent paper with Aramayona \cite{APV}, Vlamis and the third author give an algebraic description of $\pmap(\Sigma)$ that will be relevant in Section~\ref{sec:loxo}. When $\Sigma$ is an infinite-type surface with $n > 1$ nonplanar ends, they prove that $\pmap(\Sigma) = \overline{\MCG_c(\Sigma)} \rtimes \mathbb{Z}^{n-1}$, where $\mathbb{Z}^{n-1}$ is generated by $n-1$ handleshifts with disjoint support. In particular, when $\Sigma$ has exactly 2 nonplanar ends (for example when $\Sigma$ is the ladder surface), $\pmap(\Sigma) = \overline{\MCG_c(\Sigma)} \rtimes \mathbb{Z}$ where $\mathbb{Z} = \langle H \rangle$ and $H$ is the standard handleshift, shifting each genus of $\Sigma$ over to the right by one.
 
In this paper we are primarily concerned with mapping classes of intrinsically infinite-type.

\begin{defn}\label{def:intrinsic}
An element  $f \in \MCG(\Sigma)$ is of \textit{intrinsically infinite-type} if $f \notin \overline{\MCG_c(\Sigma)}$. 
\end{defn}

\noindent More specifically, we are interested in how such elements act on a particular graph of arcs called the \textit{relative arc graph}. 

Let $\Sigma$ be a connected, orientable surface with empty boundary, and let $\Pi\subset \Sigma$ be the set of punctures of $\Sigma$, which we assume to be non-empty.  In this subsection, it is convenient to regard $\Pi$ as a set of marked points on $\Sigma$.  By a \textit{proper arc} on $\Sigma$ we mean a map $\alpha\colon [0,1]\to \Sigma$ such that $\alpha^{-1}(\Pi)=\{0,1\}$. We often conflate an arc with its image in $\Sigma$.  An arc is \textit{simple} if it is an embedding when restricted to the open interval $(0,1)$.

The \textit{arc graph} $\mathcal A(\Sigma)$ is the simplicial graph whose vertices are isotopy classes of simple arcs on $\Sigma$, where we only consider isotopies rel endpoints, and two (isotopy classes of) arcs are connected by an edge if they can be realized disjointly away from $\Pi$.  The mapping class group $\MCG(\Sigma)$ acts on $\mathcal A(\Sigma)$ by isometries. Hensel, Przytycki, and Webb \cite{HPW} show that when $\Sigma$ has finite-type, the graph $\mathcal A(\Sigma)$ is infinite diameter and 7--hyperbolic.  On the other hand, when $\Sigma$ is infinite-type with infinitely many punctures, it is straight-forward to see that $\mathcal A(\Sigma)$ has diameter 2, and so this graph is not particularly useful for studying $\MCG(\Sigma)$. 

Assuming that $\Pi$ contains a non-empty set of isolated punctures, Aramayona, Fossas, and Parlier \cite{AFP} construct a particular subgraph of the arc graph which has interesting geometry, even when $\Pi$ is infinite.  We are interested in a special case of this construction, involving a single isolated puncture $p$.

\begin{defn}\label{defn:modarcgraph}
The \textit{relative arc graph} $\mathcal{A}(\Sigma,p)$ is the subgraph of $\mc A(\Sigma)$ spanned by arcs which start and end at $p$.  More precisely, the vertices of $\mathcal{A}(\Sigma,p)$ are isotopy classes of arcs on $\Sigma$ with endpoints on $p$, where we allow only isotopy rel endpoints.  There is an edge between two (isotopy classes of) arcs if they can be realized disjointly away from $p$.
\end{defn}

Aramayona, Fossas, and Parlier show that $\mathcal A(\Sigma,p)$ is connected, has infinite diameter, and is $7$--hyperbolic (see \cite[Theorem~1.1]{AFP}).  While $\MCG(\Sigma)$ does not necessarily act on $\mathcal A(\Sigma,p)$, the subgroup $\MCG(\Sigma,p)$ that fixes the puncture $p$ does act by isometries on this graph. When $\Sigma$ has only one isolated puncture $p$, $\MCG(\Sigma) = \MCG(\Sigma, p)$.

\subsection{Metric spaces and loxodromic isometries}

We now introduce some basics of metric spaces and isometries of a hyperbolic metric space.
Given a metric space $X$, we denote by $d_X$ the distance function on $X$. A map $f\colon X\to Y$ between metric spaces $X$ and $Y$ is a \textit{$(K,C)$--quasi-isometric embedding} if there is are constants $K\geq 1$, $C\geq 0$ such that for all $x,y\in X$, \[\frac1K d_X(x,y)-C\leq d_Y(f(x),f(y))\leq  Kd_X(x,y)+C.\]    A \textit{geodesic} in $X$ is an isometric embedding of an interval into $X$ and a \textit{$(K,C)$--quasi-geodesic} in $X$ is a $(K,C)$--quasi-isometric embedding of an interval into $X$. We call the constants $K,C$ the \textit{quality} of the quasi-geodesic. By an abuse of notation, we often conflate a (quasi-)geodesic and its image in $X$.

\begin{defn}
Given an action by isometries of a group $G$ on a hyperbolic space $X$, an element $g\in G$ is \textit{elliptic} if it has bounded orbits; \textit{loxodromic} if the map $\Z\to X$ given by $n\mapsto g^n x_0$ for some (equivalently, any) $x_0\in X$ is a quasi-isometric embedding; and \textit{parabolic} otherwise.
\end{defn}

Any bi-infinite quasi-geodesic in $X$ which is preserved by a loxodromic isometry $ g\in G$ is called an \textit{axis} of $g$.  An axis always exists; for any $x_0\in X$, the set $\{g^nx_0\mid n\in \Z\}$ is a (discrete) quasi-geodesic preserved by $g$.  If $X$ is a geodesic metric space, in the sense that there exists a geodesic connecting any two points of $X$, then we may construct a continuous quasi-geodesic axis as follows.  Fix a geodesic $[x_0,gx_0]$ from $x_0$ to $gx_0$.  Then $g$ stabilizes the path formed by concatenating the geodesics $g^n[x_0,gx_0]$; this path is a quasi-geodesic axis of $g$ in $X$.  Varying the point $x_0$ will change the quality of the quasi-geodesic.   Let $g^+=\lim_{n\to\infty}g^nx_0$ and $g^-=\lim_{n\to-\infty}g^nx_0$ be points in the Gromov boundary $\partial X$ of $X$.  The \textit{limit set} of $\langle g\rangle$ is the subset $\{g^+,g^-\}\subseteq \partial X$; this set is fixed pointwise by $g$.  It is straightforward to show that the limit set $\{g^+,g^-\}$ does not depend on the choice of $x_0\in X$.
 
\subsection{Shift maps and the biinfinite flute surface}\label{sec:backgroundshiftmaps}

A handleshift was first defined in \cite{PatelVlamis} as follows. Consider the surface $S'$ defined by taking the strip $\mathbb{R}\times [-1,1]$, removing a disk of radius $\frac{1}{2}$ with center $(n, 0)$ for each $n \in \mathbb{Z}$, and attaching a torus with one boundary component to the boundary of each such disk. A handleshift on $S'$ is the homeomorphism that acts like a translation, sending $(x, y)$ in $S$ to $(x +1, y)$ and which tapers to the identity on $\partial S'$. Given a surface of infinite-genus $\Sigma$ with at least two nonplanar ends and a proper embedding of $S'$ into $\Sigma$ so that the two ends of the strip correspond to two distinct ends of $\Sigma$, the handleshift on $S'$ induces a handleshift on $\Sigma$, where the homeomorphism acts as the identity on the complement of $S'$. In this paper, more flexibility is allowed, and we define the following generalization.

\begin{defn}\label{def:shift}
Let $S'$ be the surface defined by taking the strip $\mathbb{R}\times [-1,1]$, removing a closed disk of radius $\frac{1}{4}$ with center $(n, 0)$ for $n \in \mathbb{Z}$, and attaching any fixed topologically non-trivial surface with exactly one boundary component to the boundary of each such disk.  A \textit{shift} on $S'$ is the homeomorphism that acts like a translation, sending $(x,y)$ in $S'$ to $(x +1, y)$ and which tapers to the identity on $\partial S'$. 
\end{defn}

\noindent  Lanier and Loving use two particular cases of this generalization in \cite{LanierLoving}. Naming the full generalization a ``shift" is in line with their paper. Note that it is essential for the same surface to be glued to the boundary component of each disk in order for the shift to be a homeomorphism of the surface. 

As above, given a surface $\Sigma$ with a proper embedding of $S'$ into $\Sigma$ so that the two ends of the strip correspond to two different ends of $\Sigma$, the shift on $S'$ induces a shift on $\Sigma$, where the homeomorphism acts as the identity on the complement of $S'$. Given a shift $h$ on $\Sigma$, the embedded copy of $S'$ in $\Sigma$ is called the \textit{domain} of $h$.  In this paper, we produce special homeomorphisms that can be obtained as a composition of three shift maps on such a surface $\Sigma$ with an isolated puncture $p$ and that are loxodromic  with respect to the action of $\MCG(\Sigma, p)$ on $\mathcal{A}(\Sigma, p)$. Instead of working generally with surfaces that admit shift maps, we begin by letting $S$ be the biinfinite flute surface. Then, $S$ admits shift maps which shift a countable collection of punctures on $S$. To prove Theorem~\ref{thm:loxomain} we first construct mapping classes that are loxodromic with respect to the action of $\MCG(S, p)$ on $\mathcal{A}(S, p)$.  We then use this surface as a template for constructing the desired mapping classes for more general surfaces $\Sigma$ by extending the shift maps on $S$ to shift maps on $\Sigma$ as follows.

\begin{defn}\label{def:typeS}
Let $S$ be the biinfinite flute surface. A surface $\Sigma$ with an isolated puncture $p$ is \textit{of type $\mathcal{S}$} if there exists a proper embedding $S \hookrightarrow \Sigma$ where $S$ contains $p$, the two non-isolated ends of $S$ correspond to distinct ends of $\Sigma$, and such that a countably infinite collection of connected components of $\Sigma \setminus S$ are of the same (nontrivial) topological type. Note that when the components are once-punctured disks, there are countably many isolated punctures of $S$ that remain isolated punctures when embedded in $\Sigma$. Denote this special class of connected components of $\Sigma\setminus S$ by $\mathcal{U}$, so that the elements of $\mathcal{U}$ are all homeomorphic to a fixed surface $\Sigma_0$ with one boundary component. See Figure~\ref{fig:exs} for some examples of surfaces of type $\mathcal{S}$. 
\end{defn}


Given a shift map $h$ on $S$, the support of $h$ is a strip $\mathbb{R}\times [-1,1]$ with countably many punctures. When the set of punctures in the support of $h$ only consists of those corresponding to elements of $\mathcal{U}$, we can glue copies of $\Sigma_0$ onto the punctures of this strip to produce a shift map on a surface $S'$ as in Definition~\ref{def:shift}. The embedding of $S$ in $\Sigma$ therefore gives an embedding of $S'$ in $\Sigma$ and the shift on $S'$ in $\Sigma$ is extended via the identity on $\Sigma\setminus S'$ as usual. From this construction, we immediately have one direction of the following lemma. 	

\begin{lem}\label{lem:equiv}
Given a surface $\Sigma$ with an isolated puncture, $\Sigma$ is of type $\mathcal{S}$ if and only if $\Sigma$ admits shift maps.
\end{lem}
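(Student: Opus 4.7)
The forward direction of the lemma is essentially carried out in the paragraph preceding it: given a surface $\Sigma$ of type $\mathcal{S}$, every shift map on the embedded biinfinite flute $S$ whose support contains only punctures corresponding to elements of $\mathcal{U}$ extends to a shift map on $\Sigma$ by gluing a copy of $\Sigma_0$ to each such puncture (producing the domain $S'$ of Definition~\ref{def:shift}) and then extending by the identity on $\Sigma\setminus S'$. My plan is to say little more about this direction beyond verifying that such shift maps on $S$ exist, which is automatic since $\mathcal{U}$ is countably infinite with components exiting both non-isolated ends of $S$.

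For the converse, suppose $\Sigma$ has an isolated puncture $p$ and admits a shift map $h$ with domain $S'$, where $S'$ is a strip $\mathbb{R}\times[-1,1]$ with a closed disk removed at each integer point and a fixed nontrivial surface $\Sigma_0$ with one boundary component glued in each disk's place, properly embedded in $\Sigma$ so that the two ends of the strip correspond to two distinct ends of $\Sigma$. The first step is to produce a biinfinite flute naturally sitting inside $S'\subset\Sigma$: let $F$ be the open strip $\mathbb{R}\times(-1,1)$ with a puncture at each integer point, and embed $F$ into $S'$ so that each puncture of $F$ is sent into the interior of the corresponding copy of $\Sigma_0$. This gives a proper embedding $F\hookrightarrow\Sigma$ whose two non-isolated ends go to the distinguished ends of $\Sigma$ realized by the strip of $S'$, and whose complement in $\Sigma$ contains the countably infinite family of mutually homeomorphic copies of $\Sigma_0$.

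The one condition of Definition~\ref{def:typeS} not immediately satisfied by $F$ is that it contain the prescribed puncture $p$. To arrange this, I will choose a simple arc $\gamma$ in $\Sigma$ from $p$ to a point on the frontier $\mathbb{R}\times\{\pm 1\}$ of the strip, with interior disjoint from $F$ and disjoint from all but finitely many of the copies of $\Sigma_0$ (we may reroute around copies of $\Sigma_0$ whenever possible; in the worst case we lose only finitely many from the family $\mathcal{U}$, which preserves countable infiniteness). A regular neighborhood $B$ of $\gamma$ in $\Sigma$, chosen thin enough to avoid the remaining $\Sigma_0$-components, is a planar band attached to $F$ along a single arc and terminating in a punctured disk around $p$. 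Setting $S:=F\cup B$, the result is again a biinfinite flute: it has genus zero, countably many isolated punctures (those of $F$ together with $p$), and the same two non-isolated ends as $F$, which are distinct ends of $\Sigma$. The complement $\Sigma\setminus S$ still contains a countably infinite subcollection of the original copies of $\Sigma_0$, so $\Sigma$ is of type $\mathcal{S}$.

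The main obstacle will be the final step: verifying that the band-sum surgery in Step~2 really produces a flute and preserves the required complementary structure. In particular, I need to argue that the attaching arc $\gamma$ can always be chosen as claimed (which uses connectedness of $\Sigma$ together with the fact that the copies of $\Sigma_0$ are a locally finite family, so only finitely many can separate any compact arc from $F$) and that attaching a planar band along an embedded arc does not introduce extra genus or fuse distinct complementary components in a harmful way. Both are handled by standard transversality and regular-neighborhood arguments, leveraging the flexibility of having infinitely many copies of $\Sigma_0$ available.
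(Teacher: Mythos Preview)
Your approach is genuinely different from the paper's. The paper argues via the end space: it sets $X = E(S') \cup \{p\}$, notes that $X^c$ is open in $E(\Sigma)$, uses second countability to produce (at most countably many) simple closed curves $\{\gamma_i\}$ whose complementary pieces $\overline{K_i}$ carry exactly the ends in $X^c$, and then declares the flute to be $\Sigma \setminus \left( (\cup_i \overline{K_i}) \cup (\cup_i T_i)\right)$. Your construction is more hands-on---take the strip part of $S'$ as a flute $F$ and graft on a band to capture $p$---and avoids the end-space bookkeeping entirely.

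There is, however, a genuine gap in your band step. You require the arc $\gamma$ to run from $p$ to a point on $\mathbb{R}\times\{\pm 1\}$ with interior disjoint from $F$, and you anticipate the difficulty as being that $\gamma$ might have to pass through some copies of $\Sigma_0$. But the real obstruction is different: if $p$ happens to be a puncture of one of the attached copies $\Sigma_0^{(j)}$ (possible whenever $\Sigma_0$ itself has isolated punctures), then \emph{no} such arc exists. Any path out of $\Sigma_0^{(j)}$ must cross the circle $\partial D_j$, and from there every route to the strip boundary lies entirely inside $F$, since $\Sigma_0^{(j)}$ is an island completely surrounded by $F$. Connectedness of $\Sigma$ and local finiteness of the $\Sigma_0$'s do not help here; indeed, when $p\in\Sigma\setminus S'$ your arc can stay in $\Sigma\setminus S'$ and never meets any $\Sigma_0$ at all, so the rerouting concern is moot in that case too. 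The fix is easy: allow $\gamma$ to terminate anywhere on the frontier of $F$, not only on $\mathbb{R}\times\{\pm 1\}$. When $p\in\Sigma_0^{(j)}$, take $\gamma\subset\Sigma_0^{(j)}$ ending on $\partial D_j$; the band $B$ is then attached at the $j$-th annular end of $F$, and $F\cup B$ is still a flute with $p$ as a new isolated puncture and with all $\Sigma_0^{(i)}$, $i\neq j$, surviving as complementary components. With this amendment your argument goes through and is arguably more elementary than the paper's.
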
 

\begin{proof}
It is left to show that if $\Sigma$ has an isolated puncture $p$ and admits a shift map, then $\Sigma$ is of type $\mathcal{S}$. To see this, we consider the proper embedding of $S'$ into $\Sigma$. Recall that $S'$ is obtained from a punctured strip by gluing on countably many copies of any surface $\Sigma_0$ with exactly one boundary component. Let $\mathcal{T} = \{T_i\}$ denote the corresponding countable collection of subsurfaces homeomorphic to $\Sigma_0$ in $\Sigma$, indexed by $\Z$. 

Note that $E(S')$ is a closed subset of $E(\Sigma)$, as is $X = E(S') \cup \{p\}$, and thus $X^c= E(\Sigma)\setminus X$ is open in $E(\Sigma)$. The second countability of the topology on $E(\Sigma)$ implies that $X^c$ is the union of countably many basis elements. If $X^c$ is in fact clopen in $E(\Sigma)$, then $X^c$ is compact and is therefore a finite union of basis elements. In this case, there exists a simple closed curve $\gamma$ in $\Sigma$ with the following property: there exists a connected component $K$ of $\Sigma \setminus \gamma$ such that the end space of $\overline K = K \cup \gamma$ is exactly $X^c$. In this way, $\gamma$ cuts away the ends of $\Sigma$ that are in $X^c$ (see Figure~\ref{fig:shiftsareS}). We then have that $$\Sigma \setminus \left( \overline K \, \bigcup \, \left(\cup_i T_i \right)\right)$$ is homeomorphic to the biinfinite flute surface and $\Sigma$ is of type $\mathcal{S}$ with $\mathcal{T}$ playing the role of $\mathcal{U}$ in the definition of a surface of type $\mathcal{S}$. 

In general, we can only assume that $X^c$ is open, not clopen, so that it can be expressed as the union of countably many basis elements for the topology. Then, there exists a countable collection of simple closed curves $\{\gamma_i\}$ and a countable collection of connected components $K_i$ of $\Sigma \setminus \gamma_i$ such that the end space of $\bigcup_i \overline K_i =  \bigcup_i K_i \cup \gamma_i$ is exactly $X^c$ (see Figure~\ref{fig:shiftsareS}). In this case, $$\Sigma \setminus \left( \left(\cup_i \overline K_i\right) \, \bigcup \, \left(\cup_i T_i \right)\right)$$ is homeomorphic to the biinfinite flute surface and $\Sigma$ is of type $\mathcal{S}$.
\end{proof}

\begin{figure}[h]
\begin{center}
\begin{overpic}[trim = 1.8in 3.5in 1.15in 2.15in, clip=true, totalheight=0.55\textheight]{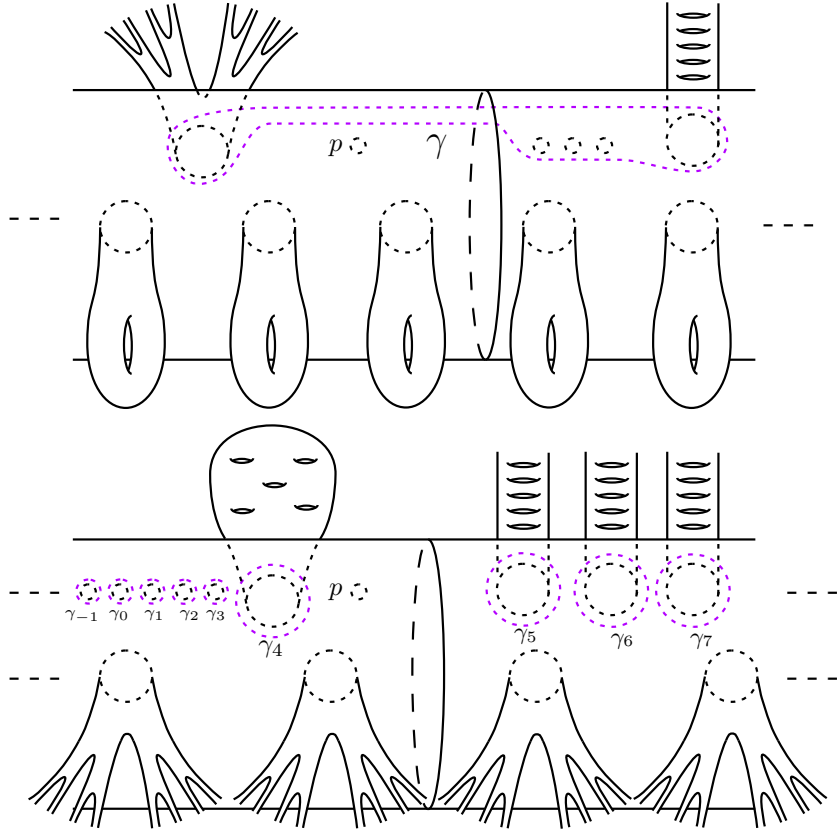}
\put(39,78){$p$}
\put(39,27.5){$p$}
\put(50,78){\LARGE{$\gamma$}}
\put(31,21){$\gamma_4$}
\put(25,25){\tiny{$\gamma_{3}$}}
\put(22,25){\tiny{$\gamma_{2}$}}
\put(18,25){\tiny{$\gamma_{1}$}}
\put(14,25){\tiny{$\gamma_{0}$}}
\put(9,25){\tiny{$\gamma_{-1}$}}
\put(60,22.5){\small{$\gamma_{5}$}}
\put(71,22){\small{$\gamma_{6}$}}
\put(80,22){\small{$\gamma_{7}$}}
\end{overpic}
\caption{Examples showing that surfaces with shift maps are always of type $\mathcal{S}$. For the first surface, only one curve $\gamma$ is needed to cut away the extra topology of $\Sigma$. In the second case, a countable collection of curves $\{\gamma_i\}$ is needed.}\label{fig:shiftsareS}
\end{center}
\end{figure}

Given this equivalent definition for a surface of type $\mathcal{S}$, we can show that this class includes all  infinite-type surfaces with an isolated puncture outside of two sporadic classes that do not admit shift maps. We will need the following definition. 

\begin{defn}
The \textit{Loch Ness Monster} is the infinite-type surface with no planar ends and exactly one non-planar end. An infinite-type surface is a \textit{fluted Loch Ness Monster} if it is obtained from the Loch Ness Monster in one of the two following ways: 1) by deleting a finite, non-zero collection of isolated points, or 2) deleting a countably infinite collection of isolated points accumulating to  exactly one point, which we also delete from the surface, or accumulating onto the end of the Loch Ness Monster.  See Figure~\ref{fig:sporadic} for examples of fluted Loch Ness Monsters. 
\end{defn}


\begin{lem}\label{lem:sporadic}
Let $\Sigma$ be an infinite-type surface with an isolated puncture. Then $\Sigma$ is of type $\mathcal{S}$ unless $\Sigma$ is a flute surface with finite (possibly zero) genus or is a fluted Loch Ness Monster surface. 
\end{lem}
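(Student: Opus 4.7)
By Lemma~\ref{lem:equiv}, the task reduces to producing a shift map on every $\Sigma$ that is not in one of the two sporadic classes. A shift map requires a proper embedding of the biinfinite flute surface $S$ into $\Sigma$ whose two non-isolated (cylinder) ends map to \emph{distinct} ends of $\Sigma$, together with a countably infinite family of same-type, topologically nontrivial components of $\Sigma\setminus S$. The plan is a case analysis based on the Ker\'ek\'jart\'o--Richards invariants $(g, b, E_g(\Sigma), E(\Sigma))$, showing that in every non-sporadic configuration these data force the existence of such an embedding.

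The first case to dispatch is $|E_g(\Sigma)|\geq 2$. Pick two nonplanar ends $e_1, e_2$; since each carries infinite genus accumulating to it, one may choose a bi-infinite discrete sequence of disjoint once-holed tori $\{T_n\}_{n\in\Z}$ in $\Sigma$ with $T_n\to e_1$ as $n\to-\infty$ and $T_n\to e_2$ as $n\to+\infty$. Threading a biinfinite flute $S$ through the $T_n$ (so that each $T_n$ caps an isolated puncture of $S$) gives the required embedding, and the $T_n$ serve as the countable same-type family of components of $\Sigma\setminus S$.

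The remaining cases have $|E_g(\Sigma)|\leq 1$, and $E(\Sigma)$ contains at least one non-isolated end because $\Sigma$ is of infinite type. If $\Sigma$ has a unique non-isolated end, then every isolated puncture must accumulate to that single end; depending on whether that end is planar or nonplanar, one reads off that $\Sigma$ is a finite-genus flute or a fluted Loch Ness Monster (of type (1), or of the variant of (2) where the isolated punctures accumulate onto the Loch Ness end), contradicting our hypothesis. Otherwise, $\Sigma$ has at least two distinct non-isolated ends $e_1, e_2$. I would further split this on whether $\Sigma$ has $0$ or $1$ nonplanar end. When the nonplanar end is absent, failing to be a finite-genus flute guarantees one can select a bi-infinite discrete sequence of isolated punctures of $\Sigma$ with one tail accumulating to $e_1$ and the other to $e_2$, and embed $S$ so that these are exactly its isolated ends; the leftover finite genus appears as finitely many components of $\Sigma\setminus S$, while the remaining (unused) isolated punctures of $\Sigma$ provide a countable collection of once-punctured disks as same-type components. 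When there is one nonplanar end, failing to be a fluted Loch Ness Monster forces more end structure than ``LN end together with isolated punctures accumulating to a single extra deleted point,'' again producing two distinct non-isolated ends between which an analogous biinfinite sequence of isolated punctures (or handles) can be strung.

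The main obstacle is the fluted Loch Ness Monster with a deleted accumulation point: this surface \emph{does} have two distinct non-isolated ends, so one has to verify it is genuinely sporadic rather than hunt for a cleverer embedding. The key observation is that any biinfinite flute embedded with cylinder ends at the Loch Ness end and the new planar accumulation end leaves the entire infinite-genus region of $\Sigma$ as a single connected component of $\Sigma\setminus S$, so the countable same-type condition fails. Ruling this (and the other sporadic configurations) out amounts to a careful enumeration of the possible end data allowed by Ker\'ek\'jart\'o--Richards and verifying that any configuration not on the sporadic list contains a biinfinite homogeneous pattern of isolated punctures or handles running between two distinct non-isolated ends, which is precisely what is needed to build the required proper embedding of $S$.
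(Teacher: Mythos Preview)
The paper takes the contrapositive and dispatches it in three lines: if $\Sigma$ has at least two nonplanar ends, a handleshift exists; if $\Sigma$ has at least two non-isolated planar ends, a shift exists whose strip runs between them; otherwise $\Sigma$ has at most one of each, and the Ker\'ekj\'art\'o--Richards data force $\Sigma$ onto the sporadic list.

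Your direct construction has a genuine gap. In the sub-case $|E_g|=0$ with at least two non-isolated ends, you claim that failing to be a finite-genus flute guarantees a bi-infinite discrete sequence of isolated punctures of $\Sigma$ running between $e_1$ and $e_2$. This is false: take $\Sigma=\mathbb{R}^2\setminus C$, the plane minus a Cantor set---the paper's motivating example. It has genus zero, a Cantor set of non-isolated planar ends, is not a finite-genus flute, and has exactly \emph{one} isolated puncture, namely $p$. There is nothing to thread. The shift map that works here uses $\Sigma_0$ equal to a disk minus a sub-Cantor-set, not a once-punctured disk or a torus. The same failure recurs in your $|E_g|=1$ discussion (e.g.\ the Loch Ness monster minus a Cantor set minus a point: only one isolated puncture, and all handles exit the single nonplanar end). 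The missing idea is exactly what the paper's second sentence encodes: any non-isolated planar end has a nested sequence of nontrivial subsurfaces exiting toward it, and infinitely many of those can be taken pairwise homeomorphic to serve as the $\Sigma_0$'s---you are not restricted to handles or isolated punctures.

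Two smaller points. Your ``main obstacle'' paragraph misreads the end space of the fluted Loch Ness monster with a deleted accumulation point: the Loch Ness end is \emph{isolated} in $E(\Sigma)$ there, so that surface has a unique non-isolated end and already falls under your case 2a. And the lemma only asserts one implication; verifying that the sporadic surfaces genuinely fail to be of type $\mathcal{S}$ is not part of what you need to prove.
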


\begin{proof}
Let $\Sigma$ be an infinite-type surface with an isolated puncture $p$. If $\Sigma$ has at least two non-planar ends, then $\Sigma$ admits a shift map (in fact a handleshift). Similarly, if $\Sigma$ has at least two non-isolated planar ends, then $\Sigma$ admits a shift map with these two ends corresponding to the two ends of the strip $S'$ in Definition~\ref{def:shift}. Thus, if $\Sigma$ does not admit a shift map, $\Sigma$ has exactly one non-isolated planar end and finite genus, i.e., a flute surface with finite genus, or has exactly one non-planar end and up to one non-isolated planar end, i.e., a fluted Loch Ness Monster. 
\end{proof}

 Going back to the original definition of a surface of type $\mathcal{S}$, there are a few more notable remarks regarding the relationship between $S$ and $\Sigma$. First, there is not necessarily an embedding of $\MCG(S,p)$ into $\MCG(\Sigma,p)$ since if the support of a shift $h$ on $S$ contains punctures that do not correspond to elements of $\mathcal{U}$, then there may not be a way to extend that shift to $\Sigma$. In particular, if $h$ shifts one puncture $x$ to another puncture $x'$ but the topology of the surfaces glued to $x$ and $x'$ are different, there is no extension of $h$ to a shift of $\Sigma$. This will not affect our arguments since there are countably many punctures of $S$ corresponding to the elements of $\mathcal{U}$ which we move to the \textit{front} of the cylinder for $S$ along with $p$, and we move all other punctures to the \textit{back} of the cylinder. Here we are choosing one non-isolated end of $S$ to correspond to the left direction on the surface, and the other non-isolated end to correspond to moving right on the surface so that there is a well-defined notion of the front and back of $S$. In our constructions, we use  shift maps on $S$ whose support only contains the punctures on the front of $S$ so that all of these shifts extend to $\Sigma$.

Second, and most importantly, we now show that proving Theorem \ref{thm:loxomain} for surfaces $\Sigma$ of type $\mathcal S$ can be reduced to the case of the biinfinite flute $S$. In fact, this is the motivation for the original definition of a surface of type $\mathcal{S}$. 
For simplicity, given a surface $M$ with an isolated puncture $p$ and any points $a,b\in\mathcal A(M,p)$, we write $d_M(a,b)$ for the distance between $a$ and $b$ in $\mathcal A(M,p)$

\begin{lem} \label{lem:AsqiembsAsigma}
Let $\Sigma$ be an infinite-type surface with an isolated puncture $p$ of type $\mathcal{S}$. Then the inclusion of $\mathcal A(S,p)$ into $\mathcal A(\Sigma,p)$ is a $(2,0)$--quasi-isometric embedding.
\end{lem}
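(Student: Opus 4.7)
The plan is a standard coarse projection argument. The upper bound is immediate: the inclusion $\mathcal{A}(S,p)\hookrightarrow \mathcal{A}(\Sigma,p)$ is $1$--Lipschitz, since two arcs on $S$ that are disjoint on $S$ remain disjoint when viewed on $\Sigma$. Hence $d_\Sigma(\alpha,\beta)\leq d_S(\alpha,\beta)\leq 2\,d_S(\alpha,\beta)$ for all $\alpha,\beta\in \mathcal{A}(S,p)$. The content of the lemma is the lower bound $d_S(\alpha,\beta)\leq 2\,d_\Sigma(\alpha,\beta)$, which I would obtain by building a coarse retraction $\pi\colon \mathcal{A}(\Sigma,p)\to \mathcal{A}(S,p)$ satisfying $\pi|_{\mathcal{A}(S,p)}=\mathrm{id}$ and $d_S(\pi(\gamma),\pi(\gamma'))\leq 2$ whenever $\gamma,\gamma'$ span an edge of $\mathcal{A}(\Sigma,p)$. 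Applying this to a geodesic from $\alpha$ to $\beta$ in $\mathcal{A}(\Sigma,p)$ immediately yields the bound.

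To define $\pi$, view $S$ as a subsurface of $\Sigma$, so that each puncture of the abstract surface $S$ that corresponds to a component of $\Sigma\setminus S$ (in particular each element of $\mathcal{U}$) becomes a boundary circle of $S$ bounding the attached component. Given $\gamma\in\mathcal{A}(\Sigma,p)$, homotope $\gamma$ rel $p$ to minimize intersections with $\partial S$. If $\gamma\subset S$, set $\pi(\gamma)=\gamma$. Otherwise, let $\gamma_0$ denote the initial component of $\gamma\cap S$; it is a simple arc in $S$ from $p$ to a point on some boundary circle $c\subset \partial S$. Let $\pi(\gamma)$ be the \emph{doubled arc} at $\gamma_0$: the frontier in $S$ of a small regular neighborhood of $\gamma_0\cup c$, arranged to pass through the puncture $p$. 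This gives a simple arc from $p$ to $p$ in $S$, essential since it encloses the puncture of $S$ corresponding to $c$, so $\pi(\gamma)\in \mathcal{A}(S,p)$, and tautologically $\pi|_{\mathcal{A}(S,p)}=\mathrm{id}$.

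The heart of the argument—and the main obstacle—is verifying the coarse Lipschitz property: if $\gamma,\gamma'\in\mathcal{A}(\Sigma,p)$ are disjoint away from $p$, then $d_S(\pi(\gamma),\pi(\gamma'))\leq 2$. I would argue by cases. If both $\gamma,\gamma'\subset S$ then $\pi(\gamma)=\gamma$ and $\pi(\gamma')=\gamma'$ are already disjoint in $S$. If exactly one, say $\gamma$, lies in $S$, then $\gamma$ is disjoint from $\gamma_0'$ and, in minimal position, from $\partial S$; taking the regular neighborhood of $\gamma_0'\cup c'$ small enough makes the doubled arc $\pi(\gamma')$ disjoint from $\gamma=\pi(\gamma)$. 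If both $\gamma,\gamma'$ exit $S$ and the corresponding boundary circles $c,c'$ are distinct, then $\gamma_0\cup c$ and $\gamma_0'\cup c'$ are compact and disjoint apart from $p$, so their neighborhoods can be chosen disjoint and the doubled arcs are disjoint. The subtle case is $c=c'$: then both doubled arcs run alongside $c$ and may intersect. Here I use that $S$ is the biinfinite flute, with infinitely many isolated punctures lying outside the compact set $\gamma_0\cup\gamma_0'\cup c$; choosing such a puncture $q$, let $\delta\in \mathcal{A}(S,p)$ be a simple arc from $p$ around $q$ and back, routed through the complement of that compact set. Then $\delta$ is disjoint from both $\pi(\gamma)$ and $\pi(\gamma')$, witnessing $d_S(\pi(\gamma),\pi(\gamma'))\leq 2$.

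Given any $\alpha,\beta\in\mathcal{A}(S,p)$ and a geodesic $\alpha=\gamma_0,\gamma_1,\dots,\gamma_n=\beta$ in $\mathcal{A}(\Sigma,p)$, the key claim gives $d_S(\pi(\gamma_i),\pi(\gamma_{i+1}))\leq 2$ for each $i$, so the triangle inequality together with $\pi(\alpha)=\alpha$ and $\pi(\beta)=\beta$ yields $d_S(\alpha,\beta)\leq 2n=2\,d_\Sigma(\alpha,\beta)$, completing the desired $(2,0)$--quasi-isometric embedding bound.
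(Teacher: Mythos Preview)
Your retraction argument is sound and gives the correct $(2,0)$ bound, but it takes a genuinely different route from the paper. The paper's proof is a two-line appeal to \cite[Corollary~4.3]{AFP}: given $a,b\in\mathcal A(S,p)$, choose any finite-type subsurface $S_{a,b}\subset S$ containing $a$, $b$, and $p$; then $S_{a,b}$ is also a finite-type subsurface of $\Sigma$, and the cited corollary yields
\[
d_S(a,b)\le d_{S_{a,b}}(a,b)\le 2\,d_\Sigma(a,b).
\]
So the paper factors through a \emph{finite-type} witness surface and invokes a packaged result, whereas you construct an explicit coarse retraction $\pi\colon\mathcal A(\Sigma,p)\to\mathcal A(S,p)$ by doubling initial subarcs. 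Your approach is more self-contained and yields an actual Lipschitz map rather than just a distance inequality; the paper's is shorter and avoids the case analysis.

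One spot in your case~4 deserves a little more justification: you claim $\delta$ can be ``routed through the complement'' of $\gamma_0\cup\gamma_0'\cup c$, but since $p$ lies on this set, you must check that some complementary region \emph{adjacent to $p$} contains a puncture of $S$ to loop around. This follows from the planarity of the flute: the embedded graph $\gamma_0\cup\gamma_0'\cup c$ has three complementary faces, one inside $c$ (containing only $p_c$) and two outer faces both meeting $p$; together the outer faces carry all the remaining (infinitely many) punctures, so at least one contains a puncture $q$, and $\delta$ can loop around $q$ entirely within that face. With that filled in, your argument is complete.
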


\begin{proof}
As $S\subset \Sigma$, it is clear that   $d_{\Sigma}(a,b)\leq d_{S}(a,b)$ for any $a,b\in\mathcal A(S,p)$.   

To obtain the other inequality, let $S_{a,b}\subset S$ be a finite-type subsurface of $S$ which contains $a,b$, the puncture $p$, and has complexity at least $2$.  Note that $S_{a,b}$ is then a finite-type subsurface of $\Sigma$ as well.  Thus by \cite[Corollary 4.3]{AFP} applied to $S_{a,b}\subset \Sigma$ and to $S_{a,b}\subset S$, we have 
\[d_S(a,b)\leq d_{S_{a,b}}(a,b)\leq 2d_{\Sigma}(a,b).\]  Together, these imply that \[d_\Sigma(a,b)\leq d_S(a,b)\leq 2d_\Sigma(a,b),\] completing the proof.
\end{proof}

In particular, let $g\in \MCG(S,p)$ be loxodromic with respect to the action of $\MCG(S,p)$ on $\mc A(S,p)$ with a $(K,C)$--quasi-geodesic axis.  If $g$ can be extended to an element of $\MCG(\Sigma,p)$, then this extension is loxodromic with respect to the action of  $\MCG(\Sigma,p)$ on $\mc A(\Sigma, p)$, and the extension will have a $(2K,C)$--quasi-geodesic axis.


\section{Coding arcs and standard position}\label{sec:codingarcs}
Let  $S$ be the biinfinite flute surface with a distinguished isolated puncture $p$, and let $\{p_i\}_{i\in\Z}$ be any countably infinite discrete collection of punctures on $S$ which exits both ends of the cylinder and does not contain $p$.
As described in Section \ref{sec:backgroundshiftmaps}, we choose one non-isolated end of $S$ to correspond to the left direction and one to correspond to the right direction, which gives a well-defined notion of a front and back of the cylinder for $S$.  We move all of the punctures in $\{p_i\mid i\in \Z\}\cup\{p\}$ to the front of the cylinder for $S$ and all other punctures to the back.   We also move the distinguished puncture $p$ so that it lies to the right of $p_{-1}$ and to the left of $p_0$. We will consider the collection $\{p_i\mid i\in \Z\}\cup \{p\}$ of punctures.  We index this set with $\Z\cup\{P\}$, which we give the ordering consisting of the usual ordering on $\Z$ with the additional requirement that $-1<P<0$.  The index $P$ corresponds to the distinguished puncture $p$.

Fix the simple closed curve $B_0$ bounding the puncture $p_0$ on $S$ shown in Figure \ref{fig:B0}.  More formally, to define $B_0$ we fix a complete hyperbolic metric on $S$ and let $B_0$ be a horocycle at a height sufficiently far out the cusp.   Fix a shift map $H$ on $S$ whose domain contains exactly the collection $\{p_i\}$ for $i \in \mathbb{Z} \cup P$ and which shifts $p_i$ to $p_{i+1}$ for all $i \in \Z \cup P$.
\begin{figure}[h]
\begin{center}
\begin{overpic}[trim = 0.45in 4.75in 1.05in 0.85in, clip=true, width=4in]{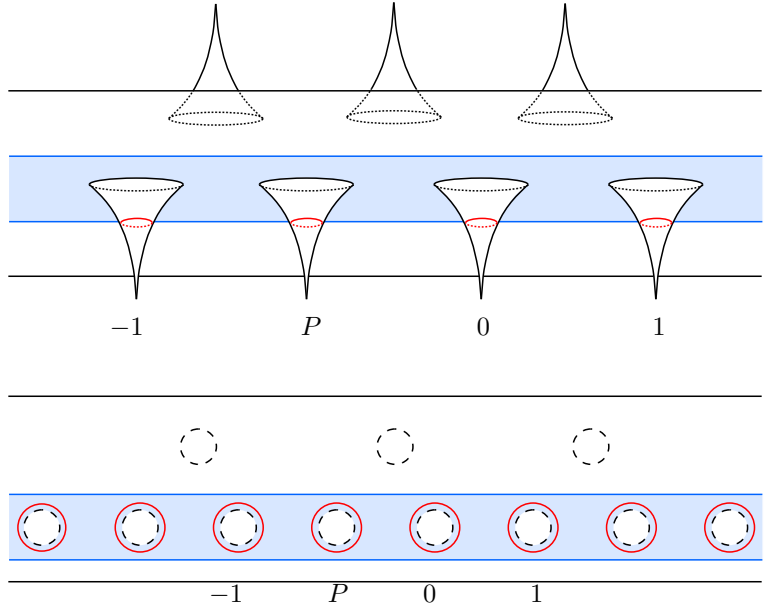}
\put(39, 33){$P$}
\put(62, 33){$0$}
\put(14, 33){$-1$}
\put(85, 33){$1$}
\put(42.5, -2){$P$}
\put(55, -2){$0$}
\put(69, -2){$1$}
\put(27, -2){$-1$}
\end{overpic}
\caption{The curves $B_i$ are in red.  The blue region is the domain of the shift map $H$.}\label{fig:B0}
\end{center}
\end{figure}

\begin{defn}\label{def:B_i}
Define the simple closed curves $B_i:=H^iB_0$ for $i\in \mathbb Z \cup P$.  Then $B_i$ is a simple closed curve bounding the puncture $p_i$, where $p_P=p$.
\end{defn}

\noindent  Our choice of left/right also gives a well-defined notion of an arc passing \textit{over} or \textit{under} a puncture (or equivalently some $B_i$). In all pictures of $S$ throughout the paper, we denote the special puncture $p$ by an ``X," and rather than drawing the punctures $p_i$, we draw the simple closed curves $B_i$ in $S$.   We will use these simple closed curves to put arcs into \textit{standard position} as described later in this section.

\subsection{Coding arcs}\label{sec:thecode}
We use the simple closed curves $B_i$ to  describe a way to code simple arcs on $S$ starting and ending at $p$.  We will use this code to quantify how long two arcs fellow travel, which will be essential for proving the results of this paper.

Suppose that $\gamma$ is an oriented arc on $S$ starting and ending at $p$ such that $\gamma$ can be homotoped to be completely contained on the front of $S$.   We code $\gamma$ as follows. First homotope $\gamma$ so that it is disjoint from all $B_i$ with $i\in \Z \cup P$, with the exception that $\gamma$ starts and ends at the puncture $p$ and therefore intersects $B_P$ exactly twice. The code for $\gamma$ always starts and ends with the character $P_s$ (which stands for ``puncture start") and contains either the character $k_o$ or the character $k_u$, where $k\in \Z\cup\{P\}$, whenever $\gamma$ passes over or under the simple closed curve $B_k$ for $k\in \Z \cup P$. These characters appear in the code for $\gamma$ in the same order in which $\gamma$ passes over/under the curves $B_k$.  For example, since $-1<P<0$, the second character of the code for $\gamma$ must be either $0_o$, $0_u$, $(-1)_o$, $(-1)_u$, $P_o$, or $P_u$, because if $\gamma$ doesn't immediately wrap around $p$ (which would lead to the second character being $P_o$ or $P_u$),  it must pass over or under either $B_0$ or $B_{-1}$ before it can pass over or under $B_k$ for any $k\neq 0,-1$.   Similarly, if the character $2_o$ or $2_u$ appears in the code, each adjacent character must be one of $1_o, 1_u, 2_o, 2_u, 3_o,$ or $3_u$. To simplify notation, we write $k_{o/u}$ to mean that the character could be $k_o$ or $k_u$. We will write $k_{o/u}k_{u/o}$ to mean that the two adjacent characters are either $k_ok_u$ or $k_uk_o$; the $k_{u/o}$ is used to emphasize that the second character has the opposite subscript as the first one. 

\begin{ex}  \label{ex:codes}
Consider the arcs shown in Figure \ref{fig:examplecodes}.  The elements $k\in\Z\cup \{P\}$ shown under $S$ denote the subscript on the simple closed curves $B_k$.   The code for $\alpha$ is $P_s0_o1_u2_o2_u1_u0_uP_s$, the code for $\beta$ is  $P_sP_uP_o0_o1_o2_o2_u1_o0_oP_s$, and the code for $\gamma$ is $P_s(-1)_o(-2)_o(-2)_u(-1)_uP_u0_u1_u1_o0_oP_s$.
\end{ex}

Now suppose $\gamma$ is an oriented arc on $S$ starting and ending at $p$ such that no arc in its homotopy class is contained on the front of $S$.  Since $\gamma$ starts and ends at $p$, which is on the front of the surface, every time $\gamma$ leaves the front of $S$ it must eventually re-enter the front.  We give the code $C$ to any subpath of $\gamma$ which is on the back of $S$.  Up to homotopy, we may assume that each time $\gamma$ exits then enters the front of $S$, it does so ``between" two simple closed curves $B_k$ and $B_{k+1}$. In other words, there is an arc $\gamma'$ in the homotopy class of $\gamma$ whose code contains either $k_{o/u}C(k+1)_{o/u}$ or $k_{o/u}Ck_{o/u}$ each time $\gamma'$ leaves the front of $S$.  We give $\gamma$ the same code as $\gamma'$.  We emphasize that this implies that the code of an arc does not distinguish the behavior of arcs $\gamma$ on the back of  $S$.  

\begin{figure}
\centering
\def\svgwidth{4in}
\Small{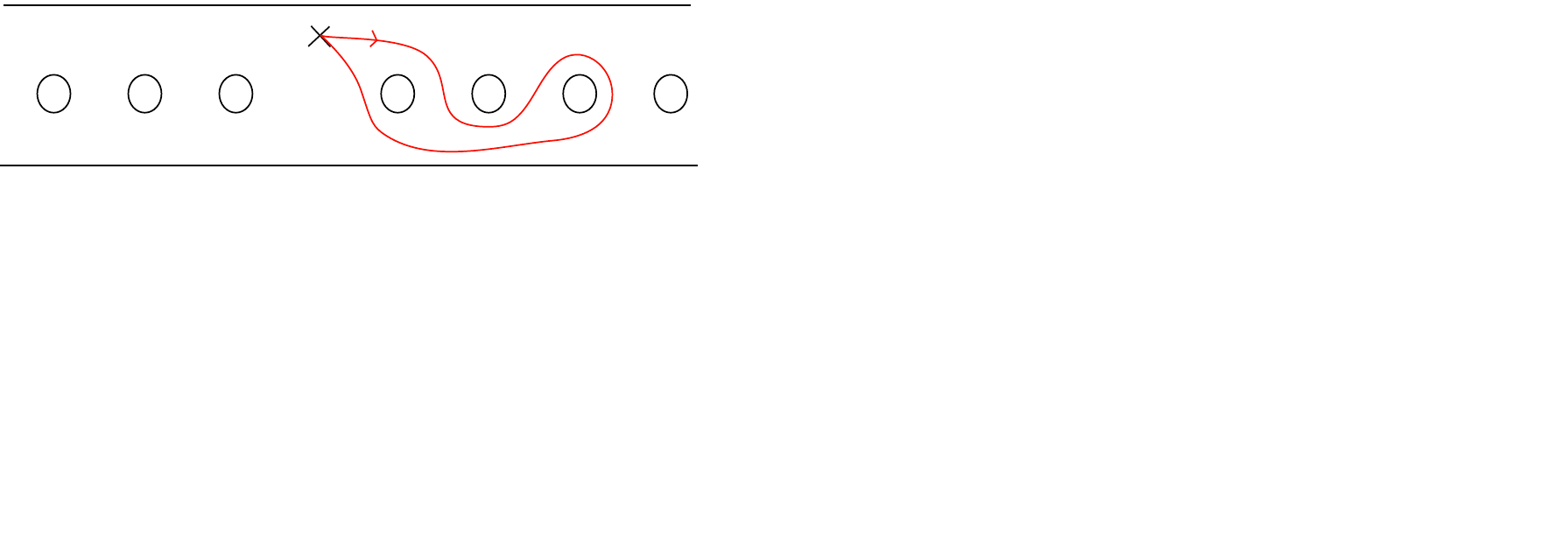}
\caption{Pictured are arcs on the front of the surface $S$.  The X denotes the puncture $p$ and the elements $k\in\Z\cup \{P\}$ shown under $S$ denote the subscript on the simple closed curves $B_k$. The codes for arcs $\alpha,\beta, \gamma$ are given in Example \ref{ex:codes}.}
\label{fig:examplecodes}
\end{figure}

By an abuse of notation, we typically blur the distinction between an arc and its code, writing, for example, $\alpha = P_s0_o1_u2_o2_u1_u0_uP_s$.

\begin{defn}\label{def:reducedcode}
Let $\gamma$ be an oriented arc on $S$ starting and ending at $p$.  A code for $\gamma$ is \textit{reduced} if no two adjacent characters in the code are the same and if the character immediately following the initial $P_s$ or preceding the terminal $P_s$ is not $P_{o/u}$.
\end{defn}

The appearance of repeated characters in the code of an arc indicates backtracking in the arc. 
The following lemma is immediate.

\begin{lem}
If there are two arc $\gamma$ and $\gamma'$, starting and ending at $p$, whose codes differ only by the removal of two adjacent  characters which are equal, i.e., $k_ok_o$ or $k_uk_u$, then $\gamma$ and $\gamma'$ are homotopic.
\end{lem}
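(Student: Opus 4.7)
The plan is to interpret two adjacent equal characters, say $k_ok_o$ (the $k_uk_u$ case is symmetric), in the code of $\gamma$ as a localized backtracking U-turn of $\gamma$ near the horocycle $B_k$ that can be homotoped away. The starting observation is that because no character intervenes between the two occurrences of $k_o$ in the code, the corresponding subarc $\sigma\subset\gamma$ neither visits the back of $S$ (no $C$ character) nor passes any horocycle $B_i$ with $i\neq k$; by construction it only approaches $B_k$ at its two endpoints, each time on the top side.

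First I would choose representatives of $\gamma$ and $\gamma'$ that agree outside a small embedded disk $D\subset S$ lying on the front of $S$, meeting $B_k$ in a short top arc and disjoint from every other $B_i$. This is possible because the two codes are identical away from the deleted pair of characters, and each character in a code is produced by a localized encounter with a single horocycle. Inside $D$, the representative of $\gamma$ traces $\sigma$, which approaches the top of $B_k$, U-turns, and returns to the top of $B_k$, while the representative of $\gamma'$ simply passes along the top of $B_k$ without this detour.

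Next I would close $\sigma$ to a loop $\ell$ by concatenating it with a short arc of $B_k$ joining its endpoints. Since $D$ is a disk containing no punctures, $\ell$ bounds a subdisk of $D$, and hence $\sigma$ is homotopic rel endpoints, within $D$, to the short arc on $B_k$; pushing this off $B_k$ yields precisely the path used by $\gamma'$ inside $D$. This local homotopy extends by the identity outside $D$ to a homotopy from $\gamma$ to $\gamma'$.

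The main point requiring care is the initial step of arranging common representatives for $\gamma$ and $\gamma'$ that agree outside $D$; this uses that a code determines an arc on the front of $S$ up to homotopy (the sequence of sides at each encountered horocycle forces the front-of-$S$ behavior in minimal position with respect to $\{B_i\}$), while the back-of-$S$ behavior is recorded only up to homotopy class by the symbols $C$. Once this is in place, the null-homotopy of the U-turn inside the disk $D$ is immediate.
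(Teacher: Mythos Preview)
Your argument is essentially the one the paper has in mind: the paper does not prove this lemma at all, declaring it ``immediate,'' and your local disk homotopy is exactly the obvious justification. The core step---that the subarc corresponding to $k_ok_o$ lies on the front of $S$, stays between $B_k$ and an adjacent $B_{k\pm 1}$, and therefore can be pushed across a disk to eliminate both crossings---is correct.

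One inaccuracy worth flagging: you write that ``the back-of-$S$ behavior is recorded only up to homotopy class by the symbols $C$.'' This is false; the paper explicitly notes that the symbol $C$ does \emph{not} distinguish behavior on the back of $S$ at all, so non-homotopic arcs can share a code. Consequently, your justification for arranging $\gamma$ and $\gamma'$ to agree outside $D$ does not go through for arcs with back loops, and indeed the lemma read strictly literally (two arbitrary arcs whose codes happen to differ by $k_ok_o$) is false. The intended statement---and the only one the paper uses---is that an arc $\gamma$ containing $k_ok_o$ is homotopic to one whose code has that pair deleted, and for this your local argument needs no assumption about $C$ at all: you simply homotope $\gamma$ near $B_k$ and leave the rest alone. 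So drop the sentence about $C$ and the claim that codes determine arcs on the front up to homotopy, and phrase the conclusion as producing a homotopic arc with the shorter code rather than matching a pre-given $\gamma'$.
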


\begin{figure}[H]
\centering
\def\svgwidth{4in}
\small{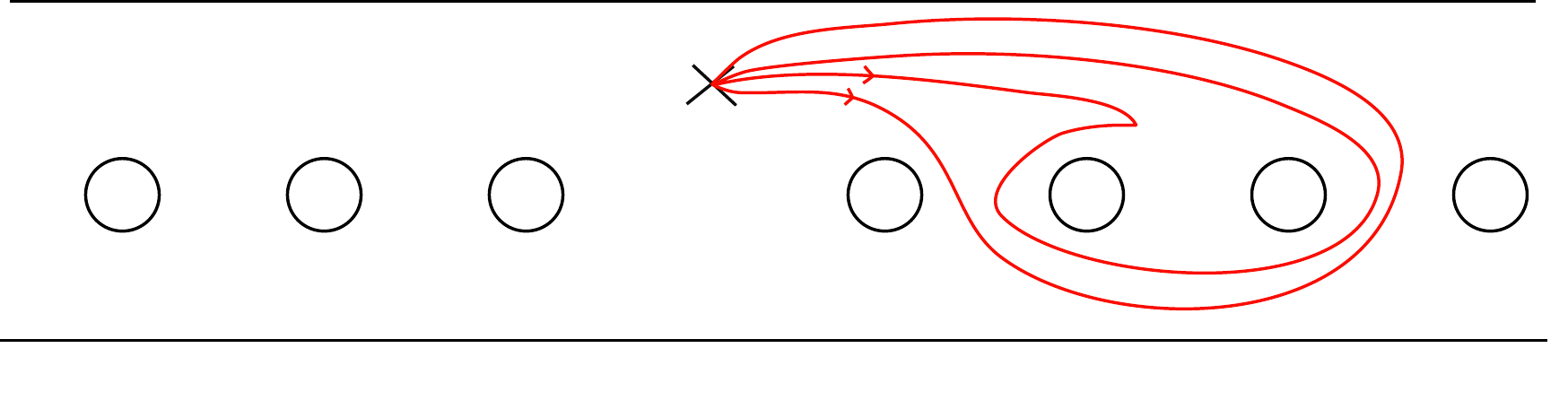}
\caption{The arcs $\gamma$ and $\gamma'$ are homotopic and have the same reduced code: $\gamma'$ is formed from $\gamma$ by the removal of the pair $1_o1_o$.}
\label{fig:reducingcode}
\end{figure}

\begin{ex}
The arcs $\gamma$ and $\gamma'$ in Figure~\ref{fig:reducingcode}, with codes $P_s0_o1_o1_o1_u2_u2_o1_o0_oP_s$ and $P_s0_o1_u2_u2_o1_o0_oP_s$, respectively, are homotopic.  
\end{ex}

\noindent Note that if a triple appears in the code for an arc, it is reduced to a single character according to our convention, as only \emph{pairs} of repeated characters are removed. For example, $P_s0_o1_o1_o1_o1_u0_oP_s$ is reduced to $P_s0_o1_o1_u0_oP_s$.

Each homotopy class of curves on $S$ determines a reduced code, in the sense that any two homotopic curves have the same reduced code.  We write that two codes are equal if they determine homotopic arcs.  For example, we write $$P_s0_o1_o1_o1_u2_u1_u0_uP_s =P_s0_o1_u2_u1_u0_uP_s.$$  The converse of this fact is not true, however, because the code does not encode the behavior of arcs on the back of $S$; hence there can be non-homotopic arcs with the same reduced code.  This will not cause any problems in this paper.

\begin{defn}\label{def:codelength}
The \textit{code length} of an arc $\gamma$, denoted $\ell_c(\gamma)$, is the number of characters in a reduced code for $\gamma$. 
\end{defn}

\begin{conv}\label{conv:alphabet}
When giving the code of an arc for which the numerical values of the characters are unimportant (or unknown), we will use variables in the code.  Our convention is to use Roman letters to represent single characters and Greek letters to represent strings of characters whose length is (possibly) greater than one.  For example, $\ell_c(a_1a_2a_3)=3$ while $\ell_c(a\gamma b)=\ell_c(\gamma)+2$.
\end{conv}

Given a string of characters $\alpha=a_1a_2\ldots a_n$, we denote by $\overline{\alpha}$ the reverse of $\alpha$, so that $\overline{\alpha}=a_na_{n-1}\ldots a_2a_1$.  If $\alpha$ is an arc, then  $\overline{\alpha}$ is the same arc with the opposite orientation.

\subsection{Standard position}\label{sec:standardposition}

In this section, we  describe how to use the code for an arc to find its image under a general class of shifts which we call ``permissible". 

\begin{defn}\label{def:handleshift}
We say a shift shifting to the right is a \textit{right shift}, while a shift shifting to the left is a \textit{left shift}.
A right shift is  \textit{permissible} if its domain $D$ stays on the front of our subsurface and contains a \textit{turbulent region} $(n_1,n_2)$, that is, there exist $n_1,n_2\in\Z\cup\{P\}$ with $n_1<n_2$ such that $D$ contains $B_k$ for all $k\in(-\infty,n_1]\cup[n_2,\infty)$ but does not contain $B_k$ for any $k\in(n_1,n_2)$. We call $(-\infty,n_1)\cup[n_2,\infty)$ the \textit{shift region} of $h$. Analogously, a left shift  is  \textit{permissible} if its domain $D$ stays on the front of our subsurface and contains a \textit{turbulent region} $(n_2,n_1)$, that is, there exist $n_1,n_2\in\Z\cup\{P\}$ with $n_2<n_1$ such that $D$ contains $B_k$ for all  $k\in(-\infty,n_2]\cup[n_1,\infty)$ but  does not contain $B_k$ for any  $k\in(n_2,n_1)$.  The shift region for a left shift is $(-\infty,n_2]\cup(n_1,\infty)$. 
\end{defn}

\begin{conv}\label{conv:leftshift}
Throughout the paper, we will use both left and right shifts.  For notational simplicity, all general results about shifts will be stated for right shifts. All statements of results,  proofs, and figures will make this assumption as well.  However, all of our definitions and results (and their proofs) also hold for left shifts, by modifying any proof for a right shift so that we essentially replace all instances of $n_1$ with $n_2$ and vice versa and replace all instances of the word ``increasing" by the word ``decreasing" and vice versa.
The only subtleties are that:
	\begin{itemize}
	\item We retain the convention that $h(B_{n_1})=B_{n_2}$.

 \item For the shift region intervals $(-\infty,n_1)\cup[n_2,\infty)$ that appear for a right shift, we use $(-\infty,n_2]\cup(n_1,\infty)$ for the left shift.  In particular, the $n_2$ is always contained in the shift region. 
	\end{itemize}
\end{conv}

\begin{rem}
It is worthwhile to mention that Convention \ref{conv:leftshift} is equivalent to simply redefining the order, given by the symbol $<_{rev}$, on $\Z\cup\{P\}$ to be the opposite of the standard meaning of the inequality sign $<$.
For example, in this ``reversed order" we would have $5<_{rev}3$ and so on.
Given this and using the standard meanings for ``increasing" and ``decreasing" with respect to $<_{rev}$, all of the proofs for shifts that shift to the left would go through identically as shifts that shift to the right when one replaces each instance of $<$ with an $<_{rev}$.
Despite the simplicity of this reversed order, we found writing proofs with it to be more confusing to the reader than applying the above convention.
\end{rem}

In order to find the image of an arc using only its code, we will need to consider paths whose endpoints are not on $p$. 
 
\begin{defn} \label{def:backloop}
A \textit{segment} is a simple path with at least one endpoint which is not a puncture, and no endpoints on a puncture other than $p$.  We code a segment in an analogous way as we did arcs in Section \ref{sec:thecode}.  If a segment begins or ends on $p$, then the initial or terminal character of the code is $P_s$, respectively.  Note that a segment can have at most one instance of $P_s$ in its code.  Given a segment $\gamma$, we denote the initial and terminal character of its code by $\gamma^i$ and $\gamma^t$, respectively.  A segment is \textit{supported on} an interval $(a,b)\subset \Z\cup\{P\}$ if the numerical value of every character in its reduced code is contained in $(a,b)$.  A subsegment of $\gamma$ which is supported on an interval $(a,b)$ is denoted $\gamma|_{(a,b)}$, with similar notation for half-open and closed intervals.  A segment is \textit{(strictly) monotone} if the numerical value of the characters in its reduced code are (strictly) monotone as a subset of $\Z\cup\{P\}$. 
A segment with code $C$ is called a \textit{back loop}.
\end{defn}

The notion of left/right on the front of $S$ induces an orientation on strictly monotone segments contained on the front of $S$ in the following way.  If the terminal endpoint of such a segment $\gamma$  is to the right of the initial endpoint, then $\gamma$ is \textit{oriented to the right}.  Similarly, if the terminal endpoint is to the left of the initial endpoint, then $\gamma$ is \textit{oriented to the left}. Since $\gamma$ is strictly monotone, one of the above two possibilities must occur.  We note that single characters of a code represent strictly monotone segments and so can be oriented in this way.

We  will use the code for an arc or segment to find the image of the arc or segment under certain homeomorphisms of $S$.   The process can be complicated.  We now introduce a new way of concatenating strings of characters which will be more suited to finding the image of an arc or segment in certain situations.

\begin{defn}\label{def:effcat} Given two segments $\alpha$ and $\beta$ such that the terminal character of $\alpha$ agrees with the initial character of $\beta$ and such that these two characters have the same orientation, the \textit{efficient concatenation} of $\alpha$ and $\beta$, denoted $\alpha+\beta$, is formed by removing the terminal character of $\alpha$ to form a new string $\alpha'$ and concatenating this new string with $\beta$, resulting in $\alpha'\beta$.
\end{defn}

For example, 
$$P_s0_o+0_o1_o=P_s0_o1_o,$$
and
$$P_s0_o1_o2_o2_u+2_u2_u2_o1_o1_u=P_s0_o1_o2_o2_u2_u2_o1_o1_u=P_s0_o1_u,$$ 
\noindent where the middle term is an unreduced code and the final term is a reduced code.  See Figure \ref{fig:effcat}.  We note that if $\alpha$ and $\beta$ can be efficiently concatenated, then they cannot be concatenated, because $\alpha^t$ and $\beta^i$ have the same orientation.  By a similar reasoning, if two segments can be concatenated, then they cannot be efficiently concatenated.  Throughout the paper, we only (efficiently) concatenate two segments when it is possible.

\begin{figure}
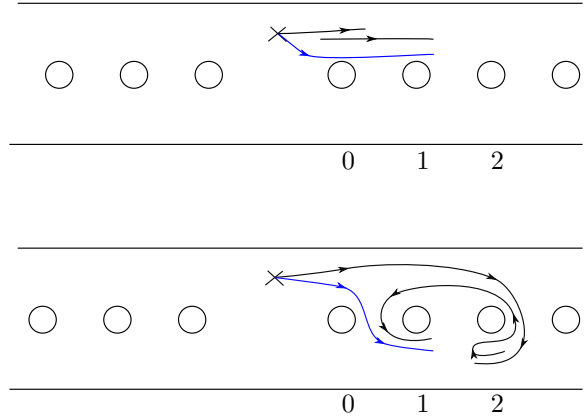

\centering
\begin{overpic}[unit=4pt, width=3in]{effcat.pdf}
\put(58,41.5){$0$}
\put(71,41.5){$1$}
\put(84,41.5){$2$}
\put(58,-1){$0$}
\put(71,-1){$1$}
\put(84,-1){$2$}
\end{overpic}
\caption{The two examples of efficient concatenation following Definition \ref{def:effcat}.  In each, the (reduced) code for the blue segment is the efficient concatenation of the codes of the two black segments.}
\label{fig:effcat}
\end{figure}

As written, the code of a segment is not well behaved under homotopy because every segment is homotopically trivial or homotopic into a puncture.  We will introduce a standard position for segments on $S$ with the property that any two segments that are homotopic rel endpoints will, in standard position, have the same reduced code.  Standard position will also allow us to find the image of a segment under a permissible shift using only its code.

\begin{defn}\label{defn:familyS}
 Fix a simple closed curve $S_j$ as in Figure \ref{fig:familyS} for each $j\in \Z\cup \{P\}$.  Let $\scc=\{S_j\mid j\in \Z\cup \{P\}\}$.
\end{defn}

\begin{figure}
\centering
\def\svgwidth{4in}
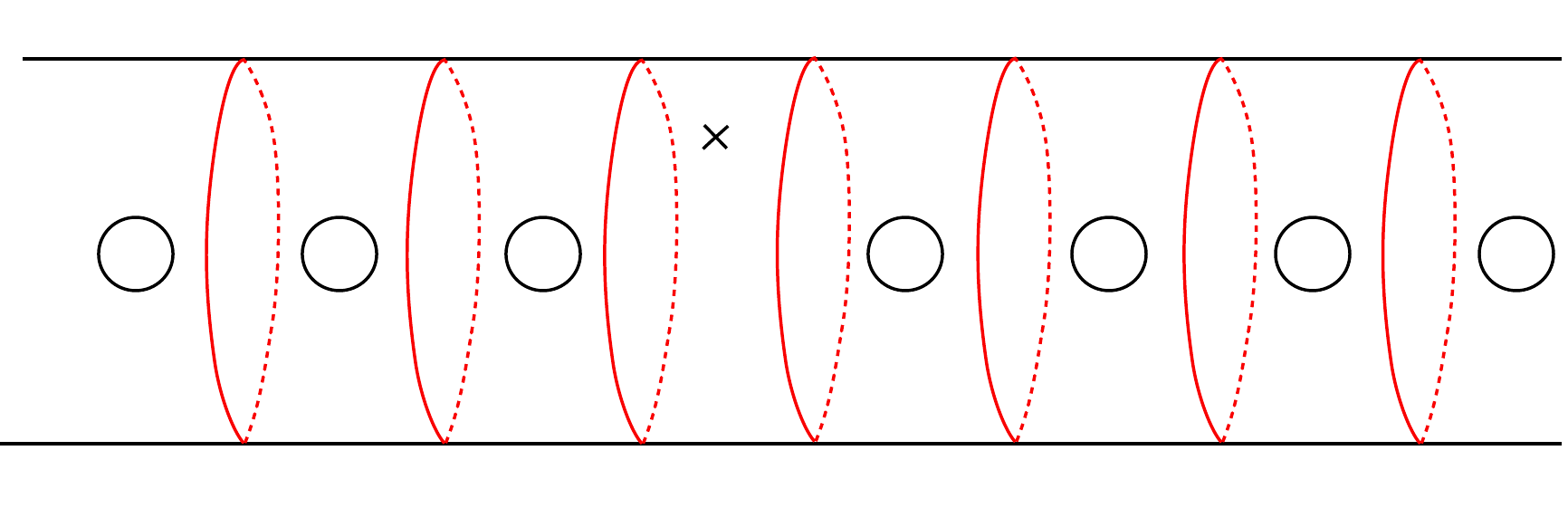
\caption{Some of the simple separating curves in $\scc$ defined in Definition \ref{defn:familyS}.}
\label{fig:familyS}
\end{figure}

For each $j\in\Z\cup\{P\}$, we orient the simple closed curve $B_j$ clockwise and identify $B_j$ with the subset $\mathbb S^1$ of $\mathbb C$ by a homeomorphism which preserves this orientation.  Fix points $b^j_L,b^j_R\in B_j$ corresponding to $-1,1\in\mathbb S^1$, respectively. Here $L$ and $R$ stand for \textit{left} and \textit{right}. 

To describe standard position, we will sometimes move endpoints of segments $\gamma$ to lie on various boundary components.  When we do this, we will use the following convention. Suppose $\gamma^i=k_{o/u}$ and we want to move the initial endpoint of $\gamma$ onto the boundary component $B_k$.  If $\gamma$ is oriented to the right, then we move the initial endpoint of $\gamma$ to  $b^k_L$, and if it is oriented to the left, we move the initial endpoint  to  $b^k_R$.  On the other hand, suppose $\gamma^t=(k')_{o/u}$ and  we want to move the terminal endpoint of $\gamma$ onto the boundary component $B_{k'}$.  If $\gamma$ is oriented to the right, we move the terminal endpoint of $\gamma$ to $b^{k'}_R$, and if it is oriented to the left, we move the terminal endpoint to $b^{k'}_L$. See Figure \ref{fig:changingendpoints}. Moving endpoints in this way does not change the code for $\gamma$.

\begin{figure}
\centering
\def\svgwidth{4in}
\begingroup%
  \makeatletter%
  \providecommand\color[2][]{%
    \errmessage{(Inkscape) Color is used for the text in Inkscape, but the package 'color.sty' is not loaded}%
    \renewcommand\color[2][]{}%
  }%
  \providecommand\transparent[1]{%
    \errmessage{(Inkscape) Transparency is used (non-zero) for the text in Inkscape, but the package 'transparent.sty' is not loaded}%
    \renewcommand\transparent[1]{}%
  }%
  \providecommand\rotatebox[2]{#2}%
  \newcommand*\fsize{\dimexpr\f@size pt\relax}%
  \newcommand*\lineheight[1]{\fontsize{\fsize}{#1\fsize}\selectfont}%
  \ifx\svgwidth\undefined%
    \setlength{\unitlength}{496.800003bp}%
    \ifx\svgscale\undefined%
      \relax%
    \else%
      \setlength{\unitlength}{\unitlength * \real{\svgscale}}%
    \fi%
  \else%
    \setlength{\unitlength}{\svgwidth}%
  \fi%
  \global\let\svgwidth\undefined%
  \global\let\svgscale\undefined%
  \makeatother%
  \begin{picture}(1,0.21404756)%
    \lineheight{1}%
    \setlength\tabcolsep{0pt}%
    \put(0,0){\includegraphics[width=\unitlength,page=1]{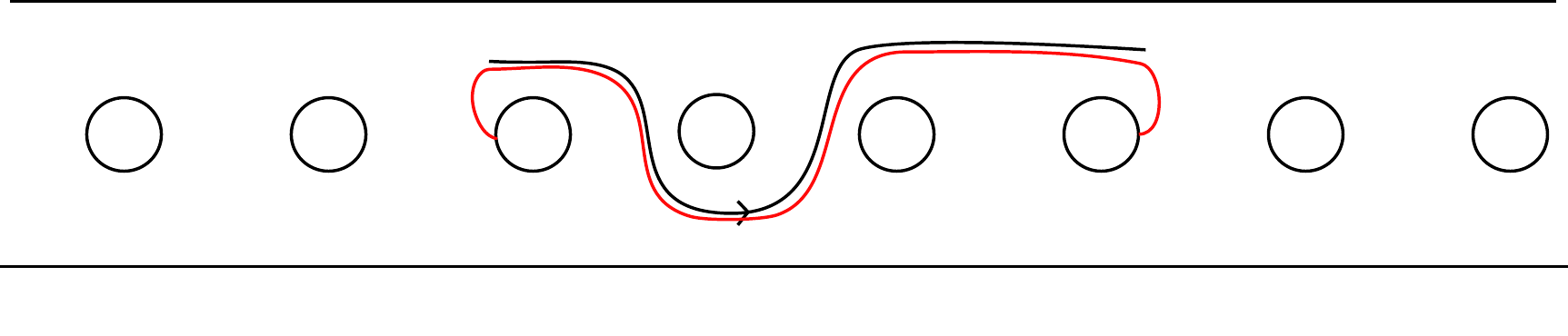}}%
    \put(0.33723146,0.00816987){\makebox(0,0)[lt]{\lineheight{1.25}\smash{\begin{tabular}[t]{l}$k$\end{tabular}}}}%
    \put(0.69519365,0.00425772){\makebox(0,0)[lt]{\lineheight{1.25}\smash{\begin{tabular}[t]{l}$k'$\end{tabular}}}}%
  \end{picture}%
\endgroup%

\caption{Adjusting the endpoints of $\gamma$ (black) results in the red segment.}
\label{fig:changingendpoints}
\end{figure}

It will be useful to understand how a given segment interacts with the domain of a permissible shift.
\begin{defn}
Let $h$ be a permissible right shift with domain $D$ and turbulent region $(n_1,n_2)$.  Then both (the upper and lower) boundary components of $D$ have the same reduced code on $(n_1,n_2)$.  For any $k,k'\in(n_1,n_2)$, we let $\partial D|_{[k,k']}$ be the reduced code of the boundary components of $D$ on the interval $[k,k']$ with similar notation for open and half-open intervals.  If a segment $\gamma$ has the same code as $\partial D$ on an interval $(k_1,k_2)\subset (n_1,n_2)$, we say that $\gamma$ \textit{follows} $\partial D$ or \textit{agrees with} $\partial D$ on that interval. As noted in the convention above, there is an analogous definition when $h$ is a left shift.  When a segment $\gamma$ supported on $[n_1,n_2]$ intersects one component of $\partial D$, we call this a \textit{half crossing}.  When such a $\gamma$ intersects both components of $\partial D$ so that the code for the subsegment of $\gamma$ between these two half crossings is empty, in the sense that this subsegment does not pass over or under $B_k$ for any $k$, we call this a \textit{full crossing}.  See Figure \ref{fig:crossings}.
\end{defn}

\begin{figure}
\centering
\def\svgwidth{4in}
\begingroup%
  \makeatletter%
  \providecommand\color[2][]{%
    \errmessage{(Inkscape) Color is used for the text in Inkscape, but the package 'color.sty' is not loaded}%
    \renewcommand\color[2][]{}%
  }%
  \providecommand\transparent[1]{%
    \errmessage{(Inkscape) Transparency is used (non-zero) for the text in Inkscape, but the package 'transparent.sty' is not loaded}%
    \renewcommand\transparent[1]{}%
  }%
  \providecommand\rotatebox[2]{#2}%
  \newcommand*\fsize{\dimexpr\f@size pt\relax}%
  \newcommand*\lineheight[1]{\fontsize{\fsize}{#1\fsize}\selectfont}%
  \ifx\svgwidth\undefined%
    \setlength{\unitlength}{503.80451737bp}%
    \ifx\svgscale\undefined%
      \relax%
    \else%
      \setlength{\unitlength}{\unitlength * \real{\svgscale}}%
    \fi%
  \else%
    \setlength{\unitlength}{\svgwidth}%
  \fi%
  \global\let\svgwidth\undefined%
  \global\let\svgscale\undefined%
  \makeatother%
  \begin{picture}(1,0.28949208)%
    \lineheight{1}%
    \setlength\tabcolsep{0pt}%
    \put(0,0){\includegraphics[width=\unitlength,page=1]{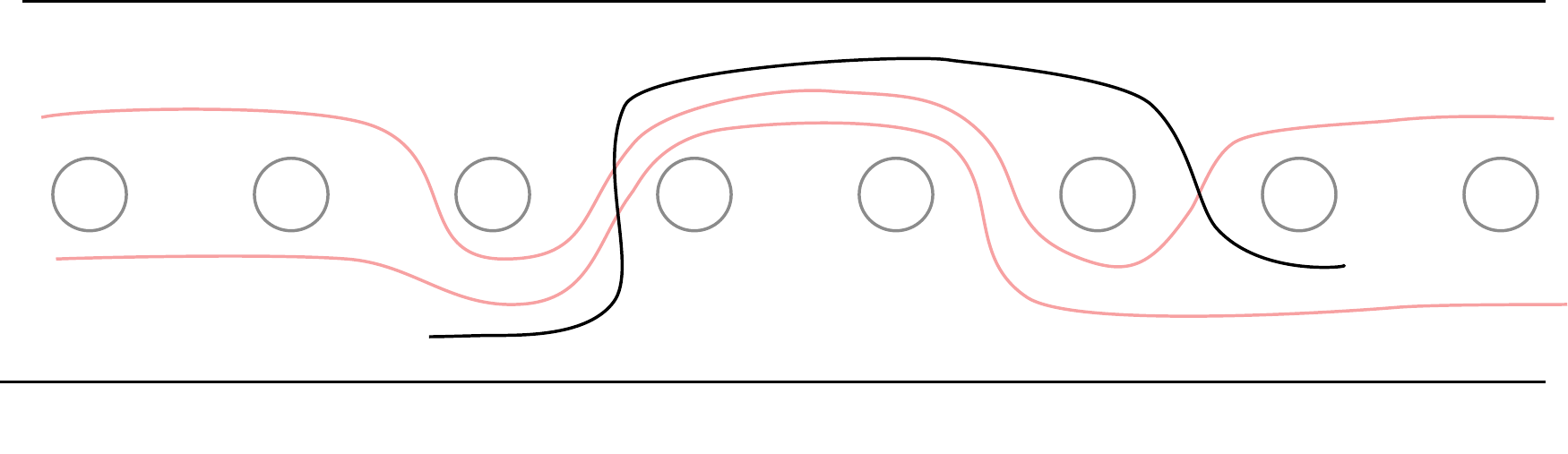}}%
    \put(0.81145014,0.00419852){\makebox(0,0)[lt]{\lineheight{1.25}\smash{\begin{tabular}[t]{l}$n_2$\end{tabular}}}}%
    \put(0.40318418,0.25957473){\makebox(0,0)[lt]{\lineheight{1.25}\smash{\begin{tabular}[t]{l}$\gamma$\end{tabular}}}}%
    \put(0.18141012,0.00923884){\makebox(0,0)[lt]{\lineheight{1.25}\smash{\begin{tabular}[t]{l}$n_1$\end{tabular}}}}%
  \end{picture}%
\endgroup%

\caption{The  domain $D$ for a permissible shift is shown in red.  The segment $\gamma$ follows $\partial D$ on $[n_1+1,n_1+3]$, has a half crossing between $B_{n_2-1}$ and $B_{n_2}$, and has a full crossing between $B_{n_1+1}$ and $B_{n_1+2}$.}
\label{fig:crossings}
\end{figure}

Even though $S$ is a straightforward surface, standard position for segments on $S$ is necessarily complicated. Before introducing it formally in the next two subsections, we briefly give an intuitive idea of standard position in the following remark.  On a first reading of this paper, we strongly suggest the reader read this remark and study Figures \ref{fig:ex1}--\ref{fig:ex4} instead of reading the formal definition of standard position given in Sections \ref{sec:segnobackloop} and \ref{sec:Ccode}.  The reader may then safely skip to Section \ref{sec:segconn}.  If, later in the paper, the image of a particular segment seems counterintuitive, likely this is because we put the segment in standard position before taking its image.  This would be a good time to look back at Sections \ref{sec:segnobackloop} and \ref{sec:Ccode} with that  example of a segment in mind.
\begin{rem}
 Let $h$ be a permissible right shift with domain $D$ and turbulent region $(n_1,n_2)$, and let $\gamma$ be a segment.  To put $\gamma$ in standard position with respect to $h$, we first homotope  subsegments of $\gamma$ that are contained in the  region $(-\infty,n_1)\cup(n_2,\infty)$ to be completely contained in $D$.   For the subsegments of $\gamma$  contained in the  region $[n_1,n_2]$, we homotope $\gamma$ so that crossings are full crossings whenever possible.  We will always be able to make  crossings full except near $n_1$ or $n_2$, because $n_1$ and $n_2$ are where $\gamma$ leaves the turbulent region and enters the shift region, where we have already ensured that it is contained in $D$.  We also homotope $\gamma$ to minimize the number of full crossings.  If $\gamma$ contains a loop $(n_1)_{o/u}(n_1)_{u/o}$ or   $(n_2)_{o/u}(n_2)_{u/o}$, we homotope $\gamma$ so that it crosses one connected component of $\partial D$ immediately before the loop and the other connected component immediately after.  In other words, $\gamma$ has to enter one side of $D$ before the loop and exit the other side of $D$ after the loop. If $\gamma$ contains the character $C$, we homotope it so that the subsegment of $\gamma$ represented by $C$ exits and reenters the front of $S$ at the same place, in the sense that the endpoints of this subsegment lie on the same simple closed curve $S_i$.  Finally, we also homotope each endpoint of $\gamma$ to lie either on the nearest $B_k$ (if it is in the shift region) or  the nearest $S_k$ (if it is in the turbulent region).
\end{rem}

Standard position will be slightly different for those segments which contain back loops.  We first discuss standard position for segments that do not contain back loops.

\subsubsection{Segments without back loops}\label{sec:segnobackloop} Given a segment $\gamma$ whose reduced code does not contain $C$ and a permissible right shift $h$ with domain $D$ and turbulent region $(n_1,n_2)$, we put $\gamma$ into \textit{standard position with respect to $h$} as follows.

If the endpoints of $\gamma$ are not contained in $(n_1,n_2)$ and it is possible to homotope $\gamma$ completely inside of the domain of $h$, we do so, and we move the endpoints of $\gamma$ to lie on the $B_k$ curve numbered by the first and last characters of the code as described above.  

Otherwise,  $\gamma$ can be written as the concatenation $\gamma_1\cdots\gamma_k$ of disjoint connected maximal subsegments  such that each $\gamma_i$ is either:
\begin{enumerate}[(a)]
\item supported on either $(-\infty,n_1]$ or $[n_2,\infty)$; or
\item supported on $(n_1,n_2)$.
\end{enumerate}

We now homotope each $\gamma_i$ individually.
If $\gamma_i$ satisfies (a), then we move the endpoints of $\gamma_i$ onto the $B_k$ curve numbered by the first and last character of the code as above and homotope the interior of $\gamma_i$ to lie completely inside the domain of $h$.  We homotope segments $\gamma_i$ satisfying (b) using the following procedure:

\begin{figure}
\centering
\begin{subfigure}{.6\textwidth}
\centering
\def\svgwidth{2.5in}
\begingroup%
  \makeatletter%
  \providecommand\color[2][]{%
    \errmessage{(Inkscape) Color is used for the text in Inkscape, but the package 'color.sty' is not loaded}%
    \renewcommand\color[2][]{}%
  }%
  \providecommand\transparent[1]{%
    \errmessage{(Inkscape) Transparency is used (non-zero) for the text in Inkscape, but the package 'transparent.sty' is not loaded}%
    \renewcommand\transparent[1]{}%
  }%
  \providecommand\rotatebox[2]{#2}%
  \newcommand*\fsize{\dimexpr\f@size pt\relax}%
  \newcommand*\lineheight[1]{\fontsize{\fsize}{#1\fsize}\selectfont}%
  \ifx\svgwidth\undefined%
    \setlength{\unitlength}{518.21801737bp}%
    \ifx\svgscale\undefined%
      \relax%
    \else%
      \setlength{\unitlength}{\unitlength * \real{\svgscale}}%
    \fi%
  \else%
    \setlength{\unitlength}{\svgwidth}%
  \fi%
  \global\let\svgwidth\undefined%
  \global\let\svgscale\undefined%
  \makeatother%
  \begin{picture}(1,1.28875087)%
    \lineheight{1}%
    \setlength\tabcolsep{0pt}%
    \put(0,0){\includegraphics[width=\unitlength,page=1]{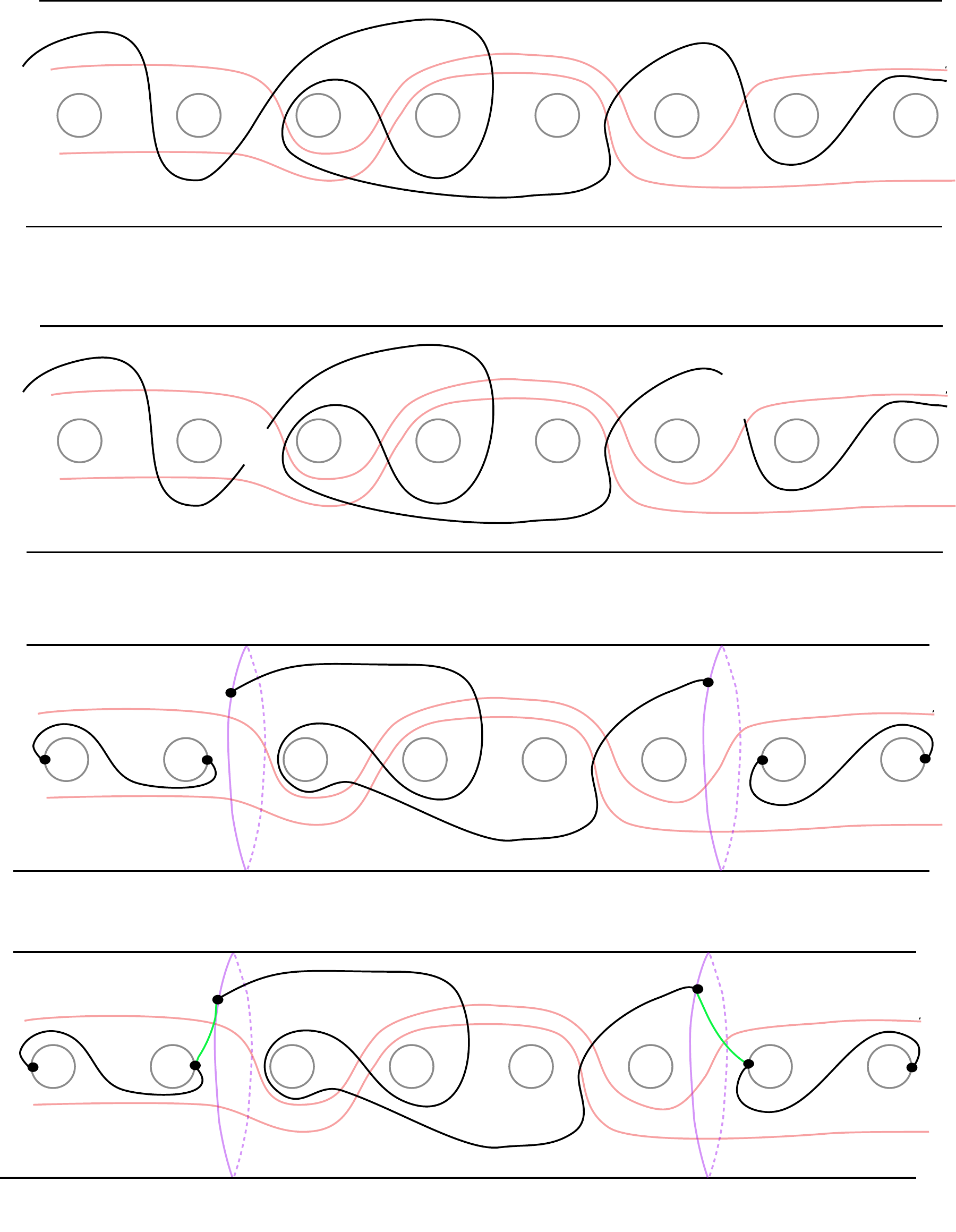}}%
    \put(0.1837133,1.00978281){\makebox(0,0)[lt]{\lineheight{1.25}\smash{\begin{tabular}[t]{l}$\scriptstyle{n_1}$\end{tabular}}}}%
    \put(0.80066475,1.01165808){\makebox(0,0)[lt]{\lineheight{1.25}\smash{\begin{tabular}[t]{l}$\scriptstyle{n_2}$\end{tabular}}}}%
    \put(0.18183805,0.66286486){\makebox(0,0)[lt]{\lineheight{1.25}\smash{\begin{tabular}[t]{l}$\scriptstyle{n_1}$\end{tabular}}}}%
    \put(0.79878953,0.66474014){\makebox(0,0)[lt]{\lineheight{1.25}\smash{\begin{tabular}[t]{l}$\scriptstyle{n_2}$\end{tabular}}}}%
    \put(0.18183805,0.32532315){\makebox(0,0)[lt]{\lineheight{1.25}\smash{\begin{tabular}[t]{l}$\scriptstyle{n_1}$\end{tabular}}}}%
    \put(0.79878953,0.32719843){\makebox(0,0)[lt]{\lineheight{1.25}\smash{\begin{tabular}[t]{l}$\scriptstyle{n_2}$\end{tabular}}}}%
    \put(0.17996282,0.00465839){\makebox(0,0)[lt]{\lineheight{1.25}\smash{\begin{tabular}[t]{l}$\scriptstyle{n_1}$\end{tabular}}}}%
    \put(0.79691431,0.00653367){\makebox(0,0)[lt]{\lineheight{1.25}\smash{\begin{tabular}[t]{l}$\scriptstyle{n_2}$\end{tabular}}}}%
  \end{picture}%
\endgroup%

\caption{Putting $\gamma$ in standard position.}
\end{subfigure}%
\begin{subfigure}{.4\textwidth}
\centering
\def\svgwidth{2.5in}
\begingroup%
  \makeatletter%
  \providecommand\color[2][]{%
    \errmessage{(Inkscape) Color is used for the text in Inkscape, but the package 'color.sty' is not loaded}%
    \renewcommand\color[2][]{}%
  }%
  \providecommand\transparent[1]{%
    \errmessage{(Inkscape) Transparency is used (non-zero) for the text in Inkscape, but the package 'transparent.sty' is not loaded}%
    \renewcommand\transparent[1]{}%
  }%
  \providecommand\rotatebox[2]{#2}%
  \newcommand*\fsize{\dimexpr\f@size pt\relax}%
  \newcommand*\lineheight[1]{\fontsize{\fsize}{#1\fsize}\selectfont}%
  \ifx\svgwidth\undefined%
    \setlength{\unitlength}{526.83915915bp}%
    \ifx\svgscale\undefined%
      \relax%
    \else%
      \setlength{\unitlength}{\unitlength * \real{\svgscale}}%
    \fi%
  \else%
    \setlength{\unitlength}{\svgwidth}%
  \fi%
  \global\let\svgwidth\undefined%
  \global\let\svgscale\undefined%
  \makeatother%
  \begin{picture}(1,0.27642776)%
    \lineheight{1}%
    \setlength\tabcolsep{0pt}%
    \put(0,0){\includegraphics[width=\unitlength,page=1]{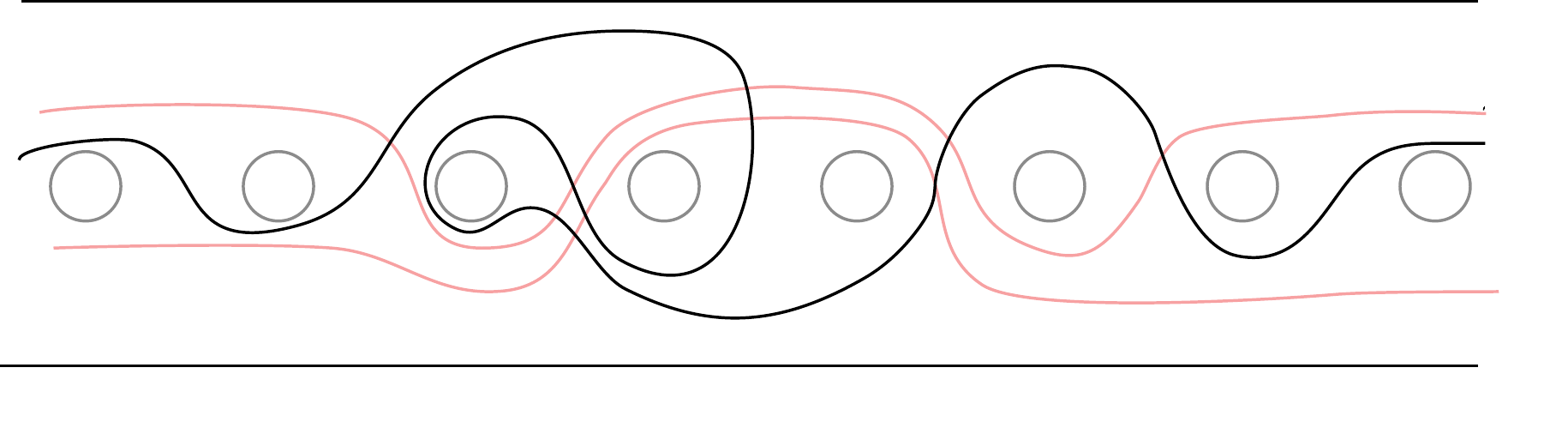}}%
    \put(0.16371007,0.00729506){\makebox(0,0)[lt]{\lineheight{1.25}\smash{\begin{tabular}[t]{l}$\scriptstyle{n_1}$\end{tabular}}}}%
    \put(0.77276053,0.00458216){\makebox(0,0)[lt]{\lineheight{1.25}\smash{\begin{tabular}[t]{l}$\scriptstyle{n_2}$\end{tabular}}}}%
  \end{picture}%
\endgroup%

\caption{A simple segment with the same code as $\gamma$ which we say is in standard position.}
\end{subfigure}
\caption{The  domain $D$ for a permissible shift is shown in red.  In black is a segment $\gamma$ that has no $(n_i)_{o/u}(n_i)_{u/o}$ for $i=1,2$. }
\label{fig:ex1}
\end{figure}

\begin{enumerate}[Step (i):]
\item If the initial character of the segment is $k_{o/u}$,  move the initial endpoint of the segment onto the separating curve $S_k$ if $k_{o/u}$ is oriented to the right and onto $S_{k+1}$ if it is oriented to the left.  If the terminal character of the segment is $k'_{o/u}$, move the terminal endpoint of the segment onto $S_{k'}$ if $k'_{o/u}$ is oriented to the left and onto $S_{k'+1}$ if it is oriented to the right.  Now move the endpoints along the $S_j$ containing them to reduce the number of full and half crossings, if possible, without creating any self-intersections.  In particular, the endpoints should not lie in the domain of $h$. See Figure~\ref{fig:ex1}.

There is one caveat to the rule above. Note that when $\gamma_i$ is type (b), then $\gamma_{i+1}$ and $\gamma_{i-1}$ are type (a), otherwise $\gamma_i$ would not be maximal. In the case that either of these neighboring segments is exactly a loop around $n_1$ or $n_2$ we must adjust the position of the endpoints of $\gamma_i$ along $S_j$. If $\gamma_{i+1} = (n_k)_{o}(n_k)_{u}$, for $k = 1$ or $2$, we require that the terminal endpoint of $\gamma_i$ is above $D$ and require that the terminal endpoint is below $D$ when $\gamma_{i+1}=(n_i)_{u}(n_i)_{o}$. Similarly, if $\gamma_{i-1}=(n_i)_{o}(n_i)_{u}$, for $i = 1$ or $2$, we require that the initial endpoint of $\gamma_i$ is below $D$ and require that the initial endpoint of $\gamma_{i}$ is above $D$ when $\gamma_{i-1}=(n_i)_{u}(n_i)_{o}$. See Figure \ref{fig:ex2}. Note that this repositioning of endpoints can cause additional crossings of $D$ as in Figure~\ref{fig:ex3}, but this is the appropriate configuration for our calculations. 
\item Homotope the segment rel endpoints to make all crossings full.  Since Step (i) ensures that the endpoints of $\gamma_i$ are always outside the domain $D$, this is always possible.
\item Homotope the segment rel endpoints to reduce the number of crossing by removing all bigons that bound disks and have one side on the segment $\gamma_i$  and the other side on $\partial D$.  
\item Finally, there may be a choice of where a full crossing occurs.    If there is such a choice, then it will always be possible to homotope rel endpoints so that the crossing occurs between two adjacent characters  $k_{o/u}$, $(k')_{o/u}$ with $k,k'\in(n_1,n_2)$ such that the $o/u$ pattern of $k$ and/or $k'$ does not match that of $\partial D$, and our convention is to make this choice for the largest possible $k,k'$. For example, in Figure \ref{fig:ex1}, the bottom strand could fully cross the domain between $(n_1+1)_o$ and $(n_1+1)_u$ or between $(n_1+1)_u$ and $(n_1+2)_u$; we make the latter choice.
\end{enumerate}

At this point, we have a collection of disjoint subsegments, each in standard position.  The final step is to connect the endpoints of these segments, in the order they were originally connected, by  segments called \textit{connectors}.  Connectors always occur between characters with numerical value $n_1$ and $n_1 +1$ or $n_2$ and $n_2 -1$. For the purposes of the code, we picture these  segments extended slightly in either direction to overlap with the segments on either side so that the code of a connector will always be a pair $j_{o/u}(j+1)_{o/u}$ or $(j+1)_{o/u}j_{o/u}$.  Thus, if a connector $\alpha$ connects disjoint subsegments $\delta_1$ and $\delta_2$ (in that order), then $\alpha=(\delta_1)^t(\delta_2)^i$ and we can write $\delta_1+\alpha+\delta_2$.  Note that this code is equivalent to the concatenation $\delta_1\delta_2$.  By construction, connectors always have one endpoint inside and one endpoint outside the the domain $D$ of the shift.  After applying this procedure, the resulting segment in standard position may no longer be simple.  However, it will have the same code as our original $\gamma$.

\begin{figure}
\centering
\begin{subfigure}{.6\textwidth}
\centering
\def\svgwidth{2.5in}
\begingroup%
  \makeatletter%
  \providecommand\color[2][]{%
    \errmessage{(Inkscape) Color is used for the text in Inkscape, but the package 'color.sty' is not loaded}%
    \renewcommand\color[2][]{}%
  }%
  \providecommand\transparent[1]{%
    \errmessage{(Inkscape) Transparency is used (non-zero) for the text in Inkscape, but the package 'transparent.sty' is not loaded}%
    \renewcommand\transparent[1]{}%
  }%
  \providecommand\rotatebox[2]{#2}%
  \newcommand*\fsize{\dimexpr\f@size pt\relax}%
  \newcommand*\lineheight[1]{\fontsize{\fsize}{#1\fsize}\selectfont}%
  \ifx\svgwidth\undefined%
    \setlength{\unitlength}{515.90661993bp}%
    \ifx\svgscale\undefined%
      \relax%
    \else%
      \setlength{\unitlength}{\unitlength * \real{\svgscale}}%
    \fi%
  \else%
    \setlength{\unitlength}{\svgwidth}%
  \fi%
  \global\let\svgwidth\undefined%
  \global\let\svgscale\undefined%
  \makeatother%
  \begin{picture}(1,1.25521117)%
    \lineheight{1}%
    \setlength\tabcolsep{0pt}%
    \put(0,0){\includegraphics[width=\unitlength,page=1]{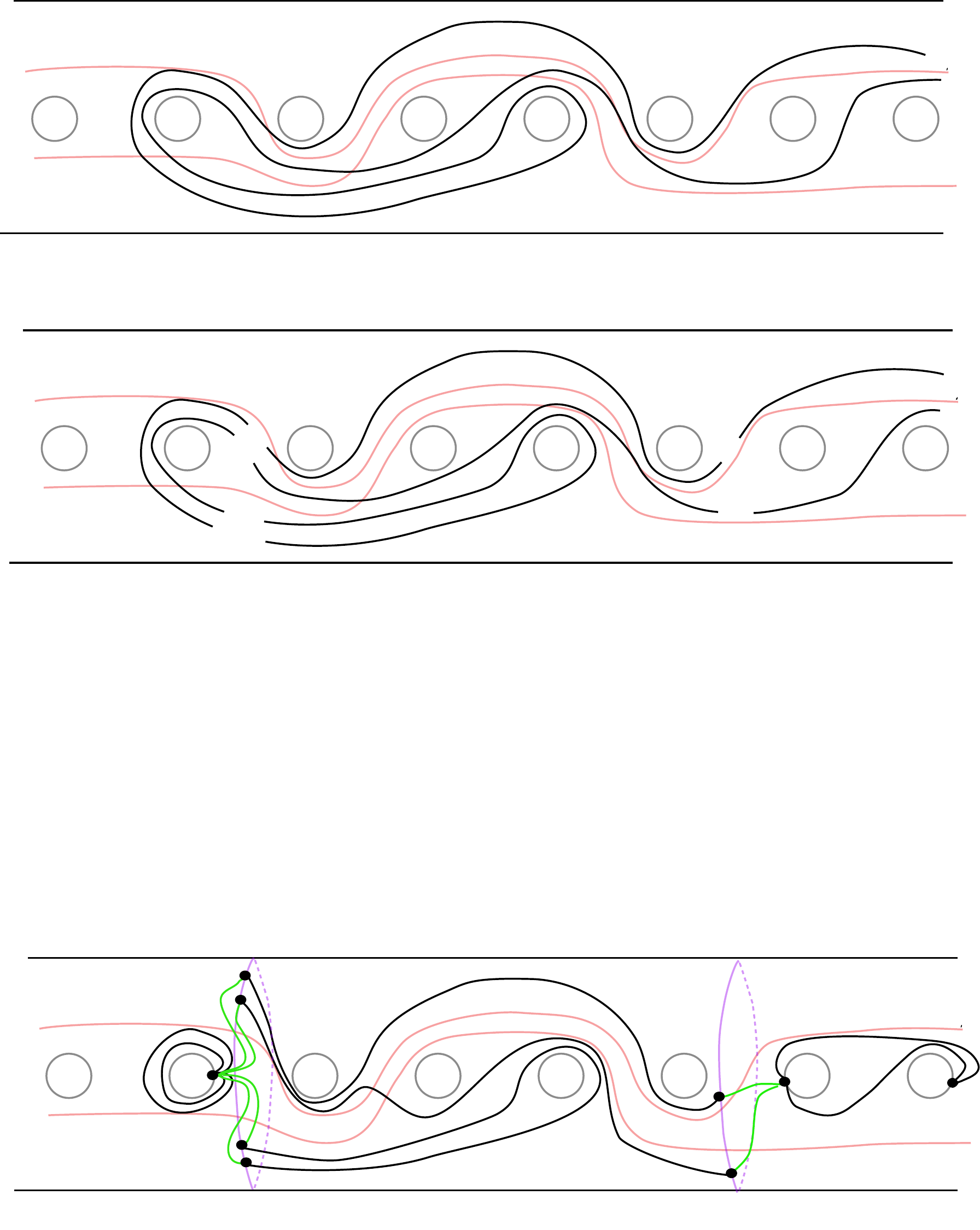}}%
    \put(0.17076857,0.97353775){\makebox(0,0)[lt]{\lineheight{1.25}\smash{\begin{tabular}[t]{l}$\scriptstyle{n_1}$\end{tabular}}}}%
    \put(0.17464994,0.63946383){\makebox(0,0)[lt]{\lineheight{1.25}\smash{\begin{tabular}[t]{l}$\scriptstyle{n_1}$\end{tabular}}}}%
    \put(0,0){\includegraphics[width=\unitlength,page=2]{StdPos2.pdf}}%
    \put(0.16334816,0.32678057){\makebox(0,0)[lt]{\lineheight{1.25}\smash{\begin{tabular}[t]{l}$\scriptstyle{n_1}$\end{tabular}}}}%
    \put(0.18030087,0.00467926){\makebox(0,0)[lt]{\lineheight{1.25}\smash{\begin{tabular}[t]{l}$\scriptstyle{n_1}$\end{tabular}}}}%
  \end{picture}%
\endgroup%

\caption{Putting $\gamma$ in standard position.}
\end{subfigure}
\begin{subfigure}{.35\textwidth}
\centering
\def\svgwidth{2.5in}
\begingroup%
  \makeatletter%
  \providecommand\color[2][]{%
    \errmessage{(Inkscape) Color is used for the text in Inkscape, but the package 'color.sty' is not loaded}%
    \renewcommand\color[2][]{}%
  }%
  \providecommand\transparent[1]{%
    \errmessage{(Inkscape) Transparency is used (non-zero) for the text in Inkscape, but the package 'transparent.sty' is not loaded}%
    \renewcommand\transparent[1]{}%
  }%
  \providecommand\rotatebox[2]{#2}%
  \newcommand*\fsize{\dimexpr\f@size pt\relax}%
  \newcommand*\lineheight[1]{\fontsize{\fsize}{#1\fsize}\selectfont}%
  \ifx\svgwidth\undefined%
    \setlength{\unitlength}{503.80470626bp}%
    \ifx\svgscale\undefined%
      \relax%
    \else%
      \setlength{\unitlength}{\unitlength * \real{\svgscale}}%
    \fi%
  \else%
    \setlength{\unitlength}{\svgwidth}%
  \fi%
  \global\let\svgwidth\undefined%
  \global\let\svgscale\undefined%
  \makeatother%
  \begin{picture}(1,0.29051723)%
    \lineheight{1}%
    \setlength\tabcolsep{0pt}%
    \put(0,0){\includegraphics[width=\unitlength,page=1]{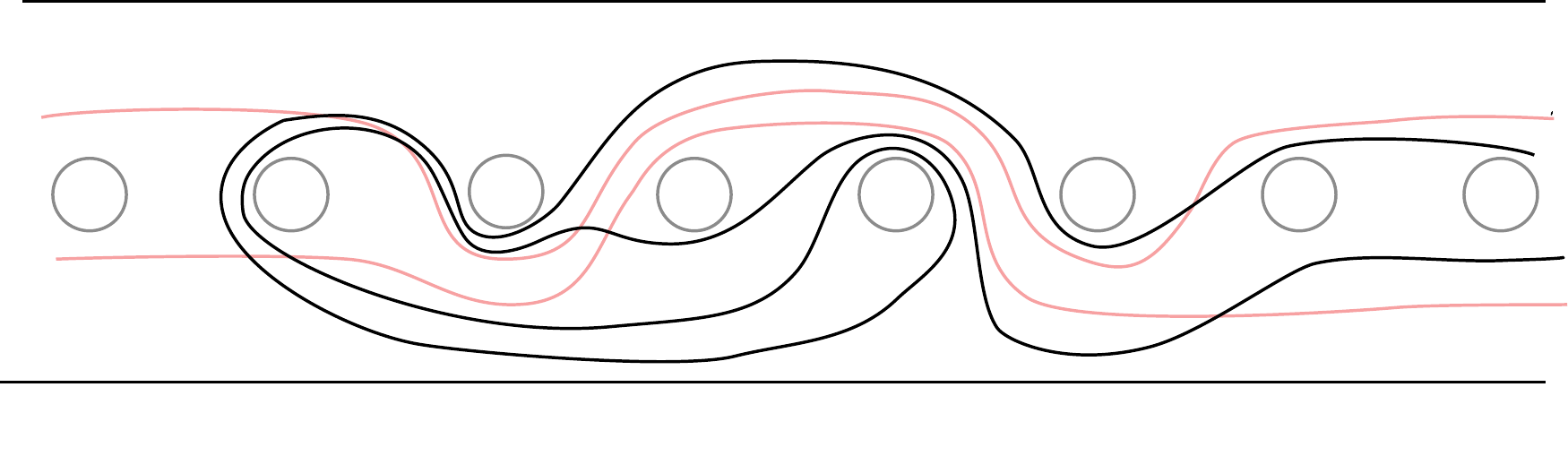}}%
    \put(0.17285986,0.00479166){\makebox(0,0)[lt]{\lineheight{1.25}\smash{\begin{tabular}[t]{l}$\scriptstyle{n_1}$\end{tabular}}}}%
  \end{picture}%
\endgroup%

\caption{A simple segment with the same code as $\gamma$ which we say is in standard position.}
\end{subfigure}
\caption{A segment $\gamma$ that contains multiple copies occurrences of $(n_1)_o(n_1)_u$.}
\label{fig:ex2}
\end{figure}

The following lemma summarizes the above procedure.
\begin{lem}\label{lem:stdpos}
A segment $\gamma$  in standard position with respect to a permissible right shift $h$ which is not completely contained in the domain of $h$ can be written as the efficient concatenation of (possibly empty) segments in the following way.
	\begin{equation}\label{eqn:gammadecomp}
	\gamma=\gamma_1^{tu}+\gamma_1^{c1}+\gamma_1^{sh}+\gamma_1^{c2}+\gamma_2^{tu}+\gamma_2^{c1}+\cdots + \gamma_n^{sh}+\gamma_n^{c2},
	\end{equation}
where for each $i$,
	\begin{itemize}
	\item $\gamma_i^{tu}$ is supported on the turbulent region $(n_1,n_2)$, has been put into standard position following steps (i)--(iv) above, and has both endpoints outside of the domain $D$; 
	\item $\gamma_i^{sh}$ is supported on the shift region $(-\infty,n_1]\cup[n_2,\infty)$, is completely contained in the domain $D$, and has both endpoints on the $B_k$ curves; and
	\item $\gamma_i^{c1},\gamma_i^{c2}$ are connectors, each of which has code length 2, is supported on either $[n_1,n_1+1]$ or $[n_2-1,n_2]$, and has one endpoint on $B_{n_1}$ or $B_{n_2}$ and one endpoint outside the domain $D$ on $S_{n_1+1}$ or $S_{n_2}$.
	\end{itemize}
\end{lem}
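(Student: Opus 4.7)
The plan is to read the decomposition in (\ref{eqn:gammadecomp}) directly off of the construction of standard position given in Section~\ref{sec:segnobackloop}. Recall that since $\gamma$ is not completely contained in the domain of $h$, standard position first writes $\gamma$ as a concatenation $\gamma_1\cdots\gamma_k$ of maximal subsegments of type (a) or (b). First, I will observe that because these subsegments are maximal and their supports $(-\infty,n_1]\cup[n_2,\infty)$ and $(n_1,n_2)$ only overlap at the endpoints, consecutive $\gamma_i$ must alternate between type (a) and type (b), and two consecutive subsegments must share exactly one common boundary character whose numerical value is either $n_1$ or $n_2$.

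Next, I will identify the pieces. For each type (b) piece, Steps (i)--(iv) place both endpoints outside the domain $D$ (on the appropriate separating curves $S_j$), so after trimming off a length-two collar at each end that straddles $\partial D$ (these collars are the connectors), the remaining segment is supported on $(n_1,n_2)$, has both endpoints outside $D$, and is in standard position in the sense of Steps (i)--(iv); this is what I will call $\gamma_i^{tu}$. For each type (a) piece, the procedure homotopes it to lie completely inside $D$ and moves its endpoints onto the $B_k$ curves matching its first and last characters, so it satisfies the claimed properties of $\gamma_i^{sh}$ after trimming off length-two collars where it meets a neighboring type (b) piece. The trimmed-off collars are precisely the connectors $\gamma_i^{c1},\gamma_i^{c2}$: each has code length $2$, consists of two adjacent characters one with numerical value $n_1$ or $n_2$ and the other with value $n_1+1$ or $n_2-1$, and by construction has one endpoint on $B_{n_1}$ or $B_{n_2}$ (shared with the neighboring shift piece) and one endpoint on $S_{n_1+1}$ or $S_{n_2}$ (shared with the neighboring turbulent piece).

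To assemble the decomposition, I will insert a $\gamma_i^{sh}$ piece and connectors between each pair of consecutive $\gamma_i^{tu}$ pieces, using empty segments when needed at the endpoints of $\gamma$ (e.g., if $\gamma$ begins with a type (b) piece, then $\gamma_1^{sh}$ and one of the surrounding connectors may be empty; analogously for the terminal end). The efficient concatenation structure follows from Definition~\ref{def:effcat}: at each boundary between two pieces in the decomposition, the terminal character of one and the initial character of the next agree (because they were a single shared character before trimming) and are oriented in the same direction (they come from a single consistently-oriented crossing of $B_{n_1}$ or $B_{n_2}$ by $\gamma$, or from a common character of $\gamma$ in the shift region).

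The main obstacle will be bookkeeping around the endpoints of $\gamma$ and around loops of the form $(n_i)_{o/u}(n_i)_{u/o}$. For endpoints, one must track whether $\gamma$ begins or ends in the shift region or the turbulent region and allow the corresponding $\gamma_i^{tu}$, $\gamma_i^{sh}$, or connector to be empty without disrupting the efficient concatenation. For the loop case, the caveat after Step (i) forces $\gamma_i^{tu}$ to cross $\partial D$ in a prescribed way; one must check that these extra crossings are still absorbed into connectors of the prescribed form and do not violate the supports or endpoint conditions listed in the lemma. Once these cases are handled, the lemma follows directly from the construction.
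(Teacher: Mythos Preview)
Your approach is correct and matches the paper's treatment: the lemma is stated there as a summary of the standard-position procedure with no separate proof, and you are doing exactly that, reading the decomposition off the construction in Section~\ref{sec:segnobackloop}. One minor wording issue: the connectors are not ``trimmed off'' from the type (b) pieces but are the new segments inserted between consecutive pieces, whose codes (by the paper's convention) are taken to overlap one character with each neighbor via efficient concatenation; this does not affect the validity of your argument.
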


We will sometimes use the notation $\gamma_i^{conn}$ for a connector if it is not important if this subsegment is the first connector $\gamma_i^{c1}$ or the second connector $\gamma_i^{c2}$.

It is often  convenient to abuse notation and say that a simple segment is in ``standard position" even if it is not the result of the above procedure because these segments are easier to draw and think about.  What we mean by this is that the  segment intersects the boundary of $D$ in the same place as it would in standard position.   In Figures \ref{fig:ex1}--\ref{fig:ex4}, we give a segment, the steps to put it in standard position, and an example of a simple segment with the same code which we also say is in standard position.

\begin{figure}
\centering
\begin{subfigure}{.6\textwidth}
\centering
\def\svgwidth{2.5in}
\begingroup%
  \makeatletter%
  \providecommand\color[2][]{%
    \errmessage{(Inkscape) Color is used for the text in Inkscape, but the package 'color.sty' is not loaded}%
    \renewcommand\color[2][]{}%
  }%
  \providecommand\transparent[1]{%
    \errmessage{(Inkscape) Transparency is used (non-zero) for the text in Inkscape, but the package 'transparent.sty' is not loaded}%
    \renewcommand\transparent[1]{}%
  }%
  \providecommand\rotatebox[2]{#2}%
  \newcommand*\fsize{\dimexpr\f@size pt\relax}%
  \newcommand*\lineheight[1]{\fontsize{\fsize}{#1\fsize}\selectfont}%
  \ifx\svgwidth\undefined%
    \setlength{\unitlength}{519.20700907bp}%
    \ifx\svgscale\undefined%
      \relax%
    \else%
      \setlength{\unitlength}{\unitlength * \real{\svgscale}}%
    \fi%
  \else%
    \setlength{\unitlength}{\svgwidth}%
  \fi%
  \global\let\svgwidth\undefined%
  \global\let\svgscale\undefined%
  \makeatother%
  \begin{picture}(1,1.21330124)%
    \lineheight{1}%
    \setlength\tabcolsep{0pt}%
    \put(0,0){\includegraphics[width=\unitlength,page=1]{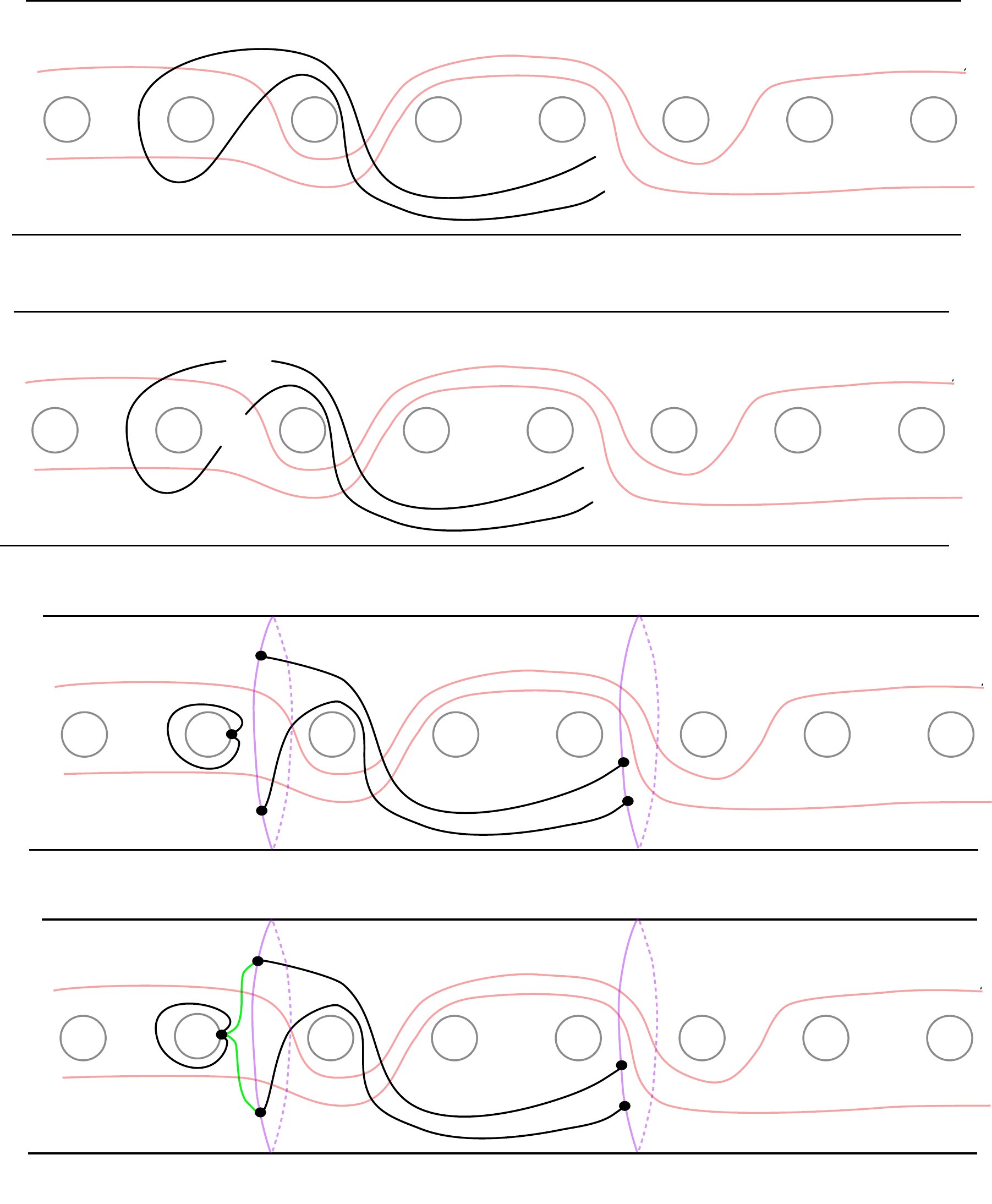}}%
    \put(0.1774858,0.93299294){\makebox(0,0)[lt]{\lineheight{1.25}\smash{\begin{tabular}[t]{l}$\scriptstyle{n_1}$\end{tabular}}}}%
    \put(0.15689752,0.62978397){\makebox(0,0)[lt]{\lineheight{1.25}\smash{\begin{tabular}[t]{l}$\scriptstyle{n_1}$\end{tabular}}}}%
    \put(0.19058744,0.33218993){\makebox(0,0)[lt]{\lineheight{1.25}\smash{\begin{tabular}[t]{l}$\scriptstyle{n_1}$\end{tabular}}}}%
    \put(0.18310074,0.00464952){\makebox(0,0)[lt]{\lineheight{1.25}\smash{\begin{tabular}[t]{l}$\scriptstyle{n_1}$\end{tabular}}}}%
  \end{picture}%
\endgroup%

\caption{Putting $\gamma$ in standard position.}
\end{subfigure}%
\begin{subfigure}{.4\textwidth}
\def\svgwidth{2.5in}
\begingroup%
  \makeatletter%
  \providecommand\color[2][]{%
    \errmessage{(Inkscape) Color is used for the text in Inkscape, but the package 'color.sty' is not loaded}%
    \renewcommand\color[2][]{}%
  }%
  \providecommand\transparent[1]{%
    \errmessage{(Inkscape) Transparency is used (non-zero) for the text in Inkscape, but the package 'transparent.sty' is not loaded}%
    \renewcommand\transparent[1]{}%
  }%
  \providecommand\rotatebox[2]{#2}%
  \newcommand*\fsize{\dimexpr\f@size pt\relax}%
  \newcommand*\lineheight[1]{\fontsize{\fsize}{#1\fsize}\selectfont}%
  \ifx\svgwidth\undefined%
    \setlength{\unitlength}{503.80490626bp}%
    \ifx\svgscale\undefined%
      \relax%
    \else%
      \setlength{\unitlength}{\unitlength * \real{\svgscale}}%
    \fi%
  \else%
    \setlength{\unitlength}{\svgwidth}%
  \fi%
  \global\let\svgwidth\undefined%
  \global\let\svgscale\undefined%
  \makeatother%
  \begin{picture}(1,0.28615664)%
    \lineheight{1}%
    \setlength\tabcolsep{0pt}%
    \put(0,0){\includegraphics[width=\unitlength,page=1]{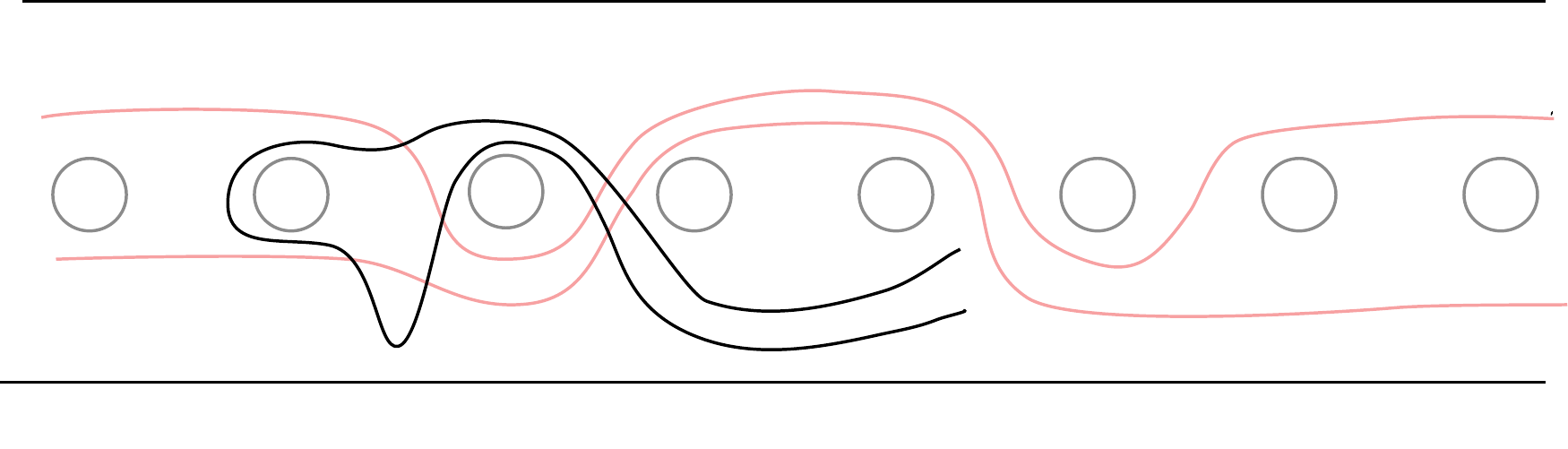}}%
    \put(0.16215022,0.00479166){\makebox(0,0)[lt]{\lineheight{1.25}\smash{\begin{tabular}[t]{l}$\scriptstyle{n_1}$\end{tabular}}}}%
  \end{picture}%
\endgroup%
\caption{A simple segment with the same code as $\gamma$ which we say is in standard position.}
\end{subfigure}
\caption{A segment $\gamma$ that contains $(n_1)_o(n_1)_u$.  Note that the requirements on the endpoints of $\gamma_{i}$ in step (i) of the procedure for standard position require that we have one connector cross the top of $\partial D$ and one connector cross the bottom of $\partial D$.}
\label{fig:ex3}
\end{figure}

\begin{figure}
\begin{centering}
\def\svgwidth{3in}
\begingroup%
  \makeatletter%
  \providecommand\color[2][]{%
    \errmessage{(Inkscape) Color is used for the text in Inkscape, but the package 'color.sty' is not loaded}%
    \renewcommand\color[2][]{}%
  }%
  \providecommand\transparent[1]{%
    \errmessage{(Inkscape) Transparency is used (non-zero) for the text in Inkscape, but the package 'transparent.sty' is not loaded}%
    \renewcommand\transparent[1]{}%
  }%
  \providecommand\rotatebox[2]{#2}%
  \newcommand*\fsize{\dimexpr\f@size pt\relax}%
  \newcommand*\lineheight[1]{\fontsize{\fsize}{#1\fsize}\selectfont}%
  \ifx\svgwidth\undefined%
    \setlength{\unitlength}{524.5091512bp}%
    \ifx\svgscale\undefined%
      \relax%
    \else%
      \setlength{\unitlength}{\unitlength * \real{\svgscale}}%
    \fi%
  \else%
    \setlength{\unitlength}{\svgwidth}%
  \fi%
  \global\let\svgwidth\undefined%
  \global\let\svgscale\undefined%
  \makeatother%
  \begin{picture}(1,0.52782202)%
    \lineheight{1}%
    \setlength\tabcolsep{0pt}%
    \put(0,0){\includegraphics[width=\unitlength,page=1]{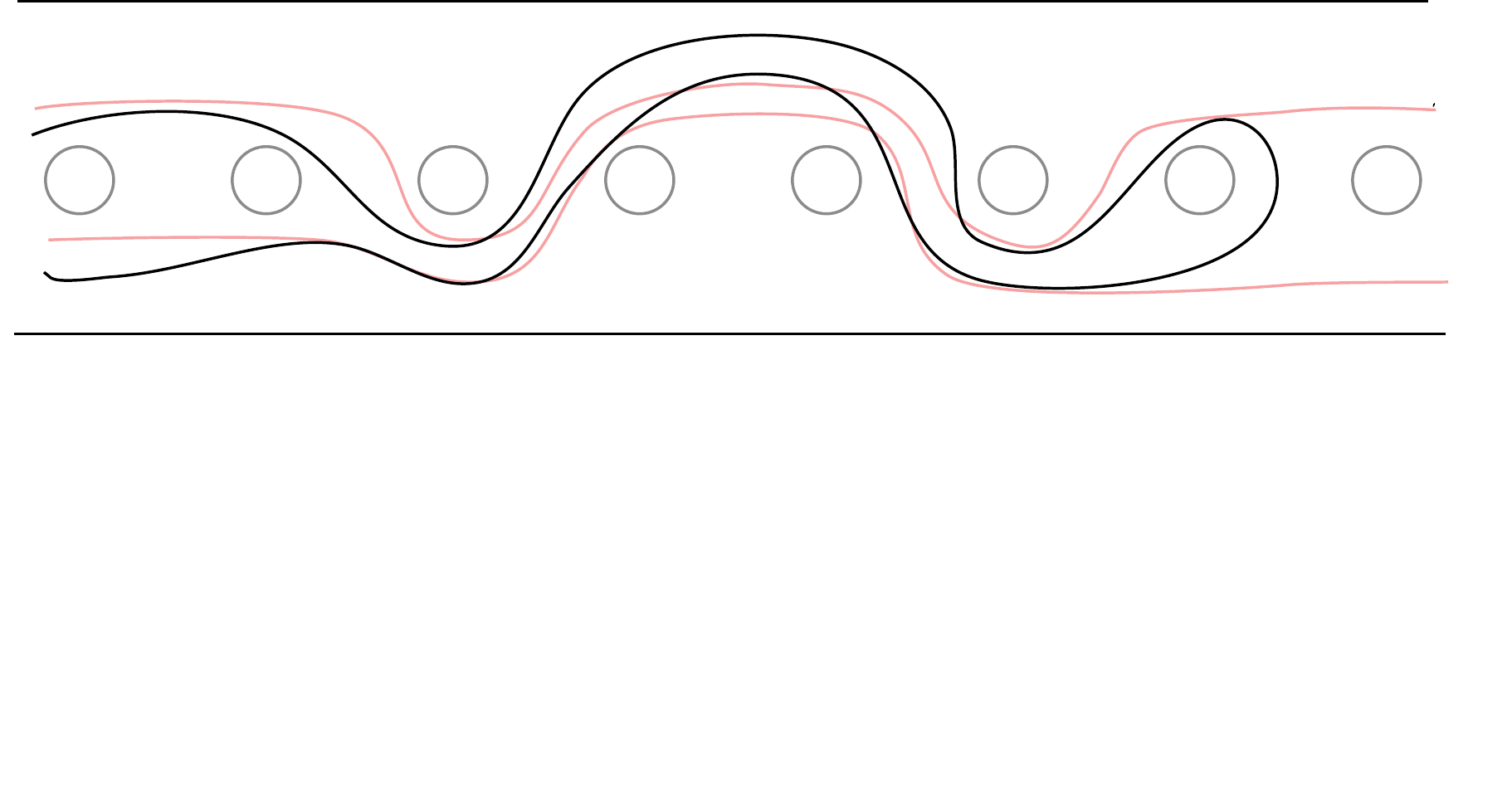}}%
    \put(0.1592621,0.27698327){\makebox(0,0)[lt]{\lineheight{1.25}\smash{\begin{tabular}[t]{l}$\scriptstyle{n_1}$\end{tabular}}}}%
    \put(0.76291688,0.27992791){\makebox(0,0)[lt]{\lineheight{1.25}\smash{\begin{tabular}[t]{l}$\scriptstyle{n_2}$\end{tabular}}}}%
    \put(0,0){\includegraphics[width=\unitlength,page=2]{StdPos4.pdf}}%
    \put(0.16809608,0.00460252){\makebox(0,0)[lt]{\lineheight{1.25}\smash{\begin{tabular}[t]{l}$\scriptstyle{n_1}$\end{tabular}}}}%
    \put(0.77175088,0.00754717){\makebox(0,0)[lt]{\lineheight{1.25}\smash{\begin{tabular}[t]{l}$\scriptstyle{n_2}$\end{tabular}}}}%
  \end{picture}%
\endgroup%

\end{centering}
\caption{A segment $\gamma$ that has endpoints in $(-\infty,n_1]\cup[n_2,\infty)$ and can be homotoped to be completely inside $D$.}
\label{fig:ex4}
\end{figure}

Because each segment in \eqref{eqn:gammadecomp} has fixed endpoints, its image under $h$ is well-defined up to homotopy rel endpoints.  Thus we may use the decomposition of $\gamma$ to find its image under $h$ as follows
\[
h(\gamma)=h(\gamma_1^{tu})+h(\gamma_1^{c1})+h(\gamma_1^{sh})+h(\gamma_1^{c2})+h(\gamma_2^{tu})+h(\gamma_2^{c1})+\cdots + h(\gamma_n^{sh})+h(\gamma_n^{c2}).
\]
Since in standard position $\gamma$  may not be simple, its image under $h$ may not be simple.  However, this code corresponds to a unique (homotopy class of) simple segment with the same endpoints.  It is important that we use \textit{efficient} concatenation when calculating $h(\gamma)$.  Using regular concatenation, the code for $\gamma$ is simply $\gamma_1^{tu}\gamma_1^{sh}\ldots \gamma_n^{tu}\gamma_n^{sh}$.   However, it is not true that $h(\gamma_1^{tu})h(\gamma_1^{sh})\ldots h(\gamma_n^{tu}) h(\gamma_n^{sh})$ is a code for $\gamma$; in fact, much of the time this code does not define a segment.  See Example \ref{ex:imageofcrossing}.  Most of the interesting behavior in the image of an arc or segment under a permissible shift actually comes from the full and half crossings. 
Since each connector contributes a half crossing, they are essential for determining the image of $\gamma$. 

\begin{ex} \label{ex:imageofcrossing}
Consider the permissible shift $h$ shown in Figure \ref{fig:crossings} along with the segment $\gamma$.  In Figure \ref{fig:interestingimage}, we put $\gamma$ in standard position and find its image under the shift.  Using code, we have $\gamma=(n_1+1)_u(n_1+2)_o(n_1+3)_o(n_2-1)_o(n_2)_u$.  Note that every character in this code except the terminal $(n_2)_u$ is fixed by $h$, and $h((n_2)_u)=(n_2+1)_u$.  If we simply compute $h(\gamma)$ character by character, we get $h(\gamma)=(n_1+1)_u(n_1+2)_o(n_1+3)_o(n_2-1)_o(n_2+1)_u$.  However, this is not a well-defined code since $n_2-1$ and $n_2+1$ are not adjacent. On the other hand, using efficient concatenation, we see that 
\begin{align*}
h(\gamma)&= h(\gamma^{tu})+ h(\gamma^{conn})+h(\gamma^{sh}) \\
&=h((n_1+1)_u(n_1+2)_o (n_1+3)_o(n_2-1)_o) + h((n_2-1)_o(n_2)_u) + h((n_2)_u)  \\
&=(n_1+1)_u(n_1+2)_o (n_1+3)_o(n_2-1)_o + (n_2-1)_o(n_2)_o(n_2+1)_u + (n_2+1)_u\\
&= (n_1+1)_u(n_1+2)_o(n_1+3)_o(n_2-1)_o(n_2)_o(n_2+1)_u.
\end{align*}
Note that all but the final pair $(n_2-1)_o(n_2)_u$ are fixed by $h$.  This final pair is a half crossing and in standard position it is a connector.
\end{ex}

\begin{figure}
\centering
\def\svgwidth{5in}
\begingroup%
  \makeatletter%
  \providecommand\color[2][]{%
    \errmessage{(Inkscape) Color is used for the text in Inkscape, but the package 'color.sty' is not loaded}%
    \renewcommand\color[2][]{}%
  }%
  \providecommand\transparent[1]{%
    \errmessage{(Inkscape) Transparency is used (non-zero) for the text in Inkscape, but the package 'transparent.sty' is not loaded}%
    \renewcommand\transparent[1]{}%
  }%
  \providecommand\rotatebox[2]{#2}%
  \newcommand*\fsize{\dimexpr\f@size pt\relax}%
  \newcommand*\lineheight[1]{\fontsize{\fsize}{#1\fsize}\selectfont}%
  \ifx\svgwidth\undefined%
    \setlength{\unitlength}{602.4019774bp}%
    \ifx\svgscale\undefined%
      \relax%
    \else%
      \setlength{\unitlength}{\unitlength * \real{\svgscale}}%
    \fi%
  \else%
    \setlength{\unitlength}{\svgwidth}%
  \fi%
  \global\let\svgwidth\undefined%
  \global\let\svgscale\undefined%
  \makeatother%
  \begin{picture}(1,0.13483112)%
    \lineheight{1}%
    \setlength\tabcolsep{0pt}%
    \put(0,0){\includegraphics[width=\unitlength,page=1]{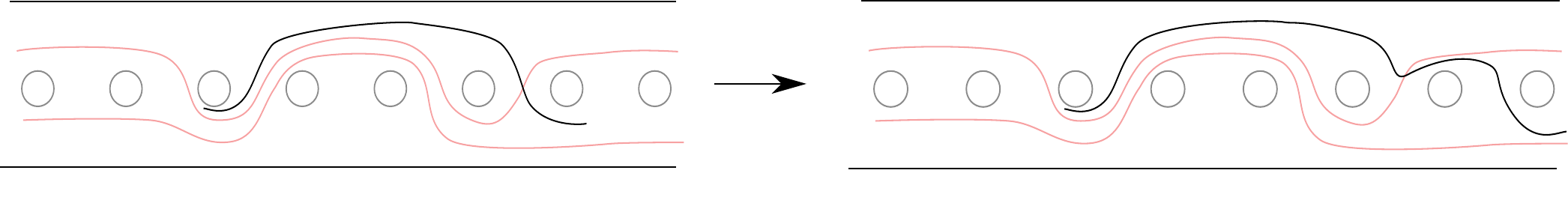}}%
    \put(0.06745915,0.00422893){\color[rgb]{0,0,0}\makebox(0,0)[lt]{\lineheight{1.25}\smash{\begin{tabular}[t]{l}$n_1$\end{tabular}}}}%
    \put(0.35798715,0.00553173){\color[rgb]{0,0,0}\makebox(0,0)[lt]{\lineheight{1.25}\smash{\begin{tabular}[t]{l}$n_2$\end{tabular}}}}%
    \put(0.614642,0.00292611){\color[rgb]{0,0,0}\makebox(0,0)[lt]{\lineheight{1.25}\smash{\begin{tabular}[t]{l}$n_1$\end{tabular}}}}%
    \put(0.90516999,0.00422892){\color[rgb]{0,0,0}\makebox(0,0)[lt]{\lineheight{1.25}\smash{\begin{tabular}[t]{l}$n_2$\end{tabular}}}}%
  \end{picture}%
\endgroup%

\caption{The segment $\gamma$ from Example \ref{ex:imageofcrossing} in standard position (left) and its image under the shift whose domain is shown (right).}
\label{fig:interestingimage}
\end{figure}

In fact, given a segment $\gamma=\gamma^{tu}$ supported on $(n_1,n_2)$, we can further decompose $\gamma$ into subsegments which are disjoint from $D$ and pairs which fully cross $D$ in such a way that makes it straightforward to find its image under $h$.  Write

\begin{equation}\label{eq:turbulentdecomp}
\gamma=\gamma^{d}_1+\gamma^{e}_1+\cdots+\gamma^{d}_s+\gamma^{e}_s,
\end{equation}
where each $\gamma^{d}$ is a maximal subsegment disjoint from $D$, each $\gamma^{e}$ fully crosses $D$, and $\ell_c(\gamma^{d})\ge 2$, $\ell_c(\gamma^{e})=2$, when non-empty.

Using the above decomposition, in an unreduced code we have
$$ h(\gamma)=\gamma^{d}_1+h(\gamma^{e}_1)+\dots+\gamma^{d}_s+h(\gamma^{e}_s).$$
 Every non-empty $\gamma^{e}_j$ will have image which follows $\partial D$, loops around $n_2$, and follows $\partial D$ back, so that 
$$h(\gamma^{e}_j)=\partial D\vert_{[(\gamma^{e}_j)^i,n_2)}(n_2)_{o/u}(n_2)_{u/o}\overline{\partial D\vert_{[(\gamma^{e}_j)^t,n_2)}}.$$ 

As in Example \ref{ex:imageofcrossing}, if we don't use efficient concatenation then we can write  $\gamma=\gamma^d_1\gamma^d_2\ldots \gamma^d_s$.   Applying $h$ to each of these subsegments individually would yield $h(\gamma)=\gamma$, since each of these subsegments is fixed by $h$, which is not the correct image.

\subsubsection{Segments with back loops.}\label{sec:Ccode}
If $\gamma$ is a segment with code equal to $C$,  we require that $\gamma$ has both endpoints on some separating curve $S_i$ in our collection $\mc C$.  We also assume that the endpoints of $C$ lie outside the domain, and, moreover, that $\gamma$ does not intersect $D$.  There are two possibilities for $\gamma$, either $\gamma$ both enters and exits the front of $S$ at the top or bottom or (up to taking inverses) $\gamma$ enters at the top and exits at the bottom of the front of the surface. Recall that we define the top/bottom of the front of $S$ with respect to the notion of right/left on the front of $S$.  In the first case, this implies that the endpoints of $\gamma$ are both above or both below $D$, respectively, while in the second case one will be above and one will be below.  In either case, this convention implies that $\gamma\cap D=\emptyset$ and $h(\gamma)=\gamma$.

Suppose next that $\gamma$ is a segment whose code contains $C$ but also contains other characters.  Suppose for simplicity that the code for $\gamma$ contains a single instance of $C$, so that $\gamma=\tau_1C\tau_2$, where $\tau_i$ does not contain $C$ for $i=1,2$.  We put $\gamma$ in standard position as follows. First note that by definition of the code, we must either have $(\tau_1)^t=(\tau_2)^i$ (if $\tau_1$ and $\tau_2$ have opposite orientations) or $(\tau_1)^t=(\tau_2)^i\pm 1$ (if $(\tau_1)^t$ and $(\tau_2)^i$ have the same orientation). See Figure~\ref{fig:backloopconnectors}.  Without loss of generality, suppose $\tau_1^t$ is oriented to the right. Consider the (disjoint) segments $\tau_1$ and $\tau_2$, put them in standard position as in the previous section, and homotope the endpoints of $C$ as in the previous paragraph.  Note the endpoints of $C$ will lie on the curve $S_{(\tau_1)^t+1}$.  We now have three disjoint segments with codes $\tau_1$, $\tau_2$, and $C$.  We will add (possibly empty) segments called \textit{back loop connectors} from the terminal point of $\tau_1$ to the initial point of $C$ and from the terminal point of $C$ to the initial point of $\tau_2$, respectively, to form a connected segment.  See Figure \ref{fig:backloopconnectors}.  We code these back loop connectors with the characters $C^-,C^+$, respectively, so that we can easily discuss their image.  In particular, by a slight abuse of notation, we replace $C$ in the code for $\gamma$ with $C^-CC^+$.    

\begin{figure}
\begin{centering}
\includegraphics[scale=.6, trim={0in 2.5in 0in 0in}, clip]{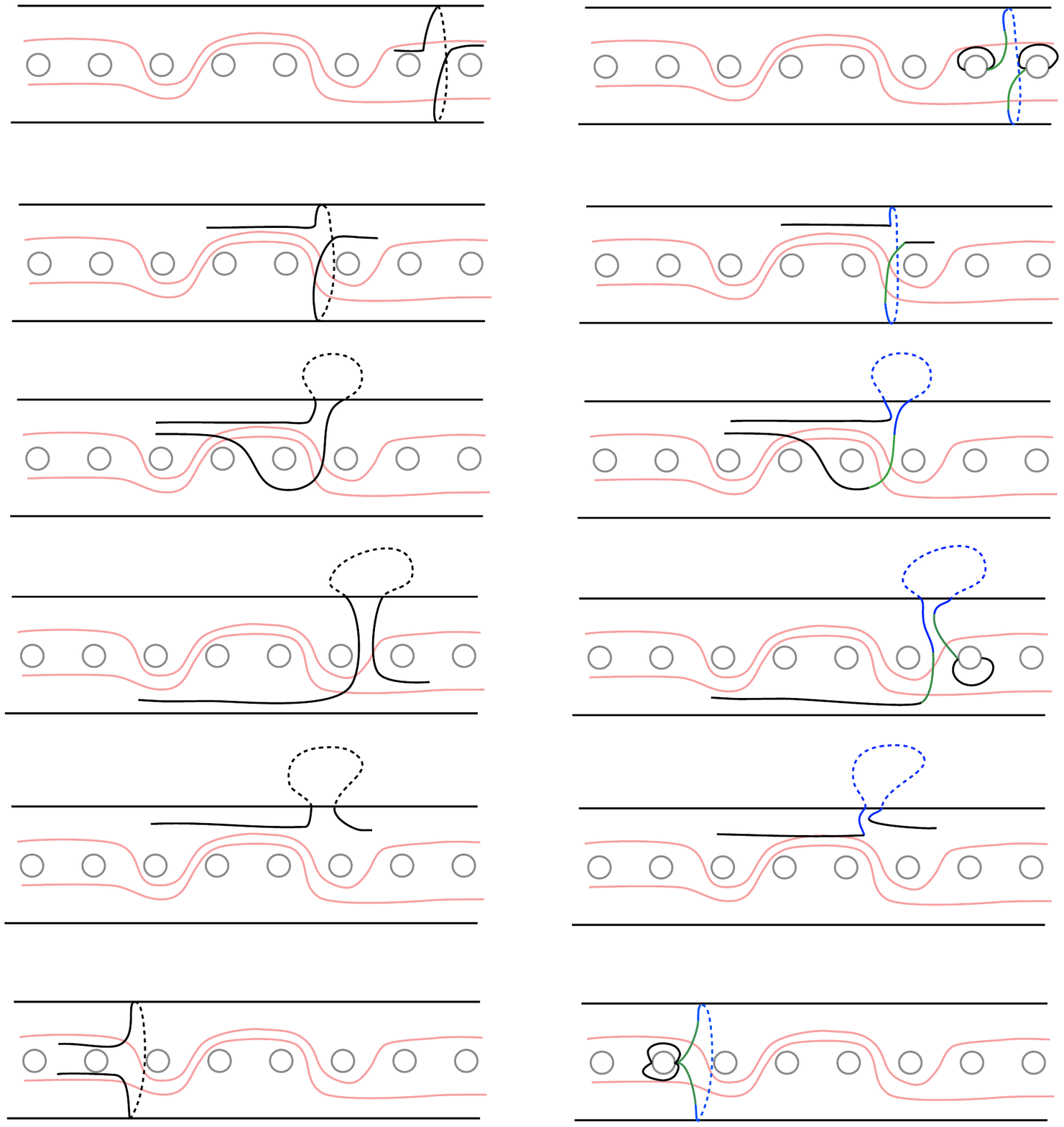}
\end{centering}
\caption{Various segments $\gamma$ containing back loops.  The segments on the right are in standard form.  Back loops are blue and back loop connectors are green.  Note that in the second, third, and fifth examples, at least one of $C^\pm$ is empty.}
\label{fig:backloopconnectors}
\end{figure}

Recall that without loss of generality, $(\tau_1)^t$ is oriented to the right.  If the terminal endpoint of $\tau_1$ lies outside $D$ and on the same side of $D$ as the initial point of the back loop $C$, then the back loop connector $C^-$ is empty.  An analogous statement holds for $C^+$, where we use the initial endpoint of $\tau_2$ in place of the terminal endpoint of $\tau_1$.   Each non-empty back loop connector $C^\pm$ either:
	\begin{enumerate}[(i)]
	\item has one endpoint  on $B_{(\tau_1)^t}$ or $B_{(\tau_2)^i}$
	(depending on whether it is $C^-$ or $C^+$), the other endpoint on $S_{ (\tau_1)^t+1}$, and half-crosses $D$;   or
	\item  has both endpoints on $S_{ (\tau_1)^t+1}$ which are outside of $D$ and fully crosses $D$.
	\end{enumerate}

\subsection{Gaps in segments}\label{sec:segconn}
When we use the code of a segment to find its image under a permissible shift, we first break it into smaller subsegments using standard position.  When we do this, we always use efficient concatenation, so that the codes of the individual pieces overlap in a single character.  The goal of efficient concatenation is to ensure that we do not lose any information about the segment by breaking it into pieces.  However, we need to be careful when we do this.  If we first break a segment into subsegments and then put each subsegment into standard position, it is possible that we will lose some information. In particular, we may cause there to be a ``gap" in the segment.  Based on standard position, these gaps can only occur when breaking a segment in the interior of the turbulent region $(n_1,n_2)$. 

To see this, suppose we break a segment $\gamma=\gamma_1+\gamma_2$ 
into two subsegments such that the numerical value of $\gamma_1^t=\gamma_2^i$ is $j\in(n_1,n_2)$. If we put each $\gamma_i$ into standard position individually, it is possible that $\gamma_1^t$ and $\gamma_2^i$ lie on opposite sides of $D$ (see Figure \ref{fig:gapsinstdpos}).  In this case, we have lost the full crossing between them. Recall that a shift fixes the surface outside of its domain.  In the region $(n_1,n_2)$, a segment in standard position is disjoint from the domain of the shift \textit{except} where there is a full crossing (see the decomposition in equation  \eqref{eq:turbulentdecomp}), so  the full crossings are essential for determining the image of a segment.  Thus we cannot use $\gamma_1$ and $\gamma_2$ to  find the correct image of $\gamma$.

\begin{figure}
\centering
\def\svgwidth{5in}
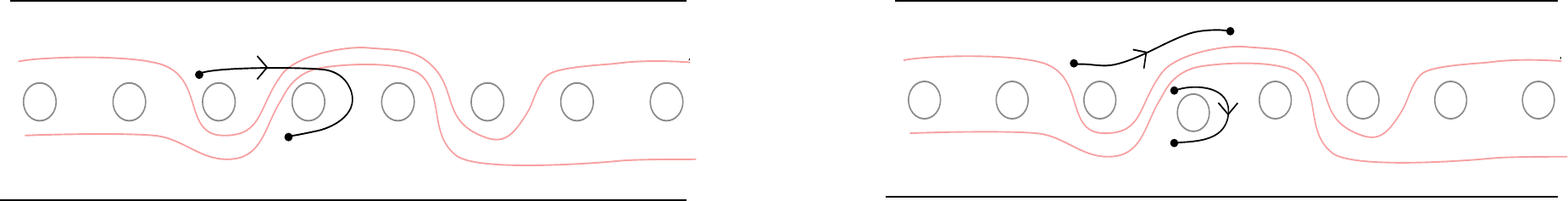
\caption{A segment $\gamma$ (left) is split into subsegments $\gamma=\gamma_1+\gamma_2$ and each $\gamma_i$ is individually put into standard position with respect to the shift $h$ whose domain is shown (right).  This causes a loss of information when we take the images of $\gamma_i$ individually under $h$: since $h(\gamma_i)=\gamma_i$ for $i=1,2$, we have $h(\gamma_1)+h(\gamma_2)=\gamma_1+\gamma_2=\gamma$, but $\gamma$ is clearly not fixed by $h$.}
\label{fig:gapsinstdpos}
\end{figure}

In order to ensure that we do not lose any information when working with a segment and subsegments in the turbulent region, we always first homotope the whole segment $\gamma$ to be a simple segment in standard position.  We then fix this particular representative of the homotopy class of $\gamma$ for the remainder of the time we work with it.  Thus, when we break $\gamma$ into subsegments $\gamma_1$ and $\gamma_2$, we do \textit{not} put these into standard position individually.  To be precise, we choose the endpoints  $\gamma_1^t$ and $\gamma_2^i$ to be the intersection of $\gamma$ and the appropriate separating curves in our collection $\scc$ and we do not allow any further homotopies of $\gamma_1$ or $\gamma_2$.  This will always ensure that there are no gaps between $\gamma_1$ and $\gamma_2$.  

In certain cases, it may be simpler to  break $\gamma$ into subsegments in a different way, and we do this whenever it will avoid technicalities. For example, in Figure \ref{fig:gapsinstdpos}, we may simply choose not to divide $\gamma$ into subsegments at all.  On the other hand, we could also choose to make $\gamma_1$ or $\gamma_2$ longer than is strictly necessary in a particular calculation  in order to avoid a potential loss of information. 

\subsection{Taking inverses of segments}\label{sec:inverses}

In general, if we have a segment $\gamma$ and a subsegment $\zeta$ of $h(\gamma)$, there is not necessarily a subsegment of $\gamma$ which we may call $h^{-1}(\zeta)$.  In other words, not every subsegment of $h(\gamma)$ is the image of a subsegment of $\gamma$.  It is important here that when we think of a subsegment of $\gamma$, we are fixing $\gamma$ in standard position.  That is, we are thinking of a \textit{reduced} code for $\gamma$, rather than any (unreduced) code representing $\gamma$.

For example, consider the  shift shown in Figure \ref{fig:nopreimage}.  Here, the pink subsegment $\zeta_1$ of $h(\gamma)$ is not the image of any subsegment of $\gamma$.  Rather, it is properly contained in the image of the purple subsegment of $\gamma$, specifically because it is in the image of the full crossing of the purple  segment with $D$.
Explicitly, 
$$\gamma=0_o1_u2_o2_u1_u,\quad h(\gamma)=0_o1_u2_o2_u1_u0_o0_u1_u2_u2_o1_u1_o2_o2_u1_u0_u,$$
and one can  see that if the subsegment $\zeta_1=1_u2_o2_u1_u$ of $h(\gamma)$ then there is no subsegment $\gamma'$ of $\gamma$ for which $h(\gamma')=\zeta_1$. 

\begin{figure}
\begin{subfigure}{.4\textwidth}
\centering
\def\svgwidth{2.5in}
{\small 
\begingroup%
  \makeatletter%
  \providecommand\color[2][]{%
    \errmessage{(Inkscape) Color is used for the text in Inkscape, but the package 'color.sty' is not loaded}%
    \renewcommand\color[2][]{}%
  }%
  \providecommand\transparent[1]{%
    \errmessage{(Inkscape) Transparency is used (non-zero) for the text in Inkscape, but the package 'transparent.sty' is not loaded}%
    \renewcommand\transparent[1]{}%
  }%
  \providecommand\rotatebox[2]{#2}%
  \newcommand*\fsize{\dimexpr\f@size pt\relax}%
  \newcommand*\lineheight[1]{\fontsize{\fsize}{#1\fsize}\selectfont}%
  \ifx\svgwidth\undefined%
    \setlength{\unitlength}{509.8554bp}%
    \ifx\svgscale\undefined%
      \relax%
    \else%
      \setlength{\unitlength}{\unitlength * \real{\svgscale}}%
    \fi%
  \else%
    \setlength{\unitlength}{\svgwidth}%
  \fi%
  \global\let\svgwidth\undefined%
  \global\let\svgscale\undefined%
  \makeatother%
  \begin{picture}(1,0.63587666)%
    \lineheight{1}%
    \setlength\tabcolsep{0pt}%
    \put(0,0){\includegraphics[width=\unitlength,page=1]{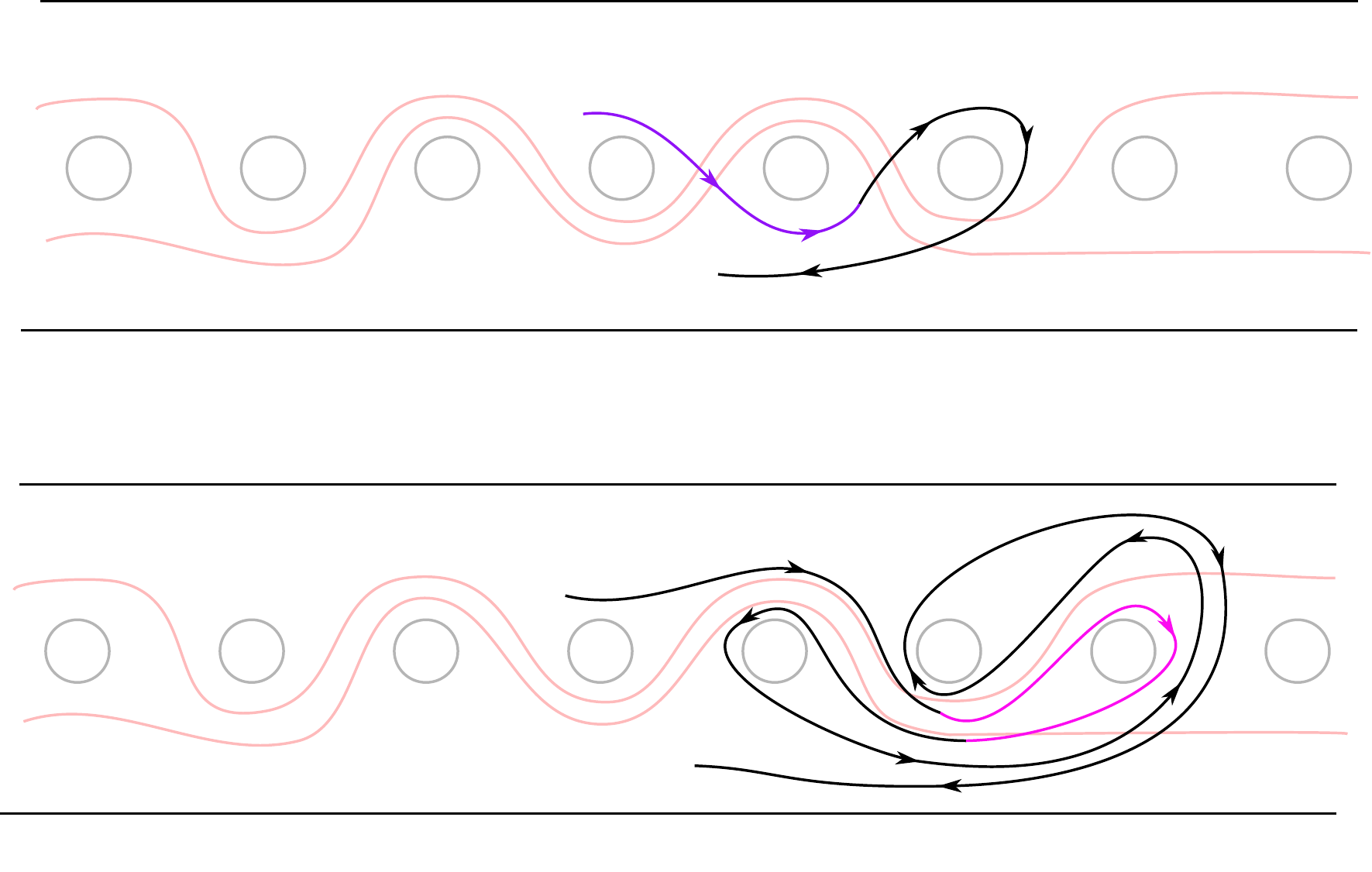}}%
    \put(0.44011752,0.56714672){\makebox(0,0)[lt]{\lineheight{1.25}\smash{\begin{tabular}[t]{l}$\gamma$\end{tabular}}}}%
    \put(0.41343364,0.23931645){\makebox(0,0)[lt]{\lineheight{1.25}\smash{\begin{tabular}[t]{l}$h(\gamma)$\end{tabular}}}}%
    \put(0.43211677,0.3472608){\makebox(0,0)[lt]{\lineheight{1.25}\smash{\begin{tabular}[t]{l}$B_0$\end{tabular}}}}%
    \put(0.42166465,0.00172862){\makebox(0,0)[lt]{\lineheight{1.25}\smash{\begin{tabular}[t]{l}$B_0$\end{tabular}}}}%
  \end{picture}%
\endgroup%
}
\caption{A subsegment which does not have a preimage.}
\label{fig:nopreimage}
\end{subfigure}\hspace{5em}%
\begin{subfigure}{.4\textwidth}
\centering
\def\svgwidth{2.5in}
{\small 
\begingroup%
  \makeatletter%
  \providecommand\color[2][]{%
    \errmessage{(Inkscape) Color is used for the text in Inkscape, but the package 'color.sty' is not loaded}%
    \renewcommand\color[2][]{}%
  }%
  \providecommand\transparent[1]{%
    \errmessage{(Inkscape) Transparency is used (non-zero) for the text in Inkscape, but the package 'transparent.sty' is not loaded}%
    \renewcommand\transparent[1]{}%
  }%
  \providecommand\rotatebox[2]{#2}%
  \newcommand*\fsize{\dimexpr\f@size pt\relax}%
  \newcommand*\lineheight[1]{\fontsize{\fsize}{#1\fsize}\selectfont}%
  \ifx\svgwidth\undefined%
    \setlength{\unitlength}{509.8554bp}%
    \ifx\svgscale\undefined%
      \relax%
    \else%
      \setlength{\unitlength}{\unitlength * \real{\svgscale}}%
    \fi%
  \else%
    \setlength{\unitlength}{\svgwidth}%
  \fi%
  \global\let\svgwidth\undefined%
  \global\let\svgscale\undefined%
  \makeatother%
  \begin{picture}(1,0.59425084)%
    \lineheight{1}%
    \setlength\tabcolsep{0pt}%
    \put(0,0){\includegraphics[width=\unitlength,page=1]{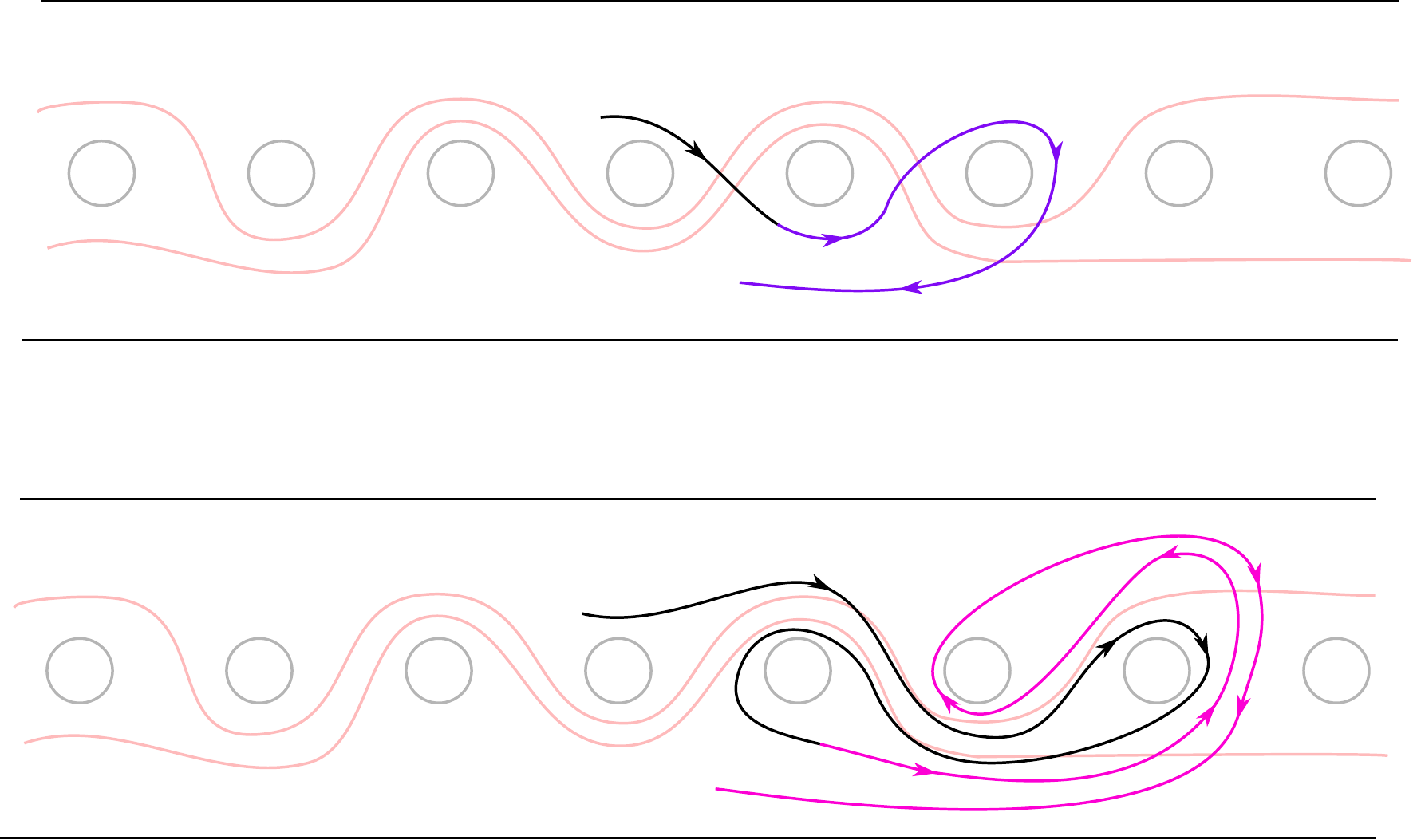}}%
    \put(0.45155348,0.52170892){\makebox(0,0)[lt]{\lineheight{1.25}\smash{\begin{tabular}[t]{l}$\gamma$\end{tabular}}}}%
    \put(0.4572714,0.19387871){\makebox(0,0)[lt]{\lineheight{1.25}\smash{\begin{tabular}[t]{l}$h(\gamma)$\end{tabular}}}}%
  \end{picture}%
\endgroup%
}
\caption{A subsegment which has a preimage.}
\label{fig:preimage}
\end{subfigure}
\caption{A segment $\gamma$ and its image $h(\gamma)$ are shown.  In (A), the pink subsegment of $h(\gamma)$ does not have an inverse; it is a proper subpath of the image of the purple subsegment of $\gamma$.  In (B), the pink subsegment of $h(\gamma)$ has an inverse; it is the image of the purple subsegment of $\gamma$.}
\label{fig:inverses}
\end{figure}

However, we may take an inverse image of a subsegment $\zeta$ of $h(\gamma)$ whenever we \textit{know} that $\zeta$ is the image of a subsegment of $\gamma$. This is the case, for example, when $\zeta=h(\gamma)$; in other words, it is true that $\gamma=h^{-1}(h(\gamma))$. This can also happen  when the initial and terminal characters of a subsegment of $h(\gamma)$ are the images of the initial and terminal characters of a subsegment of $\gamma$.  For example, in Figure \ref{fig:preimage}, a direct computation will show that the initial and terminal characters of $\zeta_2$ (in pink) are the images of the initial and terminal characters of the purple subsegment of $\gamma$.  Therefore $\zeta_2$ is the image of the purple subsegment of $\gamma$, which is precisely the result of calculating $h^{-1}(\zeta_2)$.


\section{Loop Theorem}\label{sec:loop}

As in Section \ref{sec:codingarcs}, we let $S$ be the biinfinite flute surface with a distinguished puncture $p$ and fix the collection of simple closed curves $\{B_i \mid i\in \Z\cup\{P\}\}$ as in Definition~\ref{def:B_i}.
Let $h$ be a permissible shift (see Definition \ref{def:handleshift}) and $k\in\Z\cup\{P\}$.  By an abuse of notation, we may occasionally write $h(k)$, by which we mean that $h(k)$ is the label of $h(B_k)$.  Thus, given any segment whose code is  $k_{o/u}$, we have $h(k_{o/u})=h(k)_{o/u}$.

Recall that a segment is a path which does not have both endpoints on $p$.
\begin{defn} A segment in standard position is \textit{trivial} if it can be homotoped rel endpoints to one of the following:
\begin{enumerate}
\item[(a)] 
a segment contained in one of the separating curves $S_i\in \scc$; or 
\item[(b)] a point.
\end{enumerate}
\end{defn}

\noindent We will use the notation $\emptyset$ to denote the reduced code for a trivial segment. For example, we write $k_ok_o=\emptyset$.

\begin{defn}\label{defn:loop}
A \textit{loop} is a segment that has one of the following forms:
\begin{enumerate}
\item $\delta_1a_{o/u}a_{u/o}\delta_2$ for some $a$ where $\delta_1^i=\delta_2^t$, 
\item $a_{o/u}a_{u/o}$ for some $a$, or
\item $C$.
See Figure \ref{fig:loopex} for examples of loops.
\end{enumerate}
\begin{figure}
\centering
\begin{overpic}[width=3.5in, trim={0.4in 9in 1.1in 0.05in}, clip]{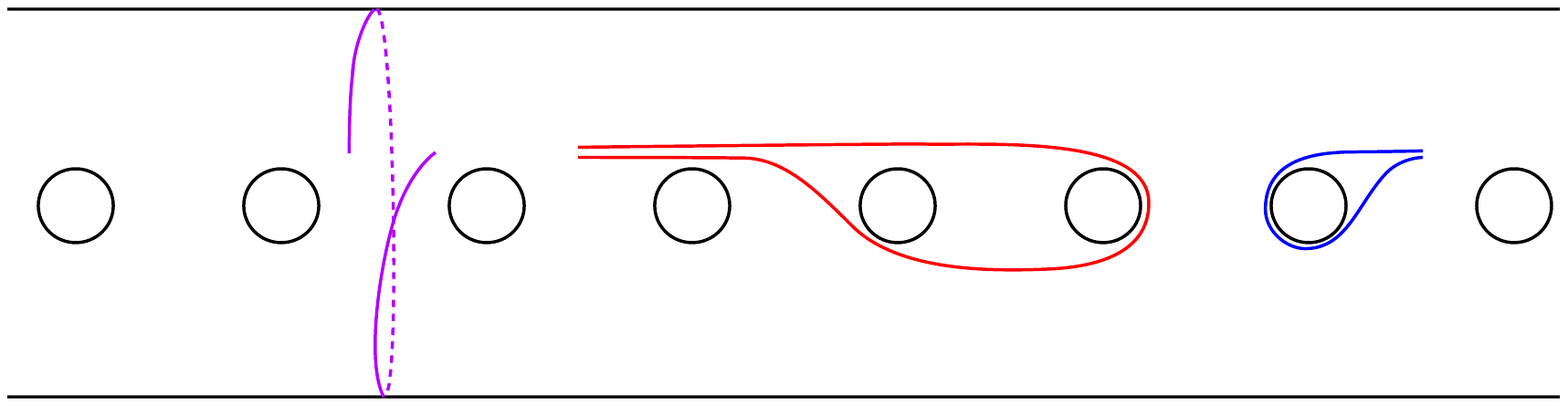}
\end{overpic}
\caption{Some examples of loops from Definition \ref{defn:loop}. The loop in red fits Case (1), the loop in blue fits Case (2), and the loop in purple fits Case (3).}
\label{fig:loopex}
\end{figure}
A loop satisfying (1) is called a \textit{regular loop}, a loop satisfying (2) is called an \textit{over-under loop}, and, as in Definition \ref{def:backloop}, a loop satisfying (3) is called a \textit{back loop}.
Note that regular loops always contain an over-under loop but not all over-under loops can be extended to a regular loop.
A \textit{single loop} is a back loop,  an over-under loop, or a regular loop  such that $\delta_i$ does not contain a loop for $i=1,2$. 
\end{defn}

\noindent With the above definitions, the rest of this section is devoted to analyzing the following question. 

\begin{ques}\label{ques:imageoflooptrivial}
Let $h$ be a permissible shift.  When does $h$ send a loop to a trivial segment? 
\end{ques}

The reason we introduced standard position is to ensure that this question is well defined.  The issue is that homotopies of a loop can change whether or not its image is trivial, even if those homotopies keep the endpoints on a fixed separating closed curve (see Figure \ref{fig:trivvsnon}).  Thus it is important that, given a loop, we first put it in standard position before applying the permissible shift $h$.  This will remove any possible ambiguity in the image of the loop.

\begin{figure}[H]
\centering
\includegraphics[width=4.5in, trim={0in 8.6in 0in .3in},clip]{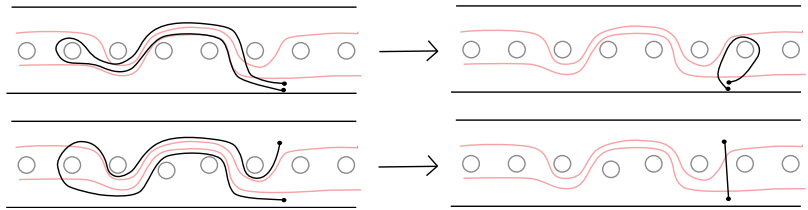}
\caption{The permissible shift $h$ translates to the right.  The first two loops on the left are homotopic via homotopies which keep the endpoints on the same fixed separating curve. However, one image is trivial while the other is not.} 
\label{fig:trivvsnon}
\end{figure}
\begin{figure}[H]
\includegraphics[width=4.5in, trim={0in 5.3in 0in 3.25in},clip]{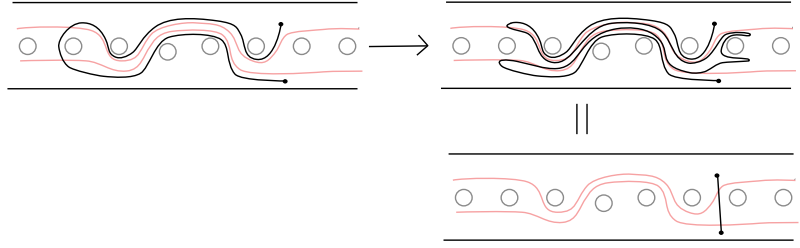}
\caption{ The second loop from  Figure \ref{fig:trivvsnon} is in standard position.  We show why its image under $h$ is trivial.} 
\label{fig:trivexample}
\end{figure}

In this section, we first introduce a kind of cancellation in the image of a segment and its code which we call \textit{cascading cancellation}.  This kind of cancellation will cause technical problems throughout the paper, and much of  Section \ref{sec:arcsthatstartlike} is devoted to understanding how to control it. We then prove the Loop Theorem (Theorem \ref{prop:nontrivialloop}), which answers Question \ref{ques:imageoflooptrivial}.  We end the section with a discussion of several technical consequences of the Loop Theorem which will be useful later.

\subsection{Cascading cancellation}\label{sec:cascading}

\begin{defn}\label{defn:symmetric}
We call an arc $\gamma$ \textit{symmetric} if $\gamma=\delta q_1q_2\overline\delta$ for any characters $q_1$, $q_2$.
In other words, a reduced code for $\gamma$ is palindromic with the exception of the middle two characters.
Note in particular that this implies that $q_1$, $q_2$ have the same numerical value.
\end{defn}

Recall that we find the image of a path with code $\alpha\beta$ under a permissible shift $f$ as follows. Let $q$ be the last character of $\alpha$ and $q'$ be the first character of $\beta$.  Then \[f(\alpha\beta)=f(\alpha)+f(qq')+f(\beta).\]  While $f(\alpha), f(qq'),$ and $f(\beta)$ are all reduced codes, it is possible that the efficient concatenation will cause there to be cancellation.  For example, if $f(\alpha)=1_u1_o2_o3_o$ and $f(qq')=3_o3_o2_o1_o0_oP_o$, then $f(\alpha)+f(qq')=1_u0_oP_o$.  When this type of cancellation occurs, that is, when a character of $f(qq')$ does not appear in a reduced code of the image, we say there is \textit{cancellation involving $f(qq')$}.  In our example, there is cancellation involving $f(qq')$ and $f(\alpha)$.  When it is necessary to be more precise, we may also say there is (respectively, is not) cancellation involving a character $s$, if $s$ appears in the unreduced code but not the reduced code (respectively, appears in both the unreduced code and the reduced code) of the image. Thus in our example, there is no cancellation involving $1_u$ but there is cancellation involving $2_o$. Our goal is to understand, in general, when there is cancellation with a particular character in a path under a permissible shift.

Suppose $\alpha=\gamma q_1$  and $h$ is a permissible shift.  Let $q$ be the terminal character of $\gamma$. It is tempting to believe that if we can show that there is no cancellation involving $h(q_1)$ within $h(qq_1)$, then there is no cancellation involving $h(q_1)$ at all.  However, this is not sufficient, for it is possible that there is ``cascading cancellation."  Before giving a formal definition, we illustrate this phenomenon with an example. 
\begin{ex}
Consider the segment $\delta=3_u2_u1_o0_u$ and the permissible left shift $h$ whose domain is shown in Figure \ref{fig:cascadingcancellation}.  Then 
\[h(\delta)=h(3_u2_u)+h(2_u1_o)+h(1_00_u).\]  We have 
\[h(1_o0_u)=1_o0_u,\]
\[h(2_u1_o)=2_u1_o,\] and
\[h(3_u2_u) = 3_u2_u1_o0_u(-1)_u(-1)_o0_u1_o2_u2_u .\]
Putting this together, we obtain $h(\delta)=3_u2_u1_o0_u(-1)_u(-1)_o.$

\begin{figure}
\centering
\begin{overpic}[width=2.5in]{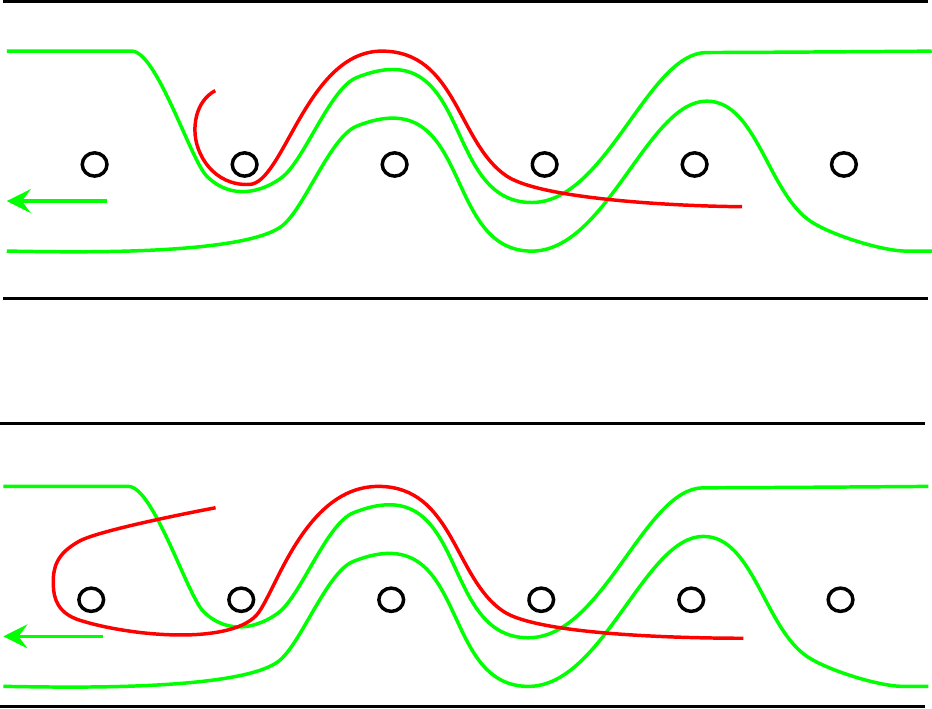}
\put(35,71){$\textcolor{red}{\delta}$}
\put(35,25.5){$\textcolor{red}{h(\delta)}$}
\put(4,-5){$-1$}
\put(25,-5){$0$}
\put(41,-5){$1$}
\put(57,-5){$2$}
\put(73,-5){$3$}
\put(89,-5){$4$}
\put(4,39){$-1$}
\put(25,39){$0$}
\put(41,39){$1$}
\put(57,39){$2$}
\put(73,39){$3$}
\put(89,39){$4$}
\end{overpic}
\caption{Above: the segment $\delta$ in standard position. The domain of the left shift $h$ is shown in green. Below: the image $h(\delta)$.}
\label{fig:cascadingcancellation}
\end{figure}

There is no cancellation involving either of the terms when computing $h(2_u1_0)+h(1_o0_u)$.  However, when computing $h(3_u2_u)+h(2_u1_0)$, we see that $h(2_u1_o)$ completely cancels with an initial segment of $h(3_u2_u3)$, and $h(1_o0_u)$ completely cancels with the next subsegment of $h(3_u2_u)$.  Therefore, there is in fact cancellation involving $h(0_u)$ in $h(\delta)$.  
\end{ex}

\begin{defn}
Formally, given an arc $\delta_1\dots \delta_n$ and a permissible shift $f$, we say there is \textit{cascading cancellation involving $f(\delta_n)$} if there is cancellation involving $f(\delta_{n})$ in $f(\delta_1\delta_2\dots \delta_n)$ but there is no cancellation involving $f(\delta_{n})$ in $f(\delta_{n-1}\delta_n)$.  
\end{defn}

Understanding and controlling cascading cancellation is the difficult part of many of the proofs in this paper. The remainder of this subsection is devoted to theorems that will allow us to control cascading cancellation for permissible shifts.   We will often be in the following situation:  there is a segment $\gamma=\gamma_1\gamma_2$ whose image under a shift $h$ we would like to understand and we know that $h(\gamma_1)$ has some desired quality (such as containing a loop, for example).  In order to show that the desirable behavior of $h(\gamma_1)$ persists in $h(\gamma)$, we need to ensure that  there is no  cancellation involving $h(\gamma_1)$ and $h(\gamma_2)$ by controlling cascading cancellation. In Section \ref{sec:arcsthatstartlike}, we will revisit this topic and prove some additional results that allow us to control cascading cancellation for the particular homeomorphisms we construct, which are compositions of shifts.

\begin{lem}\label{lem:generalmonotone}
Let $h$ be a permissible right shift with domain $D$ and turbulent region $(n_1,n_2)$.  Let $\alpha$ be a strictly monotone segment supported on $(n_1,n_2)$.
Then in a reduced code $h(\alpha)$ has $n$ loops around $n_2$, where $n$ is the number of times $\alpha$ fully crosses $D$.
\end{lem}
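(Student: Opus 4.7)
The plan is to put $\alpha$ in standard position with respect to $h$ and use the decomposition \eqref{eq:turbulentdecomp}, writing $\alpha = \alpha^d_1 + \alpha^e_1 + \cdots + \alpha^d_n + \alpha^e_n$ (with possibly a trailing $\alpha^d_{n+1}$ and with some boundary $\alpha^d$'s possibly empty). Since $h$ is the identity off its domain, $h(\alpha^d_i) = \alpha^d_i$, and by the formula immediately following \eqref{eq:turbulentdecomp},
\[ h(\alpha^e_j) = \partial D|_{[a_j, n_2)}\, L_j\, \overline{\partial D|_{[b_j, n_2)}}, \]
where $a_j = (\alpha^e_j)^i$, $b_j = (\alpha^e_j)^t$, and $L_j = (n_2)_{o/u}(n_2)_{u/o}$ is a loop around $n_2$. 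In the unreduced code for $h(\alpha)$, the only characters of index $n_2$ come from the $L_j$'s, contributing $2n$ total.

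The main step is to show that each $L_j$ survives reduction. I would examine the material between consecutive loops,
\[ M_j := \overline{\partial D|_{[b_j, n_2)}} + \alpha^d_{j+1} + \partial D|_{[a_{j+1}, n_2)}, \]
and argue it cannot reduce to the empty code. Strict monotonicity of $\alpha$ forces $\alpha^d_{j+1}$, when non-empty, to cover exactly the indices $b_j, b_j+1, \ldots, a_{j+1}$ with $\ell_c(\alpha^d_{j+1}) = a_{j+1} - b_j + 1 \ge 2$, and when empty to satisfy $b_j = a_{j+1}$. A direct count, accounting for the single character removed at each efficient concatenation junction, gives $|M_j| = 2n_2 - 2b_j - 1$ in both cases, an odd number. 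Since reduction only removes pairs of adjacent identical characters, parity is preserved, so after reduction $M_j$ still has odd length $\ge 1$, and every remaining character has index strictly less than $n_2$.

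Consequently, the terminal $(n_2)$-character of $L_j$ and the initial $(n_2)$-character of $L_{j+1}$ can never become adjacent in the reduced code. Combined with the facts that the two characters within a single $L_j$ differ in subscript and hence do not cancel each other, and that the material preceding $L_1$ and following $L_n$ consists entirely of characters of index less than $n_2$, this shows that all $n$ loops around $n_2$ persist in a reduced code. The main obstacle is the parity computation: strict monotonicity of $\alpha$ is precisely what makes the interloop length an odd number independent of the size of $\alpha^d_{j+1}$, and this is what prevents adjacent loops from collapsing during reduction.
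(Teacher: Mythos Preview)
Your proof is correct and takes a genuinely different route from the paper's. Both begin with the same standard-position decomposition and the formula for $h(\alpha^e_j)$, but diverge at the key step of showing that consecutive loops $L_j, L_{j+1}$ around $n_2$ cannot collapse.

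The paper argues by case analysis on the boundary characters of the full crossings: by the standard-position convention (Step~(iv)), at least one of $(\alpha^e_j)^t$ or $(\alpha^e_{j+1})^i$ must disagree with $\partial D$, and that disagreeing character sits between the two loops and blocks any cancellation that could merge them. Your approach instead computes the parity of $|M_j|$. Strict monotonicity of $\alpha$ forces the index sequence of $M_j$ to be a simple ``V'' --- down from $n_2-1$ to $b_j$ and back up to $n_2-1$ --- so $|M_j| = 2\bigl|[b_j,n_2)\bigr| - 1$ is always odd, and reduction (which removes characters in pairs) cannot empty $M_j$. Since every character of $M_j$ has index strictly below $n_2$, no reduction can reach across $M_j$ to touch the $(n_2)$-characters in $L_j$ or $L_{j+1}$.

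Your parity argument is cleaner: it avoids the case split and does not actually use the convention about where full crossings are placed, only that each $\alpha^e_j$ has code length $2$ and each $\alpha^d_{j+1}$ is strictly monotone. The paper's argument, on the other hand, identifies the specific blocking character --- exactly the sort of information that is reused later (e.g., in Lemma~\ref{lem:bpersists}).

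One minor remark: your formula $|M_j| = 2n_2 - 2b_j - 1$ tacitly treats all indices as integers. If $P \in [b_j,n_2)$ the exact count shifts by one, but writing it as $|M_j| = 2\bigl|[b_j,n_2)\bigr|-1$ shows the parity, and hence the argument, is unaffected.
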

\begin{proof}
Without loss of generality, we will assume that $\alpha$ is strictly monotone increasing, that is, the numerical value $\alpha^i$ is strictly less than that of $\alpha^t$, as the conclusion is invariant under replacing $\alpha$ by $\overline\alpha$.  As in Section~\ref{sec:segnobackloop}, put $\alpha$ in standard position.   Since no segment in standard position which is supported on $(n_1,n_2)$ will be completely contained in the domain of the shift, we may write
$$\alpha=\alpha^{d}_1+\alpha^{e}_1+\dots+\alpha^{d}_s+\alpha^{e}_s,$$
where each $\alpha_j^{d}$ is a maximal subsegment disjoint from $D$, each $\alpha_j^{e}$ fully crosses $D$, and $\ell_c(\alpha_j^{d})\ge 2$, $\ell_c(\alpha_j^{e})=2$ when non-empty.
Notice that if $\alpha^{d}_j\neq\emptyset$ and $\alpha^{d}_{j+1}\neq\emptyset$, then also $\alpha^{e}_j\neq\emptyset$ by maximality. 

Under the above decomposition, in an unreduced code we have
$$h(\alpha)=\alpha^{d}_1+h(\alpha^{e}_1)+\dots+\alpha^{d}_s+h(\alpha^{e}_s),$$
where every non-empty $\alpha^{e}_j$ will have image
$$h(\alpha^{e}_j)=\partial D\vert_{[(\alpha^{e}_j)^i,n_2)}(n_2)_{o/u}(n_2)_{u/o}\overline{\partial D\vert_{[(\alpha^{e}_j)^t,n_2)}}.$$  Thus each full crossing $\alpha_j^e$ contributes a loop around $n_2$ in an \textit{unreduced} code for $h(\alpha)$.  We must show that such loops persist in a \textit{reduced} code for $h(\alpha)$.

If $\alpha^{e}_j$, $\alpha^{e}_{j+1}\neq\emptyset$ and $\alpha^{d}_{j+1}=\emptyset$  then a calculation shows that in a reduced code
$$h(\alpha^{e}_{j}+\alpha^{d}_{j+1}+\alpha^{e}_{j+1})=h(\alpha^{e}_{j}+\alpha^{e}_{j+1})=h(\alpha^{e}_{j})+h(\alpha^{e}_{j+1}).$$
In particular, there is no cancellation between $h(\alpha^{e}_{j})$ and $h(\alpha^{e}_{j+1})$ and the loops around $n_2$ persist. See Figure~\ref{fig:twofullcross}.

\begin{figure}
\centering
\begin{overpic}[width=3.5in]{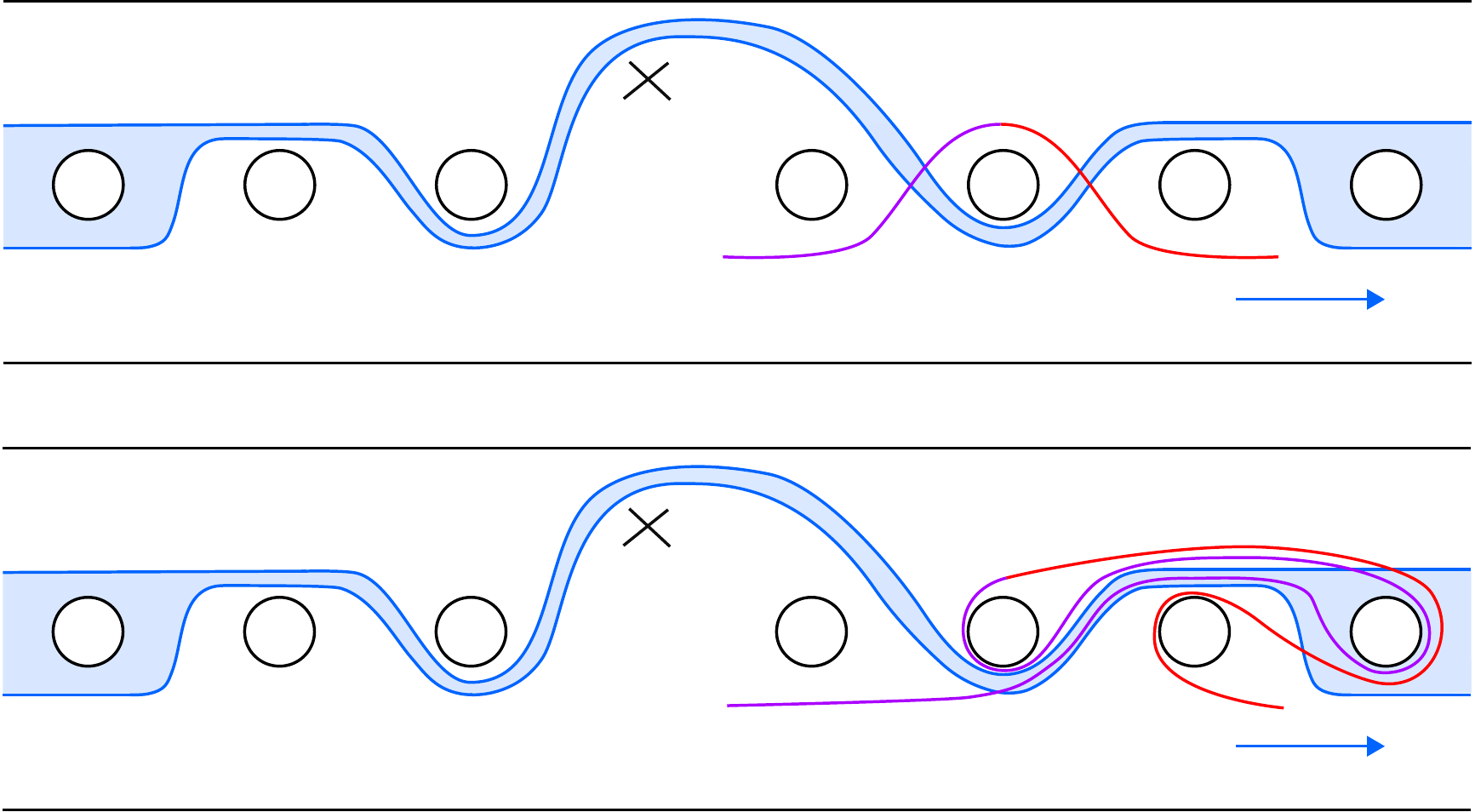}
\end{overpic}
\caption{Above: The case where $\alpha_j^e+\alpha_j^d+\alpha_{j+1}^e = \alpha_j^e+\alpha_{j+1}^e$ in the proof of Lemma \ref{lem:generalmonotone}, where $\alpha_j^e$  is in purple and $\alpha_{j+1}^e$ is in red.  Below: the non-trivial image of this segment under $h$.}
\label{fig:twofullcross}
\end{figure}

 Now assume that $\alpha_{j+1}^d$ is nonempty. The maximality of $\alpha_{j+1}^d$ implies that $\alpha^{e}_{j}$ and $\alpha^{e}_{j+1}$ are nonempty as well. We must show that there is no cancellation between the loops around $n_2$ in $h(\alpha^{e}_{j})$ and $h(\alpha^{e}_{j+1})$ so that both loops persist in a reduced code for $h(\alpha^{e}_{j})+\alpha^{d}_{j+1}+h(\alpha^{e}_{j+1})$. Recall that in standard position, a full crossing occurs between two adjacent characters  $k_{o/u}$, $(k')_{o/u}$ with $k,k'\in(n_1,n_2)$ such that the $o/u$ pattern of $k$ and/or $k'$ does not match that of $\partial D$, and our convention is to make this choice for the largest possible $k,k'$.

The subtlety arises because the code for $h(\alpha^{e}_j)+\alpha^{d}_{j+1}$ may not be reduced if an initial subsegment of $\alpha^{d}_{j+1}=h(\alpha_{j+1}^{d})$ agrees with $\partial D$, in which case $(\alpha_j^e)^t$ agrees with $\partial D$.  If $(\alpha_j^e)^t$ is the character of the full crossing $\alpha^{e}_j$ that does \emph{not} agree with $\partial D$, then this character will block cancellation between $h(\alpha^{e}_{j})$ and $\alpha^{d}_{j+1}$, so that the loops around $n_2$ in $h(\alpha^{e}_{j})$ and $h(\alpha^{e}_{j+1})$ cannot cancel. 
On the other hand, if $(\alpha_{j+1}^e)^i$ is the character of the full crossing $\alpha^{e}_{j+1}$ that does \emph{not} agree with $\partial D$, then this character will block cancellation between the two $n_2$ loops, even if $\alpha_{j+1}^d$ fully cancels in a reduced code for $h(\alpha^{e}_{j})+\alpha^{d}_{j+1}+h(\alpha^{e}_{j+1})$. 

In the last case, where $(\alpha_j^e)^t$ and $(\alpha_{j+1}^e)^i$ both agree with $\partial D$, the largest possible choice of $k,k'$ for the full crossing $\alpha_j^e$ would in fact result in a segment $\alpha^{e}_{j}+\alpha^{d}_{j+1}+\alpha^{e}_{j+1}$ that is fully disjoint from $D$, contradicting the maximality of $\alpha_{j+1}^d$, so that this case cannot occur. See Figure~\ref{fig:disjoint}. 

Thus, there is no cancellation between the loops around $n_2$ in $h(\alpha^{e}_j)$ and $h(\alpha^{e}_{j+1})$, which proves the lemma. 
\end{proof}

\begin{figure}
\centering
\def\svgwidth{4in}
\begingroup%
  \makeatletter%
  \providecommand\color[2][]{%
    \errmessage{(Inkscape) Color is used for the text in Inkscape, but the package 'color.sty' is not loaded}%
    \renewcommand\color[2][]{}%
  }%
  \providecommand\transparent[1]{%
    \errmessage{(Inkscape) Transparency is used (non-zero) for the text in Inkscape, but the package 'transparent.sty' is not loaded}%
    \renewcommand\transparent[1]{}%
  }%
  \providecommand\rotatebox[2]{#2}%
  \newcommand*\fsize{\dimexpr\f@size pt\relax}%
  \newcommand*\lineheight[1]{\fontsize{\fsize}{#1\fsize}\selectfont}%
  \ifx\svgwidth\undefined%
    \setlength{\unitlength}{504.16271737bp}%
    \ifx\svgscale\undefined%
      \relax%
    \else%
      \setlength{\unitlength}{\unitlength * \real{\svgscale}}%
    \fi%
  \else%
    \setlength{\unitlength}{\svgwidth}%
  \fi%
  \global\let\svgwidth\undefined%
  \global\let\svgscale\undefined%
  \makeatother%
  \begin{picture}(1,0.56644212)%
    \lineheight{1}%
    \setlength\tabcolsep{0pt}%
    \put(0,0){\includegraphics[width=\unitlength,page=1]{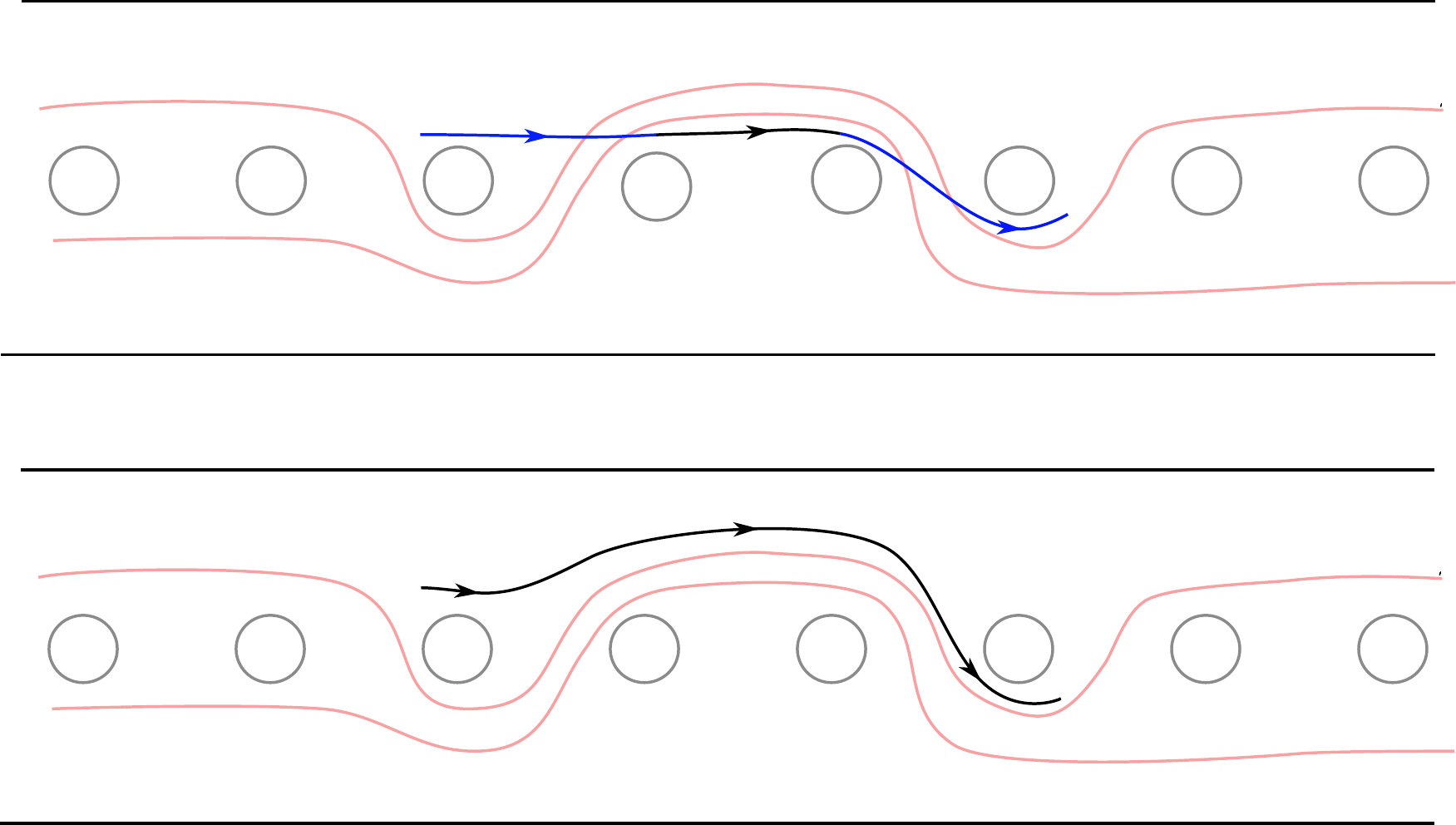}}%
    \put(0.33368125,0.49180684){\makebox(0,0)[lt]{\lineheight{1.25}\smash{\begin{tabular}[t]{l}$\alpha_j^e$\end{tabular}}}}%
    \put(0.70191014,0.38072592){\makebox(0,0)[lt]{\lineheight{1.25}\smash{\begin{tabular}[t]{l}$\alpha_{j+1}^e$\end{tabular}}}}%
    \put(0.48587087,0.43845303){\makebox(0,0)[lt]{\lineheight{1.25}\smash{\begin{tabular}[t]{l}$\alpha_{j+1}^d$\end{tabular}}}}%
  \end{picture}%
\endgroup%

\caption{Above: A picture of $\alpha^{e}_{j}+\alpha^{d}_{j+1}+\alpha^{e}_{j+1}$, not in standard position, where $(\alpha_j^e)^t$ and $(\alpha_{j+1}^e)^i$ both agree with $\partial D$. Below: The same subsegment of $\alpha$ in standard position.}
\label{fig:disjoint}
\end{figure}

\subsection{The Loop Theorem} \label{subsec:loops}

In this subsection, we prove the  Loop Theorem, which describes the form a loop must have if its  image under a shift is trivial.  In particular, the image of any loop which does not have the form as stated in the theorem is non-trivial. In addition, any segment that is not a loop cannot have trivial image under a shift since the numerical values the initial and terminal characters, and thus their images, differ. 

\begin{thm}[Loop Theorem] \label{prop:nontrivialloop}
Let $h$ be a permissible right shift with turbulent region $(n_1,n_2)$.  Suppose $\beta$ is a non-trivial loop such that $h(\beta)=\emptyset$.  Then either $\beta=k_o\delta k_o$ or $\beta=k_u\delta k_u$ where 
\begin{enumerate}[(i)]
\item $k\in (n_1,n_2)$, 
\item $\delta=\gamma(n_1)_{o/u}(n_1)_{u/o}\bar{\gamma}$, and
\item $\gamma$ follows $\partial D$ between $k$ and $n_1$.
\end{enumerate}
\end{thm}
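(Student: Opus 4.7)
The strategy is to analyze the structure of a non-trivial loop $\beta$ whose image under $h$ is trivial by ruling out all configurations except the one stated.

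First, I would eliminate the base cases among the three types of loops in Definition~\ref{defn:loop}. Back loops satisfy $h(C)=C\neq\emptyset$ since $h$ acts as the identity on segments that remain on the back of $S$. For a pure over-under loop $\beta=a_{o/u}a_{u/o}$, the image $h(\beta)$ is a loop around $B_{h(a)}$ in every case (where $h(a)=a+1$ for $a$ in the shift region except $n_1$, $h(n_1)=n_2$ by convention, and $h(a)=a$ for $a\in(n_1,n_2)$), so $h(\beta)\neq\emptyset$. Hence $\beta$ must be a regular loop $\beta=\delta_1 a_{o/u}a_{u/o}\delta_2$ with $\delta_1^i=\delta_2^t$, and nontriviality together with $h(\beta)=\emptyset$ forces $\beta$ (in standard position) to interact with the domain $D$ nontrivially.

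Next, I would show that the middle over-under loop must be at $n_1$. Under $h$, the piece $a_{o/u}a_{u/o}$ contributes a loop around $B_{h(a)}$ in $h(\beta)$. By Lemma~\ref{lem:generalmonotone}, the only loops systematically produced or available to cancel in the image by the action of $h$ on segments with full crossings in the turbulent region are loops around $B_{n_2}$. Hence the contributed loop around $B_{h(a)}$ can be cancelled only if $h(a)=n_2$, which for a permissible right shift forces $a=n_1$. In the other cases ($a\in(n_1,n_2)$, $a\in[n_2,\infty)$, or $a\in(-\infty,n_1)$) the resulting loop around $B_{h(a)}\neq B_{n_2}$ cannot be absorbed by the wrappings from full crossings, producing a residual nontrivial loop in $h(\beta)$.

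With the middle pinned down as $(n_1)_{o/u}(n_1)_{u/o}$, the induced wrapping around $B_{n_2}$ must now be cancelled by contributions from $h(\delta_1)$ and $h(\delta_2)$. Using standard position, Lemma~\ref{lem:generalmonotone}, and the efficient concatenation formula $h(\delta_1)+h((n_1)_{o/u}(n_1)_{u/o})+h(\delta_2)$, I would track exactly when a full crossing in $\delta_i$ produces a wrapping around $B_{n_2}$ whose orientation can cancel the middle wrapping. The rigid formula $h(\alpha^e)=\partial D\vert_{[(\alpha^e)^i,n_2)}(n_2)_{o/u}(n_2)_{u/o}\overline{\partial D\vert_{[(\alpha^e)^t,n_2)}}$ combined with the constraint $\delta_1^i=\delta_2^t$ forces $\delta_1$ and $\delta_2$ to be mirror images of each other; more precisely, the cancellation of the wrapping requires $\delta_1=k_{o/u}\gamma$ and $\delta_2=\bar\gamma\, k_{o/u}$ where $k\in(n_1,n_2)$ and $\gamma$ follows $\partial D$ between $k$ and $n_1$, as any deviation from this pattern (either a mismatched $o/u$ choice in the interior or an unmatched character at the boundary) leaves a residual wrapping around $B_{n_2}$ or an extra full crossing that introduces its own uncancelled wrapping.

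The main obstacle is controlling cascading cancellation (Section~\ref{sec:cascading}) across the three pieces $h(\delta_1)$, $h((n_1)_{o/u}(n_1)_{u/o})$, and $h(\delta_2)$. The assumption that $\beta$ is nontrivial but $h(\beta)$ is trivial means that cascading cancellation must propagate through the entire efficient concatenation, and verifying that the only codes allowing this complete collapse are exactly those of the stated symmetric form requires a careful case analysis showing that any asymmetry between $\delta_1$ and $\delta_2$, or any failure of $\gamma$ to follow $\partial D$, obstructs the cancellation at some character and leaves $h(\beta)$ with a visible nontrivial wrapping around $B_{n_2}$.
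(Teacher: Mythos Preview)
Your forward approach is fundamentally different from the paper's, and while not wrong in spirit, it leaves the hard part unfinished. The paper's proof is a one-line backward argument: since $h(\beta)$ is trivial, it is homotopic rel endpoints to an embedded subsegment $\sigma$ of some separating curve $S_k$; then $\beta=h^{-1}(\sigma)$, and one simply computes $h^{-1}(\sigma)$ directly. Either $\sigma\cap D=\emptyset$, in which case $h^{-1}(\sigma)=\sigma$ is trivial (contradiction), or $\sigma$ fully crosses $D$, in which case a direct calculation gives
\[
h^{-1}(\sigma)=\overline{\partial D}|_{[k,n_1)}(n_1)_{o/u}(n_1)_{u/o}\partial D|_{(n_1,k]},
\]
which is exactly the stated form. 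This completely sidesteps cascading cancellation.

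Your approach, by contrast, tries to push $\beta$ forward through $h$ and track all possible cancellations. You correctly identify cascading cancellation as the main obstacle, but you do not actually overcome it: the sentence ``any deviation from this pattern\dots leaves a residual wrapping'' is exactly the step that would require the careful case analysis you mention, and that analysis is not supplied. Moreover, your reduction to ``only loops around $B_{n_2}$ can be cancelled'' via Lemma~\ref{lem:generalmonotone} is not airtight, since that lemma applies to strictly monotone segments while your $\delta_1,\delta_2$ need not be monotone, and characters in the turbulent region agreeing with $\partial D$ can also be absorbed. The backward trick---classifying the trivial segments first and then applying $h^{-1}$---is the missing idea that makes the proof short.
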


\begin{proof}
Put $\beta$ in standard position. We will consider the image $h(\beta)$.   By the discussion in Section \ref{sec:inverses}, we have $\beta=h^{-1}(h(\beta))$.  By assumption, $h(\beta)$ is trivial, and therefore it is homotopic rel endpoints to either a segment contained in the separating curve $S_k$  for some $k\in[n_1,n_2)$ or a point.  If $h(\beta)$ is homotopic rel endpoints to a point, then $\beta=h^{-1}(h(\beta))$ is also homotopic rel endpoints to a point, in which case $\beta$ is trivial, which is a contradiction.  So suppose that $h(\beta)$ is homotopic rel endpoints to  an embedded subsegment $\sigma$ of the separating curve $S_k$  for some $k\in(n_1,n_2]$. Then  $\beta$ is homotopic rel endpoints to $h^{-1}(\sigma)$.

Let $D$ denote the domain of $h$.  Since $\beta$ is in standard position, the endpoints of $\sigma$ (which are the endpoints of $\beta$) are not contained in $D$.  There are two possibilities.  If $\sigma\cap D=\emptyset$, then $\beta$ is homotopic rel endpoints to $h^{-1}(\sigma)=\sigma$ and so $\beta$ is trivial, which is a contradiction.  On the other hand, if $\sigma\cap D\neq \emptyset$, then $\sigma$ must fully cross $D$.   A direct computation shows that, up to reversing orientation,
\[
h^{-1}(\sigma)=\overline{\partial D}|_{[k,n_1)}(n_1)_{o/u}(n_1)_{u/o}\partial D|_{(n_1,k]},
\]
for $k\in(n_1,n_2)$, as desired.
\end{proof}

We note one immediate consequence of Theorem \ref{prop:nontrivialloop}.
\begin{cor}\label{cor:compoundloop}
Suppose $\beta$ is any of the following:
\begin{enumerate}
\item a loop containing more than one single loop;
\item a loop containing a back loop; or
\item a loop with endpoints outside of the turbulent region.
\end{enumerate}
Then $h(\beta)$ is non-trivial.
\end{cor}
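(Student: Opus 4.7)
The plan is to prove Corollary~\ref{cor:compoundloop} as an immediate consequence of Theorem~\ref{prop:nontrivialloop} by contraposition. Suppose, for the sake of contradiction, that $\beta$ is a loop satisfying one of the hypotheses (1), (2), or (3) and that $h(\beta)=\emptyset$. Then the Loop Theorem forces $\beta$ to take the form $k_o\delta k_o$ or $k_u\delta k_u$, where $\delta = \gamma(n_1)_{o/u}(n_1)_{u/o}\bar\gamma$, $\gamma$ follows $\partial D$ between $k$ and $n_1$, and $k\in(n_1,n_2)$. My strategy is to extract an immediate contradiction with each hypothesis from this rigid form.

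The key step is to unpack three structural features of the form above. First, because $\gamma$ follows $\partial D$ between $k$ and $n_1$, it is strictly monotone, and so its reduced code contains neither a $C$ character nor an adjacent $a_{o/u}a_{u/o}$ pair. Consequently, the full code of $\beta$ contains no $C$ at all, and its unique adjacent $a_{o/u}a_{u/o}$ pair is the central $(n_1)_{o/u}(n_1)_{u/o}$. Third, the initial and terminal characters of $\beta$ have numerical value $k$, which lies strictly inside the turbulent region $(n_1,n_2)$.

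Each case of the corollary then follows by direct inspection. For (3), the endpoints of $\beta$ would have to lie outside the turbulent region, contradicting $k\in(n_1,n_2)$. For (2), any back loop subsegment of $\beta$ would introduce a $C$ into the code of $\beta$, contradicting that no $C$ appears. For (1), the key observation is that every single loop is characterised by a distinguished innermost core, either a $C$ (back loop case) or an adjacent pair $a_{o/u}a_{u/o}$ (over-under or regular loop case). Distinct single loops contained in $\beta$ with disjoint cores would force the code of $\beta$ to contain multiple such innermost substructures, but the only such substructure here is the single central over-under pair $(n_1)_{o/u}(n_1)_{u/o}$. Hence in each case the assumed form of $\beta$ fails, and $h(\beta)$ cannot be trivial.

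Since the substantive work is already done in the Loop Theorem, this corollary amounts to careful bookkeeping, and I do not anticipate any significant obstacle.
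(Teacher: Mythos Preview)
Your proposal is correct and takes essentially the same approach as the paper: both treat the corollary as the immediate contrapositive of Theorem~\ref{prop:nontrivialloop}, observing that the rigid form $k_{o/u}\gamma(n_1)_{o/u}(n_1)_{u/o}\bar\gamma k_{o/u}$ with $k\in(n_1,n_2)$ and $\gamma$ monotone along $\partial D$ excludes back loops, forces endpoints into the turbulent region, and admits only the single central over-under pair. The paper in fact gives no formal proof at all, only the sentence ``We note one immediate consequence of Theorem~\ref{prop:nontrivialloop}'' followed by an informal heuristic discussion; your write-up is a faithful formalization of that immediate consequence.
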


Some of these consequences may be surprising, so we now give a more intuitive (and informal) discussion of why the image of loops as in the corollary are non-trivial.  

We first consider the case of segments which contain more than one single loop.  At first glance, it may seem that if a segment $\beta$ is composed of loops which each satisfy the conclusion of Theorem \ref{prop:nontrivialloop}, then $h(\beta)$ will be trivial.  The problem arises in how these loops fit together to form $\beta$.  Suppose we have two loops, $\beta_1$ and $\beta_2$ which each satisfy the conclusion of Theorem \ref{prop:nontrivialloop}.  If the numerical value of $\beta_1^t$ is not the same as that of $\beta_2^i$, then in order to ``connect" $\beta_1$ to $\beta_2$, we must add a segment between the terminal point of $\beta_1$ and the initial point of $\beta_2$.  This segment is either disjoint from $D$, in which case it is fixed by $h$, or it fully crosses $D$.  In the former case, this segment will appear in the image of $\beta$, while in the latter case, the image of this full crossing will be non-trivial by Lemma \ref{lem:generalmonotone} applied to the full crossing. If the numerical values are the same and $\beta = \beta_1\beta_2$, then $\beta$ will be trivial. On the other hand, if there is some segment connecting the terminal point of $\beta_1$ to the initial point of $\beta_2$, then as before, the image of this segment will not be trivial and will force $h(\beta)$ to be non-trivial.  See Figure \ref{fig:NontrivialImage} for an example of this.

Suppose next that $\beta$ contains a back loop $C$.  Intuitively, it seems reasonable that if $\beta=C\beta'\bar C$ and $\beta'$ is a loop as in Theorem \ref{prop:nontrivialloop}, then $h(\beta)$ is trivial.  However, this is not the case.  To see this, notice that if we put $\beta'$ in standard position then it will have one endpoint above $D$ and one below $D$.  However, $C$ and $\bar C$ are the same loop.  If $C$ is either above or below $D$, then this will cause $\beta$ to have a full crossing between $\beta'$ and one of $C$ or $\bar C$.  The image of this full crossing will be non-trivial by Lemma \ref{lem:generalmonotone} applied to the full crossing, which will prevent any cancellation between $h(C)=C$ and $h(\bar C)=\bar C$.  On the other hand, if $C$ exits at the top and enters at the bottom, then $\bar C$ exits at the bottom and enters at the top.  This will again force $\beta$ to have a full crossing between $\beta'$ and one of $C$ or $\bar C$, preventing cancellation between the images of the back loops.

Finally, if $\beta$ is a loop whose endpoints are outside the turbulent region, say both in $[n_2,\infty)$, and $\beta\vert_{(n_1,n_2)}$ has the form in the conclusion of Theorem \ref{prop:nontrivialloop}, it seems possible that $h(\beta)$ is trivial.  For example, suppose that $\beta=(n_2)_o\beta'(n_2)_o$, where $\beta'$ is as in Theorem \ref{prop:nontrivialloop}.  Then since $h(\beta')$ is trivial, $h(\beta')$ will be a segment contained in the separating curve $S_{n_2}$.
On the other hand, the images of the connectors $(n_2)_o(\beta')^i$ and $(\beta')^t(n_2)_o$ will each be of the form $(n_2+1)_o(n_2)_{o/u}(n_2-1)_{o/u}$ or $(n_2-1)_{o/u}(n_2)_{o/u}(n_2+1)_o$.  In particular, we have $h(\beta)=(n_2+1)_o(n_2)_{o/u}(n_2)_{u/o}(n_2+1)_o$, which is non-trivial.  

\begin{figure}
\centering
\begin{overpic}[width=3in]{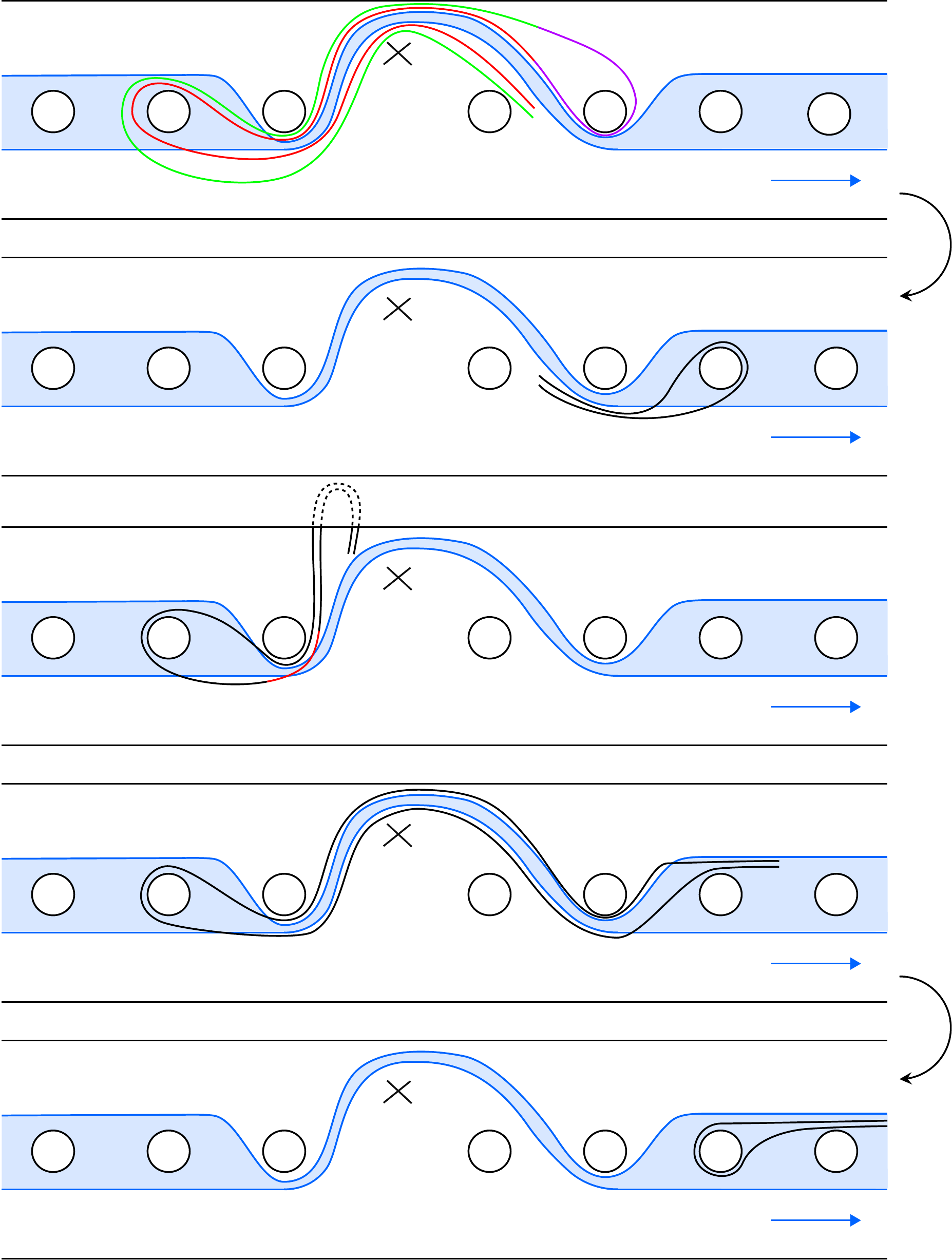}
\put(15,90){\tiny$\textcolor{red}{\beta_1}$}
\put(28,90){\tiny$\textcolor{green}{\beta_2}$}
\put(24,96){\tiny$\beta$}
\put(54,65){\tiny$h(\beta)$}
\put(22.5,58.5){\tiny$C$}
\put(25.75,58.5){\tiny$\overline{C}$}
\put(49,24){\tiny$\beta$}
\put(65,13){\tiny$h(\beta)$}
\end{overpic}
\caption{Examples of loops whose images are non-trivial.  From top to bottom, these correspond to Corollary \ref{cor:compoundloop} (1), (2), and (3).}
\label{fig:NontrivialImage}
\end{figure}




\subsection{Consequences of the Loop Theorem}
In this subsection, we record several (technical) consequences of Theorem \ref{prop:nontrivialloop} which will be useful in later sections.
The first  shows that loops in the shift region persist under the image of shifts.

\begin{lem} \label{lem:imageof2o2ufamily}
Let $h$ be a permissible right shift and $\gamma$ any non-trivial, simple arc with no back loops.
If $k\in (-\infty,n_1)\cup[n_2,\infty)$ and $k_{o/u}k_{u/o}$ appears in a reduced code for $\gamma$, then there is no cancellation involving $h(k_{o/u}k_{u/o})$ in an unreduced code for $h(\gamma)$.
In particular, such a pair $k_{o/u}k_{u/o}$ will yield a pair $(k+1)_{o/u}(k+1)_{u/o}$ in a reduced code for $h(\gamma)$.
\end{lem}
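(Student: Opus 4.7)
The plan is to argue by contradiction using the Loop Theorem (Theorem~\ref{prop:nontrivialloop}). Since $k\in(-\infty,n_1)\cup[n_2,\infty)$, we have $k\neq n_1$, so $h$ translates our pair character-by-character to give $h(k_{o/u}k_{u/o})=(k+1)_{o/u}(k+1)_{u/o}$. Suppose for contradiction that there is cancellation involving this pair when passing from an unreduced code for $h(\gamma)$, obtained via the standard-position decomposition of Lemma~\ref{lem:stdpos}, to a reduced code.

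Cancellation in a reduced code arises from a non-crossing matching of equal adjacent characters that are successively eliminated during reduction. Tracing the matchings that remove $(k+1)_{o/u}$ and $(k+1)_{u/o}$, we isolate a subsegment $\omega$ of $\gamma$ containing $k_{o/u}k_{u/o}$ whose image $h(\omega)$ reduces to the empty code; the outermost matchings force the two endpoints of $\omega$ to carry the same character, so $\omega$ is a non-trivial loop with $h(\omega)=\emptyset$. Making this step precise is the main obstacle of the argument: one fixes $\gamma$ in standard position, identifies the characters in $h(\gamma)$ with which $(k+1)_{o/u}$ and $(k+1)_{u/o}$ ultimately pair, and uses the simplicity of $\gamma$ together with the absence of back loops to recover $\omega$ unambiguously from this pairing data.

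The Loop Theorem now applies to $\omega$ and forces $\omega=k'_o\mu k'_o$ or $\omega=k'_u\mu k'_u$ with $k'\in(n_1,n_2)$, where $\mu=\nu\,(n_1)_{o/u}(n_1)_{u/o}\,\overline{\nu}$ and $\nu$ follows $\partial D$. Because $\nu$ is strictly monotone on a subinterval of $(n_1,n_2)$, the unique over-under loop contained in $\omega$ is $(n_1)_{o/u}(n_1)_{u/o}$, located at height $n_1$. But $\omega$ also contains $k_{o/u}k_{u/o}$, which is an over-under loop at height $k\neq n_1$, giving a contradiction. Therefore no cancellation involving $h(k_{o/u}k_{u/o})$ can occur, and in particular $(k+1)_{o/u}(k+1)_{u/o}$ appears as a subsegment of any reduced code for $h(\gamma)$, yielding the ``in particular'' statement.
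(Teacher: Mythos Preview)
Your strategy—assume cancellation, produce a loop in $\gamma$ with trivial image, and contradict the Loop Theorem—is exactly the paper's. The difference lies in how the loop is extracted and how the contradiction is read off.

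You acknowledge that the construction of $\omega$ is ``the main obstacle,'' and indeed this is where your argument is incomplete. You want to pass from the cancelling partners of $(k+1)_{o/u}$ and $(k+1)_{u/o}$ in an unreduced code for $h(\gamma)$ back to characters of $\gamma$, but Section~\ref{sec:inverses} warns that not every subsegment of $h(\gamma)$ is the image of a subsegment of $\gamma$: characters in $h(\gamma)$ can arise from images of connectors or full crossings rather than from individual characters of $\gamma$. So ``tracing the matchings'' does not by itself yield a well-defined $\omega\subset\gamma$.

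The paper closes this gap with a single observation you are missing: because $k$ lies in the \emph{shift} region $(-\infty,n_1)\cup[n_2,\infty)$, every occurrence of $(k+1)_u$ in an unreduced code for $h(\gamma)$ must be the image under $h$ of a $k_u$ in $\gamma$ (images of turbulent pieces, crossings, and connectors all have numerical values confined near $(n_1,n_2]$). With this in hand, the paper works with just \emph{one} character: assuming without loss of generality that $(k+1)_u$ cancels, its cancelling partner is another $(k+1)_u$, which by the observation comes from a $k_u$ in $\gamma$. The minimal subsegment between the two occurrences of $k_u$ in $\gamma$ is then a loop $k_u\eta k_u$ with $h(k_u\eta k_u)$ trivial. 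The Loop Theorem now gives the contradiction directly from condition~(i): the endpoints of such a loop must lie in $(n_1,n_2)$, but $k\notin(n_1,n_2)$.

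Your contradiction via the over-under loop at height $n_1$ versus height $k$ is correct once $\omega$ exists, but it is more roundabout than simply invoking condition~(i). More importantly, the same ``shift-region preimage'' observation that the paper uses is precisely what would make your construction of $\omega$ rigorous—so the paper's route is both the missing ingredient in yours and a shorter path to the conclusion.
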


\begin{proof}
We argue for $k_ok_u$ without loss of generality. Fix $k$ as above and put $\gamma$ into standard position. If in standard position $\gamma$ is completely contained in the domain of $h$, then the result is clear, so suppose this is not the case.  We assume for contradiction that there is cancellation with $h(k_ok_u)$ and, without loss of generality, that it is with $h(k_u)=(k+1)_u$, as otherwise we replace $\gamma$ with $\overline\gamma$ and argue identically for $k_o$ instead. Let $\gamma'$ be the minimal subsegment of $\gamma$ beginning with $k_ok_u$ so that $h(\gamma')$ had cancellation with $h(k_u)$. The only way cancellation in an unreduced code can involve $h(k_u)$ is if a subsegment of the form $(k+1)_o(k+1)_u\delta(k+1)_u$ appears in an unreduced $h(\gamma')$ where a reduced code for $\delta$ is trivial. Since $k\in (-\infty,n_1)\cup[n_2,\infty)$, any $(k+1)_u$ in $h(\gamma')$ must be the image under $h$ of $k_u$. Therefore, $\gamma' = k_ok_u\eta k_u$ where $h(k_u\eta k_u) = (k+1)_u \delta (k+1)_u$, which has trivial reduced code. This contradicts Theorem~\ref{prop:nontrivialloop} as $k \notin (n_1, n_2)$.
\end{proof}

The second consequence of our loop theorems shows that characters of a segment that lie in the turbulent region which disagree with the domain of a shift persist in a reduced code of the image of the segment under that shift.

\begin{lem}\label{lem:bpersists}
Let $h$ be a permissible right shift with domain $D$.  Let $\delta$ be a simple segment whose support intersects $(n_1,n_2)$ non-trivially, and suppose $\delta$ contains a character $b$ with numerical value in $(n_1,n_2)$ which disagrees with $\partial D$.  Then $b$ persists in a reduced code for $h(\delta)$.  

In other words, if $\delta=\delta_1b\delta_2$, then $h(\delta)=\sigma_1b\sigma_2$, where $\sigma_1b=h(\delta_1b)$ and $b\sigma_2=h(b\delta_2)$.
\end{lem}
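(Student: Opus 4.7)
The plan is to show that the specific character $b$ in $\delta=\delta_1 b \delta_2$ persists in a reduced code for $h(\delta)$ by carefully enumerating the sources of characters at position $k$ in the unreduced image, and then invoking the Loop Theorem to preclude any cancellation. First I put $\delta$ in standard position. Since $b$ has numerical value $k\in(n_1,n_2)$, the curve $B_k$ lies outside the domain of $h$, so $h$ fixes $B_k$ pointwise; hence $h(b)=b$, and the given $b$ contributes a matching character at the expected position in an unreduced code for $h(\delta)$.

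Next I enumerate the possible sources of characters at position $k$ in an unreduced code for $h(\delta)$. Using the standard position decomposition of Lemma~\ref{lem:stdpos} together with the explicit image formula $h(\alpha^e_j)=\partial D|_{[(\alpha^e_j)^i,n_2)}(n_2)_{o/u}(n_2)_{u/o}\overline{\partial D|_{[(\alpha^e_j)^t,n_2)}}$ for full crossings (and the analogous description for connectors), every character at position $k$ in an unreduced $h(\delta)$ either (i) is the image under $h$ of a character at position $k$ in $\delta$, or (ii) appears inside a subsegment following $\partial D$ produced by the image of a connector or a full crossing. Type-(ii) characters necessarily have the same type as $\partial D$ at position $k$, which by hypothesis is opposite to $b$. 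Thus characters of the same type as $b$ in an unreduced code for $h(\delta)$ are in bijection with characters of type $b$ in $\delta$.

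Now assume for contradiction that this specific $b$ does not persist. Free reduction cancels $b$ only by pairing it with another character of the same type, which by the above must be the image of another occurrence $b'$ of $b$ in $\delta$. Let $\eta$ be the subsegment of $\delta$ strictly between $b$ and $b'$. Because $h$ fixes each of $b$ and $b'$ as a single character with no spillover, the unreduced subword between the two matched $b$'s in $h(\delta)$ is precisely $h(\eta)$, and cancellation forces $h(\eta)$ to reduce to the empty code; equivalently, $h(b\eta b')=\emptyset$. The loop $b\eta b'$ then falls under the hypotheses of the Loop Theorem (Theorem~\ref{prop:nontrivialloop}), and inspection of the explicit formula $h^{-1}(\sigma)=\overline{\partial D}|_{[k,n_1)}(n_1)_{o/u}(n_1)_{u/o}\partial D|_{(n_1,k]}$ produced in its proof shows that the endpoint character at position $k$ must coincide with $\partial D$ at position $k$. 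This contradicts the assumption that $b$ disagrees with $\partial D$, so $b$ persists.

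For the refined decomposition, I apply the persistence statement separately to $\delta_1 b$ and $b\delta_2$ to obtain reduced codes $\sigma_1 b=h(\delta_1 b)$ and $b\sigma_2=h(b\delta_2)$. Efficient concatenation at the common $b$ then gives $h(\delta)=h(\delta_1 b)+h(b\delta_2)=\sigma_1 b + b\sigma_2=\sigma_1 b\sigma_2$, and this combined code is already reduced because the central $b$ cannot pair with any neighbor without contradicting the persistence already established. The main subtle point in this argument is justifying that the unreduced subword between the two matched $b$'s in $h(\delta)$ really is $h(\eta)$ rather than $h(\eta)$ together with some cascading contribution from adjacent subsegments; this reduces to the observation that $h$ fixes $b$ and $b'$ as single characters and produces no $\partial D$-following or $n_2$-loop contributions at their positions, so the two matched $b$'s serve as clean separators inside the unreduced image.
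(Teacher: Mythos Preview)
Your proof is correct and follows essentially the same approach as the paper: first establish that any character of type $b$ in an unreduced code for $h(\delta)$ must come from an occurrence of $b$ already in $\delta$ (the paper isolates this as a separate claim), then argue by contradiction that cancellation of $b$ would force a subsegment $b\sigma b$ of $\delta$ to have trivial image, contradicting the form dictated by the Loop Theorem since $b$ disagrees with $\partial D$. Your treatment is slightly more explicit about the ``clean separator'' issue and the refined decomposition in the second paragraph of the statement, which the paper leaves implicit.
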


\begin{proof}
Write $b=k_{o/u}$, where by assumption $k\in(n_1,n_2)$. 

\begin{claim}\label{claim:bfromb}
Since $b$ disagrees with $\partial D$, any occurrence of $b$ in $h(\delta)$ must also appear in $\delta$.  
\end{claim}
\begin{proof}Since $b$ disagrees with $\partial D$, the segment $\delta$ cannot be homotoped rel endpoints to be completely contained in $D$.  
Thus in standard position, $\delta$  can be written as the efficient concatenation of subsegments which are disjoint from $D$, subsegments of length two which are either full or half crossings, and subsegments supported on $(-\infty,n_1]\cup[n_2,\infty)$ (see Lemma \ref{lem:stdpos}).  We consider the images of each type of subsegment in turn.  The subsegments which are disjoint from $D$ are fixed by $h$. The image of subsegments supported on $(-\infty,n_1)\cup[n_2,\infty)$ have empty intersection with $(n_1,n_2)$, and so cannot contain $b$.  The remaining subsegments are half crossings, full crossings, or segments that include a character whose numerical value is $n_1$.  Any subsegment of the image of any of these subsegments supported on $(n_1,n_2)$ agrees with $\partial D$ and so cannot contain $b$.  This proves the claim.
\end{proof}

Now suppose towards a contradiction that $b$ does not persist in $h(\delta)$.  Then there must be another instance of $b$ in an \textit{unreduced code} for $h(\delta)$ which cancels with $b$.  By the claim, $\delta$ must contain a subsegment of the form $b\sigma b$ whose image is trivial.
Theorem \ref{prop:nontrivialloop} then implies that $b\sigma b=\overline{\partial D|_{(n_1,k]}}(n_1)_{o/u}(n_1)_{u/o}\partial D|_{(n_1,k]}$.  However, $b=k_{o/u}$ disagrees with $\partial D$ by assumption, and therefore $b\sigma b$ cannot have this form, which is a contradiction.
\end{proof}


\section{Constructing the homeomorphisms}\label{sec:arcinit}

As in Section \ref{sec:codingarcs}, let $S$ be the biinfinite flute  with an isolated puncture $p$, and let $\{B_i\mid i\in\Z\cup P\}$ be the simple closed curves from Definition \ref{def:B_i} bounding a collection of punctures $\{p_i\mid i\in \Z\cup P\}$, where $p_P=p$. As in that section, we move the punctures $\{p_i\mid i\in \Z\cup P\}$ to the front of $S$ and all other punctures to the back of $S$.  

In this section, we define a countable collection of elements $\{g_n\}_{n\in\N}$ in $\MCG(S,p)$.  In the following sections we will show that these elements are of intrinsically infinite type and are loxodromic with respect to the action of $\MCG(S,p)$ on $\mc A(S,p)$.
  
\begin{defn}\label{def:g_n}
For each $n\in\N$, define  
\begin{equation}\label{eqn:defgn}
g_n:=h^{(n)}_3\circ h^{(n)}_2\circ h_1,
\end{equation}
 where $h_1$ and $h_i^{(n)}$ for $i=2, 3$ are permissible shifts defined as follows.  Let $h_1$ be the right shift whose domain includes all simple closed curves $B_j$ for $j\in \mathbb Z\setminus\{0\}$  and passes under $p$ and $B_0$.  Let $h_2^{(n)}$ be the left shift whose domain includes only $\{B_j\mid j\in (-\infty,-n-1]\cup[n+1,\infty)\}$, passes over $p$ and $B_0$, and, when $j\in [-n,-1]\cup[1,n]$,  passes  under $B_j$ for $j$ odd and over $B_j$ for $j$ even.    Finally, let $h_3^{(n)}$ be the right shift whose domain includes only $\{B_j\mid j\in (-\infty,-n-1]\cup[n+1,\infty)\}$ and passes under $p$ and over $B_j$ for all $j\in[-n,P)\cup(P,n]$.  See Figure~\ref{fig:shifts}. \end{defn}

\begin{figure}
\centering
\begin{subfigure}{0.4\linewidth}
\begin{overpic}[width=2.5in]{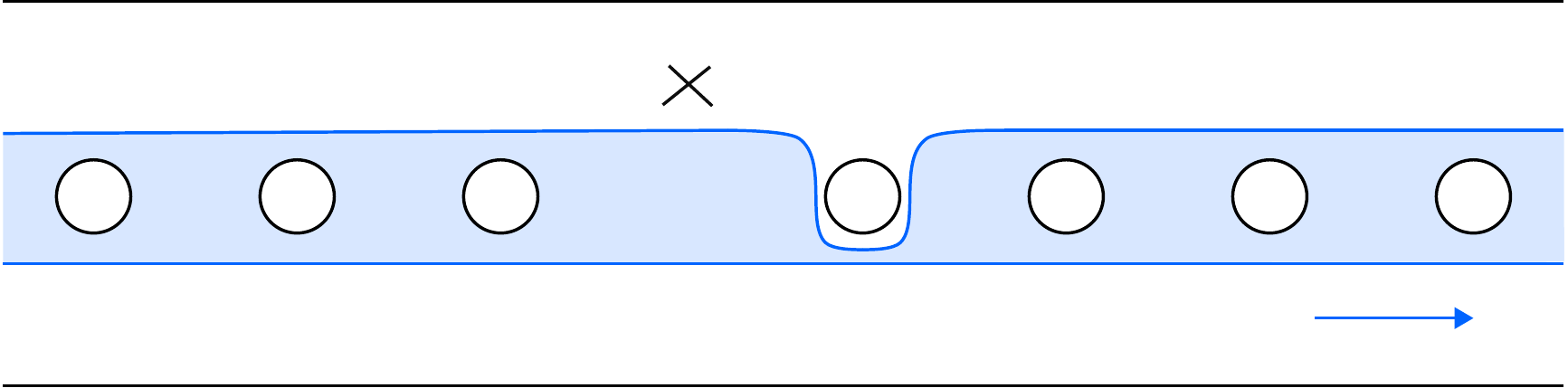}
\put(3,5){\tiny{$-3$}}
\put(15.5,5){\tiny{$-2$}}
\put(29,5){\tiny{$-1$}}
\put(42.5,5){\tiny$P$}
\put(54.5,5){\tiny{$0$}}
\put(67.5,5){\tiny{$1$}}
\put(79.5,3){\color{blue}\tiny{$h_1$}}
\end{overpic}
\caption{The shift map $h_1$.}
\label{fig:theshiftsh1}
\end{subfigure}\\
\par\bigskip
\begin{subfigure}{0.4\linewidth}
\centering
\begin{overpic}[width=1.4\textwidth]{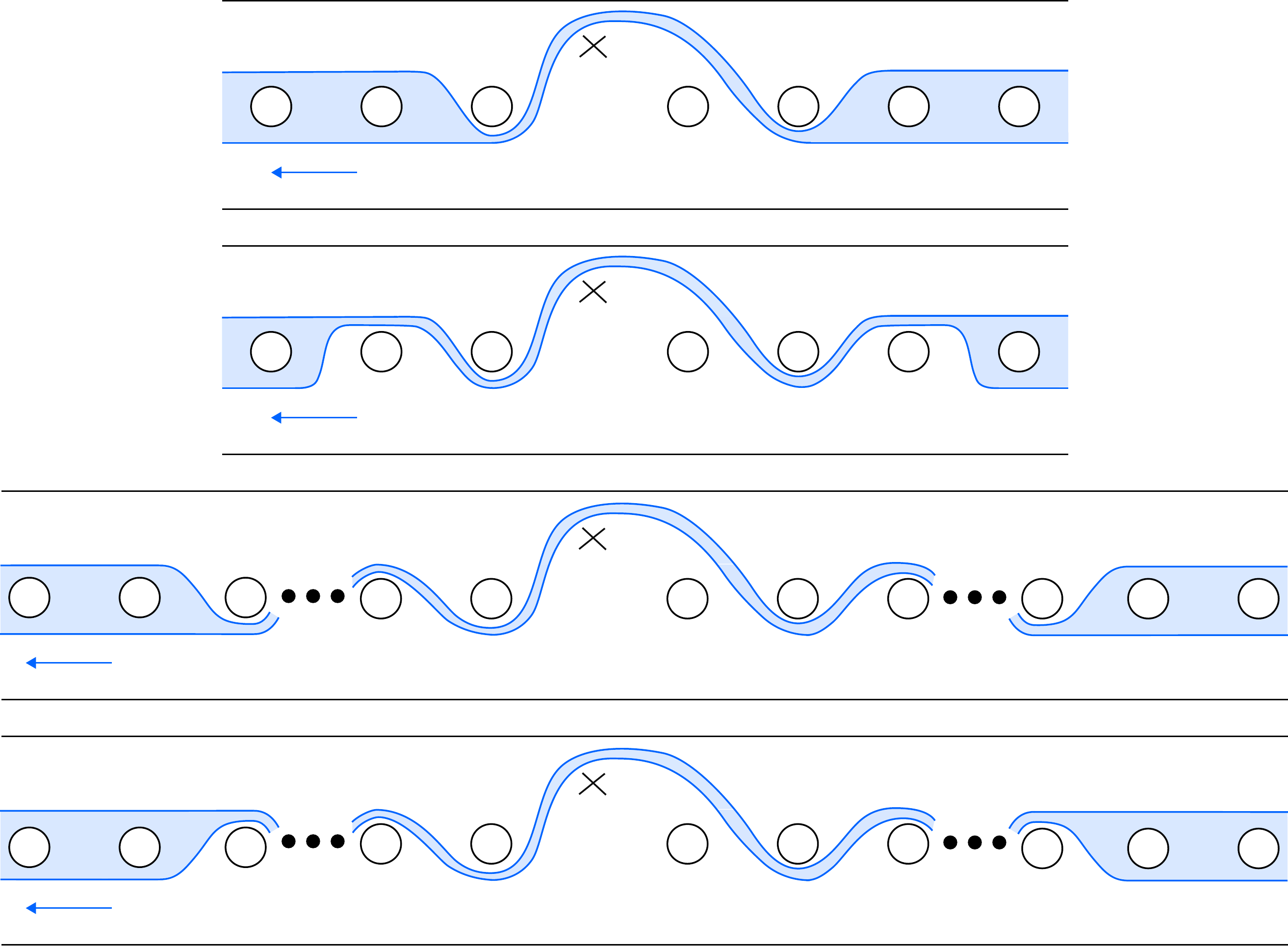}
\put(53,60){\tiny{$0$}}
\put(61.5,60){\tiny{$1$}}
\put(70,60){\tiny{$2$}}
\put(78.5,60){\tiny{$3$}}
\put(45,60){\tiny{$P$}}
\put(36,60){\tiny{$-1$}}
\put(28.5,58.75){\color{blue}\tiny{$h_2^{(1)}$}}
\put(53,41){\tiny{$0$}}
\put(61.5,41){\tiny{$1$}}
\put(70,41){\tiny{$2$}}
\put(78.5,41){\tiny{$3$}}
\put(45,41){\tiny{$P$}}
\put(36,41){\tiny{$-1$}}
\put(28.5,40){\color{blue}\tiny{$h_2^{(2)}$}}
\put(53,22){\tiny{$0$}}
\put(61.5,22){\tiny{$1$}}
\put(70,22){\tiny{$2$}}
\put(86.25,22){\tiny{$n+1$}}
\put(80.25,22){\tiny{$n$}}
\put(45,22){\tiny{$P$}}
\put(36,22){\tiny{$-1$}}
\put(27.5,22){\tiny{$-2$}}
\put(16.5,22){\tiny{$-n$}}
\put(9.5,20.75){\color{blue}\tiny{$h_2^{(n)}$}}
\put(53,3){\tiny{$0$}}
\put(61.5,3){\tiny{$1$}}
\put(70,3){\tiny{$2$}}
\put(86.25,3){\tiny{$n+1$}}
\put(80.25,3){\tiny{$n$}}
\put(45,3){\tiny{$P$}}
\put(36,3){\tiny{$-1$}}
\put(27.5,3){\tiny{$-2$}}
\put(16.5,3){\tiny{$-n$}}
\put(9.5,1.75){\color{blue}\tiny{$h_2^{(n)}$}}
\end{overpic}
\caption{The shift map $h_2^{(n)}$ for various $n$. In order from top to bottom, the first two pictures are the case of $n=1,2$ (respectively) and the bottom two are for general $n$ odd and general $n$ even (respectively).}
\label{fig:theshiftsh2}
\end{subfigure}\hfill
\begin{subfigure}{0.4\linewidth}
\centering
\begin{overpic}[width=1.4\textwidth]{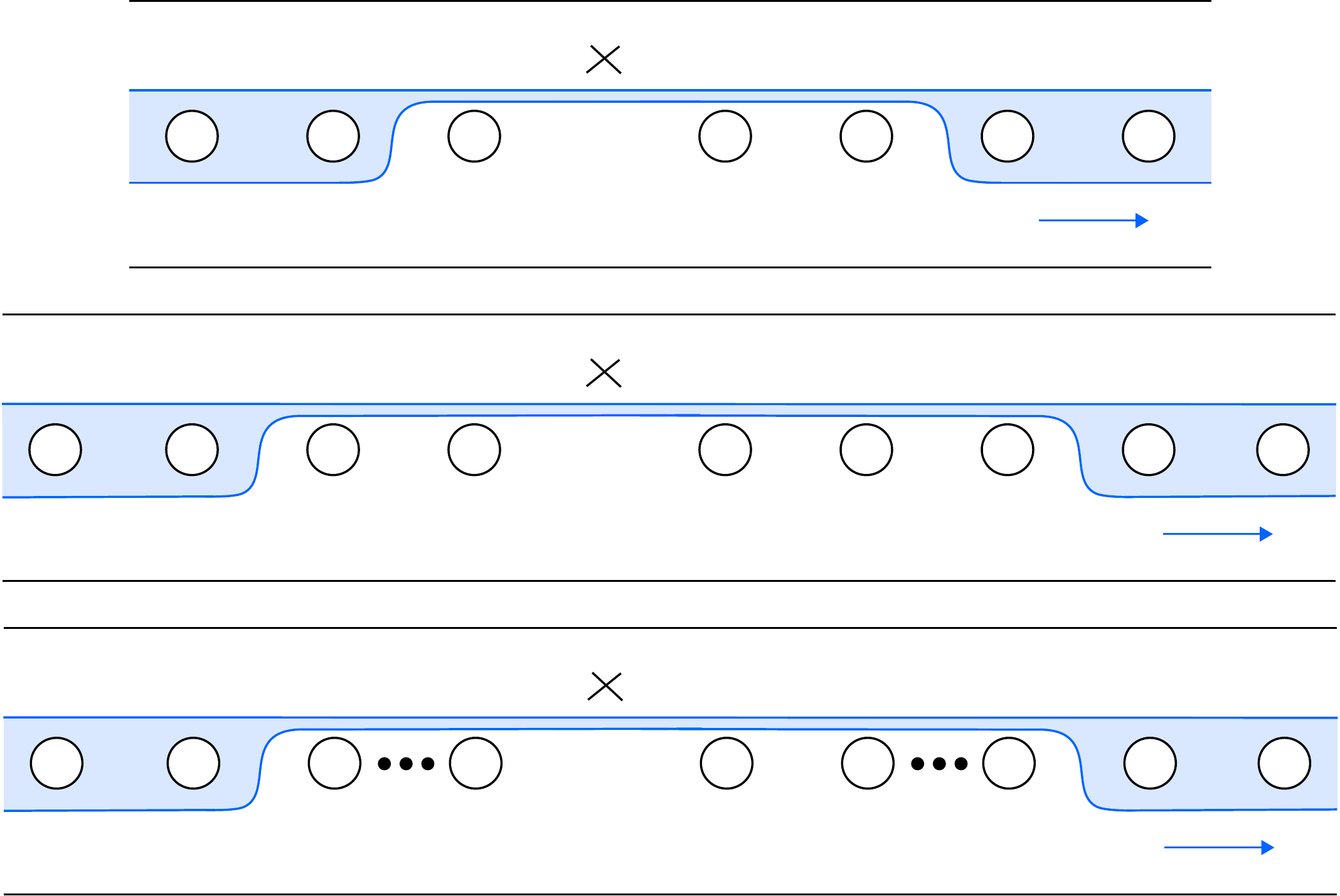}
\put(12,51){\tiny{$-3$}}
\put(23,51){\tiny{$-2$}}
\put(33.5,51){\tiny{$-1$}}
\put(53.5,51){\tiny{$0$}}
\put(64,51){\tiny{$1$}}
\put(44,51){\tiny{$P$}}
\put(71,49){\color{blue}\tiny{$h_3^{(1)}$}}
\put(1.5,27.5){\tiny{$-4$}}
\put(12,27.5){\tiny{$-3$}}
\put(23,27.5){\tiny{$-2$}}
\put(33.5,27.5){\tiny{$-1$}}
\put(53.5,27.5){\tiny{$0$}}
\put(64,27.5){\tiny{$1$}}
\put(75,27.5){\tiny{$2$}}
\put(44,27.5){\tiny{$P$}}
\put(80,25.5){\color{blue}\tiny{$h_3^{(2)}$}}
\put(0,4){\tiny{$-n-2$}}
\put(10,4){\tiny{$-n-1$}}
\put(22.5,4){\tiny{$-n$}}
\put(33.5,4){\tiny{$-1$}}
\put(53.5,4){\tiny{$0$}}
\put(64,4){\tiny{$1$}}
\put(75,4){\tiny{$n$}}
\put(44,4){\tiny{$P$}}
\put(79,2){\color{blue}\tiny{$h_3^{(n)}$}}
\end{overpic}
\caption{The shift map $h_3^{(n)}$ for various $n$. In order from top to bottom, the first two pictures are the case of $n=1,2$ and the bottom picture is the case of general $n$.}
\label{fig:theshiftsh3}
\end{subfigure}
\caption{The various shift maps used in the construction of $g_n=h_3^{(n)}\circ h_2^{(n)}\circ h_1$.}
\label{fig:shifts}
\end{figure}

In the language of the previous sections, we have that $n_1 = -1$ and $n_2 = 1$ for $h_1$. For $h_2^{(n)}$, $n_1 = n+1$ and $n_2 = -n-1$. For $h_3^{(n)}$, $n_1 = -n-1$ and $n_2 = n+1$.

In order to prove that each $g_n$ is loxodromic with respect to the action of $\MCG(S,p)$ on $\mc A(S,p)$, we introduce a collection of arcs $\{\alpha_i^{(n)}\mid i\in \Z\}$ which are invariant under $g_n$.  We will show in Proposition \ref{prop:loxonS} that for $i\geq 0$, these arcs form a quasi-geodesic half-axis for $g_n$ in $\mc A(S,p)$.

\begin{defn}\label{def:alpha_i}
For each fixed $n$, we  define a sequence of arcs $\{\alpha_i^{(n)}\}_{i=-\infty}^\infty$ in $\mathcal{A}(S,p)$ as the images under successive applications of $g_n$ of the fixed initial arc $\alpha_0=P_s0_o0_uP_s$.
That is to say that $\alpha^{(n)}_i=g^i_n(\alpha_0)$ for any $i\in \Z$.  See Figure \ref{fig:alphain=1} for the case $n=1$ and Figure \ref{fig:alphain} for general $n\geq 1$.  
\end{defn}

\begin{figure}
\centering
\def\svgwidth{3in}
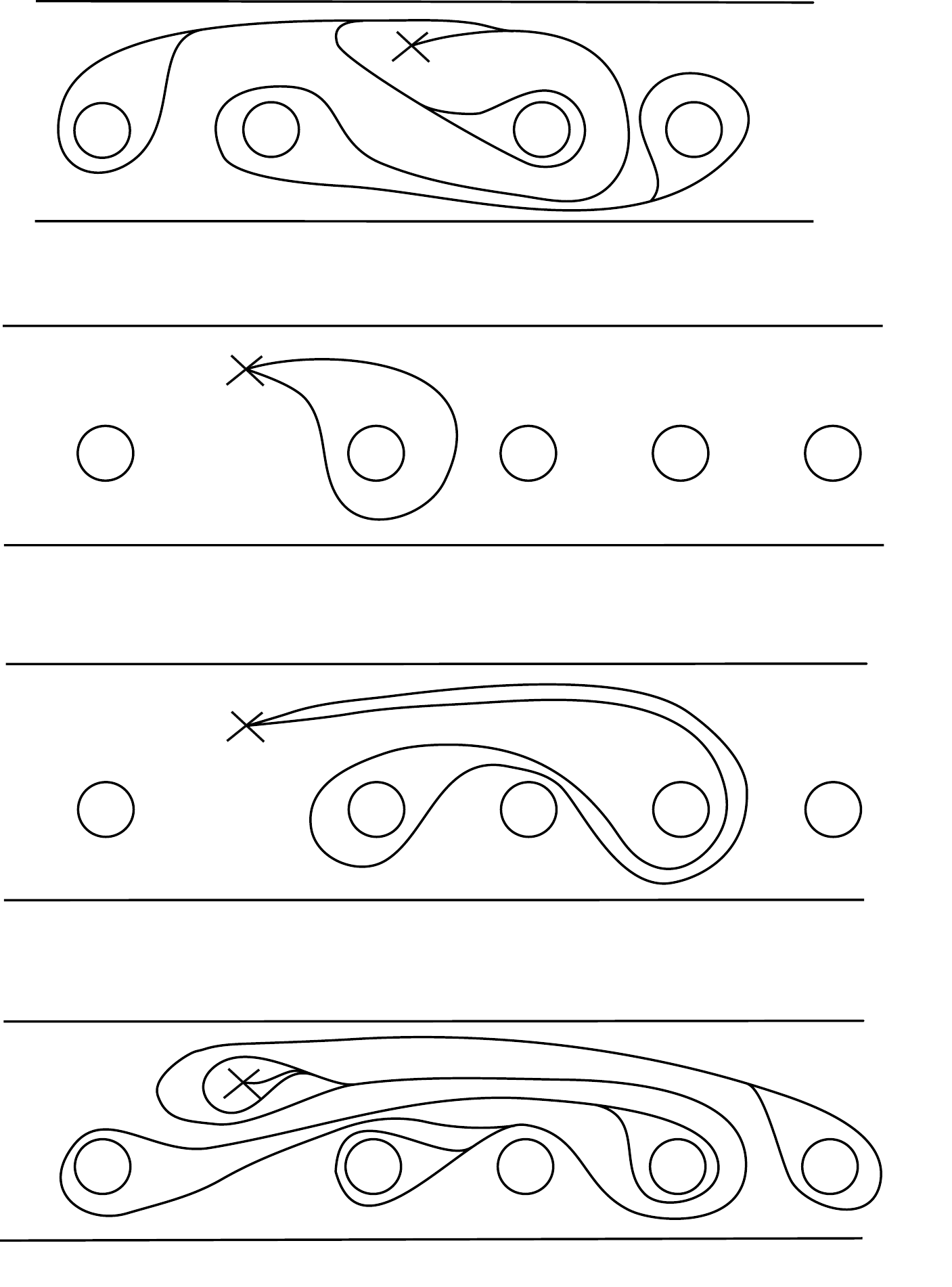
\caption{The arcs $\alpha_i^{(1)}$ for $i=-1,0,1,2$ drawn as train tracks for simplicity.}
\label{fig:alphain=1}
\end{figure}

When $n=1$, the arcs $\alpha_i^{(1)}$ are the most straightforward and thus most useful for building intuition.  We suggest that the reader keep these arcs in mind while reading the remainder of the paper.  We now give the code for $\alpha_1^{(1)}$ and $\alpha_2^{(1)}$, which the reader can compare to Figure~\ref{fig:alphain=1}:
\begin{align*}
\alpha_1^{(1)} =& P_s0_o1_o2_o2_u1_o0_o0_u1_o2_u2_o1_o0_oP_s \\
\alpha_2^{(1)} =& P_s0_o1_o2_o2_u1_o0_oP_u(-1)_u(-1)_oP_u0_o1_o2_o2_u1_o0_oP_u(-1)_o(-1)_uP_u0_o1_o2_u2_o1_o \\
&0_oP_uP_o0_o1_o2_o3_o3_u2_o1_o0_oP_oP_u0_o1_o2_o2_u 
 1_o0_oP_u(-1)_u(-1)_oP_u 0_o1_o2_u2_o1_o0_o\\ 
 &P_u(-1)_o(-1)_u  P_u0_o1_o2_u2_o1_o0_oP_uP_o0_o1_o2_o2_u1_o0_o0_u 1_o2_u2_o1_o0_oP_oP_u0_o1_o2_o \\
& 2_u1_o0_oP_u(-1)_u(-1)_oP_u0_o  1_o2_o2_u1_o0_oP_u(-1)_o(-1)_uP_u0_o1_o2_u2_o1_o0_oP_uP_o0_o \\
&1_o2_o3_u3_o  2_o1_o0_oP_oP_u0_o1_o2_o2_u1_o0_oP_u(-1)_u(-1)_oP_u0_o1_o2_u2_o1_o0_oP_u(-1)_o(-1)_u \\
& P_u0_o1_o2_u2_o1_o0_oP_s
\end{align*}

While it is possible to compute the images of arcs under $g_n=h_3^{(n)}\circ h_2^{(n)} \circ h_1$ by hand, we have also written a computer program to implement this.  The interested reader should contact the authors for more details.

\begin{figure}
\centering
\def\svgwidth{4in}
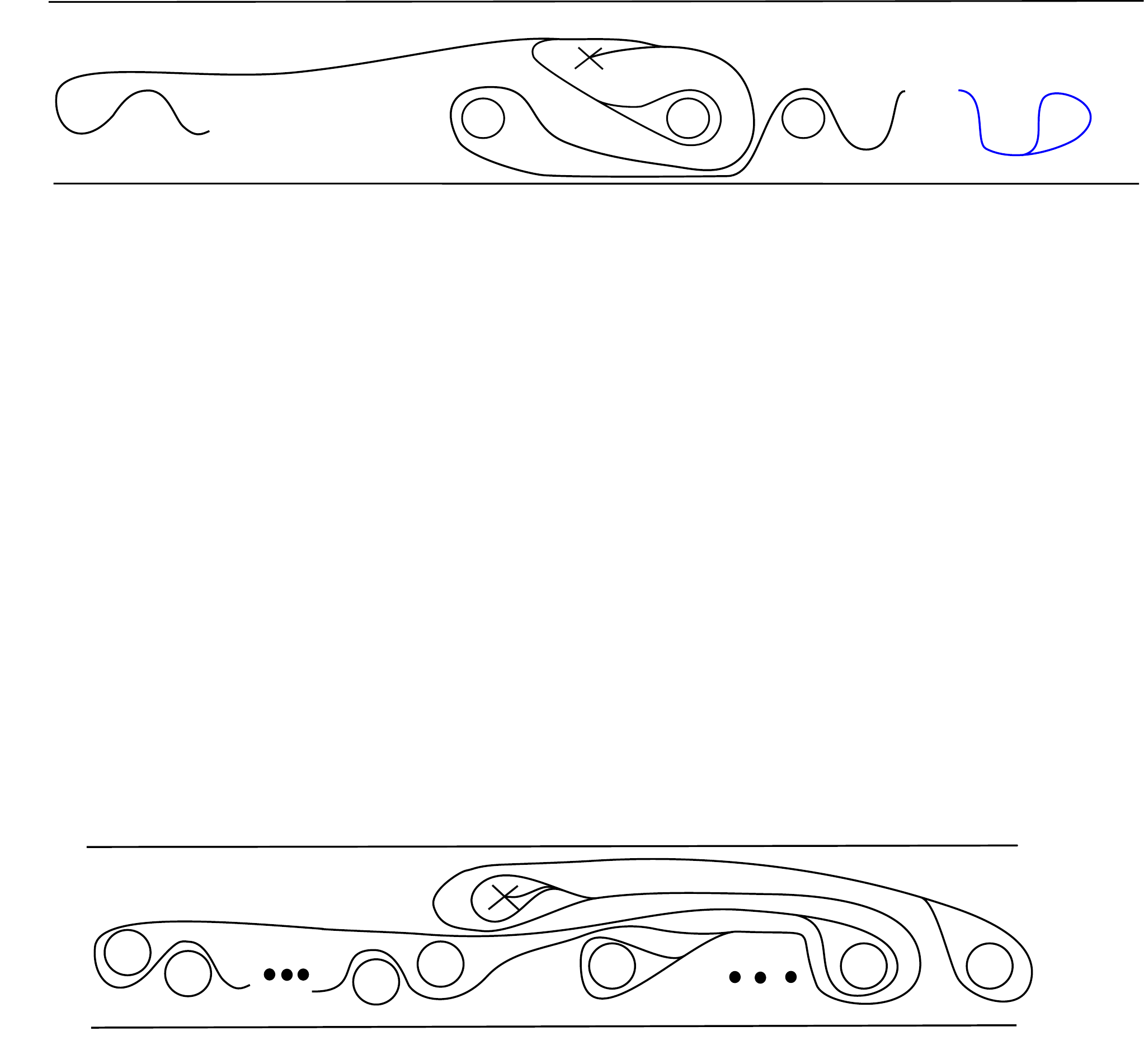
\caption{The arcs $\alpha_i^{(n)}$ for $i=-1,0,1,2$ and $n\geq 1$.  In $\alpha_{-1}^{(n)}$, the blue appears when $n$ is odd, while the green appears when $n$ is even.}
\label{fig:alphain}
\end{figure}

\begin{rem} In light of how complicated the arcs $\alpha_i^{(n)}$ are when $i\neq 0,1$, it may be surprising that only single loops can have trivial image under each of the shifts $h_1,h_2^{(n)},$ and $h_3^{(n)}$ (see Theorem \ref{prop:nontrivialloop} and Corollary \ref{cor:compoundloop}).  Figure \ref{fig:alpha-1toalpha0} gives the intermediate steps so that one can see how the image under $g_1$ of a complicated arc such as $\alpha^{(1)}_{-1}$ becomes the straightforward arc $\alpha_0$.   The figure shows the collection of single loops with trivial image at each stage of computing $g_1(\alpha^{(1)}_{-1})=\alpha_0$. A similar phenomenon happens for $n>1$.
\end{rem}

\begin{figure}
\centering
\def\svgwidth{4in}
{\small 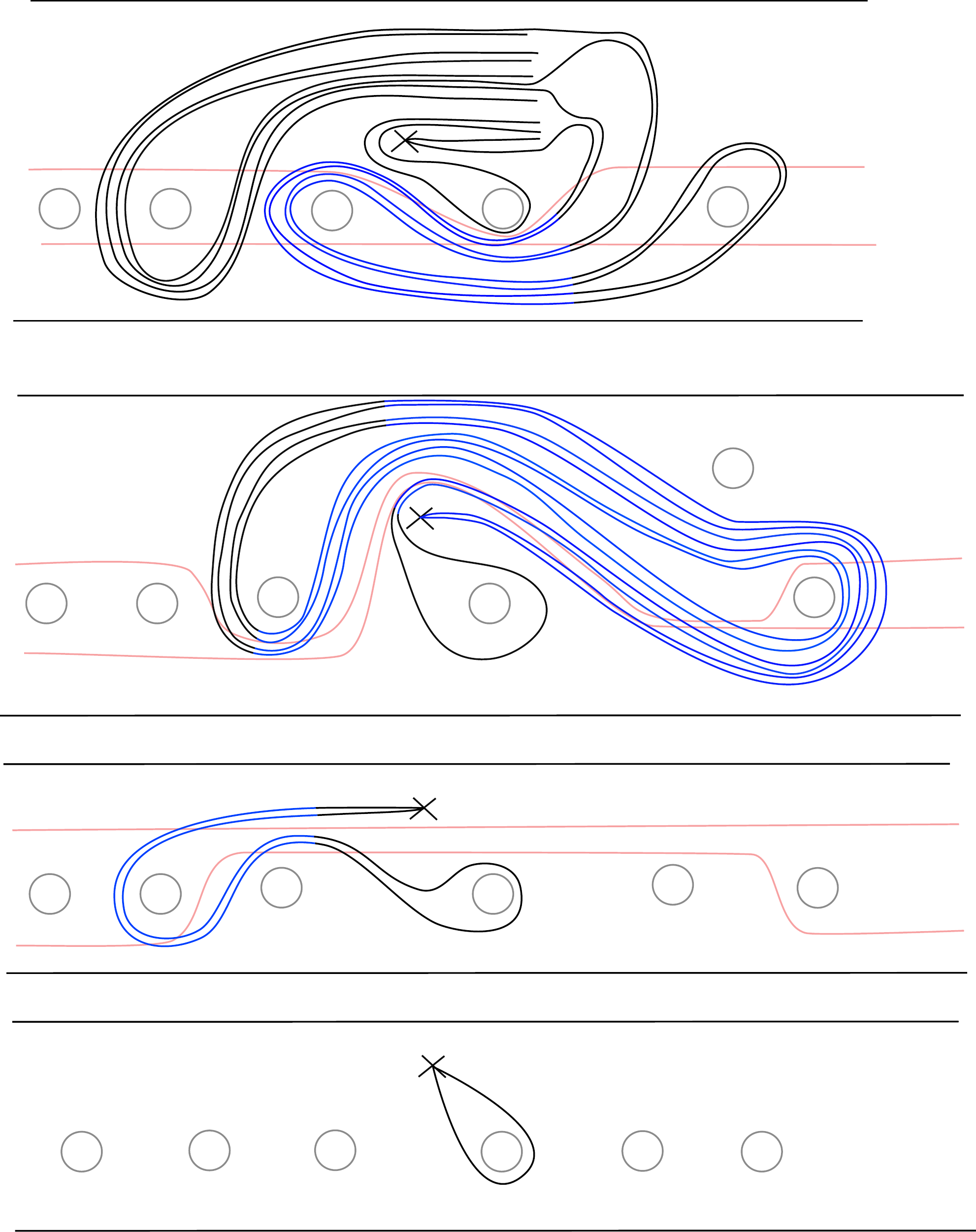}
\caption{Each of the strands in blue are single loops whose  image is trivial under the  shift whose domain is shown in red.  Each blue strand fits the form of Theorem \ref{prop:nontrivialloop}.}
\label{fig:alpha-1toalpha0}
\end{figure}

\medskip
 Our method for using the arcs $\alpha_i^{(n)}$ to prove that $g_n$ is a loxodromic isometry of $\mc A(S,p)$ is inspired by the foundational work of Bavard in \cite{Bavard}.  Bavard constructs an element $f$ of the mapping class group of the plane minus a Cantor set which is loxodromic with respect to the associated relative arc graph.  We give a brief outline of Bavard's methods here.  

Bavard constructs a collection of simple paths $\beta_i$ which start at an isolated puncture and end at some point of the Cantor set. These paths are invariant under the action of a chosen homeomorphism $f$ and have the property that $\beta_{i+1}=f(\beta_i)$.  They are constructed so that for all $i$, the path $\beta_{i+1}$ begins by following the same path as $\beta_i$.  Roughly, if a path $\gamma$ begins by following the same path as $\beta_i$ then Bavard says the path $\gamma$ `begins like' $\beta_i$ (see \cite[Section~2]{Bavard}). Bavard uses this definition to define a function from the vertex set of a certain graph of paths (defined similarly to the relative arc graph but with paths instead of arcs) to $\Z_{\ge 0}$ by sending a path $\gamma$ to the maximal $i\in \Z_{\ge 0}$ so that $\gamma$ begins like $\beta_i$. 
Aramayona, Fossas, and Parlier \cite{AFP} show that the relative arc graph is quasi-isometric to the graph whose vertex set is isotopy class of paths with \textit{at least one} endpoint on the distinguished isolated puncture.   Therefore this function can then be used to estimate distances in the relative arc graph and ultimately to show that the collection of paths $\{\beta_i\mid i\in\Z_{\geq 0}\}$ forms a geodesic half-axis for the element $f$.  The key fact that Bavard uses is that if $\delta$ is a path which begins like $\beta_i$ and $\gamma$ is any path disjoint from $\delta$, then $\gamma$ begins like $\beta_{i-1}$ \cite[Lemma~2.4]{Bavard}.

Our arcs $\{\alpha_i^{(n)}\}_{i\in\Z}$ do not satisfy  the same property as Bavard's paths. One notable difference is that our arcs start and end at the puncture $p$.  Because of this, for a fixed $n$, $\alpha_{i+1}^{(n)}$ does not begin by following \textit{the entirety} of $\alpha_i^{(n)}$.  However, we will show that  it does begin by following the \textit{first half} of $\alpha_i^{(n)}$.  In light  of this, we modify Bavard's notion of `begins like' and make the following definition.

\begin{defn}\label{def:startslike}
Fix an $n\in \N$. An arc $\delta$ \textit{starts like} $\alpha^{(n)}_i$ if the maximal initial or the maximal terminal segment of $\delta$ which agrees with an initial or terminal segment of $\alpha^{(n)}_i$ (not necessarily respectively) has code length at least $\lfloor\frac12\ell_c(\alpha^{(n)}_i)\rfloor$. Recall that code length was defined in Definition \ref{def:codelength}.
\end{defn}

In Section \ref{sec:arcsthatstartlike}, we investigate the properties of arcs that start like $\alpha_i^{(n)}$ for some $i$, and in Section \ref{sec:highways} we use these properties to prove Theorem \ref{thm:startslike}, which is an analog of \cite[Lemma~2.4]{Bavard}.  In our more general context, proving this result is quite a bit more involved than the proof of \cite[Lemma~2.4]{Bavard}.  One reason for this is that the behavior of our arcs $\alpha_i^{(n)}$ is much more complicated than the paths from \cite{Bavard}.


\section{Arcs that start like $\alpha_i^{(n)}$}\label{sec:arcsthatstartlike}

The main goal of this section is to prove that the images of arcs which start like $\alpha^{(n)}_i$ under $g_n$ must start like $\alpha^{(n)}_{i+1}$ (see Corollary \ref{cor:inductcor}).
This will follow from Proposition \ref{lem:imageofinit} and Theorem \ref{lem:induct}.
Proposition \ref{lem:imageofinit} states that the image of the first half of $\alpha_i^{(n)}$ is the first half of $\alpha_{i+1}^{(n)}$.
We need to be careful about what we mean by this because of the possibility that we cause there to be a gap when we break $\alpha_i^{(n)}$ into its first and second half (see Section \ref{sec:segconn}).   Given a segment $\gamma$, let $\mathring\gamma$ be the initial subsegment of $\gamma$ with code length $\lfloor\frac12\ell_c(\gamma)\rfloor$.
For a fixed $i$ and $n$, we have $\alpha_i^{(n)}=\mathring\alpha_i^{(n)}+\alpha'$ for some $\alpha'$.  As in Section \ref{sec:segconn}, it is possible that when we put $\mathring\alpha_i^{(n)}$ and $\alpha'$ (or their images) into standard position individually with respect to one of the shifts $h_1,h_2^{(n)}$, or $h_3^{(n)}$, we will  lose  information when we take its image.  In fact, this does happen when we put $h_2^{(n)}(h_1(\mathring\alpha_i^{(n)}))$ and $h_2^{(n)}(h_1(\alpha'))$ into standard position with respect to $h_3^{(n)}$ (see Figure \ref{fig:gapinh_2}).   We avoid this issue by extending $\mathring\alpha_{i}^{(n)}$ by one character and considering $\mathring\alpha_{i}^{(n)}0_u$.  Note that $0_u$ is fixed by $g_n$.  Thus when we say that the image of the first half of $\alpha_i^{(n)}$ is the first half of $\alpha_{i+1}^{(n)}$ we precisely mean that $g_n(\mathring\alpha_{i}^{(n)}0_u)=\mathring\alpha_{i+1}^{(n)}0_u$ as a subsegment of $g_n(\alpha_{i}^{(n)})=\alpha_{i+1}^{(n)}$, in a reduced code.  In other words, $g_n$ sends everything before the central $0_u$ in $\alpha_i^{(n)}$ to $\mathring\alpha_{i+1}^{(n)}$.
\begin{rem}
When dealing with the arcs $\alpha_i^{(n)}$, the floor function in $\mathring\alpha_i^{(n)}$ is actually unnecessary, as $\ell_c(\alpha_i^{(n)})$ will always be even with the central two characters being $0_o0_u$.
However, at this point in the paper we have not proven this fact, which is a consequence of Proposition \ref{lem:imageofinit}, so we will not use it in what follows.
\end{rem}

\begin{figure}
\centering
\def\svgwidth{4in}
{\small 
\begingroup%
  \makeatletter%
  \providecommand\color[2][]{%
    \errmessage{(Inkscape) Color is used for the text in Inkscape, but the package 'color.sty' is not loaded}%
    \renewcommand\color[2][]{}%
  }%
  \providecommand\transparent[1]{%
    \errmessage{(Inkscape) Transparency is used (non-zero) for the text in Inkscape, but the package 'transparent.sty' is not loaded}%
    \renewcommand\transparent[1]{}%
  }%
  \providecommand\rotatebox[2]{#2}%
  \newcommand*\fsize{\dimexpr\f@size pt\relax}%
  \newcommand*\lineheight[1]{\fontsize{\fsize}{#1\fsize}\selectfont}%
  \ifx\svgwidth\undefined%
    \setlength{\unitlength}{582.91978792bp}%
    \ifx\svgscale\undefined%
      \relax%
    \else%
      \setlength{\unitlength}{\unitlength * \real{\svgscale}}%
    \fi%
  \else%
    \setlength{\unitlength}{\svgwidth}%
  \fi%
  \global\let\svgwidth\undefined%
  \global\let\svgscale\undefined%
  \makeatother%
  \begin{picture}(1,0.55742626)%
    \lineheight{1}%
    \setlength\tabcolsep{0pt}%
    \put(0,0){\includegraphics[width=\unitlength,page=1]{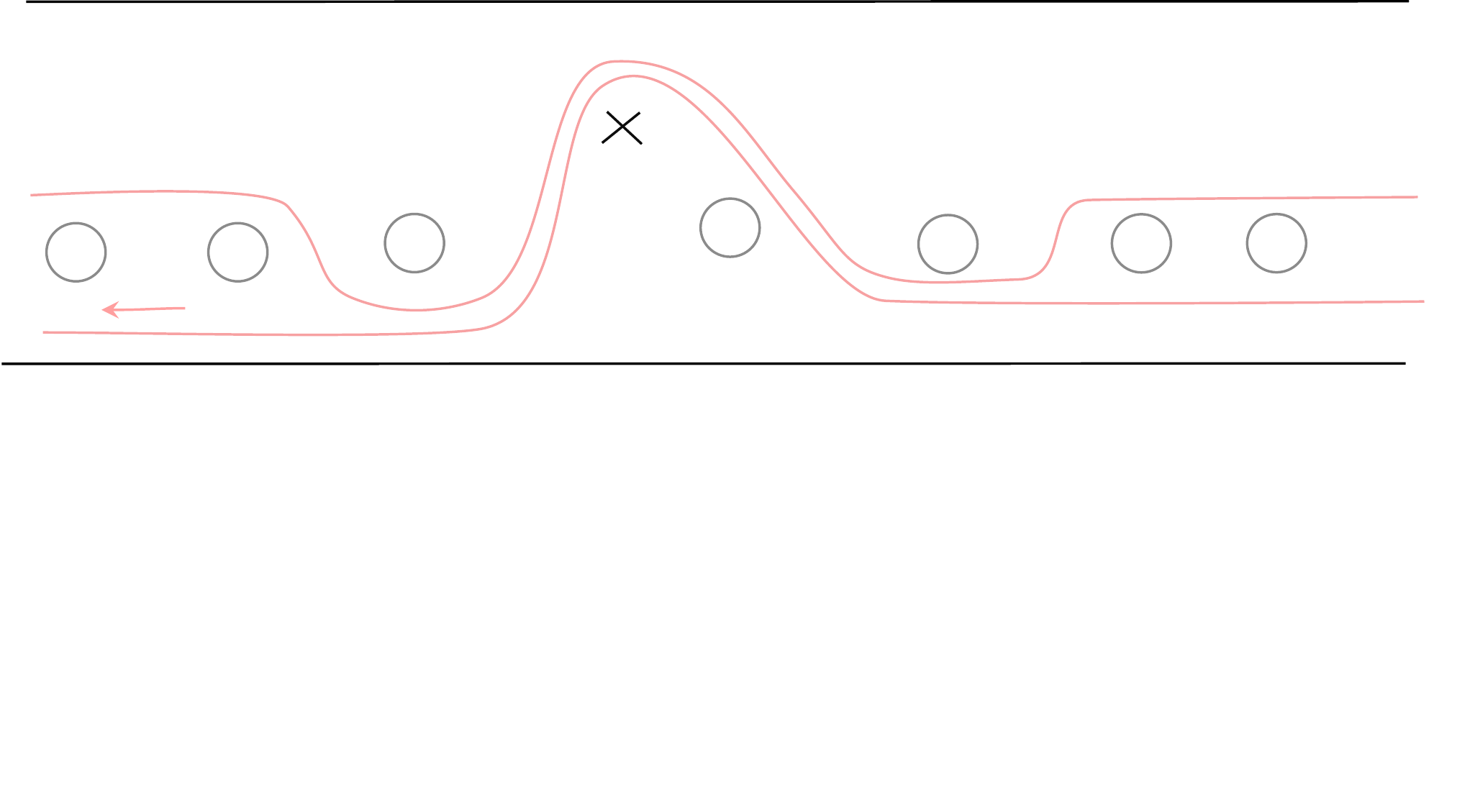}}%
    \put(0.0804057,0.3682342){\color[rgb]{0,0,0}\makebox(0,0)[lt]{\lineheight{1.25}\smash{\begin{tabular}[t]{l}$h_2^{(1)}$\end{tabular}}}}%
    \put(0,0){\includegraphics[width=\unitlength,page=2]{gapinh_2.pdf}}%
    \put(0.78234873,0.51632032){\color[rgb]{0,0,0}\makebox(0,0)[lt]{\lineheight{1.25}\smash{\begin{tabular}[t]{l}$h_1(\mathring\alpha_1^{(1)})$\end{tabular}}}}%
    \put(0.45905641,0.34204685){\color[rgb]{0,0,0}\makebox(0,0)[lt]{\lineheight{1.25}\smash{\begin{tabular}[t]{l}$h_1(0_o0_u)$\end{tabular}}}}%
    \put(0,0){\includegraphics[width=\unitlength,page=3]{gapinh_2.pdf}}%
    \put(0.46359432,0.01780791){\color[rgb]{0,0,0}\makebox(0,0)[lt]{\lineheight{1.25}\smash{\begin{tabular}[t]{l}$h_2^{(1)}\circ h_1(0_o0_u)$\end{tabular}}}}%
    \put(0.60287724,0.21045263){\color[rgb]{0,0,0}\makebox(0,0)[lt]{\lineheight{1.25}\smash{\begin{tabular}[t]{l}$h_2^{(1)}\circ h_1(\mathring\alpha_1^{(1)})$\end{tabular}}}}%
    \put(0.92116649,0.08747729){\color[rgb]{0,0,0}\makebox(0,0)[lt]{\lineheight{1.25}\smash{\begin{tabular}[t]{l}$h_3^{(1)}$\end{tabular}}}}%
  \end{picture}%
\endgroup%
}
\caption{Here we consider $h_1(\mathring\alpha_10_u)=h_1(\mathring\alpha_1)+h_1(0_o0_u)$ and take the images of each subsegment separately.  The images are drawn as train tracks for clarity, but we omit the weights as they are immaterial.  When we put the images $h_2^{(n)}(h_1(\mathring \alpha_1))$ and $h_2^{(n)}(h_1(0_o0_u))=0_o0_u$ in standard position individually with respect to $h_3^{(n)}$ (the second picture), there is a gap in the segment between $p$ and $B_1$, which would have been a full crossing.  This results in a loss of information in the image under $h_3^{(n)}$.}
\label{fig:gapinh_2}
\end{figure}

\begin{prop} \label{lem:imageofinit}
For any $n$ and any $i$, we have  $g_n(\mathring\alpha_{i}^{(n)}0_u)=\mathring\alpha_{i+1}^{(n)}0_u$  in a reduced code.
\end{prop}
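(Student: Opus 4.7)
The plan is to proceed by induction on $|i|$, handling $i \geq 0$ with $g_n$ and $i < 0$ by the analogous argument using $g_n^{-1}$.

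\emph{Base case ($i=0$).} Here $\mathring\alpha_0 0_u = P_s 0_o 0_u$, so $g_n(\mathring\alpha_0 0_u) = h_3^{(n)} \circ h_2^{(n)} \circ h_1(P_s 0_o 0_u)$ can be computed by applying the three shifts in sequence. Using the explicit descriptions of $h_1$, $h_2^{(n)}$, and $h_3^{(n)}$ from Definition~\ref{def:g_n}, together with the standard-position procedures of Sections~\ref{sec:segnobackloop} and \ref{sec:Ccode}, I would verify character-by-character that the resulting reduced code matches $\mathring\alpha_1^{(n)} 0_u$, as read off from the explicit form of $\alpha_1^{(n)}$ shown in Figure~\ref{fig:alphain}.

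\emph{Inductive step.} Assume the statement for some $i \geq 0$. By the inductive hypothesis, $\mathring\alpha_{i+1}^{(n)} 0_u = g_n(\mathring\alpha_i^{(n)} 0_u)$ appears as an initial subsegment of $\alpha_{i+1}^{(n)}$. I decompose $\alpha_{i+1}^{(n)} = \mathring\alpha_{i+1}^{(n)} 0_u \, \tau$ (ordinary concatenation), taking care to choose a fixed standard-position representative as prescribed in Section~\ref{sec:segconn} so that no gap is introduced at the split. Applying $g_n$ then gives, in an unreduced code,
\[
\alpha_{i+2}^{(n)} \;=\; g_n(\alpha_{i+1}^{(n)}) \;=\; g_n\bigl(\mathring\alpha_{i+1}^{(n)} 0_u\bigr) \,+\, g_n(\tau),
\]
so the conclusion for $i+1$ reduces to two claims: (a) no cancellation occurs at the junction between the two images; and (b) the surviving initial subsegment $g_n(\mathring\alpha_{i+1}^{(n)} 0_u)$ equals $\mathring\alpha_{i+2}^{(n)} 0_u$.

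\emph{Main obstacle.} Establishing (a) --- that is, ruling out cascading cancellation through the three-stage composition $h_3^{(n)} \circ h_2^{(n)} \circ h_1$ --- is the principal difficulty, and is exactly why we append the extra $0_u$. The role of this character is to serve as an anchor that survives each of the three shifts: I would analyze its image after $h_1$, then after $h_2^{(n)}$, then after $h_3^{(n)}$, invoking Lemma~\ref{lem:bpersists} at each stage where the image contains a character in the relevant turbulent region which disagrees with the boundary of the next shift's domain, and Lemma~\ref{lem:imageof2o2ufamily} whenever the intermediate image produces a pair $k_{o/u}k_{u/o}$ lying in the shift region of the next shift. Together these lemmas guarantee that a distinguished descendant of $0_u$ survives the full composition and sits between $g_n(\mathring\alpha_{i+1}^{(n)})$ and $g_n(\tau)$ in any reduced code, blocking any cancellation between the two halves. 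The necessity of this anchor is exactly the phenomenon illustrated in Figure~\ref{fig:gapinh_2}: without it, information is lost as one passes between the intermediate images.

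Once (a) is in hand, claim (b) is a matching/length argument: since $g_n(\mathring\alpha_{i+1}^{(n)} 0_u)$ is now known to appear unaltered as an initial subsegment of $\alpha_{i+2}^{(n)}$, I need only compare its code length with $\lfloor \tfrac12 \ell_c(\alpha_{i+1}^{(n)})\rfloor + 1$. I would track this length in parallel through the induction, using the explicit recursive description of $\alpha_{i+1}^{(n)}$ together with the palindromic symmetry of the arcs $\alpha_j^{(n)}$ (which is inherited by $g_n$ from the symmetry of $\alpha_0$ and the geometric symmetry of the three shifts about $P$). The case $i < 0$ is handled identically with $g_n$ replaced by $g_n^{-1}$, using that the three shifts $h_j^{(n)}$ all admit standard-position analyses that are symmetric under inversion.
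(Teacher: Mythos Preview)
Your proposal identifies the right broad questions---cancellation control at a junction, induction on $i$---but it misses the two structural observations that make the paper's proof short, and as written it has a genuine gap.

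First, the paper's proof opens by using the symmetry of $\alpha_i^{(n)}$ as the \emph{primary} reduction, not as an afterthought for the length-matching step. Since $\alpha_i^{(n)}$ is palindromic about its central $0_o0_u$, and $g_n$ respects this symmetry, the proposition is equivalent to the single assertion that there is no cancellation in $g_n(\alpha_i^{(n)})$ involving the image of that central $0_o0_u$. This automatically handles your claim (b): once you know no cancellation touches the center, the image of the first half \emph{is} the first half of the image, with the correct length, for free. Your sketch defers symmetry to a vague ``length/matching argument'' at the end, but without invoking it upfront you have no mechanism to match $\ell_c\bigl(g_n(\mathring\alpha_{i+1}^{(n)}0_u)\bigr)$ with $\tfrac12\ell_c(\alpha_{i+2}^{(n)})$.

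Second, for claim (a) your plan is to track the anchor $0_u$ through $h_1$, $h_2^{(n)}$, $h_3^{(n)}$ using Lemmas~\ref{lem:bpersists} and~\ref{lem:imageof2o2ufamily}. This does not work as stated: under $h_1$ the character $0_u$ lies in the turbulent region $(-1,1)$ and \emph{agrees} with $\partial D_1$ (which passes under $B_0$), so Lemma~\ref{lem:bpersists} gives you nothing at the first stage. The paper instead uses a different block: since $\mathring\alpha_1^{(n)}0_u$ contains $(n+1)_o(n+1)_u$, and $n+1$ lies in the shift region of all three shifts, Lemma~\ref{cor:imageof2o2ufamily} (the $g_n$-level corollary of Lemma~\ref{lem:imageof2o2ufamily}) shows this pair blocks all cascading cancellation from the left. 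This localizes the problem to the short explicit tail $\mu 0_u=(n+1)_o(n+1)_u n_o\cdots 1_o0_o0_u$. A single direct computation of $g_n(\mu 0_u)$ then shows (i) no cancellation touches the final $0_o0_u$, and (ii) $g_n(\mu 0_u)$ again ends in $\mu 0_u$. Point (ii) is what makes the induction trivial for all $i\geq 1$: the tail reproduces itself, so the same block and the same computation apply at every stage. Your inductive scheme, by contrast, gains nothing from the hypothesis---the structure of $\tau$ changes with $i$, and you would have to redo the cancellation analysis from scratch each time.
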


In order to prove Proposition \ref{lem:imageofinit}, we need to understand how to control cascading cancellation for our homeomorphisms $g_n$ (see Section~\ref{sec:cascading} for the definition and a discussion of cascading cancellation).
The following lemma is almost a direct consequence of Lemma \ref{lem:imageof2o2ufamily}, the difference being that $g_n$ is not itself a shift.

\begin{lem}\label{cor:imageof2o2ufamily}
For all $k\geq n+1$, if $k_{o/u}k_{u/o}$ appears in a reduced code for $\alpha^{(n)}_i$, then there is no cancellation involving $g_n(k_{o/u}k_{u/o})$ in an unreduced code for $\alpha^{(n)}_{i+1}$. 
In particular, such a pair $k_{o/u}k_{u/o}$ will yield $(k+1)_{o/u}(k+1)_{u/o}$ in a reduced code for $\alpha^{(n)}_{i+1}$.
\end{lem}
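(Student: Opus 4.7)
Since $g_n=h_3^{(n)}\circ h_2^{(n)}\circ h_1$ is a composition of three permissible shifts, the natural strategy is to track the pair $k_{o/u}k_{u/o}$ through each shift in turn, applying Lemma~\ref{lem:imageof2o2ufamily} iteratively. The key observation is that the hypothesis $k\geq n+1$ ensures the (shifted) pair always remains in the shift region of the next shift to be applied, so each application of Lemma~\ref{lem:imageof2o2ufamily} is legitimate and the pair neither cancels nor moves into a turbulent region at any intermediate stage.

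First, since $k\geq n+1\geq 2$, the pair lies in the shift region $(-\infty,-1]\cup[1,\infty)$ of $h_1$ (whose turbulent region is $(-1,1)$). Lemma~\ref{lem:imageof2o2ufamily} applied to $h_1$ and the arc $\alpha_i^{(n)}$ guarantees that $h_1(k_{o/u}k_{u/o})$ does not cancel in an unreduced code for $h_1(\alpha_i^{(n)})$, and yields a pair $(k+1)_{o/u}(k+1)_{u/o}$ in a \emph{reduced} code for $h_1(\alpha_i^{(n)})$. Next, $k+1\geq n+2$ places this new pair in the shift region $(-\infty,-n-1]\cup(n+1,\infty)$ of the left shift $h_2^{(n)}$ (whose turbulent region is $(-n-1,n+1)$). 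By the left-shift form of Lemma~\ref{lem:imageof2o2ufamily} (per Convention~\ref{conv:leftshift}), the pair persists in an unreduced code for $h_2^{(n)}(h_1(\alpha_i^{(n)}))$ and appears as $k_{o/u}k_{u/o}$ in its reduced code. Finally, since $k\geq n+1$, the pair lies in the shift region $(-\infty,-n-1)\cup[n+1,\infty)$ of the right shift $h_3^{(n)}$. A third application of Lemma~\ref{lem:imageof2o2ufamily} gives that $(k+1)_{o/u}(k+1)_{u/o}$ appears in a reduced code for $h_3^{(n)}(h_2^{(n)}(h_1(\alpha_i^{(n)})))=\alpha_{i+1}^{(n)}$.

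For each application of Lemma~\ref{lem:imageof2o2ufamily} to be valid, we need the relevant arc to be simple, non-trivial, and to contain no back loops. Simplicity and non-triviality are preserved under all three shifts since they are homeomorphisms, and $\alpha_0$ is simple and non-trivial. The absence of back loops in $\alpha_0$ and in each intermediate arc follows from the fact that $h_1,h_2^{(n)},h_3^{(n)}$ all have domains lying entirely on the front of $S$, hence they cannot introduce back loops. Thus the iterated application is justified and establishes the conclusion.

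\textbf{Main obstacle.} The only subtlety is verifying that cascading cancellation cannot occur across the composition $g_n$. In principle, even if the pair survives in a reduced code after $h_1$, cancellation in $h_2^{(n)}$ could consume it; the pair only being in the shift region of $h_1$ is not a priori enough. The resolution is that Lemma~\ref{lem:imageof2o2ufamily} delivers persistence in a \emph{reduced} code at each stage, so that reapplying the lemma to the next shift operates on an arc where the pair genuinely appears as such---eliminating any possibility of cascading cancellation across the composition.
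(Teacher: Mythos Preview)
Your proof is correct and follows essentially the same approach as the paper: apply Lemma~\ref{lem:imageof2o2ufamily} three times, once for each of $h_1$, $h_2^{(n)}$, $h_3^{(n)}$, verifying at each stage that the (shifted) pair lies in the appropriate shift region. Your write-up is in fact more careful than the paper's, which simply notes that the union of the turbulent regions is $[-n-1,n+1]$ and that $k+1>n+1$; you additionally check the simplicity, non-triviality, and absence of back loops needed to invoke Lemma~\ref{lem:imageof2o2ufamily} at each step.
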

\begin{proof}
The union of the turbulent regions for $h_1$, $h_2^{(n)}$, and $h_3^{(n)}$ is $[-n-1,n+1]$.
      Therefore we may apply Lemma \ref{lem:imageof2o2ufamily} three times once we remark that $h_2^{(n)}$ shifts left, $h_1(k_{o/u}k_{u/o})=(k+1)_{o/u}(k+1)_{u/o}$, and $h_2^{(n)}$ fits the hypothesis of Lemma \ref{lem:imageof2o2ufamily}, as $k+1>n+1$.
\end{proof}

We can now use this control on cascading cancellation to prove Proposition \ref{lem:imageofinit}.

\begin{proof}[Proof of Proposition \ref{lem:imageofinit}]
As $\alpha_i^{(n)}$ is symmetric, the only way this proposition could be false is if there is cancellation in $g_n(\alpha_i^{(n)})$ involving the image of the central $0_o0_u$ (which is also $0_o0_u$) in $\alpha_i^{(n)}$.  Equivalently, we need to show that there is no cancellation involving the final $0_o0_u$ in $g_n(\mathring\alpha_{i}^{(n)}0_u)$.  We begin by considering the case $i=1$, where 
\begin{equation}\label{eqn:alpha1init}
\mathring\alpha_{1}^{(n)}0_u=P_s0_o1_o2_o\dots(n+1)_o(n+1)_u n_o(n-1)_o\dots 2_o1_o0_o0_u,
\end{equation}
and all of the characters that are not displayed are $k_o$ for the appropriate $k$. 
By Lemma \ref{cor:imageof2o2ufamily}, there is no cancellation in $g_n(\alpha_1^{(n)})$ involving $g_n((n+1)_o(n+1)_u)$, and so there can be no cascading cancellation involving both the final $0_o0_u$ and the image of any character before $(n+1)_o(n+1)_u$ in \eqref{eqn:alpha1init}.  Thus it suffices to consider $g_n(\mu0_u)$, where \[\mu=(n+1)_o(n+1)_u n_o(n-1)_o\dots 2_o1_o0_o.\]
A direct computation yields, in a reduced code,
\begin{equation} \label{eqn:2o2urepeatsfamily}
g_n(\mu0_u)=(n+2)_o(n+2)_u(n+1)_on_o\dots (n-1)_on_o\mu0_u,
\end{equation}
 and from this computation we can see that there is no cancellation in $g_n(\mathring\alpha_{1}^{(n)})$ involving the final characters, $0_o0_u$.  Thus $g_n(\mathring\alpha_{1}^{(n)}0_u)=\mathring\alpha_{2}^{(n)}0_u$ in a reduced code.

By \eqref{eqn:2o2urepeatsfamily}, we  also see that $g_n(\mu0_u)$ also ends in $\mu0_u$.  Hence, to   show that the result holds for any index $i$, we simply repeatedly apply $g_n$ and Lemma \ref{cor:imageof2o2ufamily} as in the previous paragraph.
\end{proof}

We can deduce from Proposition \ref{lem:imageofinit} that if an arc $\gamma$ starts like $\alpha_i^{(n)}$, then an \textit{unreduced code} for its image $g_n(\gamma)$ will start like $\alpha_{i+1}^{(n)}$.  We will  use the following technical theorem to show that this is true for a \textit{reduced} code for $g_n(\gamma)$, as well.

\begin{thm}\label{lem:induct}
Let $\gamma$ be a simple arc of the form $\gamma=P_s\eta_10_o\eta_2P_s$ in standard position.
Assume the following two conditions hold:
\begin{enumerate}
\item The numerical value of $\eta_2^i$ is at most $0$, i.e., $\eta_2^i$ is either $0_u$ or $P_{o/u}$.
\item Either the first two characters of $\eta_2$ are not a loop around $P$ or, if they are, the initial segment of $\eta_2$ is given by $P_uP_o0_o1_o$.
\end{enumerate}
Then there is no cancellation with the initial $0_o$ in a reduced code for $g_n(0_o\eta_2P_s)$.
\end{thm}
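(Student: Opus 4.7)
The plan is to track the initial character $0_o$ of the subsegment $0_o \eta_2 P_s$ through each of the three shifts comprising $g_n = h_3^{(n)} \circ h_2^{(n)} \circ h_1$, verifying at every stage that $0_o$ remains the initial character of a reduced code without cancellation. For $h_1$, the character $0_o$ lies in the turbulent region $(-1,1)$ of $h_1$ and disagrees with $\partial D$ of $h_1$ (which passes under $B_0$); Lemma~\ref{lem:bpersists} then directly yields that $0_o$ persists as the initial character of a reduced code for $h_1(0_o \eta_2 P_s)$.

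For $h_2^{(n)}$, the character $0_o$ agrees with $\partial D$ of $h_2^{(n)}$, so Lemma~\ref{lem:bpersists} does not apply. I instead argue by contradiction using Theorem~\ref{prop:nontrivialloop} in its left-shift form: cancellation of the initial $0_o$ would force an initial subsegment of $h_1(0_o \eta_2 P_s)$ to have the specific form
\[
0_o \, P_o \, (-1)_u (-2)_o \cdots (-n)_{o/u} \, (-n-1)_{o/u}(-n-1)_{u/o} \, (-n)_{o/u} \cdots (-2)_o (-1)_u \, P_o \, 0_o,
\]
whose second character is $P_o$ and whose third is $(-1)_u$. A case analysis on $\eta_2^i \in \{0_u, P_o, P_u\}$ together with the exceptional prefix $P_u P_o 0_o 1_o$ of condition~(2), using that $h_1$ fixes $B_0$ and $B_P$ and that short segments at these punctures lie above the domain of $h_1$ in standard position, shows that either the second character of $h_1(0_o \eta_2 P_s)$ is not $P_o$ (cases $\eta_2^i \in \{0_u, P_u\}$ and the exceptional prefix), or the third character cannot be $(-1)_u$ (case $\eta_2^i = P_o$: the characters following $P_o$ in $\eta_2$ lie in $\{0_{o/u}, (-1)_{o/u}\}$, and in each case $h_1$ produces a third character of numerical value $0$, since the shift $h_1((-1)_{o/u}) = 1_{o/u}$ inserts a connector character at position $0$).

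For $h_3^{(n)}$, I argue analogously. Cancellation of the initial $0_o$ under $h_3^{(n)}$ would force an initial subsegment of $h_2^{(n)}(h_1(0_o \eta_2 P_s))$ of the form
\[
0_o \, P_u \, (-1)_o (-2)_o \cdots (-n)_o \, (-n-1)_{o/u}(-n-1)_{u/o} \, (-n)_o \cdots (-1)_o \, P_u \, 0_o,
\]
with second character $P_u$ and third character $(-1)_o$. Since $h_2^{(n)}$ also fixes $B_0$ and $B_P$, and preserves $0_u$ and $P_u$ by Lemma~\ref{lem:bpersists} (both disagreeing with $\partial D$ of $h_2^{(n)}$), the second character of a reduced code for $h_2^{(n)}(h_1(0_o \eta_2 P_s))$ lies in $\{0_u, P_o, P_u\}$; moreover the character $P_o$ at the second position (when $\eta_2^i = P_o$) cannot cancel under $h_2^{(n)}$, since the required cancellation pattern would begin $P_o (-1)_u \cdots$, whose second character $(-1)_u$ does not occur at position $3$ of $h_1(0_o \eta_2 P_s)$ by the Stage~2 analysis. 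Only the subcases where the second character is $P_u$ (namely $\eta_2^i = P_u$ and the exceptional prefix) remain, and in those cases I verify that the third character of the reduced code is never $(-1)_o$, using that characters following $P_u$ in $\eta_2$ lie in $\{0_{o/u}, (-1)_{o/u}, P_s\}$ together with the exceptional $P_o$, each producing a third character of numerical value $0$ or $P$ under $h_1$ followed by $h_2^{(n)}$.

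\textbf{The main obstacle} is the third-character verification at Stage~3 in the subcases $\eta_2^i = P_u$ and $\eta_2 = P_u P_o 0_o 1_o \cdots$. This requires carefully computing the images $h_1(0_o P_u (-1)_{o/u})$ and $h_1(0_o P_u P_o 0_o 1_o)$ using standard position, tracking the connectors forced at the boundaries of the turbulent region of $h_1$, and then showing that subsequent cancellation of the various $0_o, P_o$ characters in $h_2^{(n)}$ (each governed by a Loop Theorem computation) cannot produce a $(-1)_o$ at position three. Controlling this cascading cancellation, in the sense of Section~\ref{sec:cascading}, is the crux of the remaining case analysis.
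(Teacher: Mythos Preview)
Your overall strategy—tracking the initial $0_o$ through each of $h_1$, $h_2^{(n)}$, $h_3^{(n)}$ and arguing by contradiction at each stage—matches the paper's. Stage~1 is fine and agrees with the paper. But Stages~2 and~3 contain genuine gaps.

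\textbf{Stage 2 is wrong in a concrete way.} You claim that cancellation of the initial $0_o$ under $h_2^{(n)}$ forces the segment to follow $\partial D_2$ toward $-n-1$ and back. This is not what the Loop Theorem gives. For the left shift $h_2^{(n)}$ one has $n_1=n+1$ and $n_2=-n-1$ (see Definition~\ref{def:g_n} and Convention~\ref{conv:leftshift}), so a loop with trivial image under $h_2^{(n)}$ must follow $\partial D_2$ toward $n_1=n+1$, not toward $-n-1$. The paper disposes of this Loop Theorem case in one line: such a loop would force the initial $0_o$ to be oriented to the right, contradicting hypothesis~(1). The real content of the $h_2^{(n)}$ step is the \emph{other} case, where the cancelling $0_o$ is not a literal $0_o$ in the preimage but is produced by the image of a full or half crossing $a_1a_2$. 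Since $h_2^{(n)}$ shifts left, such a crossing must satisfy $a_1,a_2\geq 0$, so the minimal segment $\zeta=0_o\zeta'a_1a_2$ goes \emph{right}. The paper then shows (using hypothesis~(1) and the fact that $\zeta'$ must turn around) that $\zeta'$ contains a character disagreeing with $\partial D_2$, and Lemma~\ref{lem:bpersists} blocks the cascading cancellation. Your case analysis on $\eta_2^i$ is aimed at the wrong target and does not touch this case.

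\textbf{Stage 3 has the right target but skips the justification.} For $h_3^{(n)}$ (right shift, $n_1=-n-1$) your displayed form $0_oP_u(-1)_o\cdots$ is indeed the dangerous one, but you do not explain why cancellation forces this form. The paper's argument is: either the intermediate segment $\beta'c_1$ contains a character disagreeing with $\partial D_3$, in which case Lemma~\ref{lem:bpersists} blocks cancellation, or it does not, in which case $\beta$ begins exactly with $0_oP_u(-1)_o$. Only then does one compute preimages under $h_2^{(n)}$ and $h_1$ (landing on $0_oP_uP_o0_o1_u$ and then $0_oP_uP_o0_o1_o$) and invoke hypothesis~(2) together with Lemma~\ref{lem:imageof2o2ufamily} to reach a contradiction. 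Your ``third character is never $(-1)_o$'' case analysis is heading in this direction, but you have not separated the crossing case from the loop case, and the part you flag as the main obstacle is exactly the preimage computation the paper carries out.

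Finally, you never rule out back loops in the minimal cancelling segment; the paper does this first (Lemma~\ref{lem:nobackloops}) and uses it to confine the segment to $[-n-1,n+1)$ before the shift-by-shift analysis.
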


We note that the first condition means that the segment $0_o$ is oriented to the left.

\begin{rem}\label{rem:induct}
Theorem \ref{lem:induct} is written with a particular orientation in mind, but such an orientation is arbitrary.
That is to say, the exact same statement is true applied to the image of $\overline{\gamma}$ under $g_n$.
We will use both the original and this ``reverse" version of Theorem \ref{lem:induct} later in the paper, so we make  its statement precise:
Suppose, as in Theorem \ref{lem:induct}, that $\gamma=P_s\eta_10_o\eta_2P_s$ and
\begin{enumerate}
\item The numerical value of $\eta_1^t$ is at most $0$;
\item Either the final two characters of $\eta_1$ are not a loop around $P$ or the final segment of $\eta_1$ is given by $1_o0_oP_oP_u$.
\end{enumerate}
Then there is no cancellation with the terminal $0_o$ in a reduced code for $g_n(P_s\eta_10_o)$.
\end{rem}

We will prove Theorem \ref{lem:induct} through a series of lemmas.
Fix an arc $\gamma$ which satisfies the hypothesis of Theorem \ref{lem:induct}. We must show that there is no cancellation with the initial $0_o$ in a reduced code for $g_n(0_o\eta_2P_s)$.
For contradiction, suppose there is cancellation with the initial $0_o$ in a reduced code for $g_n(0_o\eta_2P_s)$. Consider the subsegment of $\gamma$ given by $0_o\delta$, where $0_o\delta$ is the minimal subsegment such that $g_n(0_o\delta)$ has cancellation with $0_o$. 

\begin{lem}\label{lem:nobackloops}
The segment $\delta$ does not contain any back loops.
\end{lem}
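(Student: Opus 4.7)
Suppose for contradiction that $\delta$ contains a back loop, and write $\delta=\delta_1 C \delta_2$ where $C$ denotes the last back loop appearing in $\delta$ (so $\delta_2$ contains no back loops). The fundamental observation is that each of the shifts $h_1$, $h_2^{(n)}$, and $h_3^{(n)}$ has domain contained in the front of $S$ (by inspection of Definition \ref{def:g_n}), so each fixes any path that lies on the back. In particular $h_j(C)=C$ for each shift, hence $g_n(C)=C$, and the character $C$ appears in every code for $g_n(0_o\delta)$, whether reduced or not.

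The character $C$ represents a path on the back of $S$, while regular characters $k_{o/u}$ represent paths on the front. These two types of characters never cancel against each other when we reduce a code. Consequently, $C$ behaves as a \emph{barrier} in the reduction process: a code of the form $A + C + B$ reduces to (the reduction of $A$) $+\ C\ +$ (the reduction of $B$), with the two reductions happening independently.

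Now apply the minimality of $0_o\delta$ to the proper initial subsegment $0_o\delta_1 C$ of $0_o\delta$. By the hypothesis that $\delta$ is minimal with the stated cancellation property, $g_n(0_o\delta_1 C)$ has no cancellation with the initial $0_o$; hence a reduced code for $g_n(0_o\delta_1 C)$ starts with $0_o$. By the barrier property in the previous paragraph, the portion of the reduced code of $g_n(0_o\delta)=g_n(0_o\delta_1 C \delta_2)$ appearing before $C$ coincides with the portion of the reduced code of $g_n(0_o\delta_1 C)$ appearing before $C$, so the reduced code of $g_n(0_o\delta)$ also starts with $0_o$, contradicting the choice of $\delta$. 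In the edge case where $\delta$ ends at $C$ (so $\delta_2$ is empty), we instead apply minimality to the proper prefix $0_o\delta_1$ and observe that appending $C$ only introduces a back loop connector $C^-$ of bounded code length adjacent to the fixed character $C$; the same barrier property combined with a short direct check then shows the reduction cannot propagate from near $C$ all the way back to the initial $0_o$.

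The main obstacle in this argument is making the barrier property precise in the presence of the connectors $C^-, C^+$ introduced by standard position (Section \ref{sec:Ccode}): these connectors lie on the front and can be nontrivially permuted by the shifts. However, since they have code length at most one on each side of $C$ and the fixed character $C$ always separates the reductions on either side, the argument reduces to a short verification that no cascading cancellation can cross $C$.
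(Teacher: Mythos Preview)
Your barrier argument has a genuine gap. The claim that ``a code of the form $A + C + B$ reduces to (the reduction of $A$) $+\ C\ +$ (the reduction of $B$)'' is only valid when neither $A$ nor $B$ contains back-loop characters that could cancel against the middle $C$. In your application, $B = g_n(C^+\delta_2)$ indeed contains no back loops (since you chose $C$ to be the last one), but $A = g_n(0_o\delta_1 C^-)$ may contain back loops coming from $\delta_1$. If $\delta_1$ contains a back loop $C_j$ that is the inverse path $\overline{C}$, and the segment between $C_j$ and $C$ has trivial image under $g_n$, then $C$ cancels with $C_j$ in the reduced code and no longer separates the two sides. Once the barrier disappears, nothing you have written prevents cascading cancellation from $g_n(C^+\delta_2)$ from reaching the initial $0_o$. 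Your justification (``front characters never cancel against back characters'') only rules out $C$ cancelling with a $k_{o/u}$, not with another back loop.

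The paper's proof takes a different route precisely because this possibility cannot be dismissed so easily. It grants that for the initial $0_o$ to cancel, \emph{all} the back loops $C_1,\dots,C_s$ in $\delta$ must cancel among themselves, then locates an innermost cancelling pair $C_j = \overline{C_{j+1}}$. The segment $C_j^+\tau_{j+1}C_{j+1}^-$ between them must therefore have trivial image under one of $h_1,h_2^{(n)},h_3^{(n)}$, and the Loop Theorem (Theorem~\ref{prop:nontrivialloop}) forces such a segment into a rigid form that is incompatible with the geometry of the back-loop connectors (one of them is forced to fully cross the domain inside the turbulent region). This Loop Theorem step is exactly the missing ingredient in your argument; without it, the possibility of nested back-loop cancellation is not ruled out.
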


\begin{proof}
Suppose towards a contradiction that $\delta$ contains at least one back loop and write $\delta$ in standard form as
$$\delta=\tau_1 C_1^-C_1C_1^+\tau_2\dots \tau_sC_s^-C_sC_s^+\tau_{s+1},$$
where the $\tau_i$ are possibly empty for $i\leq s+1$. 
Then
$$g_n(\delta)=g_n(\tau_1C_1^-) C_1g_n(C_1^+\tau_2C_2^-)C_2\dots g_n(C_{s-1}^+\tau_sC_s^-)C_sg_n(C_s^+\tau_{s+1}).$$

In order to have cascading cancellation involving $0_o$, there must be an instance of $0_o$ in $g_n(C_s^+\tau_{s+1})$ and the initial subsegment of $g_n(\delta)$ which ends immediately before that $0_o$ must have trivial image.  By the minimality of our choice of $\delta$, this initial subsegment must contain $g_n(C_i)=C_i$ for all $i=1,\dots, s$.  Since its image is trivial, there must be cancellation involving each $C_i$.  The only way that this can occur is if there is some $j$ such that $C_j=\overline{C_{j+1}}$ and $C_jg_n(C_j^+\tau_jC_{j+1}^-)\overline{C_j}=\emptyset$. Consequently, we must have $g_n(C_{j}^+\tau_jC_{j+1}^-)=\emptyset$.  
 This implies that exactly one of the following holds:
\begin{enumerate}
\item $h_1(C_{j}^+\tau_jC_{j+1}^-)=\emptyset$,
\item $h_1(C_{j}^+\tau_jC_{j+1}^-)\neq\emptyset$ in a reduced code while $h^{(n)}_2(h_1(C_{j}^+\tau_kC_{j+1}^-))=\emptyset$,
\item $h^{(n)}_2(h_1(C_{j}^+\tau_jC_{j+1}^-))\neq\emptyset$ in a reduced code while $h^{(n)}_3(h^{(n)}_2(h_1(C_{j}^+\tau_jC_{j+1}^-)))=\emptyset$.
\end{enumerate}
Let $i$ correspond to which of the above cases we are in and let $h=h^{(n)}_i$.  Define
$$\tau'_j=\begin{cases}
C_{j}^+\tau_jC_{j+1}^-,&h=h_1,\\
h_1(C_{j}^+\tau_jC_{j+1}^-),&h=h^{(n)}_2,\\
h^{(n)}_2(h_1(C_{j}^+\tau_jC_{j+1}^-)),&h=h^{(n)}_3.\end{cases}$$
By assumption, $\tau'_j\neq\emptyset$ and $h(\tau_j')=\emptyset$.

By the Loop Theorem (Theorem \ref{prop:nontrivialloop}), this implies that $\tau_j'$ is a loop which begins and ends in the turbulent region and does not fully cross $D$ in $(n_1,n_2)$.  Moreover, in $\delta$, the path $\tau_j'$ is preceded by $C_j$ and followed by $C_{j+1}=\overline{C_j}$ and so one of the back loop connectors $C_j^{+}$ or $C_{j+1}^-$ must fully cross $D$.  However this is a contradiction, since these back loop connectors must occur in $(n_1,n_2)$.  See Figure \ref{fig:NoBackLoops}. 
\begin{figure}
\centering
\begin{overpic}[width=4in, trim={.6in 8.95in .75in .25in}, clip]{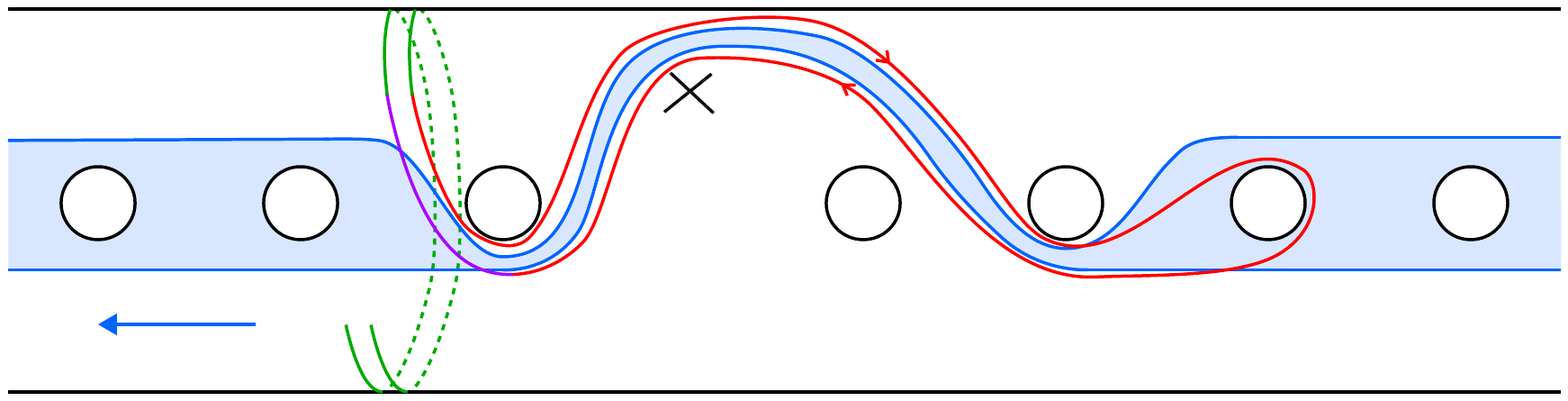}
\put(83,13){$\tau_j$}
\put(29,20){$C_j$}
\put(17.5,20){$C_{j+1}$}
\put(21.5,9.75){$C^-_{j+1}$}
\end{overpic}
\caption{Situation in the proof of Lemma \ref{lem:nobackloops}.}
\label{fig:NoBackLoops}
\end{figure}
\end{proof}

Notice that $0_o$ does not appear in the image under $g_n$ of any segment supported on $(-\infty,-n-1)\cup[n+1,\infty)$.  Therefore, if $\delta$ contains any subsegment supported in this region then, by minimality,  it must also contain a subsegment of the form $q_{o/u}q_{u/o}$ for $q\ge n+1$ or $q<-n-1$; in other words, the segment $\delta$ must ``turn around" in the shift region.  Moreover, by Lemma \ref{lem:nobackloops}, $\delta$ can have no back loops, and hence Lemma \ref{lem:imageof2o2ufamily} implies that such a $q_{o/u}q_{u/o}$ blocks cascading cancellation, so this cannot occur.
Thus, we conclude that a reduced code for $0_o\delta $ is supported on $[-n-1,n+1)$. 

We will now show that that there is no cancellation involving $0_o$ under each of $h_1,h_2^{(n)},$ and $h_3^{(n)}$.  By the assumptions of the theorem, the $0_o$ in $0_o\delta$ is oriented to the left.  If there is no cancellation involving $0_o$ under a shift, then $0_o$ persists in the image and is still oriented to the left.  This implies that the character after $0_o$ in the image still satisfies condition (1) of the theorem, that is, it is either $0_u$ or $P_{o/u}$.

A straightforward calculation  shows that $h_1(0_o\delta)$ cannot cancel $0_o$ for any $\delta $. This follows from the fact that $\partial D_1$ contains $0_u$, not $0_o$, where $D_1$ is the domain for $h_1$. 
Write $0_o\delta'$ for the reduced code of  $h_1(0_o\delta)$, which is another segment completely contained in the region $(-n-1,n+1]$ with no back loops. The first character of $\delta'$ is still either $0_u$ or $P_{o/u}$.

It remains to rule out any cancellation involving $0_o$ under $h_2^{(n)}$ and $h_3^{(n)}$.  We do this in the following two lemmas.  

\begin{lem}\label{lem:h_2}
There is no cancellation involving $0_o$ under $h_2^{(n)}$.
\end{lem}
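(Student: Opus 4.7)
The plan is to derive this as an immediate consequence of Lemma~\ref{lem:bpersists}, applied to the segment $0_o\delta'$ under the left shift $h_2^{(n)}$. The key observation is that the initial character $0_o$ lies in the turbulent region of $h_2^{(n)}$ and disagrees with the boundary of its domain, which is precisely the situation Lemma~\ref{lem:bpersists} is designed to handle.

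To verify the hypotheses of that lemma, I would first note that the turbulent region of the left shift $h_2^{(n)}$ is the open interval $(-n-1,n+1)$, which contains $0$, the numerical value of the initial $0_o$. By construction, the domain $D_2$ of $h_2^{(n)}$ passes over $B_0$, so $0_u$ appears in $\partial D_2$, and hence $0_o$ disagrees with $\partial D_2$. Moreover, $0_o\delta'$ is a simple segment, since it is the reduced code of $h_1(0_o\delta)$, and $h_1(0_o\delta)$ is a subsegment of the simple arc $h_1(\gamma)$.

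With the hypotheses in hand, Lemma~\ref{lem:bpersists} (which applies to left shifts by Convention~\ref{conv:leftshift}) applied with $\delta_1=\emptyset$, $b=0_o$, and $\delta_2=\delta'$ yields
$$h_2^{(n)}(0_o\delta')=0_o\sigma_2$$
in a reduced code, where $0_o\sigma_2=h_2^{(n)}(0_o\delta')$. In particular, the initial $0_o$ persists, so there is no cancellation involving it under $h_2^{(n)}$, as claimed.

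This argument is short and direct; the main (and essentially only) thing to check is that $0_o$ disagrees with $\partial D_2$, which is immediate from the definition of $h_2^{(n)}$. Notably, the additional structural information accumulated just above (that $0_o\delta'$ has no back loops and is supported on $[-n-1,n+1)$) is not needed for this step, since Lemma~\ref{lem:bpersists} is purely local in the character $b$; I expect these stronger properties to become essential only when analysing $h_3^{(n)}$, where a more delicate argument will be required.
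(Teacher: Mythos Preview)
Your argument has a factual error that breaks it completely. You write that ``the domain $D_2$ of $h_2^{(n)}$ passes over $B_0$, so $0_u$ appears in $\partial D_2$, and hence $0_o$ disagrees with $\partial D_2$.'' This is backwards: if the domain passes \emph{over} $B_0$, then the code of $\partial D_2$ at position $0$ is $0_o$, not $0_u$. (Compare the $h_1$ step just above in the paper: there $h_1$ passes \emph{under} $B_0$, so $\partial D_1$ contains $0_u$, and this is exactly why $0_o$ disagrees with $\partial D_1$.) For $h_2^{(n)}$ the situation is the opposite: $0_o$ \emph{agrees} with $\partial D_2$, so Lemma~\ref{lem:bpersists} cannot be applied with $b=0_o$. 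The paper's own proof confirms this reading when it says ``$\zeta'$ contains a character which disagrees with $\partial D_2$, namely $0_u$.''

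Because $0_o$ agrees with $\partial D_2$, a genuinely different argument is required, and this is what the paper supplies. The paper argues by contradiction, letting $\zeta$ be a minimal initial subsegment of $0_o\delta'$ whose image under $h_2^{(n)}$ cancels the leading $0_o$. If $\zeta$ ends in $0_o$ then $\zeta$ is a loop with trivial image, and the Loop Theorem forces the initial $0_o$ to be oriented to the right, contradicting that it is oriented to the left. Otherwise the cancelling $0_o$ comes from the image of a crossing $a_1a_2$ with numerical values at least $0$; the orientation of the initial $0_o$ then forces $\zeta'$ to contain some character $b$ (either $0_u$ or part of an over--under loop) that \emph{does} disagree with $\partial D_2$, and Lemma~\ref{lem:bpersists} is applied to this $b$ to block the cancellation. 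So Lemma~\ref{lem:bpersists} is indeed the engine, but it must be applied to an auxiliary character found inside $\zeta'$, not to $0_o$ itself.
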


\begin{proof}
For contradiction, assume that there is cancellation involving $0_o$, and let $\zeta$ be the minimal subsegment of $0_o\delta'$ which has cancellation with $0_o$ under $h_2^{(n)}$.

If $\zeta^t =0_o$, then $\zeta$ is a loop whose image is trivial.  By the Loop Theorem (see Theorem \ref{prop:nontrivialloop}), $\zeta$ must be of the form  $\partial D_2|_{[0,n_1)}(n_1)_{o/u}(n_1)_{u/o}\overline{\partial D_2|_{[0,n_1)}}$, where $D_2$ is the domain of $h^{(n)}_2$.  
In particular,  $\zeta^i=0_o$ and this $0_o$ is oriented to the right.  However, as noted above, the initial $0_o$ of $0_o\delta$ is oriented to the left, which contradicts that $\zeta$ is an initial subsegment of $0_o\delta'$.

Therefore, $\zeta=0_o\zeta'a_1a_2$, where the  $0_o$ which cancels with $\zeta^i=0_o$ appears in $h_2^{(n)}(a_1a_2)$ and is not the terminal character.  This is the case exactly when $a_1a_2$ is either a full or half crossing.  Moreover, in order for $0_o$ to appear in the image of $a_1a_2$, we must have that the numerical values of $a_1$ and $a_2$ are greater than or equal to zero, since $h_2^{(n)}$ shifts to the left.

There are two cases to consider: either $(\zeta')^i=0_u$ or $(\zeta')^i=P_{o/u}$. 
If $(\zeta')^i=0_u$, then $\zeta'$ contains a character which disagrees with $\partial D_2$, namely $0_u$.
On the other hand, if $(\zeta')^i=P_{o/u}$, then it must be oriented to the left. Since the numerical values of $a_1$ and $a_2$ are at least $0$ and $P<0$, $\zeta'$  must ``turn around" and  contain an over-under loop in turbulent region.  In particular, it must again contain a character which disagrees with $\partial D_2$. In either case,  call the character which disagrees with the boundary of the domain $b$. 

By Lemma \ref{lem:bpersists}, $b$ persists in a reduced code for $h_2^{(n)}(\zeta')$ and such a $b$ must block any cascading cancellation.  However,  by assumption,  there is cancellation between two instances of  $0_o$ in an \textit{unreduced code} for $h_2^{(n)}(\zeta')$, one which precedes $b$ and one which succeeds $b$.
This is a contradiction.
\end{proof}

It remains only to check for cancellation under $h_3^{(n)}$.
\begin{lem}\label{lem:h_3}
There is no cancellation involving $0_o$ under $h_3^{(n)}$.
\end{lem}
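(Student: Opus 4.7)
The proof follows the same blueprint as Lemma \ref{lem:h_2}. Assume for contradiction that there is cancellation involving $0_o$ under $h_3^{(n)}$, and let $\zeta$ be the minimal initial subsegment of $0_o\delta''$ whose image under $h_3^{(n)}$ cancels with the initial $0_o$. As before, either (Case 1) $\zeta^t=0_o$ and $\zeta$ is a loop with trivial image under $h_3^{(n)}$, or (Case 2) $\zeta=0_o\zeta'a_1a_2$ where $a_1a_2$ is a full or half crossing of $\partial D_3$ and a $0_o$ appearing in $h_3^{(n)}(a_1a_2)$ cancels with $\zeta^i$.

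For Case 2, note that $h_3^{(n)}$ is a right shift with $n_2=n+1$ and that $\partial D_3$ on $(-n-1, n+1)$ going right reads $(-n)_o(-n+1)_o\cdots(-1)_oP_u0_o1_o\cdots n_o$. Consequently the image of a crossing produces a $0_o$ only when both characters of $a_1a_2$ have numerical value at most $0$. Since the initial $0_o$ is oriented to the left, $(\zeta')^i$ is $0_u$, $P_o$, or $P_u$. In the first two subcases the character disagrees with $\partial D_3$ at its position (which is $0_o$ and $P_u$, respectively), so by Lemma~\ref{lem:bpersists} it persists in a reduced code for $h_3^{(n)}(\zeta')$ and blocks the cancellation. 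For $(\zeta')^i=P_u$, which agrees with $\partial D_3$, I argue in the spirit of Lemma~\ref{lem:h_2} that because $\zeta'$ must reach a crossing at value at most $0$ while starting leftward from $P_u$, either $\zeta'$ immediately transitions to $0_{o/u}$ (in which case a disagreeing character appears by inspecting the four possibilities) or $\zeta'$ itself must pass beyond $P_u$ to the left and then turn around before reaching the crossing, producing an over-under loop in the turbulent region by Lemma~\ref{lem:nobackloops}, and such a loop necessarily contains a character disagreeing with $\partial D_3$.

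The main obstacle is Case 1, because the orientation argument that settled Case 1 in Lemma~\ref{lem:h_2} no longer applies: by the Loop Theorem the loop $\zeta$ must equal $0_oP_u(-1)_o(-2)_o\cdots(-n)_o(-n-1)_{o/u}(-n-1)_{u/o}(-n)_o\cdots(-1)_oP_u0_o$, which is oriented to the left in agreement with the given orientation of $0_o$. To rule out this case I show directly that $\delta''$ cannot begin with the required pattern $P_u(-1)_o(-2)_o\cdots$. Working backwards, since $P$ and $0$ lie in the turbulent regions of both $h_2^{(n)}$ and $h_1$ and $P_u$ disagrees with $\partial D_2$ while agreeing with $\partial D_1$, any occurrence of $P_u$ immediately after the initial $0_o$ in $h_2^{(n)}\circ h_1(0_o\delta)$ must already be present after $h_1$. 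I then analyze each case permitted by hypothesis (2) of Theorem~\ref{lem:induct}: when $\eta_2^i=0_u$, the image $h_1(0_o0_u)$ yields $(h_1(0_o\delta))$ starting with $0_o$ followed by a character of value at least $0$, contradicting a second character of $P_u$; when $\eta_2^i=P_o$, an explicit computation of $h_1$ on the initial segment gives a second character disagreeing with $\partial D_3$ (so Case 1 can be re-routed through the argument of Case 2); and when $\eta_2$ starts $P_uP_o0_o1_o$, the special prescribed form forces a character disagreeing with $\partial D_3$ to appear within the first few characters of $\delta''$, again blocking any trivializing loop.

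The hardest step is the case analysis in Case 1, particularly verifying the explicit images of the small initial segments of $\eta_2$ under $h_1$ and then $h_2^{(n)}$, and checking that in every permitted configuration a disagreement with $\partial D_3$ emerges close enough to the start of $\delta''$ to prevent the loop pattern $P_u(-1)_o(-2)_o\cdots$ from occurring. This is exactly where hypothesis (2) of Theorem~\ref{lem:induct}, and in particular the precise form $P_uP_o0_o1_o$ required when the arc begins with a loop around $P$, enters the proof essentially.
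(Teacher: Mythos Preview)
Your overall architecture is sound and you correctly isolate the hard case, but the argument has a genuine gap at exactly the crucial point.  In Case~2, your dichotomy for $(\zeta')^i=P_u$ is incomplete: besides ``transition immediately to $0_{o/u}$'' and ``pass left and turn around,'' there is a third possibility, namely that $\zeta'$ continues leftward \emph{following $\partial D_3$ itself}, $\zeta'=P_u(-1)_o(-2)_o\cdots$, and reaches the crossing without ever turning around or producing a disagreeing character.  This is precisely the situation the paper singles out as the unique obstruction (their $\beta=\overline{\partial D_3|_{(n_1,0]}}(n_1)_{o/u}$), and your Case~2 argument does not cover it.

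You then try to handle this in Case~1, but that argument is too loose to close the gap.  Knowing that $P_u$ disagrees with $\partial D_2$ only tells you (via Claim~\ref{claim:bfromb}) that \emph{some} $P_u$ in $\delta''$ comes from a $P_u$ in $h_1(0_o\delta)$; it does not by itself force the $P_u$ to sit immediately after the initial $0_o$ in $h_1(0_o\delta)$, which is what your subsequent case split on $\eta_2^i$ needs.  Moreover, your three subcases for $\eta_2$ omit $\eta_2^i=P_u$ with second character \emph{not} $P_o$ (for instance $P_u(-1)_{o/u}$ or $P_u0_{o/u}$), and the remaining subcases are asserted rather than computed.  The paper avoids all of this by tracing back with a sharper anchor: it uses that $(-1)_o$ disagrees with $\partial D_2$ to compute $(h_2^{(n)})^{-1}(0_oP_u(-1)_o)$ exactly, obtaining that $h_1(0_o\delta)$ must begin $0_oP_uP_o0_o1_u$; then, since $P_o$ disagrees with $\partial D_1$, a second backward step forces $0_o\delta$ to begin $0_oP_uP_o$, whereupon hypothesis~(2) pins it down to $0_oP_uP_o0_o1_o$.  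The contradiction is then the clean discrepancy $1_o$ versus $1_u$, ruled out by Lemma~\ref{lem:imageof2o2ufamily}.  Your forward case analysis could in principle be completed, but as written it neither exhausts the cases nor carries out the computations that make hypothesis~(2) bite.
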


\begin{proof}
The argument is similar to the proof of Lemma \ref{lem:h_2}.
For contradiction assume that there is cancellation involving $0_o$ and write $\beta=0_o\delta''$ for the minimal subsegment of a reduced code for $h_2^{(n)}(0_o\delta')$ which has cancellation with $0_o$ under $h_3^{(n)}$.  Note that $(\delta'')^i$ is still either $0_u$ or $P_{o/u}$.   By the same reasoning as in the proof of Lemma \ref{lem:h_2}, $\beta=0_o\beta'c_1c_2$, and $c_1c_2$ is a full or half crossing.
Recall that for the shift $h_3^{(n)}$, $n_1=-n-1$, $n_2=n+1$, and $h_3^{(n)}$ shifts to the right.

If $\beta'c_1$ contains a character which does not agree with $\partial D_3$, then the contradiction follows by applying Lemma~\ref{lem:bpersists} as in the proof of Lemma \ref{lem:h_2}.  However, it is possible that $\beta'c_1$ does not contain such a character.  This occurs exactly when $\beta=\overline{\partial D_3|_{(n_1,0]}}(n_1)_{o/u}$.   This possibility did not arise when considering $h_2$ because $h_2$ is a left shift, and so $n_1>0$.

In this case, $\beta$ must begin with $0_oP_u(-1)_o$.  We will now show that $\beta$ could not be the image of $0_o\delta$ under $h_2^{(n)}\circ h_1$.  In the notation above, $(-1)_o$ is in the image of $\xi=0_o\delta'$ under $h_2^{(n)}$.  Since $(-1)_o$ disagrees with $\partial D_2$, there must be an instance of $(-1)_o$ in $\delta'$ by Claim \ref{claim:bfromb}.   Let $\xi'=0_o\sigma(-1)_o$ be the subsegment of $\xi$ so that $h_2^{(n)}(\xi')=0_oP_u(-1)_o$.  We can find $\xi$ directly by computing $(h_2^{(n)})^{-1}(0_oP_u(-1)_o)$.  Recall that we cannot always take the preimage of a segment under a permissible shift (see the discussion in Section \ref{sec:inverses}).  However, in this case we are able to take the preimage precisely because Claim \ref{claim:bfromb} ensures that $0_oP_u(-1)_o$ is  the image of a subsegment of $h_2^{(n)}(\xi)$. 
Computing $(h_2^{(n)})^{-1}(0_oP_u(-1)_o)$ results in
\[
\xi'=0_oP_u\partial D_2|_{[P,n+1)}(n+1)_u(n+1)_o\overline{\partial D_2|_{[P,n+1)}}(-1)_o.
\]
 In particular, $\xi'$ starts with $0_oP_uP_o0_o1_u$. 
To conclude the proof, we will show that $\xi'$ cannot occur as the image under $h_1$ of the segment $0_o\delta$.

Assume for contradiction that $h_1(0_o\delta) = \xi'$. Since $P_o$ does not agree with $\partial D_1$, there must be an instance of $P_o$ in $\delta$ by Claim \ref{claim:bfromb}.  Let $0_o\tau P_o$ be the initial subsegment of $0_o\delta$ so that $h_1(0_o\tau P_o)=0_oP_uP_o$.  As above, we may directly compute the preimage of $0_oP_uP_o$ under $h_1$.  This shows that $0_o\tau P_o=0_oP_uP_o$.  The assumptions of the theorem then imply that $0_o\delta$ starts with $0_oP_uP_o0_o1_o$, where $1_o$ is oriented to the right.  Since $\xi'$ starts with $0_oP_uP_o1_u$, the only way $h_1(0_o\delta)=\xi'$ is if there is cancellation with the terminal $1_o$ in  $0_oP_uP_o0_o1_o$ when we apply $h_1$ to $0_o\delta$. Since $h_1$ shifts to the right and $\xi'$ contains $(-1)_o$, it must be the case that $0_o\delta$  contains a loop $k_{o/u}k_{u/o}$ with $k\geq 1$.  In particular, $0_o\delta$ starts with either $0_oP_uP_o0_o1_o1_u$ (if $k=1)$ or $0_oP_uP_o0_o1_o\eta k_{o/u}k_{u/o}$ where $\eta$ is strictly monotone increasing (if $k>1$).  Since $n_2=1$ for $h_1$,  Lemma~\ref{lem:imageof2o2ufamily} then implies that this $k_{o/u}k_{u/o}$ blocks cancellation  between the terminal $1_o$ in $0_oP_uP_o0_o1_o$ and any characters in $0_o\delta$ that appear \emph{after} $k_ok_u$.  Since the segment $\eta$ of $0_o\delta$ between $1_o$ and $k_ok_u$ is either empty (if $k=1$) or a strictly monotone increasing segment (if $k>1$), using the fact that $h_1$ shifts to the right, it is straightforward to check that there can be no cancellation with the terminal $1_o$ in $0_oP_uP_o1_o$ when we apply $h_1$, which is a contradiction.  This completes the proof.
\end{proof}

Lemmas~\ref{lem:nobackloops}, \ref{lem:h_2}, and \ref{lem:h_3} show that $g_n(0_o\delta)$ has no cancellation with $0_o$, which completes the proof of Theorem \ref{lem:induct}.

As promised at the beginning of this section, we  have the following corollary.

\begin{cor}\label{cor:inductcor}
For any fixed $n$ and any $i\ge 0$, $\alpha^{(n)}_{i+1}$ starts like $\alpha^{(n)}_{i}$.  More generally, for any $i\geq 1$, if $\gamma$ starts like $\alpha^{(n)}_i$ then $g_n(\gamma)$ starts like $\alpha^{(n)}_{i+1}$.
\end{cor}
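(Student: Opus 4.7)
The plan is to prove the second, more general statement first; the first statement will then follow by induction. Assume $\gamma$ starts like $\alpha_i^{(n)}$ for some $i \geq 1$. By the symmetry of $\alpha_i^{(n)}$ (its reduced code is palindromic up to the central pair $0_o 0_u$), after possibly replacing $\gamma$ with $\overline\gamma$ we may assume the matching is on the initial side, so that $\gamma$ begins with $\mathring\alpha_i^{(n)}$. From the explicit form of $\alpha_i^{(n)}$, the segment $\mathring\alpha_i^{(n)}$ ends in the character $0_o$, so we write $\gamma = P_s\eta_1 0_o\eta_2 P_s$ with $P_s\eta_1 0_o = \mathring\alpha_i^{(n)}$; the goal is to show that the reduced code for $g_n(\gamma)$ begins with $\mathring\alpha_{i+1}^{(n)}$.

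Splitting around the central $0_o$ via efficient concatenation, one has
\[
g_n(\gamma) \;=\; g_n(\mathring\alpha_i^{(n)}) \;+\; g_n(0_o\eta_2 P_s).
\]
Combining Proposition \ref{lem:imageofinit} with the fact that the over-under loop $0_o 0_u$ is fixed by each of the shifts $h_1, h_2^{(n)}, h_3^{(n)}$ (and hence by $g_n$) yields $g_n(\mathring\alpha_i^{(n)}) = \mathring\alpha_{i+1}^{(n)}$ in reduced code. Since $\mathring\alpha_{i+1}^{(n)}$ also terminates in $0_o$, the initial segment $\mathring\alpha_{i+1}^{(n)}$ survives in the reduced code of $g_n(\gamma)$ as soon as the reduced code of $g_n(0_o\eta_2 P_s)$ begins with $0_o$. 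This last property is precisely the conclusion of Theorem \ref{lem:induct}, provided its hypotheses on $\eta_2$ are met. When the agreement between $\gamma$ and $\alpha_i^{(n)}$ extends past the central $0_o$, one has $\eta_2^i = 0_u$, and both hypotheses are immediate. In the remaining sub-case, where the agreement terminates exactly at $\mathring\alpha_i^{(n)}$, simplicity of $\gamma$ and the explicit form of $\mathring\alpha_i^{(n)}$ constrain the admissible values of $\eta_2^i$; for those not covered by the numerical-value hypothesis of Theorem \ref{lem:induct}, one instead obtains the desired non-cancellation by applying Remark \ref{rem:induct} to $\overline\gamma$, which controls cancellation from the terminal side.

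For the first statement, I will induct on $i \geq 0$. The base case $i = 0$ is a direct computation: $\alpha_1^{(n)} = g_n(\alpha_0)$ begins with $P_s 0_o = \mathring\alpha_0$. The case $i = 1$, which falls outside the hypothesis of the second statement, follows directly from the explicit formula for $g_n(\mu 0_u)$ established in the proof of Proposition \ref{lem:imageofinit}, which shows that $\alpha_2^{(n)}$ begins with $\mathring\alpha_1^{(n)}$. For $i \geq 2$, the inductive hypothesis gives that $\alpha_i^{(n)}$ starts like $\alpha_{i-1}^{(n)}$; applying the second statement with parameter $i-1 \geq 1$ to $\gamma = \alpha_i^{(n)}$ then gives that $\alpha_{i+1}^{(n)}$ starts like $\alpha_i^{(n)}$, closing the induction. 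The principal technical obstacle is the second sub-case of the second statement, where Theorem \ref{lem:induct}'s numerical-value hypothesis on $\eta_2^i$ may fail directly; resolving it relies on the cascading cancellation analysis developed in Section \ref{sec:cascading}.
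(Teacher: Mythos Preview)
Your overall strategy --- split $\gamma$ at the central $0_o$, apply Proposition~\ref{lem:imageofinit} to the first half, and invoke Theorem~\ref{lem:induct} to block cancellation --- is exactly the paper's. But your organization (prove the second statement in full, then deduce the first) has a genuine gap, and your sub-case analysis misidentifies where the difficulty lies.

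For $i\geq 1$ the terminal $0_o$ of $\mathring\alpha_i^{(n)}$ is always oriented to the left: the proof of Proposition~\ref{lem:imageofinit} shows $\mathring\alpha_i^{(n)}$ ends in $\mu=(n{+}1)_o(n{+}1)_un_o\cdots 1_o0_o$. Hence $\eta_2^i\in\{0_u,P_o,P_u\}$ automatically, and the numerical-value hypothesis~(1) of Theorem~\ref{lem:induct} is never in doubt. There is no case ``not covered'' by it, so the appeal to Remark~\ref{rem:induct} on $\overline\gamma$ is addressing a non-issue (and would in any case require the same hypotheses with roles reversed).

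The real obstacle is hypothesis~(2): if $\eta_2$ begins with a loop around $p$, it must be $P_uP_o0_o1_o$. The paper verifies this by a simplicity argument (Figure~\ref{fig:etaPoPu}) which uses that $\gamma$ \emph{starts like $\alpha_1^{(n)}$}: one needs enough of the initial loop $P_s0_o1_o\cdots$ to trap the strand after $P_uP_o0_o$ and force the next character to be $1_o$. At level $i$ you only know $\gamma$ starts like $\alpha_i^{(n)}$; concluding it starts like $\alpha_1^{(n)}$ requires the chain $\alpha_i^{(n)}$ starts like $\alpha_{i-1}^{(n)}$ starts like $\cdots$ starts like $\alpha_1^{(n)}$ --- precisely the first statement. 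This is why the paper runs a strong induction establishing both statements \emph{simultaneously}: the inductive hypothesis supplies $\alpha_i^{(n)}$ starts like $\alpha_1^{(n)}$, which unlocks hypothesis~(2), which then yields both statements at level $i$. Your decoupled plan cannot close this loop; you should reorganize as a simultaneous induction and replace your sub-case~2 by a direct verification of hypothesis~(2) using that $\mathring\alpha_i^{(n)}$ contains $\mathring\alpha_1^{(n)}$ as an initial segment.
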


\begin{proof}
First, a direct computation shows that for any fixed $n$, $\alpha^{(n)}_1$ starts like $\alpha^{(n)}_0$.
For all $i\geq 1$, we will prove both statements simultaneously, using strong induction.  We will show that for each $n$ and any $i\geq 1$, if $\gamma$ is a simple arc that starts like $\alpha_i$, then $g_n(\gamma)=\mathring\alpha^{(n)}_{i+1}\eta'$ for some reduced $\eta'$ which has the form of $\eta_2$ in the hypotheses of Theorem \ref{lem:induct}.

For the base case $i=1$, suppose $\gamma$ starts like $\alpha^{(n)}_1$ so that we may write
\[
\gamma=\mathring\alpha^{(n)}_1\eta,
\]
 in a reduced code.  Applying $g_n$ to both sides of this equation, we have, in an \textit{unreduced} code, 
 \[g_n(\gamma)=g_n(\mathring\alpha^{(n)}_{1})\eta',
 \]
for some reduced $\eta'$.  By Proposition \ref{lem:imageofinit}, $g_n(\mathring\alpha_{1}^{(n)})=\mathring\alpha_{2}^{(n)}$ in a reduced code.  Thus to conclude that $g_n(\gamma)$ starts like $\alpha_2^{(n)}$, we need to show that $g_n(\mathring\alpha_{1}^{(n)})=\mathring\alpha_{2}^{(n)}$ persists in a reduced code for $g_n(\gamma)$.  Since $\mathring\alpha^{(n)}_1$ ends with $0_o$, it suffices to show that $\eta$ has the form of $\eta_2$ in the hypotheses of Theorem \ref{lem:induct} so that there is no cancellation with this $0_o$.  We have $\mathring\alpha_1^{(n)}=P_s0_o1_o2_o\dots(n+1)_o(n+1)_u n_o(n-1)_o\dots 2_o1_o0_o$, and so the terminal $0_o$ is oriented to the left.  It follows that $\eta$ satisfies Theorem \ref{lem:induct}(1). Moreover, since $\gamma$ is simple and starts like $\alpha_1^{(n)}$, if $\eta$ begins with a loop around $p$, it must begin with $P_uP_o0_o1_o$ (see Figure \ref{fig:etaPoPu}).  Thus $\eta$ satisfies Theorem \ref{lem:induct}(2) and so Theorem  \ref{lem:induct} shows that there is no cancellation in $g_n(\gamma)$ between the final $0_o$ in $g_n(\mathring\alpha_{1}^{(n)})=\mathring\alpha_{2}^{(n)}$ and $\eta'$.  Therefore, $g_n(\gamma)$ starts like $\alpha_2^{(n)}$.  In particular, a direct computation shows that $\alpha_2^{(n)}$ starts like $\alpha_1^{(n)}$ and so by setting $\gamma=\alpha_2^{(n)}$, we have shown that $g_n(\alpha_2^{(n)})=\alpha_3^{(n)}$ starts like $\alpha_2^{(n)}$.

\begin{figure}
\centering
\def\svgwidth{3.5in}
\begingroup%
  \makeatletter%
  \providecommand\color[2][]{%
    \errmessage{(Inkscape) Color is used for the text in Inkscape, but the package 'color.sty' is not loaded}%
    \renewcommand\color[2][]{}%
  }%
  \providecommand\transparent[1]{%
    \errmessage{(Inkscape) Transparency is used (non-zero) for the text in Inkscape, but the package 'transparent.sty' is not loaded}%
    \renewcommand\transparent[1]{}%
  }%
  \providecommand\rotatebox[2]{#2}%
  \newcommand*\fsize{\dimexpr\f@size pt\relax}%
  \newcommand*\lineheight[1]{\fontsize{\fsize}{#1\fsize}\selectfont}%
  \ifx\svgwidth\undefined%
    \setlength{\unitlength}{375.02143213bp}%
    \ifx\svgscale\undefined%
      \relax%
    \else%
      \setlength{\unitlength}{\unitlength * \real{\svgscale}}%
    \fi%
  \else%
    \setlength{\unitlength}{\svgwidth}%
  \fi%
  \global\let\svgwidth\undefined%
  \global\let\svgscale\undefined%
  \makeatother%
  \begin{picture}(1,0.31720825)%
    \lineheight{1}%
    \setlength\tabcolsep{0pt}%
    \put(0,0){\includegraphics[width=\unitlength,page=1]{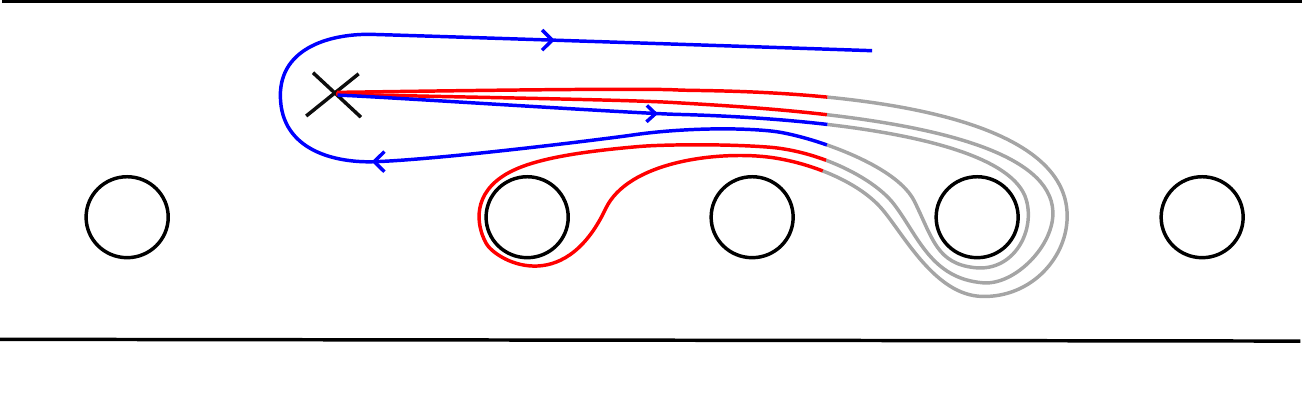}}%
    \put(0.24330655,0.00579545){\makebox(0,0)[lt]{\lineheight{1.25}\smash{\begin{tabular}[t]{l}$P$\end{tabular}}}}%
    \put(0.38931277,0.0056403){\makebox(0,0)[lt]{\lineheight{1.25}\smash{\begin{tabular}[t]{l}$0$\end{tabular}}}}%
    \put(0.56248891,0.00698276){\makebox(0,0)[lt]{\lineheight{1.25}\smash{\begin{tabular}[t]{l}$1$\end{tabular}}}}%
    \put(0.73432275,0.0056403){\makebox(0,0)[lt]{\lineheight{1.25}\smash{\begin{tabular}[t]{l}$2$\end{tabular}}}}%
  \end{picture}%
\endgroup%

\caption{The arc $\alpha_1^{(n)}$ is in red; the grey portions may vary depending on $n$.  One possibility for $\gamma$ and the subsegment $\eta$ is shown in blue; the grey portions may vary depending on $n$.}
\label{fig:etaPoPu}
\end{figure}

To finish the base case, we will now show that $\eta'$ has the form of $\eta_2$ in the hypotheses of Theorem \ref{lem:induct}.  
A direct computation shows that $\mathring \alpha_2^{(n)}$ ends with $(n+1)_o(n+1)_u n_o(n-1)_o\dots 2_o1_o0_o$, and so that $\eta'$ satisfies Theorem \ref{lem:induct}(1).  As above, since $g_n(\gamma)=\mathring\alpha_{2}^{(n)}\eta'$ is simple and $\alpha_2^{(n)}$ (and therefore $\mathring\alpha_2^{(n)}$) starts like $\alpha_1^{(n)}$, it follows that if $\eta'$ starts with a loop around $p$, it must begin with $P_uP_o0_o1_o$, so $\eta'$ satisfies Theorem \ref{lem:induct}(2).  This completes the base case.

Now assume the results hold for all $j<i$.  
For the induction step, suppose that $\gamma=\mathring\alpha_i^{(n)}\eta$ for some $\eta$ which has the form of $\eta_2$ in the hypotheses of Theorem \ref{lem:induct}.  The argument for this case goes through exactly as in the base case except we need one additional argument to show that $\eta'$ has the form of $\eta_2$ in the hypotheses of Theorem \ref{lem:induct}, where  $g_n(\gamma)=\mathring\alpha_{i+1}^{(n)}\eta'$ in a reduced code.  By the proof of Proposition \ref{lem:imageofinit}, $\mathring\alpha_{i+1}^{(n)}$ ends with $\mu=(n+1)_o(n+1)_u n_o(n-1)_o\dots 2_o1_o0_o$ and therefore $\eta'$ satisfies Theorem \ref{lem:induct}(1).  To see that $\eta'$ satisfies Theorem \ref{lem:induct}(2), notice that $\alpha_{i+1}^{(n)}$ starts like $\alpha_i^{(n)}$, which starts like $\alpha_{i-1}^{(n)}$, etc., so that $\alpha_{i+1}^{(n)}$ (and therefore $\mathring\alpha_{i+1}^{(n)}$) starts like $\alpha_1^{(n)}$.  In particular, $\gamma$ must start like $\alpha_1^{(n)}$.  Since $\gamma$ is simple, if $\eta'$ starts with a loop around $p$, it must begin with $P_uP_o0_o1_o$ and Theorem \ref{lem:induct}(2) is satisfied.  This completes the induction step, and the result is proved.
\end{proof}


\section{Highways in arcs}\label{sec:highways}

In this section, we introduce and examine the prevalence of certain segments of the code for $\alpha_i^{(n)}$ that we call \textit{highways}.  The presence of highways forces arcs disjoint from $\alpha_i^{(n)}$ to have very specific initial and terminal subsegments.
This will be instrumental in proving Theorem \ref{thm:startslike}, which shows that if $\delta$ is any arc which starts like $\alpha_i^{(n)}$ and $\gamma$ is an arc disjoint from $\delta$, then $\gamma$ starts like $\alpha_{i-1}^{(n)}$, provided $i$ is large enough.
In Section \ref{sec:loxo}, we will use Theorem \ref{thm:startslike}  to show that the arcs $\alpha_i^{(n)}$ lie on a quasi-geodesic in the modified arc graph.

In Section \ref{sec:highwayprelim}, we will give general preliminary definitions and results for general arcs, and then in Section \ref{sec:highwayalpha} we will analyze highways in the arcs $\alpha_i^{(n)}$.

\subsection{Preliminaries on highways}\label{sec:highwayprelim}
Recall our convention that all arcs are assumed to be simple and start and end at $p$. 

\begin{defn}
Given an arc $\delta=P_sq_1\eta q_2P_s$, where $q_1, q_2$ are single characters which are not $C$ and $\eta$ is a segment, we say that $\delta$ \textit{has highways} if either $q_1P_{o/u}P_{u/o}q_1$ or $q_2P_{o/u}P_{u/o}q_2$ is a subsegment of $\delta$.
\end{defn}

\noindent The following lemma is an almost immediate corollary of the definition and the fact that $\delta$ is simple. 

\begin{figure}
\centering
\def\svgwidth{3in}
\begingroup%
  \makeatletter%
  \providecommand\color[2][]{%
    \errmessage{(Inkscape) Color is used for the text in Inkscape, but the package 'color.sty' is not loaded}%
    \renewcommand\color[2][]{}%
  }%
  \providecommand\transparent[1]{%
    \errmessage{(Inkscape) Transparency is used (non-zero) for the text in Inkscape, but the package 'transparent.sty' is not loaded}%
    \renewcommand\transparent[1]{}%
  }%
  \providecommand\rotatebox[2]{#2}%
  \newcommand*\fsize{\dimexpr\f@size pt\relax}%
  \newcommand*\lineheight[1]{\fontsize{\fsize}{#1\fsize}\selectfont}%
  \ifx\svgwidth\undefined%
    \setlength{\unitlength}{242.78217962bp}%
    \ifx\svgscale\undefined%
      \relax%
    \else%
      \setlength{\unitlength}{\unitlength * \real{\svgscale}}%
    \fi%
  \else%
    \setlength{\unitlength}{\svgwidth}%
  \fi%
  \global\let\svgwidth\undefined%
  \global\let\svgscale\undefined%
  \makeatother%
  \begin{picture}(1,0.49161726)%
    \lineheight{1}%
    \setlength\tabcolsep{0pt}%
    \put(0,0){\includegraphics[width=\unitlength,page=1]{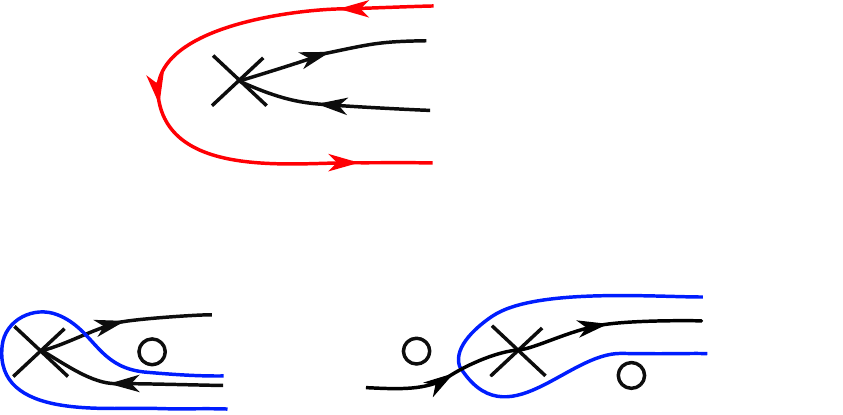}}%
    \put(0.15789078,0.12732254){\makebox(0,0)[lt]{\lineheight{1.25}\smash{\begin{tabular}[t]{l}\small{$q_1$}\end{tabular}}}}%
    \put(0.27791638,0.02967462){\makebox(0,0)[lt]{\lineheight{1.25}\smash{\begin{tabular}[t]{l}\small{$q_2$}\end{tabular}}}}%
    \put(0.3981901,0.40840824){\makebox(0,0)[lt]{\lineheight{1.25}\smash{\begin{tabular}[t]{l}\small{$q_1$}\end{tabular}}}}%
    \put(0.37295909,0.33457602){\makebox(0,0)[lt]{\lineheight{1.25}\smash{\begin{tabular}[t]{l}\small{$q_2$}\end{tabular}}}}%
    \put(0.44744064,0.00414305){\makebox(0,0)[lt]{\lineheight{1.25}\smash{\begin{tabular}[t]{l}\small{$q_2$}\end{tabular}}}}%
    \put(0.84521493,0.1038105){\makebox(0,0)[lt]{\lineheight{1.25}\smash{\begin{tabular}[t]{l}\small{$q_1$}\end{tabular}}}}%
  \end{picture}%
\endgroup%

\caption{At the top is an arc where $q_1 = q_2$ and the red segment shows that this arc has highways. Below are two arcs where $q_1 \neq q_2$ and the blue segments must appear if the arc has highways. However, this arrangement contradicts the simplicity of each arc (see Lemma \ref{lem:highwayssymm}) so neither arc has highways.}
\label{fig:highways}
\end{figure}

\begin{lem}\label{lem:highwayssymm}
If $\delta$ is an arc that has highways, then
$$\delta=P_s\tau\dots\overline{\tau} P_s,$$
for some segment $\tau$ with $\ell_c(\tau)>0$.
That is to say, the first part of the code for $\delta$ always overlaps with the reverse of the last part of the code in at least two characters, one of which is $P_s$.
\end{lem}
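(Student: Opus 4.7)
The plan is to show that the presence of a highway in the middle of $\delta$ forces the first and last non-$P_s$ characters $q_1$ and $q_2$ to agree, which is exactly the desired overlap of $\tau = q_1$ at the two ends.

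First, by reversing $\delta$ if necessary (which swaps the roles of $q_1$ and $q_2$), I may assume that the subsegment $q_1 P_{o/u} P_{u/o} q_1$ appears in the code of $\delta$.  Let $k_1$ denote the numerical value of $q_1$, so that this highway is realized as a subarc $H$ of $\delta$ whose two endpoints both lie on $B_{k_1}$ (approached with the same over/under orientation $q_1$) and whose interior winds once around the puncture $p$.  Closing up $H$ with the short arc of $B_{k_1}$ on the side facing $p$ produces a simple closed curve bounding a disk $D$ in $S$ whose only interior puncture is $p$.

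Next, I would analyze the initial and terminal subsegments $\delta_1$ (from $p$ to the first endpoint of $H$) and $\delta_2$ (from the second endpoint of $H$ back to $p$).  Each starts or ends at $p \in \operatorname{int}(D)$ and terminates on $\partial D$.  Since $\delta$ is simple, neither $\delta_i$ can meet the highway $H \subset \partial D$ in its interior, so either (a) $\delta_i$ is contained in $\overline D$ and enters $\partial D$ only at its designated endpoint of $H$, or (b) $\delta_i$ exits $D$ through the $B_{k_1}$-arc of $\partial D$ at some additional point, which would record an extra $(k_1)_{o/u}$ character in its code.  A direct case check, using that the first character of $\delta$ is $q_1$ and the last is $q_2$ together with simplicity, rules out option (b), so both $\delta_1$ and $\delta_2$ lie in $\overline D$.

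Inside the once-punctured disk $D$, the disjoint simple arcs $\delta_1,\delta_2$ run from the puncture $p$ to distinct points on the $B_{k_1}$-portion of $\partial D$.  Since the two endpoints of $H$ approach $B_{k_1}$ with the same orientation $q_1$ (both $o$ or both $u$), and since $\delta_1,\delta_2$ must approach these endpoints without crossing each other or $H$, their own crossings of $B_{k_1}$ are forced to have orientation matching $q_1$.  In terms of the code, this says precisely that the initial non-$P_s$ character $q_1$ of $\delta$ and the terminal non-$P_s$ character $q_2$ both equal $q_1$, so that $q_2 = q_1$.  Taking $\tau = q_1$ yields $\delta = P_s \tau \ldots \overline\tau P_s$ with $\ell_c(\tau) = 1 > 0$, as required.

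The main obstacle is the orientation-matching step in the last paragraph: one must argue carefully, using the Jordan curve theorem in $D$ (or an explicit classification of pairs of disjoint simple arcs in a punctured disk with endpoints on a single boundary arc), that the two tails of the highway, which cross $B_{k_1}$ with the \emph{same} orientation, can only be matched by $\delta_1$ and $\delta_2$ with the same orientation without producing a self-intersection of $\delta$.  This is exactly the content illustrated in Figure~\ref{fig:highways}, where the blue configurations (with $q_1 \neq q_2$) visibly force a crossing.
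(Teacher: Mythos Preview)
Your core intuition—that the highway $H = q_1 P_{o/u} P_{u/o} q_1$ encircles $p$ with its only ``opening'' in the $q_1$-sector, forcing $q_2 = q_1$—is exactly right and matches the paper's argument. But your execution introduces an unnecessary detour that creates a genuine gap.

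The problematic step is your claim that the full segments $\delta_1, \delta_2$ lie in $\overline D$. First, the construction of $D$ is imprecise: arcs in this paper are disjoint from every $B_k$ (they pass \emph{over} or \emph{under}, never \emph{through}), so ``closing up $H$ with the short arc of $B_{k_1}$'' does not literally make sense, and the endpoints of $H$ as a subsegment of $\delta$ do not lie on $B_{k_1}$. Second, even with a corrected construction, the containment claim fails. The segment $\delta_1$ begins with $P_s q_1$, so it immediately records passing over/under $B_{k_1}$; if $D$ really were a once-punctured disk with only $p$ inside, then $\delta_1$ could not record any character other than $P_{o/u}$ while staying in $\overline D$. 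Your ``direct case check'' to rule out option~(b) is therefore not a routine verification but the entire content of the lemma, and it is not supplied. More generally, $\delta_1$ and $\delta_2$ can be arbitrarily long paths that wander far across the surface before and after the highway; there is no reason for them to be trapped near $p$.

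The fix is the paper's proof: drop the analysis of the full $\delta_1, \delta_2$ and look only at the length-two terminal subsegment $q_2 P_s$. This short piece must reach $p$, and the highway $q_1 P_{o/u} P_{u/o} q_1$ surrounds $p$ with both tails exiting in the $q_1$-direction. If $q_2 \neq q_1$, the segment $q_2 P_s$ approaches $p$ from the wrong sector and is forced to cross $H$, contradicting simplicity of $\delta$. Your own final paragraph already points at exactly this picture (and the right figure); the point is that this local observation about $q_2 P_s$ versus $H$ is the whole proof, with no need for the disk $D$ or any containment argument.
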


\begin{proof} It suffices to consider the case when $\tau$ is a single character, so that $\tau=a=\overline{\tau}$ for some $a$.  Since $\delta=P_sq_1\eta q_2P_s$, if the conclusion does not hold, then $q_1\neq q_2$.  In this case, either the subsegment $P_sq_1$ intersects $q_2P_oP_uq_2$ or the subsegment $q_2P_s$ intersects $q_1P_oP_uq_1$, contradicting the fact that $\delta$ is simple. See Figure \ref{fig:highways}.
\end{proof}

Recall that in the code for an arc, the character $P_s$ does not correspond to any subsegment.  Since the first and last characters of every arc are always $P_s$,   we use the first \textit{two} characters of $\delta$ in the statement of the above lemma to ensure that there is an initial subsegment of $\delta$ which fellow travels a terminal subsegment.

In the future, we will need to use a refined notion of highways to constrain the beginnings of certain arcs.
For this we define a notion of right lane and left lane.  See Figure \ref{fig:lanes} for examples and non-examples.
\begin{defn}\label{defn:leftlane}
Given an arc $\delta$ that has highways, a subsegment $\lambda$ of $\delta$ is called a \textit{left lane} if one of the following holds:
\begin{enumerate}
\item $\lambda=P_oP_u\gamma$, where $\gamma$ does not contain $C$ and $\gamma$ is maximal with respect to the property that the code for $P_s\gamma$ coincides with the initial $\ell_c(P_s\gamma)$ many characters in $\delta$ and $\overline{\gamma}P_s$ coincides with the terminal $\ell_c(P_s\gamma)$ many characters in $\delta$; or
\item $\lambda=\overline\gamma P_uP_o$ is the reverse of the segment of the form in (1).
\end{enumerate}
\end{defn}

We note that the reason that there are two possibilities for a left lane is that we want the definition to be independent of the orientation of $\delta$.

\begin{defn}\label{defn:rightlane}
Given an arc $\delta$ that has highways, a segment $\rho$ of the code is called a \textit{right lane} if one of the following holds:
\begin{enumerate}
\item $\rho=P_uP_o\gamma$, where $\gamma$ does not contain $C$ and $\gamma$ is maximal with respect to the property that the code $P_s\gamma$ coincides with the initial $\ell_c(P_s\gamma)$ many characters in $\delta$ and $\overline{\gamma}P_s$ coincides with the terminal $\ell_c(P_s\gamma)$ many characters in $\delta$; or
\item $\rho=\overline\gamma P_oP_u$ is the reverse of the segment from (1).
\end{enumerate}
\end{defn}

\begin{figure}
\begin{subfigure}[t]{.4\linewidth}
\centering
\includegraphics[width=2in]{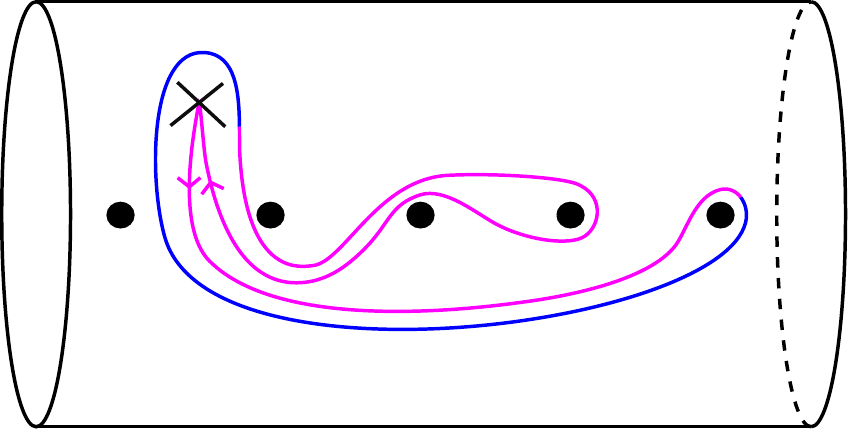}
\caption{The blue segment is \textit{not} a left lane since it does not coincide with the terminal subsegment of the arc. }
\end{subfigure}\hfill
\begin{subfigure}[t]{.4\linewidth}
\centering
\includegraphics[width=2in]{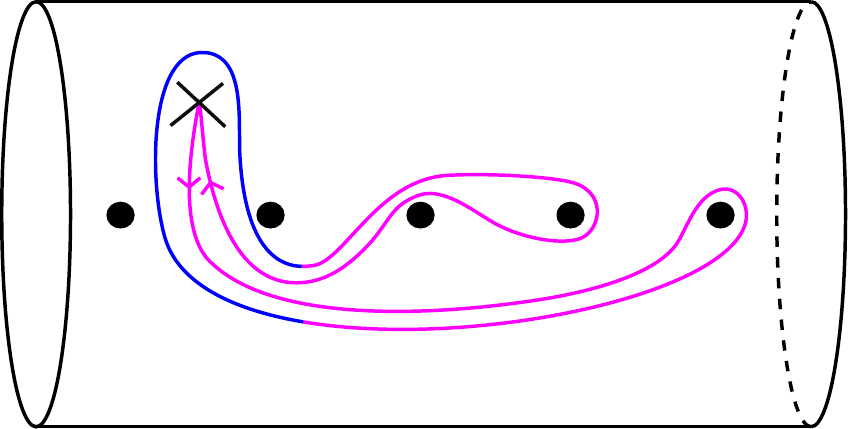}
\caption{The blue segment contains both left and right lanes. There is a left lane via Case (2) of Definition~\ref{defn:leftlane} and a right lane via Case (1) of Definition~\ref{defn:rightlane}.}
\end{subfigure}

\begin{subfigure}[t]{.4\linewidth}
\centering
\includegraphics[width=2in]{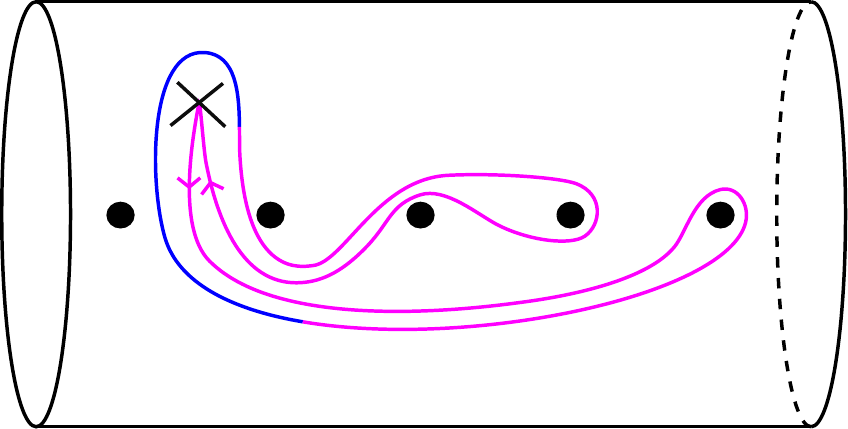}
\caption{The blue segment  contains two left lanes via Case (2) of Definition~\ref{defn:leftlane}.}
\end{subfigure}\hfill
\begin{subfigure}[t]{.4\linewidth}
\centering
\includegraphics[width=2in]{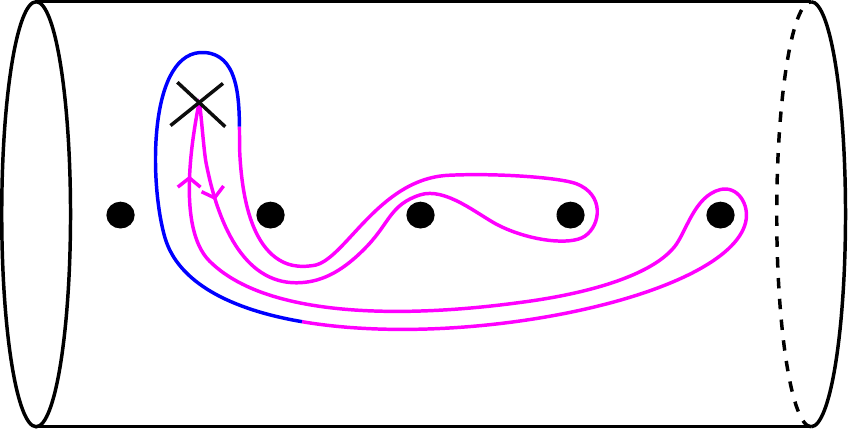}
\caption{The blue segment  contains two left lanes via Case (1) of Definition~\ref{defn:leftlane} (notice the orientation on the arc has been changed).}
\end{subfigure}
\caption{Some examples and non-examples of left and right lanes  are shown in blue. }
\label{fig:lanes}
\end{figure}

%

\noindent If $\delta$ is a symmetric arc (see Definition \ref{defn:symmetric}), then  in Definitions \ref{defn:leftlane} and \ref{defn:rightlane} it suffices to check the overlap on just the initial part of the code for $\delta$. However, in the general case, checking the overlap with both the initial and terminal parts of the code for $\delta$ is necessary.

\begin{defn}
Let $\delta$ be an arc with highways.  
The \textit{lane length} $L(\xi)$ of a left or right lane $\xi$ of $\delta$ is defined to be 
$$L(\xi)=\ell_c(\xi)-1.$$
We denote the collection of all left lanes of $\delta$ by $\mathcal{L}$ and similarly of all right lanes of $\delta$ by $\mathcal{R}$.
\end{defn}

Through the rest of the section, we fix a closed topological disc $D_p$  of sufficiently small radius with center at the puncture $p$ such that $D_p$ has empty intersection with each $B_i$ where $i\in \Z$, contains $B_P$, and has empty intersection with each separating curve $\{S_i\}$ from Section \ref{sec:segnobackloop}. Moreover, we will only work with homotopy representatives of $\delta$ which have reduced code and the property that any pair $P_{o/u}P_{u/o}$ lies inside of $D_p$, while any other segments remain outside, except for the two that come from the initial and terminal two characters of $\delta$ (see Figure \ref{fig:highwaydisk}).
Throughout the section, when we further homotope $\delta$ we will only do so relative to $D_p$, so one can assume that the set $\delta\cap D_p$ is pointwise fixed.

A left or right lane $\xi$ is called \textit{innermost} if every oriented straight line segment with initial point at the puncture and terminal point on the boundary circle of $D_p$ intersects the $P_{o/u}P_{u/o}$ at the beginning (resp. end) of $\xi$ before it intersects any other lane of the same type (left or right).  If the oriented line segment does not intersect any lane, then this condition is vacuously satisfied. See Figure \ref{fig:innerlane}.
In particular, innermost left and right lanes are the lanes which are closest to an initial and terminal subsegments of $\delta$.

\begin{figure}
\centering
\begin{overpic}[width=4in, trim={1.5in 7.5in .9in 1.6in}, clip]{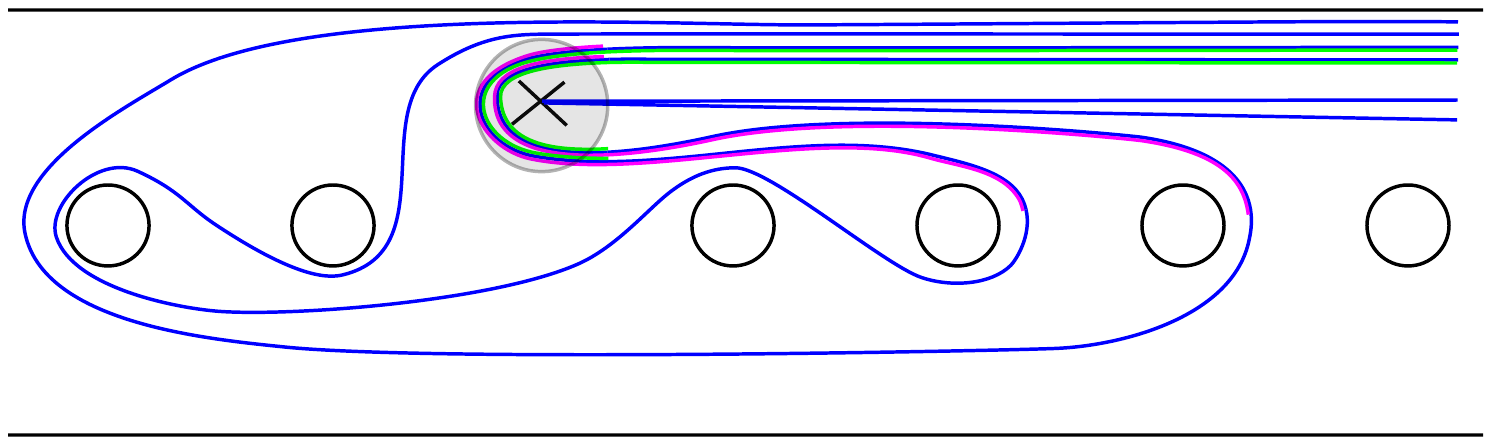}
\put(10,25){\textcolor{blue}{$\delta$}}
\end{overpic}
\caption{In blue is (a portion) of an arc $\delta$.  The shaded disk around the puncture is $D_p$.  This example has two right lanes (in green), a portion of which are shown, and two left lanes (in pink), which are shown in their entirety.  The $P_{o/u}P_{u/o}$ portion of these lanes are contained in $D_p$ while all other strands of $\delta$ are disjoint from $D_p$.}
\label{fig:highwaydisk}
\end{figure}

\begin{figure}
\centering
\def\svgwidth{2in}
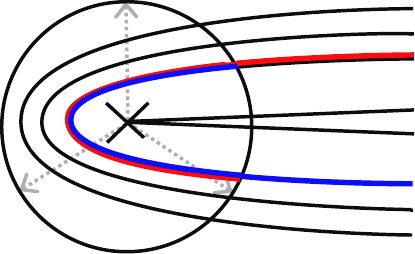
\caption{The blue and red segments are the initial portions of innermost left and right lanes, respectively.  The dotted gray segments intersect these segments before intersecting the initial or terminal portion of any other lane.}
\label{fig:innerlane}
\end{figure}

We then have the following lemma.

\begin{lem}\label{lem:stayinyourlane}
Let $\delta$ be an arc that has highways.
Let $\lambda\in \mathcal{L}$ and $\rho\in \mathcal{R}$ denote the innermost left and right lanes of $\delta$, respectively.
Then $L(\lambda)\ge L(\lambda')$ for all $\lambda'\in\mathcal{L}$ and  $L(\rho)\ge L(\rho')$ for all $\rho'\in\mathcal{R}$.
Moreover, writing $\lambda=P_uP_o\beta_l$ or its reverse and $\rho=P_oP_u\beta_r$ or its reverse, then if $\gamma$ is an arc disjoint from $\delta$, then one of the following holds:
\begin{enumerate}
\item $\gamma$ has initial code $P_s\beta_l$ and terminal code $\overline{P_s\beta_l}$,
\item $\gamma$ has initial code $P_s\beta_r$ and terminal code $\overline{P_s\beta_r}$.
\end{enumerate}
\end{lem}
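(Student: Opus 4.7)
The plan proceeds in three main steps.

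\textbf{Step 1 (Geometric setup).} I would first analyze the picture inside the small disk $D_p$ centered at $p$. Every pair $P_{o/u}P_{u/o}$ in the code of $\delta$ corresponds to a strand of $\delta$ that enters $D_p$, wraps once around $p$, and exits; these together with the initial and terminal subsegments of $\delta$ are the only pieces of $\delta$ meeting $D_p$ (back loops are excluded from the lane definition by the ``does not contain $C$'' clause). Since $\delta$ is simple, these strands are laminar inside $D_p$, and their intersections with $\partial D_p$ inherit a cyclic order. This gives a well-defined notion of innermost lane: the one whose $P_{o/u}P_{u/o}$ strand is nearest in this cyclic order to the initial subsegment of $\delta$ (equivalently, by Lemma~\ref{lem:highwayssymm}, to the terminal subsegment).

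\textbf{Step 2 (Length maximality).} I would prove $L(\lambda) \geq L(\lambda')$ by a direct nesting argument. Suppose $\lambda'$ is a left lane which is not innermost. By Step 1 there is another left lane $\lambda''$ cyclically between $\lambda'$ and the initial subsegment of $\delta$. Writing $\lambda' = P_oP_u\gamma'$, the code $P_s\gamma'$ coincides with the initial $\ell_c(P_s\gamma')$ characters of $\delta$; thus $\gamma'$ and this initial subsegment are two parallel fellow-travellers, with $\lambda''$ trapped between them inside $\delta$. Simplicity of $\delta$ forbids $\lambda''$ from crossing either, so the portion of $\lambda''$ after its own $P_oP_u$ must itself fellow-travel both for the full length $L(\lambda')$, forcing $L(\lambda'') \geq L(\lambda')$. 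The terminal-side condition in the definition of a left lane follows by the same argument read in reverse. Iterating yields $L(\lambda) \geq L(\lambda')$ for the innermost $\lambda$, and the right-lane inequality is symmetric.

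\textbf{Step 3 (Disjoint arcs).} For the dichotomy, let $\gamma$ be an arc disjoint from $\delta$ and consider $\gamma \cap D_p$. The strands of $\delta$ together with its initial and terminal subsegments divide a punctured neighborhood of $p$ into sectors; since $\gamma$ starts at $p$ and is disjoint from $\delta$, it must leave $p$ through one such sector. By Step 1, the sectors immediately adjacent to the initial subsegment are bounded on the other side by the innermost left lane $\lambda$ and the innermost right lane $\rho$. If $\gamma$ leaves through the sector bounded by $\lambda$, then $\gamma$ is confined to a corridor that forces it to fellow-travel the initial subsegment of $\delta$ for at least the length of the ``following'' portion of $\lambda$; equivalently, its initial code is $P_s\beta_l$. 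Applying the same argument at the terminal endpoint of $\gamma$ (using the portion of $\lambda$ that fellow-travels the terminal subsegment of $\delta$, which exists by the left-lane definition together with the maximality established in Step~2) yields terminal code $\overline{P_s\beta_l}$. Replacing $\lambda$ with $\rho$ gives the other alternative.

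\textbf{Main obstacle.} The principal technical difficulty is the two-case structure in the definitions of left and right lanes (starting with $P_oP_u$ versus ending with $P_uP_o$, with the swapped convention for right lanes), which means a single lane can correspond to a strand in $D_p$ oriented either way. Verifying that the nesting argument in Step~2 and the sector argument in Step~3 go through uniformly across these cases -- and that both halves of the ``initial matches initial, terminal matches terminal'' condition in the lane definition are genuinely needed to force $\gamma$'s terminal code -- is the most delicate part of the argument. A secondary subtlety is confirming that the convention fixing $\delta \cap D_p$ pointwise under further homotopies is in fact what makes the ``innermost'' notion independent of representative.
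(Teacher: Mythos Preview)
Your approach is essentially the same as the paper's: analyze the configuration of $\delta$ inside $D_p$, use simplicity to show the innermost lane is longest, then trap $\gamma$ in a sector to force fellow-travelling. Two points are worth noting.

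First, your Step~2 is more elaborate than necessary. You iterate from an arbitrary non-innermost $\lambda'$ inward via intermediate $\lambda''$, but the paper applies the sandwich argument in a single step: take $\lambda$ itself (innermost, hence sandwiched between any given $\lambda'$ and the initial segment of $\delta$) and observe that if $L(\lambda')>L(\lambda)$ then at the character where $\lambda$ diverges it must cross either $\lambda'$ or the initial segment, contradicting simplicity. Your iteration is correct but the one-shot version is cleaner.

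Second, there is a small gap in your Step~3. You split into cases according to which sector $\gamma$ leaves through, concluding initial code $P_s\beta_l$ in the $\lambda$-sector case and then asserting the terminal code is $\overline{P_s\beta_l}$ by ``the same argument at the terminal endpoint.'' But nothing in your argument forces the terminal segment of $\gamma$ to lie in the \emph{same} sector as the initial segment; a priori $\gamma$ could begin in a $\lambda$-bounded sector and end in a $\rho$-bounded one, yielding initial code $P_s\beta_l$ and terminal code $\overline{P_s\beta_r}$, which fits neither alternative. The paper's resolution is different: rather than splitting by sector, it observes that $\gamma$'s initial and terminal segments must each fellow-travel \emph{all} of $\lambda$, $\rho$, and the initial/terminal segments of $\delta$ (since these all mutually fellow-travel and bound the wedges at $p$), so $\gamma$ always satisfies conclusion~(1) when $\ell_c(\beta_l)\le\ell_c(\beta_r)$ and conclusion~(2) otherwise. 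Equivalently, since $\beta_l$ and $\beta_r$ are both initial substrings of the same code, one is a prefix of the other, and the conclusion corresponding to the shorter one holds regardless of sector. Your argument is easily patched with this observation, but as written the dichotomy is not justified.
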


Intuitively, this lemma states that the arc $\gamma$ must ``stay in a lane" of the arc $\delta$; see Figure \ref{fig:stayinyourlane}.  Note that we do not require $\gamma$ to be distinct from $\delta$, so this statement applies to $\delta$ as well. Indeed, the arc $\delta$ can always be homotoped to be disjoint from itself while retaining the same code.

\begin{figure}
\centering
\includegraphics[width=2.5in, trim={3in 7.75in 3in 2.0in}, clip]{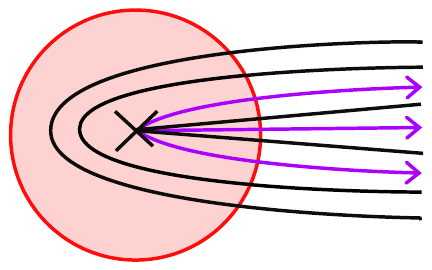}
\caption{In black is (a portion) of an arc $\delta$. The red disk is $D_p$. Any arc $\gamma$ which is disjoint from $\delta$ must have initial and terminal segment which follows one of the three purple strands emanating from the puncture and continues to fellow travel $\delta$.}
\label{fig:stayinyourlane}
\end{figure}

\begin{proof}
The hypothesis that $\delta$ has highways implies that $\mathcal{L},\mathcal{R}\neq\emptyset$, and hence an innermost left and right lane exist.
We prove the first statement for the innermost left lane $\lambda$; an identical proof works for $\rho$.

Since $\lambda$ is innermost, if there exists a left lane $\lambda'\in\mathcal{L}$ such that $L(\lambda')>L(\lambda)$, then  we get a contradiction to the simplicity of $\delta$, as $\lambda$ must intersect either $\lambda'$ or an initial segment of $\delta$.  See Figure \ref{fig:innerlanelongest}.

\begin{figure}[H]
\centering
\begin{overpic}[width=3in, trim={3.25in 8.25in 1in 1.5in}, clip]{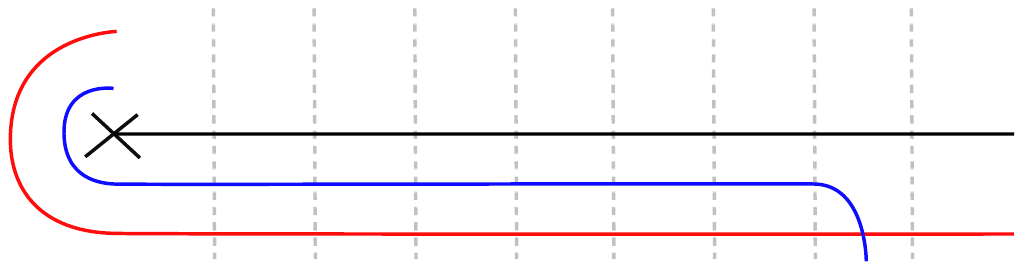}
\put(97,4){$\lambda'$}
\put(97,14){$P_s\beta_L'$}
\put(81,-1){$\lambda$}
\end{overpic}
\caption{A schematic of the two left lanes: $\lambda$, which is innermost, and $\lambda'$, which is not.  If $\lambda'$ fellow travels an initial segment of $\delta$ for longer than $\lambda$ does, then $\lambda$ must intersect either $\lambda'$ (pictured) or the initial segment of $\delta$.}
\label{fig:innerlanelongest}
\end{figure}

For the final statement, fix an arc $\gamma$ that is disjoint from $\delta$.
It must be the case that  the initial and terminal subsegments of $\gamma$ are each contained in a single connected component of $D_p\setminus {\delta}$ (not necessarily the same component).
Moreover these initial and terminal subsegments must begin at the puncture $p$ and therefore must also fellow travel $\lambda$, $\rho$, and the initial/terminal parts of $\delta$ (see Figure \ref{fig:stayinyourlane}).  
Therefore, by the same reasoning as in the previous paragraph, we see that $\gamma$ satisfies conclusion (1) of the lemma if $\ell_c(\beta_l)\le\ell_c(\beta_r)$ and satisfies conclusion (2) of the lemma if $\ell_c(\beta_r)\le \ell_c(\beta_l)$.
\end{proof}

\subsection{Highways for the arcs $\alpha_i^{(n)}$}\label{sec:highwayalpha}

The main goal of this section is to apply the technology of the previous section to show that any arc disjoint from an arc which starts like $\alpha_i^{(n)}$ starts like $\alpha_{i-1}^{(n)}$, provided $i$ is large enough (see Theorem \ref{thm:startslike}).
To do this, we will show that $\alpha^{(n)}_i$ has highways for all $i\ge 2$ and all $n$, beginning by proving the result for $i=2$.
For fixed $i$ and $n$, define $\chi_i^{(n)}$ to be all of $\mathring{\alpha}_i^{(n)}$ except the initial $P_s$, so that 
\begin{equation}\label{eqn:defofchi_i}
\mathring{\alpha}_i^{(n)}=P_s\chi_i^{(n)}.
\end{equation}

\begin{prop}\label{prop:highwayconstruction}
The segment $\alpha^{(n)}_2$ contains a subsegment of the form $\overline{\chi}_1^{(n)}P_uP_o\chi_1^{(n)}$ that appears as the final subsegment of $\mathring\alpha_2^{(n)}$.
In particular, $\alpha^{(n)}_2$ has highways whose lanes contain $\chi_1^{(n)}$. 
\end{prop}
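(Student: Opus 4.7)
The plan is a direct computation of the terminal portion of $g_n(\mathring\alpha_1^{(n)} 0_u)$, followed by a straightforward verification of the highways condition from the definitions of left and right lanes.

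By Proposition \ref{lem:imageofinit}, $g_n(\mathring\alpha_1^{(n)} 0_u) = \mathring\alpha_2^{(n)} 0_u$ in a reduced code, so it suffices to compute enough of the terminal portion of the image to identify $\overline{\chi_1^{(n)}} P_u P_o \chi_1^{(n)}$ as the final $2\ell_c(\chi_1^{(n)})+2$ characters of $\mathring\alpha_2^{(n)}$. Recall from the proof of Proposition \ref{lem:imageofinit} the decomposition
\[
\mathring\alpha_1^{(n)} 0_u = P_s \, 0_o 1_o 2_o \cdots n_o \, \mu \, 0_u, \qquad \mu = (n+1)_o (n+1)_u n_o (n-1)_o \cdots 2_o 1_o 0_o,
\]
together with the identity
\[
g_n(\mu 0_u) = (n+2)_o (n+2)_u (n+1)_o n_o \cdots (n-1)_o n_o \, \mu \, 0_u.
\]
Since $\chi_1^{(n)} = 0_o 1_o \cdots n_o \mu$, the reduced image $g_n(\mu 0_u)$ ends with $\chi_1^{(n)} 0_u$, which pins down the last $\ell_c(\chi_1^{(n)})+1$ characters of the desired terminal form. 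The remaining task is to verify that the preceding $\ell_c(\chi_1^{(n)})+2$ characters agree with $\overline{\chi_1^{(n)}} P_u P_o$.

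For this, I would apply $g_n$ to the prefix $P_s \, 0_o 1_o \cdots n_o$ of $\mathring\alpha_1^{(n)}$ and efficiently concatenate the result with $g_n(\mu 0_u)$. The decisive feature is that the domain of $h_3^{(n)}$ passes \emph{under} the puncture $p$ while passing \emph{over} each $B_j$ for $j \in [-n,P) \cup (P,n]$. When $h_3^{(n)}$ is applied to the image (under $h_2^{(n)} \circ h_1$) of this prefix, a full crossing of $D_3$ straddling $p$ produces, via the image-of-full-crossing formula from Section~\ref{sec:segnobackloop}, precisely the loop $P_u P_o$ at the center, flanked by two copies of $\partial D_3$'s code which after reduction read as $\overline{\chi_1^{(n)}}$ and $\chi_1^{(n)}$. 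Two technical points need to be verified: (a) Lemma \ref{cor:imageof2o2ufamily}, applied to the pair $(n+1)_o (n+1)_u$ present in $\chi_1^{(n)}$, blocks any cascading cancellation so that the image of the prefix is not absorbed into $g_n(\mu 0_u)$; and (b) Lemma \ref{lem:bpersists} guarantees that the central $P_u$ and $P_o$ characters, which disagree with $\partial D_3$, persist in the reduced code.

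Once the terminal form $\overline{\chi_1^{(n)}} P_u P_o \chi_1^{(n)}$ of $\mathring\alpha_2^{(n)}$ is established, the highways conclusion is immediate. The subsegment $P_u P_o \chi_1^{(n)}$ is a right lane in the sense of Definition \ref{defn:rightlane}(1): by Corollary \ref{cor:inductcor}, $\alpha_2^{(n)}$ starts like $\alpha_1^{(n)}$ and hence both begins with $P_s \chi_1^{(n)}$ and (by the palindromic symmetry of $\alpha_2^{(n)}$ inherited from that of $\alpha_1^{(n)}$ and preserved by $g_n$) ends with $\overline{\chi_1^{(n)}} P_s$. By that same symmetry, $\overline{\chi_1^{(n)}} P_u P_o$ is a left lane (Definition \ref{defn:leftlane}(2)). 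Thus $\alpha_2^{(n)}$ has highways whose lanes contain $\chi_1^{(n)}$. The main obstacle I anticipate is the bookkeeping in the shift-by-shift computation, particularly isolating the emergence and survival of the central $P_u P_o$ pair under $h_3^{(n)}$, which requires tracking several cancellations across all three shifts before the claimed reduced form appears.
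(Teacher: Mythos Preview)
Your overall plan (shift-by-shift direct computation, then read off the highways condition) matches the paper's, but your decomposition is in the wrong place and two of your key claims are unjustified.

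First, the deduction ``Since $\chi_1^{(n)} = 0_o 1_o \cdots n_o \mu$, the reduced image $g_n(\mu\,0_u)$ ends with $\chi_1^{(n)} 0_u$'' is a non sequitur. The identity from Proposition~\ref{lem:imageofinit} only tells you that $g_n(\mu\,0_u)$ ends with $\mu\,0_u$; that $\chi_1^{(n)}$ happens to end in $\mu$ says nothing about what precedes $\mu$ in the image. (The conclusion is in fact true, but it requires the full computation you are trying to avoid.)

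Second, and more seriously, the pair $P_uP_o$ does \emph{not} arise from the image of the prefix $P_s 0_o\cdots n_o$. The prefix is a short, strictly monotone segment whose image under $h_2^{(n)}\circ h_1$ never passes below $p$, so no full crossing of $D_3$ straddling the puncture is created there. The entire terminal string $\overline{\chi}_1^{(n)}P_uP_o\chi_1^{(n)}0_u$ is already contained in $g_n(\mu\,0_u)$: it is generated by the \emph{tail} $1_o0_o0_u$ of $\mu\,0_u$. Under $h_2^{(n)}$ this tail fully crosses $D_2$ and produces a loop out to $(-n{-}1)$, yielding a subsegment of the form $\cdots(-1)_uP_o0_u$; that terminal $(-1)_uP_o0_u$ then fully crosses $D_3$ \emph{twice}, and the image-of-full-crossing formula gives exactly $(-1)_uP_u\overline{\chi}_1^{(n)}P_uP_o\chi_1^{(n)}0_u$. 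The paper then checks persistence by writing out the full reduced $h_3^{(n)}$-image of the relevant $h_2^{(n)}$-tail, using Lemma~\ref{cor:imageof2o2ufamily} at $(n{+}1)_o(n{+}1)_u$ to isolate that tail from the rest.

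Your invocation of Lemma~\ref{lem:bpersists} for both $P_u$ and $P_o$ is also off: $\partial D_3$ contains $P_u$, so only $P_o$ disagrees with it; persistence of the full $P_uP_o$ pair has to come from the explicit reduced computation, not from that lemma alone. Once the terminal form is established, your reading of the highways/lanes definitions in the last paragraph is fine.
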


\begin{figure}
\centering
\begin{subfigure}{\linewidth}
\centering
\begin{overpic}[width=4in]{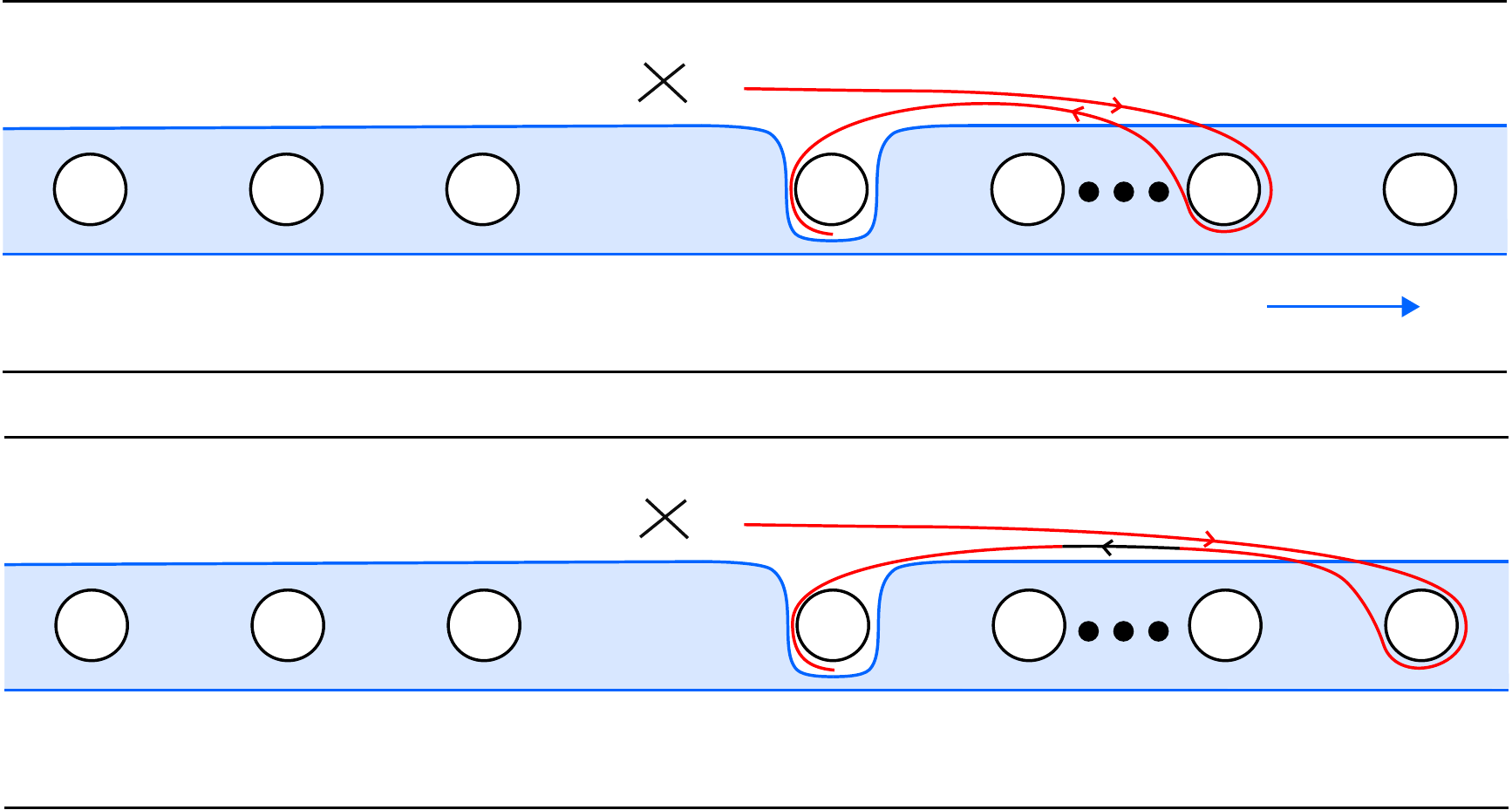}
\put(81,32.5){\tiny$\textcolor{blue}{h_1}$}
\put(77,47){\tiny$\textcolor{red}{\mathring\alpha_1^{(n)}0_u}$}
\put(84,18){\tiny$\textcolor{red}{h_1(\mathring\alpha_1^{(n)}0_u)}$}
\put(72.5,14.5){$\tiny\sigma_2$}
\end{overpic}
\caption{The image of $\alpha_1^{(n)}0_u$ under $h_1$.}
\label{fig:panel1}
\end{subfigure}
\par\bigskip
\begin{subfigure}{\linewidth}
\centering
\begin{overpic}[width=4in]{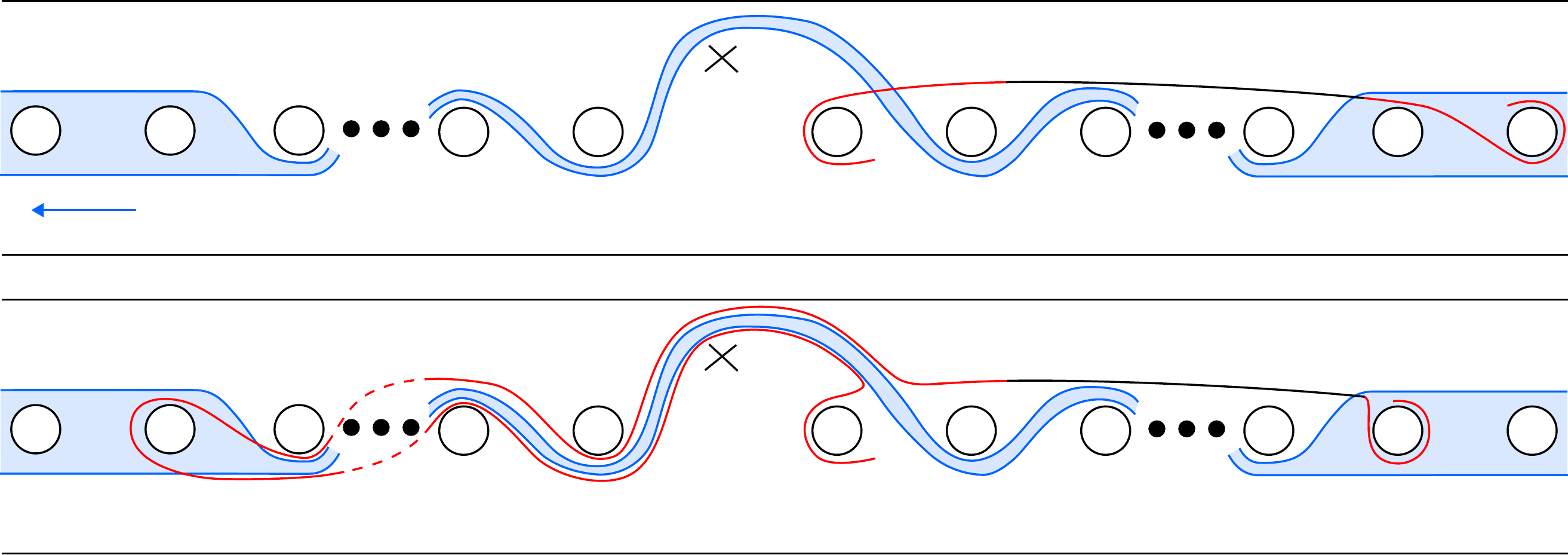}
\put(9,21.5){\tiny$\textcolor{blue}{h_2^{(n)}}$}
\put(73,31){$\tiny\sigma$}
\put(73,12){$\tiny\sigma$}
\put(20,2.5){\tiny$\textcolor{red}{h_2^{(n)}((n+2)_o(n+2)_u\sigma_21_o0_o0_u)}$}
\end{overpic}
\caption{The image under $h_2^{(n)}$ of the segment $\sigma$, shown in red and black.}
\label{fig:panel2}
\end{subfigure}
\par\bigskip
\begin{subfigure}{\linewidth}
\centering
\begin{overpic}[width=4in]{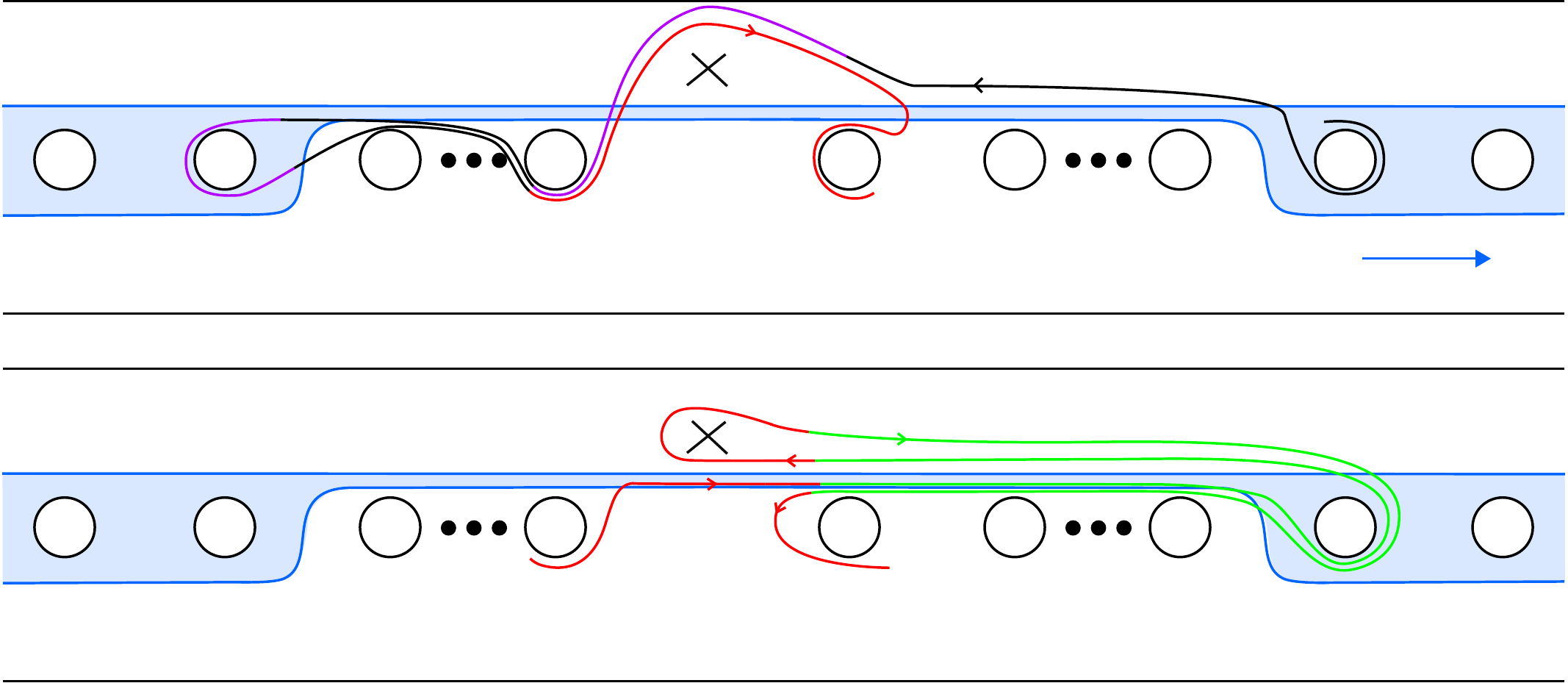}
\put(60,39){\tiny$\textcolor{black}{\tau_1}$}
\put(20,37.5){\tiny$\textcolor{black}{\tau_2}$}
\put(20,32){\tiny$\textcolor{black}{\tau_3}$}
\put(82,26.5){\tiny$\textcolor{blue}{h_3^{(n)}}$}
\put(81,11){\tiny\textcolor{black}{$\overline{\chi}_1^{(n)}$}}
\put(85,15.5){\tiny$\textcolor{black}{\chi_1^{(n)}}$}
\end{overpic}
\caption{The two full crossings in red give rise to $\chi_1^{(n)}$ and $\overline{\chi}_1^{(n)}$, which are shown in green,  in the proof of Proposition \ref{prop:highwayconstruction} upon applying $h_3^{(n)}$.}
\label{fig:panel3}
\end{subfigure}
\caption{The direct computations done in the proof of Proposition \ref{prop:highwayconstruction}.}
\end{figure}

\begin{proof}
We prove this by carefully examining the code for $\alpha^{(n)}_1$, which we assume is in standard position relative to the disk $D_p$.
As in Section \ref{sec:arcsthatstartlike}, we will retain the $0_u$ which immediately follows $\mathring\alpha_1^{(n)}$ when we compute images.  We will show that $\overline{\chi}_1^{(n)}P_uP_o\chi_1^{(n)}0_u$  appears as the final subsegment of $\mathring\alpha_2^{(n)}0_u$, which then implies the statement of the proposition. 

By Proposition \ref{lem:imageofinit}, $\mathring\alpha_2^{(n)}0_u=g_n(\mathring\alpha_1^{(n)}0_u)$.
Recall from \eqref{eqn:alpha1init} that
\begin{equation}\label{eqn:alpha1ring}
\mathring\alpha_{1}^{(n)}0_u=P_s0_o1_o\dots(n+1)_o(n+1)_u n_o(n-1)_o\dots 2_o1_o0_o0_u.
\end{equation}
Defining $\sigma:=\emptyset$ if $n=1$ or $\sigma:=n_o(n-1)_o\dots 2_o$ if $n>1$, a computation shows that 
\begin{align*}
h_1(\mathring\alpha_{1}^{(n)}0_u)&=P_s0_o1_o2_o\dots(n+2)_o(n+2)_u (n+1)_on_o\dots 2_o1_o0_o0_u,\\
&=P_s0_o1_o2_o\dots(n+2)_o(n+2)_u(n+1)_o\sigma1_o0_o0_u.
\end{align*}
See also Figure \ref{fig:panel1}.
We will show that the  image of the $1_o0_o0_u$ at the end of this last equation under $h_3^{(n)}\circ h_2^{(n)}$ produces the requisite segment in $\alpha_2^{(n)}$.

A direct computation shows that
$$h_2^{(n)}((n+2)_o(n+2)_u(n+1)_o\sigma1_o0_o0_u)=(n+1)_o(n+1)_u\sigma h_2^{(n)}(1_o0_o0_u),$$
since all of $\sigma$ is disjoint from $D_2$.
In standard position,  $1_o0_o0_u$ contains a full crossing, and  we compute that 
$$h_2^{(n)}(1_o0_o0_u)=1_o\overline{\partial D_2\vert_{(-n-1,0]}}(-n-1)_o(-n-1)_u\partial D_2\vert_{(-n-1,0]}0_o0_u,$$
in an unreduced code.
We thus have the decomposition
\begin{align}
h_2^{(n)}((n+2)_o&(n+2)_u\sigma1_o0_o0_u)\nonumber\\
&=(n+1)_o(n+1)_u\sigma1_o\overline{\partial D_2\vert_{(-n-1,0]}}(-n-1)_o(-n-1)_u\partial D_2\vert_{(-n-1,0]}0_o0_u\nonumber\\
&=\tau_1P_o(-1)_u\tau_2(-n-1)_o(-n-1)_u\tau_3(-1)_uP_o0_u,\label{eqn:h2backhalf}
\end{align}
where each $\tau_i$ is defined by the second equality. See Figure \ref{fig:panel2}.
None of $\tau_1,\tau_2,\tau_3$ fully cross $D_3$ in standard position.
As $(-1)_uP_o0_u$ fully crosses $D_3$ twice, we compute  that 
\begin{align}
h_3^{(n)}((-1)_uP_o0_u)=(-1)_uP_u&0_o\partial D_3\vert_{(0,n]}(n+1)_u(n+1)_o\overline{\partial D_3\vert_{(0,n]}} 0_oP_uP_o0_o\partial D_3\vert_{(0,n]}\nonumber\\
(n+1)_o&(n+1)_u\overline{\partial D_3\vert_{(0,n]}}0_o0_u,\nonumber\\
=(-1)_uP_u&\overline{\chi}_1^{(n)}P_uP_o\chi_1^{(n)}0_u\label{eqn:fullcrossingh3}.
\end{align}
Hence we see that $\overline{\chi}_1^{(n)}P_uP_o\chi_1^{(n)}0_u$ is contained in an \textit{unreduced} code for $g_n(\mathring\alpha_{1}^{(n)}0_u)$.
It remains to show that this segment persists in a \textit{reduced} code for $g_n(\mathring\alpha_{1}^{(n)}0_u)=\mathring{\alpha}_2^{(n)}0_u$.  

Proposition \ref{lem:imageofinit} shows that if $\overline{\chi}_1^{(n)}P_uP_o\chi_1^{(n)}0_u$ persists in a reduced code for $\mathring{\alpha}_20_u$ then it persists in a reduced code for $\alpha_2$. 
To check if $\overline{\chi}_1^{(n)}P_uP_o\chi_1^{(n)}0_u$ persists in a reduced code for $\mathring{\alpha}_20_u$, we need only consider a reduced code for the image under $g_n$ of the characters following the $(n+1)_o(n+1)_u$ in \eqref{eqn:alpha1ring}, because Corollary \ref{cor:imageof2o2ufamily} shows that the  pair $(n+1)_o(n+1)_u$ blocks cascading cancellation between characters on either side.  This is precisely the segment whose image under $h_2^{(n)}$ is given by \eqref{eqn:h2backhalf}.

We will use direct computation to show that the segment $\overline{\chi}_1^{(n)}P_uP_o\chi_1^{(n)}0_u$ is the final segment of a reduced code for
\begin{equation}\label{eqn:h3image}
h_3^{(n)}(\tau_1P_o(-1)_u\tau_2(-n-1)_o(-n-1)_u\tau_3(-1)_uP_o0_u).
\end{equation}
Before making this computation, we point out the relevant pieces. 
Each $(-1)_uP_o$ (or its reverse) in \eqref{eqn:h3image} fully crosses $D_3$ and has image given by  the first  half of \eqref{eqn:fullcrossingh3}, up to reversing the orientation.
The terminal $P_o0_u$  also  fully crosses $D_3$, and so we may similarly compute its image.
The segments $\tau_2$, $\tau_3$ are invariant under $h_3^{(n)}$.  The image of $\tau_1$ is $h_3^{(n)}(\tau_1)=(n+2)_o(n+2)_u(n+1)_o\sigma 1_o0_o$. Moreover, the four characters $\tau_2^t(-n-1)_o(-n-1)_u\tau_3^i$ form a loop which fully crosses $D_3$ exactly once. (Note that $\tau_2^t=\tau_3^i=(-n)_{o/u}$, where the choice of $o/u$ depends on the parity of $n$.)
Combining all of these remarks, we find the following \textit{reduced} code (see Figure \ref{fig:panel3}):
\begin{align}
h_3^{(n)}(\tau_1P_o(-1)_u&\tau_2(-n-1)_o(-n-1)_u\tau_3(-1)_uP_o0_u)=\nonumber\\
&(n+2)_o(n+2)_u(n+1)_o\sigma 1_o0_o 
P_oP_u0_o\dots(n+1)_o(n+1)_un_o\dots 0_o\nonumber\\
&P_u\overline{\partial D_2\vert_{(-n,-1]}}(-n)_u(-n)_o\partial D_3\vert_{(-n,n+1)}
(n+1)_o(n+1)_u\overline{\partial D_2\vert_{(-n,n]}}\nonumber\\
&(-n)_o(-n)_u
\partial D_2\vert_{(-n,-1]}P_u{\uwave{\partial D_3\vert_{[0,n+1)}(n+1)_u
(n+1)_o\overline{\partial D_3\vert_{(P,n+1)}}\nonumber }}\\ 
&{\uwave{P_uP_o\partial D_3\vert_{[0,n+1)}(n+1)_o(n+1)_u\overline{\partial D_3\vert_{[1,n+1)}}0_o0_u}}\nonumber
\end{align}
In particular, we see the requisite segment at the end of this string (underlined).  Precisely, we have 
\[\chi_1^{(n)}=\partial D_3\vert_{[0,n+1)}(n+1)_o(n+1)_u\overline{\partial D_3\vert_{[1,n+1)}}0_o.\]
This completes the proof of Proposition \ref{prop:highwayconstruction}.
\end{proof}
The next corollary shows that $\alpha_i^{(n)}$ has highways whose lanes contain $\chi_{i-1}^{(n)}$.  Notice that $g_n(\chi_{i-1}^{(n)})=\chi_i^{(n)}$ by Proposition \ref{lem:imageofinit}.

\begin{cor}\label{cor:highwayconstruction}
When $i\ge 2$, $\alpha^{(n)}_i$ contains a subsegment of the form $\overline{\chi}_{i-1}^{(n)}P_uP_o\chi_{i-1}^{(n)}$  that appears as the final subsegment of $\mathring\alpha_i^{(n)}$.
In particular, $\alpha^{(n)}_i$ has highways for all $i\ge 2$ with lanes containing $\chi_{i-1}^{(n)}$.
\end{cor}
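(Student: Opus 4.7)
The proof will proceed by induction on $i \geq 2$, with Proposition \ref{prop:highwayconstruction} furnishing the base case $i = 2$. For the inductive step, assume $\chi_i^{(n)}$ ends with $\sigma_{i-1} := \overline{\chi}_{i-1}^{(n)} P_u P_o \chi_{i-1}^{(n)}$; the aim is to show $\chi_{i+1}^{(n)}$ ends with $\sigma_i$. The key propagation tool is Proposition \ref{lem:imageofinit}, which gives $g_n(P_s\chi_i^{(n)} 0_u) = P_s\chi_{i+1}^{(n)} 0_u$ in a reduced code. Combined with the inductive hypothesis, this reduces the problem to tracking the image of the ending $\sigma_{i-1} 0_u$ of $\chi_i^{(n)} 0_u$ and extracting the $\sigma_i 0_u$ pattern at the end of $\chi_{i+1}^{(n)} 0_u$.

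For the image computation, the observation is that the ending $\sigma_{i-1} 0_u$ has essentially the same local structure near $p$ as the ending of $\mathring{\alpha}_1^{(n)} 0_u$ did in the proof of Proposition \ref{prop:highwayconstruction}: the pattern $\ldots 0_o P_u P_o 0_o \ldots 0_o 0_u$ sits at its center (using that $\chi_{i-1}^{(n)}$, and hence $\overline{\chi}_{i-1}^{(n)}$, begins and ends with $0_o$, together with the appended $0_u$). Thus the same three-step sequence of $h_1$, $h_2^{(n)}$, $h_3^{(n)}$ computations carries through: the central $P_u P_o$ is flanked by two full crossings of $D_3$ whose $h_3^{(n)}$-image unfolds via the analog of equation \eqref{eqn:fullcrossingh3} into the longer bridge $\overline{\chi}_i^{(n)} P_u P_o \chi_i^{(n)} 0_u$. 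The new $\chi_i^{(n)} 0_u$ strand on the right is furnished directly by Proposition \ref{lem:imageofinit} applied to $\mathring{\alpha}_{i-1}^{(n)} 0_u = P_s\chi_{i-1}^{(n)} 0_u$, and the new $\overline{\chi}_i^{(n)}$ strand on the left by its reverse version (Remark \ref{rem:induct}) applied to $\overline{\alpha_{i-1}^{(n)}}$, together exhibiting $\sigma_i 0_u$ as the image of $\sigma_{i-1} 0_u$ in an unreduced code.

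The main obstacle is ruling out cascading cancellation between the image of the preceding portion of $\chi_i^{(n)}$ and the image of $\sigma_{i-1} 0_u$ which could otherwise destroy the $\sigma_i 0_u$ pattern in a reduced code. This is resolved exactly as in the proof of Corollary \ref{cor:inductcor}: because $\overline{\chi}_{i-1}^{(n)}$, the left end of $\sigma_{i-1}$, contains the loop $(n+1)_o(n+1)_u$, Lemma \ref{cor:imageof2o2ufamily} guarantees that its image $(n+2)_o(n+2)_u$ persists in a reduced code and blocks any cancellation from crossing it. Since this barrier sits strictly inside the image of $\sigma_{i-1} 0_u$, no cancellation from the image of the prefix can reach the $\sigma_i 0_u$ pattern at the end, completing the induction. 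The ``In particular'' clause then follows directly from Definitions \ref{defn:leftlane} and \ref{defn:rightlane}: by Corollary \ref{cor:inductcor} applied to both $\alpha_i^{(n)}$ and $\overline{\alpha_i^{(n)}}$, the code $P_s\chi_{i-1}^{(n)}$ matches the initial characters of $\alpha_i^{(n)}$ and $\overline{\chi}_{i-1}^{(n)} P_s$ matches its terminal characters; combining this with the $P_u P_o$ produced at the end of $\mathring{\alpha}_i^{(n)}$ by the corollary just proved exhibits $P_u P_o \chi_{i-1}^{(n)}$ and $\overline{\chi}_{i-1}^{(n)} P_o P_u$ as a right and a left lane respectively, each containing the segment $\chi_{i-1}^{(n)}$.
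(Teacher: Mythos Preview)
Your overall induction scheme and the computation that $g_n(\sigma_{i-1}0_u)=\sigma_i0_u$ are fine and match the paper's calculation $g_n(\overline{\chi}_{i-2}^{(n)}+0_oP_uP_o0_o+\chi_{i-2}^{(n)}0_u)=\overline{\chi}_{i-1}^{(n)}P_uP_o\chi_{i-1}^{(n)}0_u$. The gap is in your non-cancellation step.

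You invoke Lemma~\ref{cor:imageof2o2ufamily} on the $(n+1)$-loop inside $\overline{\chi}_{i-1}^{(n)}$ to produce a persisting $(n+2)$-loop barrier, and then assert that this barrier protects ``the $\sigma_i0_u$ pattern at the end.'' But the image of $\sigma_{i-1}0_u$ \emph{is} $\sigma_i0_u$; there is no extra padding in front of it. The $(n+2)$-loop sits strictly inside $\overline{\chi}_i^{(n)}$: since $\overline{\chi}_i^{(n)}$ begins with $0_o1_o\cdots n_o(n+1)_u(n+1)_o\cdots$, the $(n+2)$-loop appears only after that initial stretch. So your barrier blocks cancellation from passing \emph{through} it, but does not prevent the image of the prefix $\eta_1$ from eating the first several characters of $\overline{\chi}_i^{(n)}$, which would already destroy the full $\sigma_i$ pattern. (Your phrase ``exactly as in the proof of Corollary~\ref{cor:inductcor}'' is a misattribution: that proof uses Theorem~\ref{lem:induct}, not a barrier from Lemma~\ref{cor:imageof2o2ufamily}. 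The barrier trick appears in Proposition~\ref{lem:imageofinit}, but there it only reduces to a tail $\mu0_u$ which is then handled by direct computation.)

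The paper's fix is to work at the boundary character itself: one applies Remark~\ref{rem:induct} (the reversed Theorem~\ref{lem:induct}) to $P_s\eta_10_o$, where $0_o=(\overline{\chi}_{i-2}^{(n)})^i$, to show that this terminal $0_o$ survives in $g_n(P_s\eta_10_o)$. This $0_o$ is precisely the first character of $\sigma_{i-1}$ in the image, so its persistence pins down the entire terminal segment. Verifying the hypotheses of Remark~\ref{rem:induct} requires checking that $\eta_1^t$ has numerical value at most $0$ (true since $(\overline{\chi}_{i-2}^{(n)})^i=0_o$ is oriented to the right) and that any terminal $P$-loop in $\eta_1$ has the form $1_o0_oP_oP_u$ (forced by simplicity of $\mathring\alpha_{i-1}^{(n)}$). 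Substituting this argument for your barrier step completes the proof.
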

\begin{proof}
As above, we will retain the $0_u$ which immediately follows $\mathring\alpha_i^{(n)}$ when we compute images.  We will show by induction that $g_n(\mathring\alpha_{i-1}^{(n)}0_u)$ ends with $\overline{\chi}_{i-1}^{(n)}P_uP_o\chi_{i-1}^{(n)}0_u$ in a reduced code, which will show that $\overline{\chi}_{i-1}^{(n)}P_uP_o\chi_{i-1}^{(n)}$ appears as the final subsegment of $\mathring\alpha_i^{(n)}$.   
Since $g_n(\mathring\alpha_{i-1}^{(n)}0_u)=\mathring\alpha_i^{(n)}0_u$ by Proposition \ref{lem:imageofinit}, 
this will imply that such a segment persists in $\alpha_i^{(n)}$.

The base case $i=2$ was shown in Proposition \ref{prop:highwayconstruction}, so we proceed to the induction step.
Using the induction hypothesis, write
$$\mathring\alpha^{(n)}_{i-1}0_u=P_s\eta_1\overline{\chi}_{i-2}^{(n)}P_uP_o\chi_{i-2}^{(n)}0_u,$$
for some subsegment $\eta_1$.
A calculation shows that in an unreduced code
\begin{align*}
g_n(\overline{\chi}_{i-2}^{(n)}P_uP_o\chi_{i-2}^{(n)}0_u)&=g_n(\overline{\chi}_{i-2}^{(n)}+0_oP_uP_o0_o+\chi_{i-2}^{(n)}0_u),\\
&=g_n(\overline{\chi}_{i-2}^{(n)})+g_n(0_oP_uP_o0_o)+g_n(\chi_{i-2}^{(n)}0_u),\\
&=\overline{\chi}_{i-1}^{(n)}P_uP_o\chi_{i-1}^{(n)}0_u,
\end{align*}
where one verifies that the last line is in fact reduced.
We therefore reduce to showing that $\overline{\chi}_{i-1}^{(n)}P_uP_o\chi_{i-1}^{(n)}0_u$ persists as the terminal substring of $\mathring\alpha_i^{(n)}0_u$.
For this, we must prove that no characters in an unreduced code for $g_n(P_s\eta_10_o)$ cancel with this terminal $0_o$, which is $(\overline{\chi}_{i-1}^{(n)})^i$.

Recall from \eqref{eqn:defofchi_i} that $\chi_{i-1}^{(n)}$ is by definition all of $\mathring\alpha_{i-1}^{(n)}$ except the initial $P_s$.  In particular, its terminal character $0_o=(\chi_{i-1}^{(n)})^t$ is  oriented to the left.   Thus the initial $0_o$ of the reverse segment  $\overline{\chi}_{i-1}^{(n)}$ is  oriented to the right.  Since we are considering the segment $P_s\eta_10_o=P_s\eta_1(\overline{\chi}_{i-1}^{(n)})^i$, this implies that $(\eta_1)^t$ has numerical value at most $0$. 
Moreover, simplicity of $\mathring\alpha^{(n)}_{i-1}$ shows that if $\eta_1$ ends by looping around $p$, then it must have $1_o0_oP_oP_u$ as its final segment.
Theorem \ref{lem:induct} combined with Remark \ref{rem:induct} then shows that there is no cancellation with $(\overline{\chi}_{i-1}^{(n)})^i=0_o$ in a reduced code for $g_n(P_s\eta_10_o)$.
\end{proof}

The following corollary follows immediately from Corollary~\ref{cor:inductcor}, Corollary \ref{cor:highwayconstruction}, our definition of $\chi^{(n)}_i$ in \eqref{eqn:defofchi_i}, and  the definition of lane length.

\begin{cor}\label{lem:lanelength}
Fix $i\ge 2$ and let $\mathcal{L}$ and $\mathcal{R}$ denote the collections of left and right lanes in $\alpha^{(n)}_i$, respectively. Then
$$\displaystyle n_L=\max_{\lambda\in\mathcal{L}}L(\lambda)\geq \ell_c(\mathring\alpha^{(n)}_{i-1})$$
and 
$$\displaystyle n_R=\max_{\rho\in\mathcal{R}}L(\rho)\geq \ell_c(\mathring\alpha^{(n)}_{i-1}).$$

\end{cor}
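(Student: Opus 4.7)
The plan is to deduce this from Corollary~\ref{cor:highwayconstruction} together with Corollary~\ref{cor:inductcor} by little more than unpacking the definitions of a lane and of lane length. Corollary~\ref{cor:highwayconstruction} already produces the subsegment $\overline{\chi}_{i-1}^{(n)} P_u P_o \chi_{i-1}^{(n)}$ of $\alpha_i^{(n)}$ and asserts that $\alpha_i^{(n)}$ has highways whose lanes contain $\chi_{i-1}^{(n)}$; the remaining work is to convert ``contains $\chi_{i-1}^{(n)}$" into the stated lane length inequality.

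First I would observe that the single subsegment $\overline{\chi}_{i-1}^{(n)} P_u P_o \chi_{i-1}^{(n)}$ simultaneously exhibits a right lane of the form $P_u P_o \gamma$ (via Definition~\ref{defn:rightlane}(1)) and a left lane of the form $\overline{\gamma} P_u P_o$ (via Definition~\ref{defn:leftlane}(2)), where in both cases $\gamma$ may be taken to contain $\chi_{i-1}^{(n)}$. To confirm that these candidates genuinely satisfy the lane conditions, I would verify the two overlap requirements: that $P_s \chi_{i-1}^{(n)} = \mathring\alpha_{i-1}^{(n)}$ coincides with the initial $\ell_c(\mathring\alpha_{i-1}^{(n)})$ characters of $\alpha_i^{(n)}$, and that $\overline{\chi_{i-1}^{(n)}} P_s$ coincides with the terminal $\ell_c(\mathring\alpha_{i-1}^{(n)})$ characters. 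The initial match is the content of Corollary~\ref{cor:inductcor} applied iteratively (tracking the inductive construction in its proof, which places $\mathring\alpha_{i-1}^{(n)}$ as an initial segment of $\mathring\alpha_i^{(n)}$), while the terminal match follows by applying the same argument to $\overline{\alpha_i^{(n)}}$ using the reverse-orientation version in Remark~\ref{rem:induct}.

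Once the overlap conditions hold, Definitions~\ref{defn:rightlane} and~\ref{defn:leftlane} produce a right lane $\rho \in \mathcal{R}$ and a left lane $\lambda \in \mathcal{L}$ whose maximal $\gamma$ contains $\chi_{i-1}^{(n)}$. Using $\mathring\alpha_{i-1}^{(n)} = P_s \chi_{i-1}^{(n)}$, so that $\ell_c(\mathring\alpha_{i-1}^{(n)}) = 1 + \ell_c(\chi_{i-1}^{(n)})$, the definition of lane length yields
$$L(\rho) = \ell_c(\rho) - 1 \geq 2 + \ell_c(\chi_{i-1}^{(n)}) - 1 = \ell_c(\mathring\alpha_{i-1}^{(n)}),$$
and identically $L(\lambda) \geq \ell_c(\mathring\alpha_{i-1}^{(n)})$. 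Taking maxima over $\mathcal{R}$ and $\mathcal{L}$ delivers the two asserted bounds $n_R, n_L \geq \ell_c(\mathring\alpha_{i-1}^{(n)})$. The only step requiring genuine care is verifying the overlap conditions --- in particular, ensuring that the initial-segment match from Corollary~\ref{cor:inductcor} is really the initial-to-initial version (rather than some other combination of initial/terminal); after that, the lane length computation reduces to the arithmetic displayed above, and no deeper obstacle is anticipated.
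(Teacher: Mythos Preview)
Your proposal is correct and follows essentially the same route as the paper, which treats this as an immediate consequence of Corollary~\ref{cor:inductcor}, Corollary~\ref{cor:highwayconstruction}, the definition of $\chi_i^{(n)}$ in \eqref{eqn:defofchi_i}, and the definition of lane length. The only simplification the paper implicitly uses that you do not: since each $\alpha_i^{(n)}$ is symmetric, the terminal overlap condition in Definitions~\ref{defn:leftlane}--\ref{defn:rightlane} follows automatically from the initial one (as remarked just after those definitions), so there is no need to invoke Remark~\ref{rem:induct} separately.
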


\noindent We are now in a position to prove the main result of the section.

\begin{thm} \label{thm:startslike}
If $\delta$ is an arc which starts like $\alpha^{(n)}_i$ for some $i\ge 2$ and $\gamma$ is any arc disjoint from $\delta$, then $\gamma$ starts like $\alpha_{i-1}^{(n)}$.
\end{thm}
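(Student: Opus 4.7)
The plan is to combine the highway structure produced by Corollary~\ref{cor:highwayconstruction} with the ``stay in your lane'' principle of Lemma~\ref{lem:stayinyourlane}. Since $\delta$ starts like $\alpha^{(n)}_i$, after possibly reversing $\delta$ and using the symmetry of $\alpha^{(n)}_i$, we may assume that an initial subsegment of $\delta$ of code length at least $\lfloor\tfrac12\ell_c(\alpha^{(n)}_i)\rfloor=\ell_c(\mathring\alpha^{(n)}_i)$ agrees with an initial subsegment of $\alpha^{(n)}_i$, and hence equals $\mathring\alpha^{(n)}_i$ as a reduced code. By Corollary~\ref{cor:inductcor}, $\mathring\alpha^{(n)}_i$ begins with $P_s\chi_{i-1}^{(n)}$, and by Corollary~\ref{cor:highwayconstruction} it ends with $\overline{\chi}_{i-1}^{(n)}P_uP_o\chi_{i-1}^{(n)}$. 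Consequently, in reduced form $\delta$ itself begins with the pattern
\[
P_s\chi_{i-1}^{(n)}\,\cdots\,\overline{\chi}_{i-1}^{(n)}P_uP_o\chi_{i-1}^{(n)}\,\cdots,
\]
which will serve as a local highway near the puncture $p$.

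Next I would run the argument from the proof of Lemma~\ref{lem:stayinyourlane} directly on $\delta$, using only this local structure. Place $\delta$ in its fixed standard position relative to the puncture disk $D_p$, so that the $P_uP_o$ in the displayed pattern lies inside $D_p$ while the two copies of $\chi_{i-1}^{(n)}$ exit $D_p$ on the same side. Since $\gamma$ is disjoint from $\delta$ and begins and ends at $p$, each of its initial and terminal subsegments lies in a single connected component of $D_p\setminus\delta$ and must exit $D_p$ by passing between two parallel strands of $\delta$. By simplicity of $\delta$, the two strands coming from the initial $P_s\chi_{i-1}^{(n)}$ and from $P_uP_o\chi_{i-1}^{(n)}$ fellow travel past the puncture all the way to the end of $\chi_{i-1}^{(n)}$, forming either an innermost right lane of length at least $\ell_c(\chi_{i-1}^{(n)})+1=\ell_c(\mathring\alpha_{i-1}^{(n)})$, or else being separated by further strands that only create an even longer innermost lane. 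In either case, the reasoning of Lemma~\ref{lem:stayinyourlane} forces the initial or terminal code of $\gamma$ to begin with $P_s\chi_{i-1}^{(n)}$ (or its reverse).

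To conclude, observe that $P_s\chi_{i-1}^{(n)}=\mathring\alpha_{i-1}^{(n)}$ is an initial segment of $\alpha_{i-1}^{(n)}$ of code length $\ell_c(\mathring\alpha_{i-1}^{(n)})=\lfloor\tfrac12\ell_c(\alpha_{i-1}^{(n)})\rfloor$, so Definition~\ref{def:startslike} is satisfied and $\gamma$ starts like $\alpha_{i-1}^{(n)}$.

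The main obstacle is the mismatch between the formal definition of a lane in Definitions~\ref{defn:leftlane}--\ref{defn:rightlane}, which requires matching both the initial and terminal segments of $\delta$, and our hypothesis, which only guarantees that $\delta$ starts like $\alpha^{(n)}_i$ on one side. To bridge this gap, I would isolate the portion of the proof of Lemma~\ref{lem:stayinyourlane} that is purely local near $D_p$: the geometric constraint that $\gamma$ must emerge from $p$ between two parallel strands of $\delta$ is imposed by the initial segment of $\delta$ alone, and the required parallelism of the two copies of $\chi_{i-1}^{(n)}$ is furnished by Corollary~\ref{cor:inductcor} together with the simplicity of $\delta$. This localized version of the lane argument suffices, and controlling the hypothesis $i\geq 2$ is handled exactly by the range of validity of Corollary~\ref{cor:highwayconstruction}.
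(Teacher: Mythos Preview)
Your approach is essentially the same as the paper's: both use Corollary~\ref{cor:highwayconstruction} to locate the subsegment $\overline{\chi}_{i-1}^{(n)}P_uP_o\chi_{i-1}^{(n)}$ inside $\mathring\alpha_i^{(n)}\subset\delta$, then invoke the lane machinery (Lemma~\ref{lem:stayinyourlane}) to force $\gamma$ to begin with $P_s\chi_{i-1}^{(n)}=\mathring\alpha_{i-1}^{(n)}$. You correctly flag the subtlety that the formal lane definition requires matching the \emph{terminal} segment of $\delta$, which the hypothesis does not provide directly.

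Your proposed fix is on the right track but stops just short of the key observation. The point is not merely that $s_1=P_s\chi_{i-1}^{(n)}\cdots$ and the exiting leg $P_uP_o\chi_{i-1}^{(n)}$ are parallel; it is that \emph{both} legs of the loop $L=\overline{\chi}_{i-1}^{(n)}P_uP_o\chi_{i-1}^{(n)}$, read outward from $D_p$, have code $\chi_{i-1}^{(n)}$. Since $L$ encircles $p$ in $D_p$, every strand of $\delta$ reaching $p$---in particular the terminal strand $s_2$ and any other $P_{o/u}P_{u/o}$ loop inside $L$---must enter $D_p$ between the two legs of $L$ and is therefore sandwiched between two strands that agree on $\chi_{i-1}^{(n)}$. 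Simplicity of $\delta$ then forces each such strand to follow $\chi_{i-1}^{(n)}$ as well. This simultaneously shows that the terminal of $\delta$ ends with $\overline{\chi}_{i-1}^{(n)}P_s$ (so the formal lane definitions are satisfied and the paper's direct appeal to Lemma~\ref{lem:stayinyourlane} goes through) and gives your localized argument: whatever pair of consecutive strands of $\delta$ the arc $\gamma$ emerges between, both follow $\chi_{i-1}^{(n)}$, hence so does $\gamma$. Once you make this two-legged sandwich explicit, your argument is complete and matches the paper's.
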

\begin{proof}
Proposition \ref{prop:highwayconstruction} and Corollary \ref{cor:highwayconstruction} show that $\mathring\alpha_i^{(n)}$ contains the segment $\overline{\chi}_{i-1}^{(n)}P_uP_o\chi_{i-1}^{(n)}$ and hence $\mathring\alpha_i^{(n)}$ has highways for all $i\ge 2$.

As $\delta$ starts like $\alpha^{(n)}_i$, the first part of its code is identical to that of $\mathring\alpha^{(n)}_i$ and therefore $\delta$ also has highways and also contains the segment $\overline{\chi}_{i-1}^{(n)}P_uP_o\chi_{i-1}^{(n)}$.
By Lemma \ref{lem:stayinyourlane}, the innermost left lane $P_uP_o\alpha_l$ (or its reverse) and the innermost right lane $P_oP_u\alpha_r$ (or its reverse) in $\delta$ each have lane length at least $\ell_c(\mathring\alpha_{i-1}^{(n)})$.
Hence the first $\ell_c(\mathring\alpha_{i-1}^{(n)})-1$ characters of these lanes immediately following $P_{o/u}P_{u/o}$ must agree with $\chi_{i-1}^{(n)}$ or its reverse.
As $\gamma$ is disjoint from $\delta$, the moreover statement of Lemma \ref{lem:stayinyourlane} gives that the code for $\gamma$ must begin with $P_s\alpha_l$ or $P_s\alpha_r$.   Consequently it must begin with $\mathring\alpha_{i-1}^{(n)}=P_s\chi_{i-1}^{(n)}$, as required.
\end{proof}


\section{Loxodromic elements for $\mathcal{A}(\Sigma,p)$}\label{sec:loxo}

In this section, we will first conclude the proof of Theorem \ref{thm:loxomain} and then go on to explore some properties of our loxodromic elements $g_n$, including identifying an explicit geodesic axis for $g_n$, as well as describing the limit points of $g_n$ on the boundary of the modified arc complex $\mathcal{A}(\Sigma,p)$ for a surface $\Sigma$ of type $\mathcal{S}$. 

\subsection{The proof of Theorem \ref{thm:loxomain}}

Let $S$ be the biinfinite flute surface we defined in Section~\ref{background} with isolated puncture $p$.    Recall that $\mathcal{A}(S,p)$ denotes the modified arc graph of $S$, as in Definition \ref{defn:modarcgraph}. Let $\alpha_0=P_s0_o0_uP_s\in\mathcal A(S,p)$ be the arc from Definition \ref{def:alpha_i}.  

For each $n$, consider the map 
\begin{equation}\label{eqn:startslike}
\phi_n\colon \mathcal A(S,p)\to \Z_{\geq 0},
\end{equation}
defined by setting $\phi_n(\delta)$ equal to the largest $i\geq 0$ such that $\delta$ starts like $\alpha_i^{(n)}$.  If there is no such $i$ then set $\phi_n(\delta)=0$.

\begin{lem} \label{cor:dist}
For any $\gamma,\delta\in\mathcal A(S,p)$, we have $d_{\mathcal A(S,p)}(\gamma,\delta)\geq|\phi_n(\gamma)-\phi_n(\delta)|$.
\end{lem}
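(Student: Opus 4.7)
The plan is to reduce to the case of arcs at distance one, and then use Theorem~\ref{thm:startslike} as the key input. Specifically, I would argue that $\phi_n\colon \mathcal A(S,p) \to \Z_{\geq 0}$ is $1$-Lipschitz: for any two arcs $\gamma,\delta$ that are adjacent in $\mathcal A(S,p)$ (so that they admit disjoint representatives), $|\phi_n(\gamma) - \phi_n(\delta)| \leq 1$. Once this is established, given any two arcs $\gamma,\delta$ and a geodesic $\gamma = \gamma_0, \gamma_1, \ldots, \gamma_k = \delta$ between them in $\mathcal A(S,p)$, iteratively applying the triangle inequality to $\phi_n$ yields
\[
|\phi_n(\gamma) - \phi_n(\delta)| \leq \sum_{j=1}^k |\phi_n(\gamma_{j-1}) - \phi_n(\gamma_j)| \leq k = d_{\mathcal A(S,p)}(\gamma,\delta),
\]
which is exactly the desired inequality.

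To prove the $1$-Lipschitz statement, suppose without loss of generality that $\phi_n(\gamma) \geq \phi_n(\delta)$, and set $i := \phi_n(\gamma)$. If $i \leq 1$, then $\phi_n(\gamma) - \phi_n(\delta) \leq 1 - 0 = 1$ holds trivially since $\phi_n$ is non-negative. If $i \geq 2$, then by definition of $\phi_n$, the arc $\gamma$ starts like $\alpha_i^{(n)}$. Since $\delta$ is disjoint from $\gamma$, Theorem~\ref{thm:startslike} applies and tells us that $\delta$ starts like $\alpha_{i-1}^{(n)}$. Hence $\phi_n(\delta) \geq i - 1 = \phi_n(\gamma) - 1$, as desired.

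The main content of the argument is thus packaged inside Theorem~\ref{thm:startslike}, which was already established. The only subtlety here is making sure the $1$-Lipschitz step chains correctly along a geodesic, but this is formal once we know $\phi_n$ satisfies the one-step inequality. There is no hidden obstacle: the lemma is essentially a clean corollary of the machinery built up in Sections~\ref{sec:arcsthatstartlike} and~\ref{sec:highways}.
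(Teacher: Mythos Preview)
Your proof is correct and follows essentially the same approach as the paper: both reduce to the distance-one case via Theorem~\ref{thm:startslike} and then chain along a geodesic using the triangle inequality (what the paper calls ``subadditivity of the absolute value''). You simply spell out the case $i\leq 1$ explicitly, which the paper leaves implicit.
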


\begin{proof}
It follows immediately from Theorem \ref{thm:startslike} that if  $d_{\mathcal A(S,p)}(\gamma,\delta)=1$, then $|\phi_n(\gamma)-\phi_n(\delta)|\leq 1$ for any $n$.  The result then follows from the subadditivity of the absolute value.
\end{proof}

Let  $\{g_n\}_{n\in\mathbb N}$ be as in Definition \ref{def:g_n}, and let $\{\alpha_i^{(n)}\}_{i\in\mathbb Z}$ be as in Definition \ref{def:alpha_i}. We first show that the elements $g_n\in \MCG(S,p)$ are loxodromic with respect to the action on $\mathcal A(S,p)$.

\begin{prop}\label{prop:loxonS}
For each $n\in \mathbb N$, the homeomorphism $g_n\in\MCG(S,p)$ is a loxodromic isometry of $\mathcal A(S,p)$ with a $(2,0)$--quasi-geodesic axis $\{\alpha_i^{(n)}\}_{i\in\mathbb Z}$.
\end{prop}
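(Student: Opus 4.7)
The plan is to show that the map $\Z\to\mathcal A(S,p)$ sending $i\mapsto \alpha_i^{(n)}=g_n^i(\alpha_0)$ is a $(2,0)$--quasi-isometric embedding; since $g_n\in\MCG(S,p)$ acts by isometries on $\mathcal A(S,p)$ and the orbit is $g_n$--invariant, this simultaneously establishes that $g_n$ is loxodromic and that the claimed sequence is a $(2,0)$--quasi-geodesic axis.

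For the upper bound, my first step is to exhibit a single arc disjoint from both $\alpha_0$ and $\alpha_1^{(n)}$. A natural candidate is the small loop $\beta=P_s(-1)_o(-1)_uP_s$ around $p_{-1}$: the arc $\alpha_0$ only touches the label $0$, while by inspection of the construction of $g_n$ (or directly from the explicit codes in Section \ref{sec:arcinit}) the arc $\alpha_1^{(n)}$ is supported on labels in $\{P,0,1,\ldots,n+1\}$. Hence $\beta$ can be realized disjoint from both arcs away from $p$, so $d_{\mathcal A(S,p)}(\alpha_0,\alpha_1^{(n)})\le 2$. Since $g_n$ is an isometry, $d_{\mathcal A(S,p)}(\alpha_i^{(n)},\alpha_{i+1}^{(n)})\le 2$ for every $i\in\Z$, and the triangle inequality yields $d_{\mathcal A(S,p)}(\alpha_i^{(n)},\alpha_j^{(n)})\le 2|i-j|$.

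For the lower bound, the key tool is Lemma \ref{cor:dist}, which gives $d_{\mathcal A(S,p)}(\gamma,\delta)\ge |\phi_n(\gamma)-\phi_n(\delta)|$. I plan first to verify that $\phi_n(\alpha_i^{(n)})=i$ for all $i\ge 0$ and that $\phi_n(\alpha_0)=0$. The inequality $\phi_n(\alpha_i^{(n)})\ge i$ is immediate since $\alpha_i^{(n)}$ starts like itself, and for the reverse inequality I will show that $\ell_c(\alpha_j^{(n)})>2\ell_c(\alpha_i^{(n)})$ whenever $j>i$, forcing $\alpha_i^{(n)}$ to be too short to have an initial or terminal segment of code length $\lfloor\tfrac12\ell_c(\alpha_j^{(n)})\rfloor$ matching one of $\alpha_j^{(n)}$. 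Lemma \ref{cor:dist} then gives $d_{\mathcal A(S,p)}(\alpha_0,\alpha_i^{(n)})\ge i$ for $i\ge 0$, and the isometry property of $g_n$ upgrades this to $d_{\mathcal A(S,p)}(\alpha_i^{(n)},\alpha_j^{(n)})=d_{\mathcal A(S,p)}(\alpha_0,\alpha_{j-i}^{(n)})\ge |j-i|$ for all $i,j\in\Z$; the negative-iterate case is handled via $d_{\mathcal A(S,p)}(\alpha_0,\alpha_{-m}^{(n)})=d_{\mathcal A(S,p)}(\alpha_m^{(n)},\alpha_0)$, again using that $g_n$ is an isometry.

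The main obstacle I anticipate is the code-length comparison $\ell_c(\alpha_j^{(n)})>2\ell_c(\alpha_i^{(n)})$ that underlies $\phi_n(\alpha_i^{(n)})=i$. I expect this to follow readily from the structure of $g_n$ as a composition of three permissible shifts together with Proposition \ref{lem:imageofinit} (which identifies $\mathring\alpha_{i+1}^{(n)}$ with the image of a proper initial segment of $\mathring\alpha_i^{(n)}$) and Corollary \ref{cor:highwayconstruction} (which exhibits $\overline{\chi}_i^{(n)}P_uP_o\chi_i^{(n)}$ inside $\alpha_{i+1}^{(n)}$), the combination of which forces at least a doubling of code length at each step. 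Once this is in hand, both bounds combine to give $|i-j|\le d_{\mathcal A(S,p)}(\alpha_i^{(n)},\alpha_j^{(n)})\le 2|i-j|$, finishing the proof.
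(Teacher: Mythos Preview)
Your proposal is correct and follows essentially the same route as the paper: exhibit the arc $\beta=P_s(-1)_o(-1)_uP_s$ to get the upper bound $d(\alpha_i^{(n)},\alpha_{i+1}^{(n)})\le 2$, and use Lemma~\ref{cor:dist} together with $\phi_n$ for the lower bound. The paper only establishes the half-axis $\{\alpha_i^{(n)}\}_{i\ge 0}$ and then rules out elliptic and parabolic behavior; your use of $g_n$-equivariance to pass directly to the full $\Z$-indexed axis is a harmless cosmetic variation.

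The one place you are working harder than necessary is your ``main obstacle.'' You do not need the exact equality $\phi_n(\alpha_i^{(n)})=i$, nor the code-length doubling $\ell_c(\alpha_j^{(n)})>2\ell_c(\alpha_i^{(n)})$ for all $j>i$. Since you already reduce, via the isometry property, to estimating $d(\alpha_0,\alpha_k^{(n)})$ for $k\ge 0$, it suffices to know $\phi_n(\alpha_0)=0$ and $\phi_n(\alpha_k^{(n)})\ge k$. The latter is tautological. The former follows from a single length count: $\ell_c(\alpha_0)=4$ while $\lfloor\tfrac12\ell_c(\alpha_1^{(n)})\rfloor=2n+4\ge 6$, so $\alpha_0$ is simply too short to start like $\alpha_1^{(n)}$. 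The paper asserts $\phi_n(\alpha_i^{(n)})=i$ without comment, and indeed the machinery you propose (Proposition~\ref{lem:imageofinit} and Corollary~\ref{cor:highwayconstruction}) would deliver the doubling if you wanted it, but it is superfluous for the proposition at hand.
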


\begin{proof}
We first show that the map $\mathbb Z_{\geq 0}\to \langle g_n\rangle \alpha_0\subset \mathcal A(S,p)$ defined by $i\mapsto g_n^i(\alpha_0)$ is a $(2,0)$--quasi-isometry.    In other words, we will show that $\{\alpha_i^{(n)}\}_{i\geq 0}$ is a quasi-geodesic half-axis for $g_n$ along which $g_n$ acts as translation.  

 Let $\phi_n$ be the map defined in \eqref{eqn:startslike}.  By Lemma \ref{cor:dist}, we have that $d_{\mathcal A(S,p)}(\gamma,\delta)\geq|\phi_n(\gamma)-\phi_n(\delta)|$ for any $\gamma,\delta\in \mathcal A(S,p)$.  Since $\phi_n(\alpha^{(n)}_i)=i$ for all $i\geq 0$, this implies that 
 \begin{equation}\label{eqn:lowerbd} 
 d_{\mathcal A(S,p)}(\alpha_0,\alpha_i^{(n)})\geq i.
 \end{equation} 
       
   Consider the arc  $\beta=P_s(-1)_o(-1)_uP_s$.  Then $\beta$ is disjoint from both $\alpha^{(n)}_0$ and $\alpha^{(n)}_1$.  Since $g_n$ is a homeomorphism, $g_n^i(\beta)$ is disjoint from both $\alpha^{(n)}_i$ and $\alpha^{(n)}_{i+1}$, and thus $d_{\mathcal A(S,p)}(\alpha_i^{(n)},\alpha^{(n)}_{i+1})\leq 2$ for all $i$. Therefore, for all $i$,  
\begin{equation}\label{eqn:upperbd}
d_{\mathcal A(S,p)}(\alpha_0,\alpha_i^{(n)})\leq 2i.
\end{equation}
Together, \eqref{eqn:lowerbd} and \eqref{eqn:upperbd} show that $\{\alpha_i^{(n)}\}_{i\geq 0}$ is a $(2,0)$--quasi-geodesic half-axis for $g_n$. 

Since $\{\alpha_i^{(n)}\}_{i\geq 0}$ is an unbounded orbit of $g_n$, we can see that $g_n$ is not elliptic, and since $g_n$ acts as translation along this quasi-geodesic half-axis, $g_n$ cannot be parabolic.  Thus we may conclude that $g_n$ is a loxodromic isometry of $\mathcal A(S,p)$.  
\end{proof}

Recall from Definition \ref{def:typeS} that a surface $\Sigma$ with an isolated puncture $p$ is of type $\mathcal{S}$ if there exists a proper embedding $S \hookrightarrow \Sigma$ where $S$ contains $p$, the two non-isolated ends of $S$ correspond to distinct ends of $\Sigma$, and with the property that  either there are countably many connected components of $\Sigma \setminus S$ of the same (nontrivial) topological type or countably many isolated punctures of $S$ remain isolated punctures when embedded in $\Sigma$.  We denote this special class of connected components by $\mathcal{U}$, so that the elements of $\mathcal{U}$ are all homeomorphic to a fixed surface $\Sigma_0$. Recall that this definition ensures that the shift maps $h_1, h_2^{(n)}, $ and $h_3^{(n)}$ we are interested in on $S$ extend to shift maps on $\Sigma$.  In particular, $g_n= h_3^{(n)}\circ h_2^{(n)}\circ h_1$ extends to a homeomorphism of $\Sigma$.  When we reference $g_n$ below, we will try to be specific about when we are considering $g_n$ as an element of $\MCG(S, p)$ versus  an element of $\MCG(\Sigma, p)$. Recall from Definition \ref{def:intrinsic} that a homeomorphism $f \in \MCG(\Sigma, p)$ is intrinsically infinite-type if $f \notin \overline{\MCG_c(\Sigma)}$. 

Theorem \ref{thm:loxomain} is then a direct consequence of the following theorem.

\begin{thm}\label{thm:gnlox}
Let $\Sigma$ be a surface of type $\mc S$. For each $n\in \mathbb N$, the homeomorphism $g_n\in\MCG(\Sigma,p)$ is a loxodromic isometry of $\mathcal A(\Sigma,p)$ with a $(4,0)$--quasi-geodesic axis $\{\alpha_i^{(n)}\}_{i\in\mathbb Z}$.  Moreover, $g_n$ is intrinsically infinite-type.
\end{thm}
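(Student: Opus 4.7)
The plan is to deduce the two conclusions of the theorem separately, using work already done on $S$ together with the quasi-isometric embedding $\mathcal A(S,p) \hookrightarrow \mathcal A(\Sigma,p)$. For the loxodromic statement, I will first observe that each of $h_1$, $h_2^{(n)}$, $h_3^{(n)}$ was constructed so that the punctures in its supporting strip are exactly punctures of $S$ which correspond to elements of $\mathcal U$. By the discussion following Definition \ref{def:typeS}, each such shift extends to a shift on $\Sigma$ by the identity outside of $S$, so $g_n = h_3^{(n)} \circ h_2^{(n)} \circ h_1$ gives a well-defined element of $\MCG(\Sigma,p)$ whose restriction to $S$ is the $g_n$ of Definition \ref{def:g_n}. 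Since the arcs $\alpha_i^{(n)} = g_n^i(\alpha_0)$ lie entirely in $S$, Proposition \ref{prop:loxonS} combined with the $(2,0)$--quasi-isometric embedding of Lemma \ref{lem:AsqiembsAsigma} (and the remark following it) immediately yields that $\{\alpha_i^{(n)}\}_{i\in\Z}$ is a $(4,0)$--quasi-geodesic axis for $g_n$ in $\mathcal A(\Sigma,p)$, and in particular $g_n$ is loxodromic.

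For the intrinsic infinite-type statement, the plan is to exhibit a topological invariant of $g_n$ which vanishes on $\overline{\MCG_c(\Sigma)}$. The key starting computation is the same in every case: tracking the index $1$ using the domains of $h_1$, $h_2^{(n)}$, $h_3^{(n)}$ from Definition \ref{def:g_n}, one finds $h_1(p_1) = p_2$; for $n \geq 2$ both $h_2^{(n)}$ and $h_3^{(n)}$ fix $p_2$ because $B_2$ is not in their domains, while for $n = 1$ the left-shift convention $h_2^{(1)}(B_2) = B_{-2}$ together with $h_3^{(1)}(B_{-2}) = B_2$ produces the same net result. Thus for every $n \geq 1$, $g_n$ maps the $\mathcal U$--component attached at $p_1$ homeomorphically onto the $\mathcal U$--component attached at $p_2$. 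From here the argument naturally splits on the topology of $\Sigma_0$. If $\Sigma_0$ has at least one end, then this shift of $\mathcal U$--components induces a non-trivial permutation of $E(\Sigma)$, so $g_n \notin \pmap(\Sigma)$; since $\overline{\MCG_c(\Sigma)} \subseteq \pmap(\Sigma)$, we conclude $g_n$ is intrinsically infinite-type. If instead $\Sigma_0$ is compact of positive genus with a single boundary component and no other ends, then the $\mathcal U$--permutation leaves $E(\Sigma)$ pointwise fixed and $g_n \in \pmap(\Sigma)$. In this case $\Sigma$ has at least two non-planar ends, namely the two non-isolated ends of $S$, around which the $\Sigma_0$--handles accumulate, and we invoke the Aramayona--Patel--Vlamis splitting $\pmap(\Sigma) \cong \overline{\MCG_c(\Sigma)} \rtimes \Z^{n-1}$ of \cite{APV}, with the $\Z^{n-1}$ factor generated by handleshifts between non-planar ends. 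The same index-tracking argument applied to $k$ with $|k| \gg n+1$ gives $g_n(p_k) = p_{k+1}$, so $g_n$ projects to a net handleshift between the two non-planar ends inherited from $S$, hence to a non-trivial element of $\Z^{n-1}$, and again $g_n \notin \overline{\MCG_c(\Sigma)}$.

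The main obstacle is dealing uniformly with the wide range of topological types allowed for $\Sigma_0$, which forces the case split above and the appeal to \cite{APV} in the genus case; by contrast, the $(4,0)$--quasi-geodesic bound drops out immediately from composing the two quasi-isometry constants, and the index computation $g_n(p_1) = p_2$ is a routine check on the domains of the three defining shifts.
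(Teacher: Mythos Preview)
Your argument for loxodromicity is essentially identical to the paper's: extend the shifts to $\Sigma$, invoke Proposition~\ref{prop:loxonS} for the $(2,0)$--quasi-geodesic half-axis in $\mathcal A(S,p)$, and push through the $(2,0)$--quasi-isometric embedding of Lemma~\ref{lem:AsqiembsAsigma} to get a $(4,0)$--quasi-geodesic axis in $\mathcal A(\Sigma,p)$. The case split on whether $\Sigma_0$ has ends, and the treatment of the case where it does, also match the paper's proof.

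Where you diverge is in the case that $\Sigma_0$ is closed of positive genus. The paper does not argue directly on $\Sigma$; instead it passes to the subsurface $\Sigma'=S\cup\mathcal U$, which has \emph{exactly two} non-planar ends, and then uses the forward reference to Theorem~\ref{thm:pAshift} to write $g_n=\phi\cdot h$ with $\phi$ compactly supported and $h$ the standard shift, so that in $\pmap(\Sigma')$ one has $g_n=\phi\cdot (h')^m$ with $m=\mathrm{genus}(\Sigma_0)$; finally it cites \cite[Proposition~6.3]{PatelVlamis} to conclude $(h')^m\notin\overline{\MCG_c(\Sigma)}$. Your route---computing $g_n(p_k)=p_{k+1}$ for $|k|$ large and reading off a non-trivial handleshift projection---is more elementary and avoids the forward reference to Theorem~\ref{thm:pAshift}, which is a genuine simplification. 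Two cautions, though. First, you have an unfortunate collision of notation: the $n$ in your ``$\mathbb Z^{n-1}$'' is the number of non-planar ends of $\Sigma$, not the subscript of $g_n$. Second, and more substantively, $\Sigma$ may well have \emph{infinitely} many non-planar ends (components of $\Sigma\setminus S$ outside $\mathcal U$ can contribute them), so the version of the APV splitting stated in the paper's background section does not literally apply; you need the general form of \cite[Corollary~6]{APV}, where the abelian factor is identified via $H^1_c(\Sigma;\mathbb Z)$ and the projection is the flux homomorphism. With that formulation your ``net handleshift'' observation becomes the statement that $g_n$ has non-zero flux through any separating curve isolating one of the two ends of $S$ (namely $m\cdot 1$ units of genus per iterate), and the conclusion follows. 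The paper's detour through $\Sigma'$ is precisely what lets it quote the two-ended splitting verbatim.
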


\begin{proof}
Fix $n\in \mathbb N$.  By Proposition \ref{prop:loxonS}, $g_n$ is loxodromic with respect to the action of $\MCG(S,p)$ on $\mathcal A(S,p)$.  Moreover, $g_n$ extends to an element of $\MCG(\Sigma,p)$ and so $g_n$ acts on by isometries $\mathcal A(\Sigma,p)$ as well.   By Lemma~\ref{lem:AsqiembsAsigma}, there is a $(2,0)$--quasi-isometric embedding $\mathcal A(S,p)\hookrightarrow \mathcal A(\Sigma,p)$.    Therefore the image of the $(2,0)$--quasi-geodesic half-axis for $g_n$ constructed in Proposition \ref{prop:loxonS} is a $(4,0)$--quasi-geodesic half-axis in $\mathcal A(\Sigma,p)$.  It is clear that $g_n$ stabilizes this $(4,0)$--quasi-geodesic half-axis and so the arcs $\{\alpha_i^{(n)}\}_{i\in \mathbb Z_{\geq 0}}$ form a quasi-geodesic half-axis for $g_n$ in $\mathcal A(\Sigma,p)$.  Therefore, $g_n$ is loxodromic with respect to the action of $\MCG(\Sigma,p)$ on $\mathcal A(\Sigma,p)$.

We now show that $g_n \notin \overline{\MCG_c(\Sigma)}$. 
If $\Sigma_0$ has a nontrivial end space, then $g_n \notin \pmap(\Sigma)$ since $g_n$ translates the elements of $\mathcal{U}$. Note that $\overline{\MCG_c(\Sigma)} < \pmap(\Sigma)$ so that $g_n$ is of intrinsically infinite-type in this case. 

Now suppose that the end space of $\Sigma_0$ is trivial, so that $\Sigma_0$ is a finite-genus surface with one boundary component. Therefore, $S \cup \mathcal{U} = \Sigma'$ is homeomorphic to an infinite-genus surface with two non-planar ends and a countable number of planar ends. Note that $\Sigma \setminus \Sigma'$ consists of all of the additional topology of $\Sigma$ that is irrelevant to our construction of shift maps. In this way, the planar ends of $\Sigma'$ cut away the irrelevant topology of $\Sigma$. 

By \cite[Corollary 6]{APV}, $\pmap(\Sigma') = \overline{\MCG_c(\Sigma')} \rtimes \mathbb{Z}(h')$, where $h'$ is the standard handleshift on $\Sigma'$, which also corresponds to a handleshift on $\Sigma$ by extending $h'$ via the identity on $\Sigma\setminus \Sigma'$. For simplicity of notation, we drop the subscript on $g_n$ in what follows since the argument does not depend on $n$.  Theorem~\ref{thm:pAshift} below tells us that when $g$ is considered as an element of $\MCG(S, p)$, there is a compactly supported mapping class $\phi$ such that $g= \phi h$, where   $h$ is the right shift on $S$ shifting the punctures corresponding to the elements of $\mathcal{U}$. As an element of $\pmap(\Sigma', p)$, and therefore of $\MCG(\Sigma, p)$, $g$ is thus equal to $\phi \cdot (h')^m$ where $\phi \in \MCG_c(\Sigma') < \MCG_c(\Sigma) $ and $m$ is the genus of $\Sigma_0$. By the proof of \cite[Proposition 6.3]{PatelVlamis}, $(h')^m \notin \overline{\MCG_c(\Sigma)}$, so that $g \notin \overline{\MCG_c(\Sigma)}$ as well. 
\end{proof}

\subsection{Alternate description of $g_n$}
In this section, we show that our mapping classes $g_n$ can be written as the composition of a pseudo-Anosov on a finite-type subsurface and a standard shift.  Moreover, we will show that $g_n$ is the composition of \textit{the same} pseudo-Anosov on a \textit{fixed} finite-type subsurface and a standard shift whenever $n>3$; however, this subsurface is embedded in $\Sigma$ in different ways for different $n$, yielding distinct elements of $\MCG(\Sigma,p)$.

Let $h$ be the shift map on our subsurface $S$ that translates the punctures that correspond to elements of $\mathcal{U}$, that is, the right shift whose domain contains the simple closed curves $B_i$ for all $i\in \Z$ and maps $B_i$ to $B_{i+1}$. 

Without loss of generality, for each $n$ we may modify our separating curves $S_{-n}$ and $S_{n+1}$ so that  a connected component $\Pi_n'$ of $S\setminus(S_{-n}\cup S_{n+2})$ is a sphere with $2$ boundary components and $n+4$ punctures, one of which is $p$.  This amounts to pushing any extra topology on the back of $S$ outside of this subsurface.
We now further modify $\Pi'_n$ to form a subsurface $\Pi_n$ for each $n$ as follows.  
Let $R_1^{(n)}, R_2^{(n)}, R_3^{(n)}$ be the simple closed curves as shown in Figure \ref{fig:subsurfacepA}.  In particular, 
\begin{itemize}
\item $R^{(n)}_1$ encloses all $B_i$ with $-n\leq i\leq -1$ and $i $ odd;  
\item  $R^{(n)}_2$ encloses all $B_i$ with $-n\leq i\leq n$ and $i$ even; and
\item $R^{(n)}_3$ encloses all $B_i$ with $3\leq i\leq n$ and $i$ odd.
\end{itemize}
Note that $R^{(1)}_i$ is empty for all $i=1,2,3$, $R^{(2)}_i$ is empty when $i=1,3$, and $R^{(3)}_i$ is empty when $i=3$.  For all $n\geq 4$, $R^{(n)}_i$ is non-empty for each $i=1,2,3$.

\begin{figure}
\centering
\begin{overpic}[width=3in, trim={.25in 3.5in .25in 1.5in}, clip]{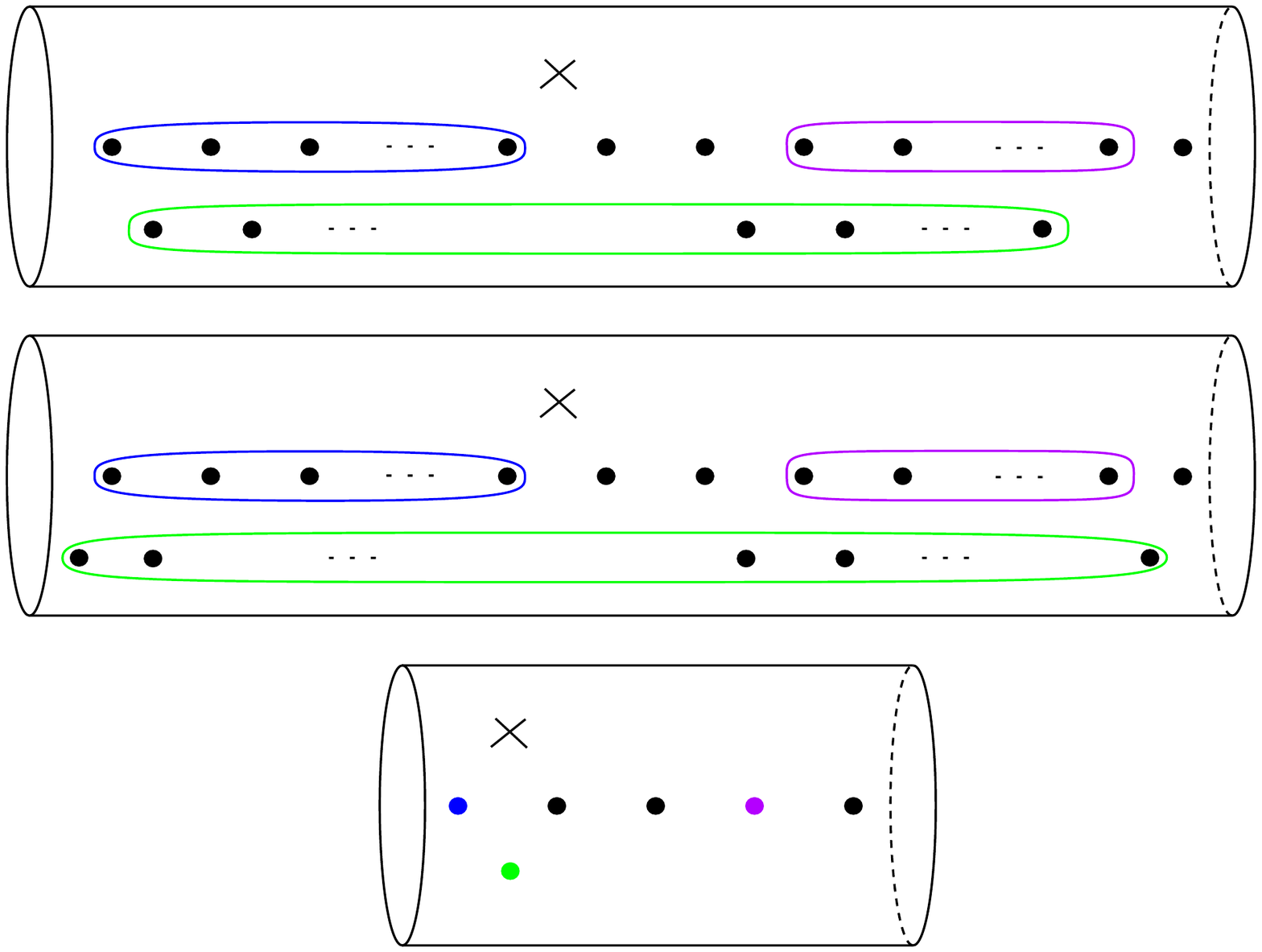}
\put(2,74){$\scriptstyle S_{-n}$}
\put(2,23){$\scriptstyle S_{-n}$}
\put(23,65){$\scriptstyle R_1^{(n)}$}
\put(73,65){$\scriptstyle R_3^{(n)}$}
\put(23,40.5){$\scriptstyle R_1^{(n)}$}
\put(73,40.5){$\scriptstyle R_3^{(n)}$}
\put(84,54.5){$\scriptstyle R_2^{(n)}$}
\put(45,30){$\scriptstyle R_2^{(n)}$}
\put(93,74){$\scriptstyle S_{n+2}$}
\put(93,23){$\scriptstyle S_{n+2}$}
\end{overpic}
\caption{The curves $R^{(n)}_i$ when $n$ is odd and even (top and middle, respectively), and the surface $\Pi=\Pi_n$ when $n\geq 3$ (bottom). In $\Pi$, the blue, green, and purple punctures correspond to where $\Pi_n'$ was cut along the curves $R_1^{(n)}, R_2^{(n)}, R_3^{(n)}$, respectively.}
\label{fig:subsurfacepA}
\end{figure}

\begin{defn}
Let $\Pi_n$ be the component of $\Pi_n'\setminus\left(R^{(n)}_1\cup R^{(n)}_2 \cup R^{(n)}_3\right)$ which contains the puncture $p$.  
\end{defn}

The surface $\Pi_n$ is a sphere with 2 boundary components and some number of punctures: five punctures if $n=1$, six punctures if $n=2$, and seven punctures if $n\geq 3$.  Notice that the $\Pi_n$ are homeomorphic  for all $n\geq 3$.  However, the embedding $\iota_n\colon \Pi_n\to\Sigma$  are different for distinct $n$.  For any $f\in \pmap(\Pi_n,p)$, let $\overline f:=\iota_n\circ f$.

\begin{thm}\label{thm:pAshift}
For each $n\geq 1$, there is a pseudo-Anosov $\phi_n\in\pmap(\Pi_n)$ so that 
$g_n=\overline\phi_n h$.  Moreover, for all $n,n'\geq 3$, $\Pi_n=\Pi_{n'}$ and $\phi_n=\phi_{n'}$ as elements of $\pmap(\Pi_n)$.  However, $\overline \phi_n$ and $\overline \phi_{n'}$ are distinct elements of $\MCG(\Sigma,p)$ since the embeddings $\iota_n$ are distinct.
\end{thm}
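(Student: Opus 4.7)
The plan is to define $\phi_n := g_n h^{-1}$ (viewed in $\MCG(\Sigma,p)$ via the embedding $\iota_n$) and verify: (a) it is compactly supported in $\iota_n(\Pi_n)$; (b) its restriction to $\Pi_n$ is pseudo-Anosov; and (c) for $n, n' \geq 3$ the maps $\phi_n$ and $\phi_{n'}$ agree as elements of the identified group $\pmap(\Pi_n) = \pmap(\Pi_{n'})$, while the embeddings $\iota_n$ differ.

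For (a), I would compare $g_n = h_3^{(n)} \circ h_2^{(n)} \circ h_1$ with $h$ directly on each $B_k$. On the shift regions $|k| \geq n+1$, all three shifts and $h$ act as one-step right translations whose compositions agree, so $g_n = h$ there. In the turbulent regions of $h_2^{(n)}, h_3^{(n)}$, the outer shifts fix the $B_k$ while $h_1$ and $h$ differ only through the wraparound near $p$: namely $h_1(B_{-1}) = B_1$ while $h(B_{-1}) = B_0$. This produces a compactly supported discrepancy localized near $p$, and the curves $R_1^{(n)}, R_2^{(n)}, R_3^{(n)}$ are chosen precisely to bound off this discrepancy so that, after the cuts, the class $\phi_n$ is supported on $\iota_n(\Pi_n)$ and lies in $\pmap(\Pi_n)$.

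For (b), I would use the geodesic lamination $\mathcal{L}$ from Theorem~\ref{thm:geodlam} obtained as the limit of $g_n^i(c_0)$. The lamination $\mathcal{L}$ is $g_n$-invariant, and an analogous backward construction yields a transverse invariant lamination $\mathcal{L}'$. Restricted to $\iota_n(\Pi_n)$, these descend to a $\phi_n$-invariant pair of transverse laminations on $\Pi_n$, because the complement $\Sigma \setminus \iota_n(\Pi_n)$ is acted on by $g_n$ identically to $h$ by part (a). One verifies these restricted laminations are filling on $\Pi_n$, using that the unboundedness of $\{g_n^i(c_0)\}$ in $\mathcal{A}(S,p)$ (Proposition~\ref{prop:loxonS}) prevents $\mathcal{L}$ from being supported on a simple closed curve or bounded collection thereof. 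A pair of invariant transverse filling laminations characterizes $\phi_n$ as pseudo-Anosov by the Nielsen--Thurston classification.

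For (c), once $n \geq 3$ the turbulent regions of $h_2^{(n)}, h_3^{(n)}$ strictly contain the turbulent region of $h_1$, so $g_n h^{-1}$ depends only on a bounded neighborhood of $p$ whose topology and induced map do not depend on $n$. Combined with the fact that $\Pi_n$ for $n \geq 3$ has constant topological type (sphere with two boundary components and seven punctures), this identifies $\phi_n$ with $\phi_{n'}$ in $\pmap(\Pi_n)$. The embeddings $\iota_n$, however, place $\Pi_n$ in $\Sigma$ bounded by $S_{-n}$ and $S_{n+2}$, whose positions depend on $n$; thus $\iota_n(\Pi_n) \neq \iota_{n'}(\Pi_{n'})$ as subsurfaces of $\Sigma$, and hence $\overline{\phi}_n \neq \overline{\phi}_{n'}$ in $\MCG(\Sigma,p)$. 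The main obstacle is step (b), particularly the construction of the transverse lamination $\mathcal{L}'$ and the verification that the restricted pair fills $\Pi_n$; while Theorem~\ref{thm:geodlam} provides the attracting lamination on $\Sigma$, establishing transversality and the filling condition on the finite-type $\Pi_n$ requires a careful train-track analysis ruling out that any peripheral curves of $\Pi_n$ are carried by the restricted laminations.
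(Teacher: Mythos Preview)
Your definition of $\phi_n = g_n h^{-1}$ and the outline of parts (a) and (c) are in line with the paper, which simply states that $\overline\phi_n$ stabilizes $\Pi_n$ and is the identity outside it, and for (c) relies on the fact that the train-track data constructed is literally the same for all $n\geq 3$.

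The genuine gap is in (b). The lamination $\mathcal{L}$ from Theorem~\ref{thm:geodlam} is invariant under $g_n$, not under $\phi_n$. Since $g_n=\overline\phi_n h$ with $h$ a nontrivial global shift, $g_n$-invariance of $\mathcal{L}$ gives $\overline\phi_n(h\mathcal{L})=\mathcal{L}$, which is not the same as $\overline\phi_n(\mathcal{L})=\mathcal{L}$; there is no reason $h$ should preserve $\mathcal{L}$. Your justification (``the complement is acted on by $g_n$ identically to $h$'') does not help: the leaves of $\mathcal{L}$ pass through both $\Pi_n$ and its complement (see the train track in Figure~\ref{fig:T}, which extends to infinity), so restricting to $\Pi_n$ does not yield a lamination at all, let alone a $\phi_n$-invariant one. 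The difficulty you flag at the end---transversality and filling---is therefore not the actual obstacle; the laminations you propose are simply not invariant under the map in question.

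The paper takes a completely different and much more concrete route for (b): it invokes a criterion of Verberne stating that a mapping class is pseudo-Anosov if it preserves a large, generic, birecurrent train track with Perron--Frobenius transition matrix. The authors exhibit an explicit train track $\tau_n$ on $\Pi_n$ (one picture for $n=1,2$, another for $n\geq 3$), check genericity, largeness, and birecurrence directly from the picture, verify by a sequence of figures that $\phi_n(\tau_n)$ is carried by $\tau_n$, and write down the resulting $4\times 4$ (for $n=1,2$) or $5\times 5$ (for $n\geq 3$) transition matrix $A_n$, whose square has all positive entries. This yields pseudo-Anosov directly, and as a bonus makes (c) immediate since $\tau_n$ and $A_n$ are visibly independent of $n$ once $n\geq 3$.
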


\begin{proof}
We define $\overline \phi_n:=g_nh^{-1}$ for all $n\geq 1$.
It is clear that $\overline\phi_n$ stabilizes the subsurface $\Pi_n$ and is the identity on $\Sigma\setminus \Pi_n$.  Let $\phi_n$ be the restriction of $\overline\phi_n$ to $\pmap(\Pi_n)$, so that $\iota\circ \phi_n=\overline\phi_n$.
We will show that $\phi_n$ is pseudo-Anosov.  To do this, we will apply \cite[Lemma~3.1]{Verberne}, which states that a mapping class  is pseudo-Anosov if it preserves a large, generic, birecurrent train track whose associated transition  matrix  is Perron--Frobenius. We will construct such a train track $\tau_n$ for each $n$.

The cases $n=1,2$ are slightly different and we will deal with them separately.  For all $n\geq 3$, the surfaces $\Pi_n$ are homeomorphic and we will build a single train track  which will satisfy the above conditions.  Notice that $\tau_n$ is a train track on the surface $\Pi_n$, so to show that $\tau_n$ is large, generic, and birecurrent it suffices to consider $\Pi_n$.  However, since $\phi_n$ is defined as a restriction of $\overline \phi_n$, which is a product of elements of $\MCG(\Sigma,p)$ which are not supported on $\Pi_n$, we must consider the different embeddings of $\Pi_n$ into $\Sigma$ in order to show that $\tau_n$ is preserved by $\phi_n$ and to calculate the transition matrix of $\tau_n$.

\begin{figure}
\centering
\begin{subfigure}{.4\columnwidth}
\begin{overpic}[width=2in]{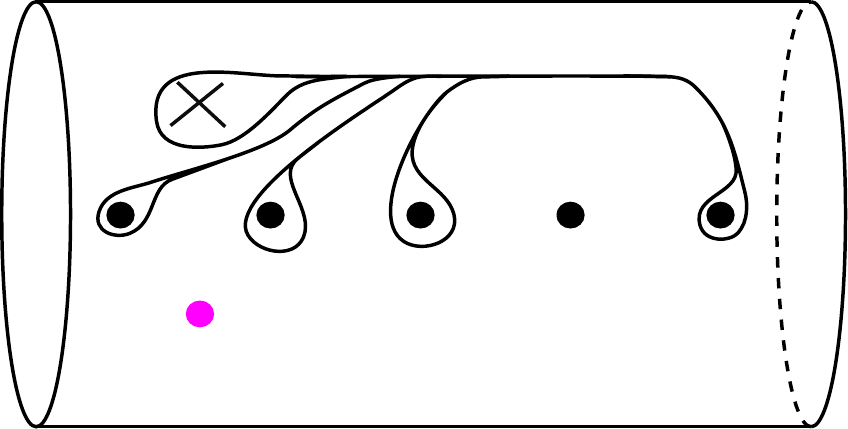}
\put(23,43){$\scriptstyle 1$}
\put(14,19){$\scriptstyle 1$}
\put(31,17){$\scriptstyle 1$}
\put(50,17){$\scriptstyle 1$}
\put(84,18){$\scriptstyle 4$}
\put(39,42){$\scriptstyle 2$}
\put(45,42){$\scriptstyle 4$}
\put(51,42){$\scriptstyle 6$}
\put(70,42){$\scriptstyle 8$}
\end{overpic}
\caption{ $n=1,2$.  The pink puncture only appears when $n=2$. }
\end{subfigure}\hfill%
\begin{subfigure}{.4\columnwidth}
\begin{overpic}[width=2in]{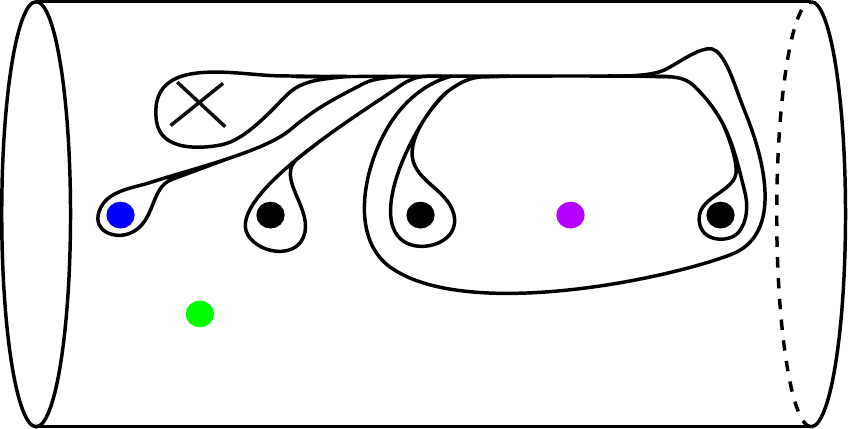}
\put(23,43){$\scriptstyle 1$}
\put(14,18.5){$\scriptstyle 1$}
\put(32,17){$\scriptstyle 1$}
\put(50,18){$\scriptstyle 1$}
\put(87,42){$\scriptstyle 1$}
\put(60,12){$\scriptstyle 1$}
\put(79,22.5){$\scriptstyle 4$}
\put(82,33){$\scriptstyle 8$}
\put(38,42.5){$\scriptstyle 2$}
\put(45,42.5){$\scriptstyle 4$}
\put(50,42.5){$\scriptstyle 6$}
\put(54,42.5){$\scriptstyle 7$}
\put(62,42.5){$\scriptstyle 9$}
\end{overpic}
\caption{$n\geq 3$. }
\end{subfigure}
\caption{The train tracks $\tau_n$ on the finite-type subsurface $\Pi_n$.}
\label{fig:tau}
\end{figure}

The train tracks $\tau_n$ for $n=1,2$ and for $n\geq 3$ are shown on $\Pi_n$ in Figure \ref{fig:tau}.  For all $n$, the following hold.  All switches are trivalent and so $\tau_n$ is generic. Each complementary region is a once-punctured disk or a polygon and so $\tau_n$ is large.  The weights on each branch of $\tau_n$ are positive and so $\tau_n$ is recurrent.  Moreover, the finite collections of simple closed curves $\{\gamma^{(n)}_i\}$ in Figure \ref{fig:gammai} is such that  each branch of $\tau_n$ is intersected transversely and \textit{efficiently} by some $\gamma_i^{(n)}$, i.e., $\gamma^{(n)}_i\cup\tau_n$ has no complementary bigon regions for any $i$. Therefore $\tau_n$ is transversely recurrent.  Since $\tau_n$ is recurrent and transversely recurrent, it is birecurrent. 

\begin{figure}
\centering
\begin{subfigure}{.4\columnwidth}
\centering
\def\svgwidth{2.75in}
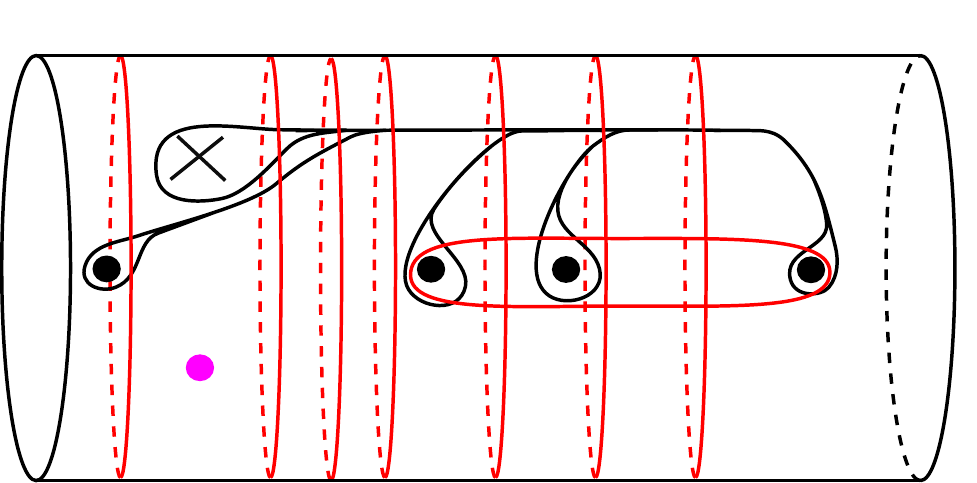
\caption{$n=1,2$}
\end{subfigure}\hfill
\begin{subfigure}{.4\columnwidth}
\centering
\def\svgwidth{2.75in}
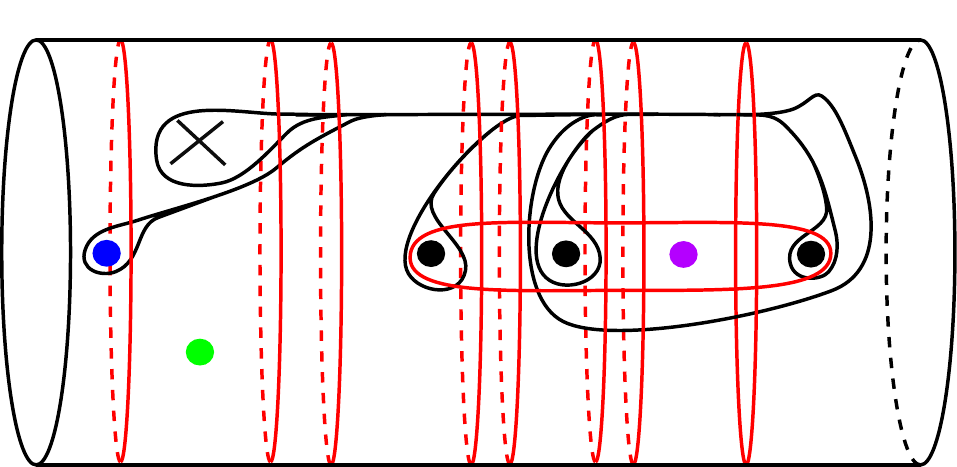
\caption{$n\geq 3$}
\end{subfigure}
\caption{A collection of simple closed curves $\{\gamma^{(n)}_i\}$ such that   each branch of $\tau_n$ is intersected transversely and efficiently by some $\gamma_i^{(n)}$.  As in Figure \ref{fig:tau}, the pink puncture only appears when $n=2$.}
\label{fig:gammai}
\end{figure}

Figures \ref{fig:phitau} and \ref{fig:phitaun} show that $\tau_n$  is preserved by $\phi_n$ for $n=1,2$ and $n\geq 3$, respectively.    It is immediate from these figures that the matrix $A_n$ associated to $\tau_n$ is
\[
A_1=A_2=\begin{pmatrix} 5 & 6 & 0 & 2 \\ 6 & 9 & 0 & 2 \\ 10 & 10 & 2 & 3 \\ 6 & 6 & 1 & 2 \end{pmatrix},\]  
and 
\[
A_n=\begin{pmatrix} 5 & 6 & 0 & 2 & 0 \\ 6 & 9 & 0 & 2 & 0 \\ 10 & 10 & 2 & 2 & 1 \\ 6 & 6 & 1 & 1 & 1 \\  6 & 6 & 1 & 2 & 0 \end{pmatrix},\]
when $n\geq 3$.

\begin{figure}
\centering
\def\svgwidth{5in}
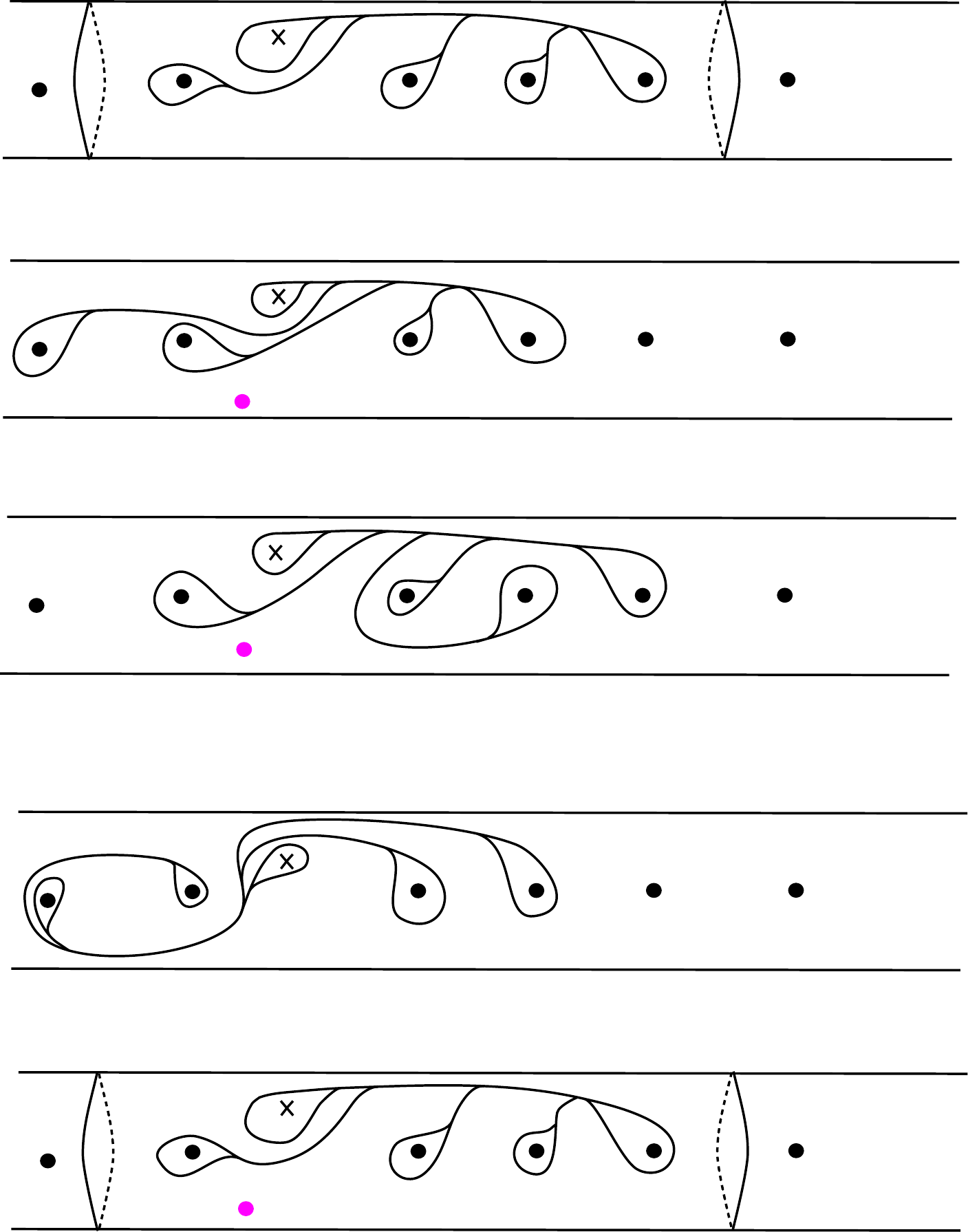 	
\caption{The train track $\tau_n$ is preserved by $\phi_n=g_nh^{-1}$ for $n=1,2$; the pink puncture only appears when $n=2$.  The weights in the picture are used to calculate the matrix $A_n$ associated to $\tau_n$.}
\label{fig:phitau}
\end{figure}

\begin{figure}
\centering
\def\svgwidth{5in}
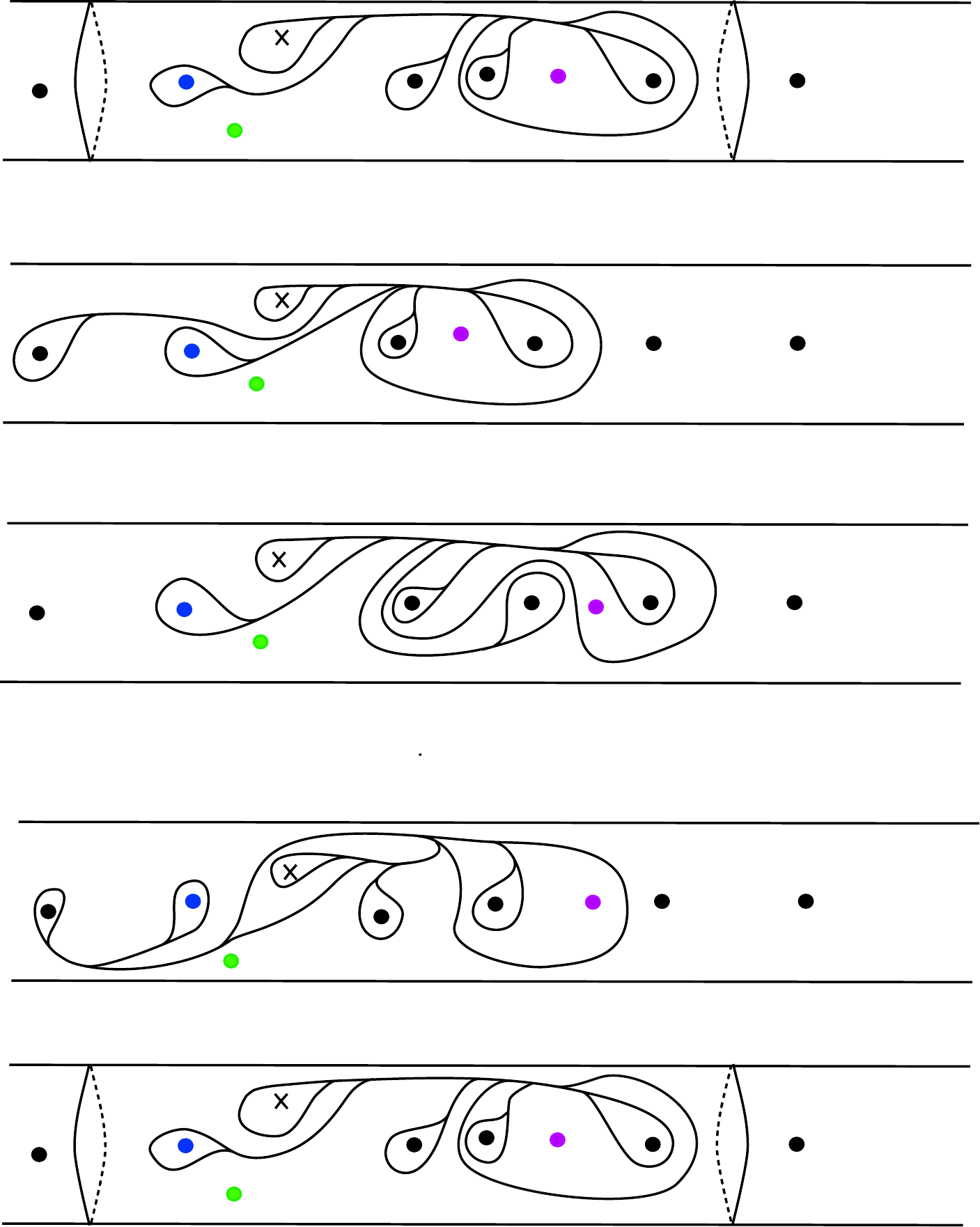
\caption{The train track $\tau_n$ is preserved by $\phi_n=g_nh^{-1}$, when $n\geq 3$.  The weights in the picture show how to calculate the matrix $A_n$ associated to $\tau_n$.  For ease of notation, we often write the weights each branch as a vector in the variable $x_1,\dots, x_5$.  For example, the label $(2\, 1\, 2\, 4\, 5)$ corresponds to the weight $2x_1+x_2+2x_3+4x_4+5x_5$.}
\label{fig:phitaun}
\end{figure}

For each $n$,  $(A_n)^2$ has all positive entries, hence $A_n$ is Perron--Frobenius.  
We conclude that $\phi_n$ is pseudo-Anosov for all $n$ by \cite[Lemma~3.1]{Verberne}.
\end{proof}

While it is not necessary for this paper, it is interesting to note that for all $n\in\N$, the top eigenvalue of $A_n$ is $\frac92+\frac{\sqrt{41}}{2}+\sqrt{\frac{59+9\sqrt{41}}{2}}$, which is associated to a unique irrational lamination on $\Pi_n$ that is carried by $\tau_n$ and fixed by $\phi_n$.

We say that a homeomorphism of $\operatorname{Map}(\Sigma,p)$ is a \textit{pseudo-Anosov shift} if it can be written as the composition of a pseudo-Anosov on a finite-type subsurface containing $p$ and a standard shift.  The results of this section inspire the following questions.

\begin{ques}
 When is the composition of shift maps a pseudo-Anosov shift?
\end{ques}
\begin{ques}
Does every pseudo-Anosov shift act loxodromically on $\mathcal A(\Sigma,p)$?
\end{ques}


\subsection{Geodesic axes}
The proof of Theorem \ref{thm:loxomain} shows that, for each $n$, the sequence $(\alpha_i^{(n)})_{i\in \Z}$ is a $(4,0)$--quasi-geodesic axis for $g_n$ in $\mathcal A(\Sigma,p)$. In this section, we find a geodesic axis for  $g_n$ in $\mathcal A(\Sigma,p)$. 

\begin{thm}
For each $n\in\N$, there is a geodesic axis for $g_n$ in $\mathcal{A}(\Sigma,p)$.
Furthermore, $g_n$ has translation length $1$ on this axis.
\end{thm}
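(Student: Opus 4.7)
The plan is to exhibit a single arc $v_0 \in \mathcal A(\Sigma,p)$ such that $v_0$ and $g_n v_0$ are disjoint (equivalently, $d_{\mathcal A(\Sigma,p)}(v_0, g_n v_0) = 1$), and then argue that the orbit $(g_n^i v_0)_{i\in\Z}$ is a geodesic axis on which $g_n$ acts as a translation of length one. The skeleton of the deduction is a standard fact for loxodromic isometries of a graph: if the stable translation length $\ell(g_n)$ equals $1$ and $d(v_0, g_n v_0) = 1$, then $d(v_0, g_n^k v_0) = k$ for every $k \ge 0$. The upper bound is just the triangle inequality along the orbit; for the lower bound, if $d(v_0, g_n^{k_0} v_0) \le k_0 - 1$ for some $k_0 > 0$, iterating along the orbit would yield $d(v_0, g_n^{m k_0} v_0) \le m(k_0 - 1)$, and hence $\ell(g_n) \le (k_0-1)/k_0 < 1$, contradicting $\ell(g_n) = 1$.

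The next step is to establish $\ell_{\mathcal A(\Sigma,p)}(g_n) = 1$. The lower bound $\ge 1$ is essentially already in hand: Lemma~\ref{cor:dist} together with Corollary~\ref{cor:inductcor} shows that the starts-like function $\phi_n$ satisfies $d_S(\gamma,\delta) \ge |\phi_n(\gamma) - \phi_n(\delta)|$ on $\mathcal A(S,p)$, and the same argument --- using that Theorem~\ref{thm:startslike} only constrains the behavior of a disjoint arc in a neighborhood of $p$ inside $S$, while the extra topology of $\Sigma\setminus S$ sits away from the highway structure of the $\alpha_i^{(n)}$ --- extends $\phi_n$ to all of $\mathcal A(\Sigma,p)$, giving $d_\Sigma(\alpha_0, \alpha_k^{(n)}) \ge k$. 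For the matching upper bound $\ell_{\mathcal A(\Sigma,p)}(g_n) \le 1$, it suffices to produce any $v_0$ with $d_\Sigma(v_0, g_n v_0) = 1$, since then the triangle inequality forces $d_\Sigma(v_0, g_n^k v_0) \le k$. Combining these bounds gives $\ell_{\mathcal A(\Sigma,p)}(g_n) = 1$, at which point the preceding standard fact upgrades the orbit $(g_n^i v_0)$ to a bi-infinite geodesic, automatically $g_n$-invariant and with translation length one.

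The main obstacle is producing the arc $v_0$. A first candidate is the arc $\beta = P_s(-1)_o(-1)_u P_s$ used in Proposition~\ref{prop:loxonS} to bound $d(\alpha^{(n)}_i, \alpha^{(n)}_{i+1}) \le 2$; I would compute $g_n\beta$ explicitly by tracing $\beta$ through $h_1$, then $h_2^{(n)}$, then $h_3^{(n)}$ in reduced form using the standard-position machinery of Section~\ref{sec:codingarcs} (being mindful of the cascading cancellation phenomenon of Section~\ref{sec:cascading}), and then verify geometrically that $\beta$ and $g_n\beta$ admit disjoint representatives in $S$ and hence in $\Sigma$. The delicate point is that the germs of $\beta$ at $p$ both point toward $B_{-1}$, while the germs of $g_n\beta$ at $p$ sit in a different sector, so that unlike the case of $\alpha_0$ versus $\alpha_1 = g_n\alpha_0$ there is no topological obstruction to realising them disjointly. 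If the computation for $\beta$ is inconclusive, a robust alternative is to take $v_0$ to be an arc supported sufficiently far out on one end of the biinfinite flute $S$ that all three shifts $h_1$, $h_2^{(n)}$, $h_3^{(n)}$ act on it as pure translation by one puncture, in which case disjointness of $v_0$ and $g_n v_0$ is immediate from the strip picture of the shifts in Definition~\ref{def:shift}.
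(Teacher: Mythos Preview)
Your overall strategy---produce a vertex $v_0$ with $d(v_0,g_nv_0)=1$ and then deduce that the $g_n$--orbit is a geodesic axis---is exactly the paper's approach, and your translation-length bookkeeping is actually more careful than the paper's (which simply asserts that ``loxodromic plus consecutive distance one'' yields a geodesic without isolating the $\ell(g_n)\ge 1$ step). So the framework is fine.

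The genuine gap is in the choice of $v_0$. Your first candidate $\beta=P_s(-1)_o(-1)_uP_s$ is not the paper's arc and does not obviously work. A direct computation gives $h_1(\beta)=P_s0_u1_o1_u0_uP_s$, which lies on the \emph{right} of $p$; applying $h_2^{(n)}$ then produces full crossings whose images run along $\partial D_2$ all the way out to $-n-1$, in particular through $(-1)_u$. Since $(-1)_u$ disagrees with $\partial D_3$, Lemma~\ref{lem:bpersists} forces it to persist in $g_n(\beta)$, so $g_n(\beta)$ genuinely passes under $B_{-1}$ and disjointness from $\beta$ is at best delicate---certainly not something you can read off without a careful computation. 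Your fallback candidate rests on a misconception: every arc in $\mathcal A(\Sigma,p)$ starts and ends at $p$, and $p$ lies in the turbulent region of all three shifts. Hence no such arc is ``supported far enough out'' for $g_n$ to act as a pure translation on the whole arc; the tail from $p$ to the far-out loop always interacts nontrivially with the turbulent regions.

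The paper's key construction is to take
\[
\beta_0^{(n)}=P_s(-1)_o(-2)_o\cdots(-n-1)_o(-n-1)_u\cdots(-2)_o(-1)_oP_s,
\]
the simple loop around $p_{-n-1}$. The point of this specific choice is that $h_1(\beta_0^{(n)})$ is the simple loop around $p_{-n}$, still supported entirely on $[-n,-1]\cup\{P\}$ and hence lying wholly inside the turbulent regions of both $h_2^{(n)}$ and $h_3^{(n)}$. From this one argues---using only that $h_2^{(n)}$ shifts left, that $h_3^{(n)}$ shifts right, and that $\partial D_3$ has $P_u$ rather than $P_o$---that $g_n(\beta_0^{(n)})$ contains neither $P_o$ nor any $k_{o/u}$ with $k\le -n-1$. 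That is enough to force $g_n(\beta_0^{(n)})$ into a region of $S$ disjoint from $\beta_0^{(n)}$ (cf.\ Figure~\ref{fig:betais}), with no standard-position gymnastics required. This tailored choice of $n$-dependent arc, rather than either a fixed small loop or an arbitrarily-far-out one, is the missing idea.
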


\begin{proof}

As $d_{\mc A(\Sigma,p)}(\gamma,\delta)\leq d_{\mc A(S,p)}(\gamma,\delta)$ for any arcs $\gamma,\delta\in \mathcal A(S,p)$, the image of a geodesic under the inclusion $\mathcal A(S,p)\hookrightarrow \mathcal A(\Sigma,p)$ is still a geodesic. Thus it suffices to construct a geodesic axis for $g_n$ in $\mathcal A(S,p)$.
Toward this goal, define
$$\beta_0^{(n)}=P_s(-1)_o(-2)_o\dots(-n-1)_o(-n-1)_u(-n)_o\dots(-2)_o(-1)_oP_s\in\mathcal A(S,p),$$
and let
\[
\beta_i^{(n)}=g_n^i(\beta_0^{(n)}).
\]
Since the arcs $\beta_i^{(n)}$ are the orbit of $\beta_0^{(n)}$ under $\langle g_n\rangle$ and $g_n$ is a loxodromic isometry, it follows that they form a quasi-geodesic axis in $\mc A(\Sigma, p)$ for $g_n$.  
We will show that $\beta_0^{(n)}$ is disjoint from $\beta_1^{(n)}$, which will prove that $(\beta_i^{(n)})_{i\in \Z}$ is a geodesic axis for $g_n$ and that $g_n$ has translation length one on this axis. 
In fact, it suffices to show that $\beta_1^{(n)}$ does not contain $P_o$ or $k_{o/u}$ for any $k\leq -n-1$, as it then follows that $\beta_0^{(n)}$ is disjoint from $\beta_1^{(n)}$; see Figure \ref{fig:betais}.  

\begin{figure}
\centering
\begin{overpic}[width=4in, trim={.4in 7.5in 1in 1.6in}, clip]{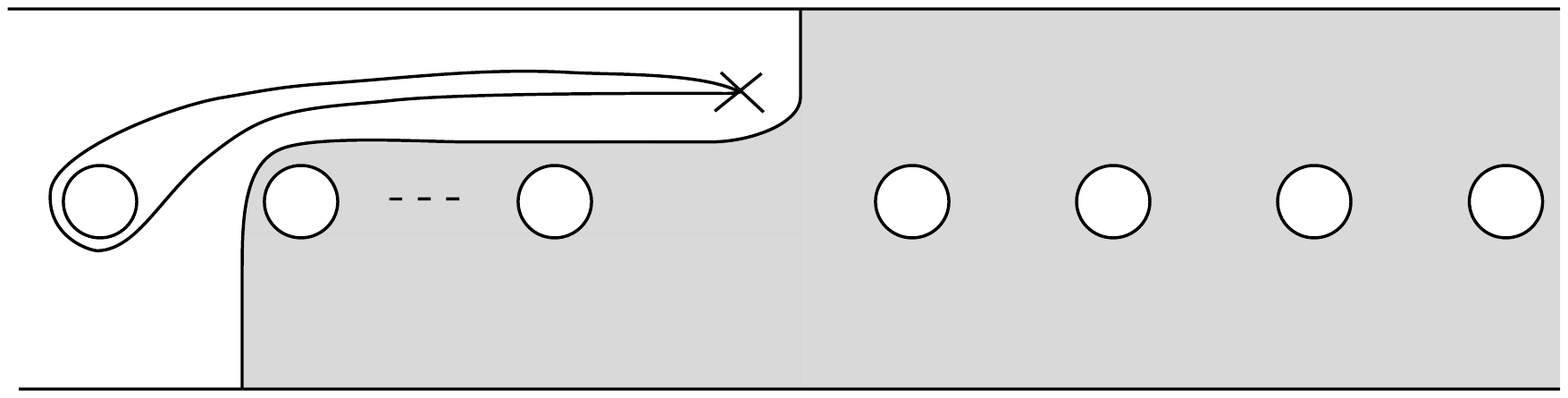}
\put(1,-2){$-n-1$}
\put(4,19){$\beta_0^{(n)}$}
\end{overpic}
\caption{If an arc $\gamma$ does not contain $P_o$ or $k_{o/u}$ for any $k\leq -n-1$, then it must lie in the shaded region of $S$.  In particular, $\gamma$ must be disjoint from $\beta_0^{(n)}$.}
\label{fig:betais}
\end{figure}

Applying $h_1$ to $\beta_0^{(n)}$ yields 
\begin{equation}\label{eqn:h1beta}
h_1(\beta_0^{(n)})=P_s(-1)_o(-2)_o\dots(-n)_o(-n)_u(-n+1)_o\dots(-2)_o(-1)_oP_s.
\end{equation}
  Since all of $h_1(\beta_0^{(n)})$ is to the left of the puncture, the image under $h_2^{(n)}$ (which shifts to the left) will not contain $P_o$.  Moreover, since $P_o$ disagrees with the code for the domain of $h_3^{(n)}$, neither will $\beta_1^{(n)}$.  Thus it remains to show that $\beta_1^{(n)}$ does not contain $k_{o/u}$ for any $k\leq -n-1$.  

Recall that $h_3^{(n)}$ shifts to the right and has shift region $(-\infty,-n-1]\cup[n+1,\infty)$.  Thus, any instance of $k_{o/u}$ with  $k\leq -n-1$ in $\beta^{(n)}_1$ must be the image of  $(k-1)_{o/u}$  in $h_2^{(n)}(h_1(\beta_0^{(n)}))$.  Similarly, such a $(k-1)_{o/u}$ must be the image of $k_{o/u}$ in $h_1(\beta_0^{(n)})$, since $h_2^{(n)}$ shifts to the left.  However, by \eqref{eqn:h1beta}, $h_1(\beta_0^{(n)})$ does not contain $j_{o/u}$ for $j\leq -n-1$, and we conclude that $\beta_1^{(n)}$ does not contain $k_{o/u}$ for any $k\leq -n-1$.
\end{proof}

\subsection{Limit points of the $g_n$}
Since the relative arc graph $\mathcal A(\Sigma,p)$ is a hyperbolic metric space, it has a well-defined Gromov boundary.  This boundary was described by Bavard--Walker \cite{BavardWalker2, BavardWalker}.  
In this section, we describe the limit set of $g_n$ on $\partial\mathcal{A}(\Sigma,p)$ in terms of Bavard and 
Walker's characterization of the boundary.

\subsubsection{The Gromov boundary of $\mathcal A(\Sigma,p)$}
We begin by recalling some definitions from \cite{BavardWalker2, BavardWalker}.
 It is important to mention that in \cite{BavardWalker2, BavardWalker}, the word \textit{loop} is used for what we call an arc in this paper. Any time we mention a result from one of these two papers, we will convert it to our terminology. 
Let $E(\Sigma)$ denote the space of ends of $\Sigma$, which necessarily contains our preferred puncture $p$.

Fix any hyperbolic metric (of the first kind) on $\Sigma$, as in \cite[Theorem 3.0.1]{BavardWalker2}. 
For a fixed lift of $p$ to the universal cover $\mathbb{H}^2$, which necessarily lies on $\partial \mathbb H^2$, there exists a parabolic subgroup $G<\pi_1(\Sigma)$ stabilizing this lift. Define $\widehat{\Sigma}=\mathbb{H}^2/G$ to be the intermediate cover associated to this parabolic subgroup.
The space $\widehat{\Sigma}$ is called the \textit{conical cover} of $\Sigma$.  This cover has  boundary $\mathbb S^1$ and contains a preferred lift $\widehat{p}$ of $p$ which comes from our fixed choice in the universal cover.  Let $\pi\colon \widehat{\Sigma}\to \Sigma$ be the natural quotient map, let $\widehat{\beta}$ be any geodesic ray from $\widehat{p}$ to $\partial \widehat{\Sigma}$, and let $\beta=\pi(\widehat{\beta})$. Thus $\widehat\beta$  has one endpoint on $\widehat p$ and the other endpoint somewhere in $\partial\widehat\Sigma\simeq\mathbb S^1$.  The other endpoint may be a lift of $p$ that is not our chosen $\widehat p$, (a lift of) a point in $E(\Sigma)\setminus\{p\}$, or a point which is neither.  If $\beta$ is simple, then in the first case  $\beta$ is an arc\footnote{As noted above, Bavard--Walker call this a \textit{loop}.}, in the second case $\beta$ is a \textit{short ray}, and in the last case $\beta$ is a \textit{long ray}. 
Equivalently, an embedding $\beta\colon(0,1)\to \Sigma$ is a short ray  if it can be continuously extended to a map $\widehat{\beta}\colon[0,1]\to \Sigma\cup E(\Sigma)$ such that $\widehat{\beta}\vert_{(0,1)}=\beta$, $\widehat\beta(0)=p$, and $\widehat\beta(1)\in E(\Sigma)\setminus\{p\}$ (see \cite[Section 5.1.1]{BavardWalker2}), and $\beta$ is a long ray if it is neither an arc nor a short ray.

Bavard and Walker construct a graph involving all three kinds of rays, which they use to describe the Gromov boundary  $\partial \mathcal A(\Sigma,p)$ of the relative arc graph.
\begin{defn}
The \textit{completed ray graph} $\mathcal{R}(\Sigma,p)$ is the graph whose vertices are isotopy classes of simple arcs, short rays, and long rays and whose edges correspond to homotopically disjoint isotopy classes.
\end{defn}

By definition, $\mathcal{A}(\Sigma,p)$ embeds in $\mathcal{R}(\Sigma,p)$, but the following theorem shows that something stronger is true.  Recall that a clique is a complete graph.

\begin{thm}[{\cite[Theorem 5.7.1]{BavardWalker2}}]
The connected component of $\mathcal{R}(\Sigma,p)$ containing all arcs is quasi-isometric to $\mathcal{A}(\Sigma,p)$.  All other connected components are cliques.
\end{thm}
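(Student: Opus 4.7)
The plan is to handle the two assertions separately. Call a component of $\mathcal{R}(\Sigma,p)$ \emph{arc-rich} if it contains an arc, and \emph{arc-free} otherwise. Since $\mc A(\Sigma,p)$ is connected (by Aramayona--Fossas--Parlier), every arc lies in a single component $\mathcal C_0 \subset \mathcal R(\Sigma,p)$, and the inclusion $\mc A(\Sigma,p) \hookrightarrow \mathcal C_0$ is automatically $1$-Lipschitz because the adjacency relations agree on pairs of arcs. So for the quasi-isometry claim I only need a coarse inverse, while for the second claim I need to rule out any arc-free component that is not a clique.

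For the quasi-isometry, the key reduction is to show that every vertex of $\mathcal C_0$ lies within distance $1$ of some arc. If $\gamma$ is a short ray limiting to an end $e \neq p$, I would pick a compact subsurface $K \subset \Sigma$ containing $p$ and disjoint from a neighborhood of $e$, and let $\alpha$ be any simple arc in $K$ based at $p$; then $\alpha$ and $\gamma$ are homotopically disjoint, so $\gamma$ is adjacent to $\alpha$ in $\mathcal R(\Sigma,p)$. If $\gamma$ is a long ray lying in $\mathcal C_0$, then by definition of $\mathcal C_0$ some edge path from $\gamma$ eventually hits an arc, and I would use an exchange argument on a shortest such path to conclude that $\gamma$ itself is adjacent to an arc. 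Combining these, any edge path in $\mathcal R(\Sigma,p)$ between two arcs can be converted into an edge path in $\mc A(\Sigma,p)$ of at most twice the length by replacing each intermediate ray with a neighboring arc, which yields $d_{\mc A}(\alpha,\beta) \le 2\, d_{\mathcal R}(\alpha,\beta)$ and hence a $(2,0)$--quasi-isometric embedding of $\mc A(\Sigma,p)$ onto $\mathcal C_0$.

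For the clique claim, let $\mathcal C$ be an arc-free component. I would prove by induction on edge-path length that any two vertices $\gamma_1, \gamma_2 \in \mathcal C$ are adjacent. The base case reduces to a two-step path $\gamma_1 - \gamma - \gamma_2$ where $\gamma$ is disjoint from each $\gamma_i$ but none of the three can be replaced by an arc (else $\mathcal C$ would not be arc-free). This forces the rays to share ``accumulation data,'' which I would formalize via the conical cover $\widehat \Sigma$: the $\widehat p$-based geodesic lifts of all three rays accumulate to the same limit point on $\partial \widehat \Sigma$. Two long rays whose $\widehat p$-based lifts have a common endpoint on $\partial \widehat \Sigma$ can be realized disjointly in $\Sigma$, producing the desired direct edge between $\gamma_1$ and $\gamma_2$.

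The main obstacle will be the exchange argument in the second paragraph and making the ``shared accumulation data'' in the third precise. In particular, upgrading a long ray's being within bounded distance of an arc to its being \emph{adjacent} to an arc requires controlling how long rays can be connected to arcs through intermediate long rays, and this is exactly where the dynamics on $\partial \widehat \Sigma$ must enter nontrivially. Passing through the conical cover seems unavoidable here, which aligns with why Bavard and Walker develop that machinery in \cite{BavardWalker2}.
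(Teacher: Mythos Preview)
This theorem is not proved in the paper; it is quoted from Bavard--Walker \cite[Theorem~5.7.1]{BavardWalker2} and used as a black box. There is therefore no proof in the paper to compare your proposal against.

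That said, your sketch has real gaps that would need to be addressed before it could stand as an independent argument. For the quasi-isometry, your plan hinges on showing that every long ray in $\mathcal C_0$ is adjacent to an arc, via an unspecified ``exchange argument.'' But the paper itself records the notion of a $k$--filling long ray (distance exactly $k$ from the set of arcs through a chain of long rays), and there is no a priori reason such an exchange collapses the distance to $1$; indeed the whole point of Bavard--Walker's analysis is that the interaction between long rays and arcs is governed by delicate dynamics on $\partial\widehat\Sigma$, not by a local swap. You acknowledge this obstacle at the end, but the proposal does not contain an idea for overcoming it.

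For the clique assertion, your mechanism is that two rays disjoint from a common ray must have $\widehat p$--based lifts with a \emph{common} endpoint on $\partial\widehat\Sigma$. This is not correct: distinct high-filling rays in the same clique are pairwise disjoint but their preferred lifts generally land at different boundary points. Disjointness of simple geodesics does not force coincidence of ideal endpoints. The actual argument in \cite{BavardWalker2} uses a finer invariant (cover--convergence of rays and the structure of high-filling sets) rather than endpoint matching, so this part of your plan would need to be replaced entirely.
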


A particular type of long ray will be important in the description of the Gromov boundary of $\mathcal A(\Sigma,p)$.  A long ray $\beta$ is \textit{$k$--filling} if $k$ is the minimum natural number such that there exists an arc $\beta_0$ and long rays $\beta_1,\dots,\beta_k=\beta$ such that $\beta_i\cap\beta_{i+1}=\emptyset$ for all $i\ge 0$.  In other words, a long ray is $k$--filling if it is distance exactly $k$ from an arc in $\mathcal R(\Sigma,p)$.

\begin{defn}
A long ray $\beta$ is said to be \textit{high-filling} if both of the following hold:
\begin{enumerate}
\item $\beta$ intersects every short ray; and 
\item $\beta$ is not $k$--filling for any $k\in\mathbb{N}$.
\end{enumerate}
\end{defn}

All of the vertices of the connected components that form cliques are high-filling rays; accordingly, such cliques are called \textit{high-filling cliques}.

\begin{thm}[{\cite[Theorem~6.3.1]{BavardWalker2}}]
The set of high-filling cliques is in bijection with the Gromov boundary $\partial \mathcal A(\Sigma,p)$ of the relative arc graph.
\end{thm}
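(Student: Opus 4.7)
The plan is to establish the bijection by producing a natural map in each direction and verifying that they are mutual inverses. The main ingredients are the conical cover $\widehat{\Sigma}$ (with its boundary circle $\partial\widehat{\Sigma}\simeq \mathbb S^1$), the fact that isotopy classes of arcs/rays from $p$ lift to $\widehat{p}$-based geodesics in $\widehat{\Sigma}$ terminating at $\partial\widehat{\Sigma}$, and a Gromov-product characterization of $\partial\mathcal A(\Sigma,p)$.

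First I would produce a map $\Phi$ from $\partial\mathcal A(\Sigma,p)$ to the set of high-filling cliques. Fix a point $\xi\in\partial\mathcal A(\Sigma,p)$ and a sequence of arcs $(\alpha_n)$ with $(\alpha_n|\alpha_m)\to\infty$ representing $\xi$. Lift each $\alpha_n$ to a geodesic $\widehat\alpha_n$ in $\widehat{\Sigma}$ starting at $\widehat p$ and terminating at a point $x_n\in\partial\widehat\Sigma$. I would show that the $x_n$ converge in $\mathbb S^1$ to a point $x_\infty$, producing a geodesic ray $\widehat\beta_\xi$ from $\widehat p$ to $x_\infty$ whose projection $\beta_\xi=\pi(\widehat\beta_\xi)$ is a ray on $\Sigma$. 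The key convergence step follows from the fact that long Gromov products force the $\widehat\alpha_n$ to fellow-travel for arbitrarily long times from $\widehat p$, so their terminal endpoints must cluster in $\partial\widehat\Sigma$.

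Next I would verify that $\beta_\xi$ is a long ray and is high-filling. It cannot be an arc: if it were, then the $\alpha_n$ would be bounded distance from $\beta_\xi$ in $\mathcal A(\Sigma,p)$, contradicting $\xi\in\partial\mathcal A(\Sigma,p)$. For the same reason $\beta_\xi$ cannot be $k$-filling for any finite $k$: if there were an arc $\beta_0$ at distance $k$ from $\beta_\xi$ in $\mathcal R(\Sigma,p)$, one could produce a uniform bound on $d_{\mathcal A(\Sigma,p)}(\alpha_n,\beta_0)$ for all large $n$, again contradicting divergence. Checking that $\beta_\xi$ intersects every short ray is analogous: if $\beta_\xi$ were disjoint from some short ray, one could truncate and approximate to bound the $\alpha_n$ near an arc cofinal with the short ray. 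Then $\Phi(\xi)$ is defined to be the high-filling clique of $\beta_\xi$ in $\mathcal R(\Sigma,p)$; independence of the chosen sequence follows because any two sequences representing $\xi$ can be interleaved into a sequence with divergent Gromov products, forcing a common $x_\infty$ up to the clique structure.

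For the inverse map, given a high-filling clique $\mathcal K$, pick any $\beta\in\mathcal K$, truncate $\beta$ at a sequence of points exiting its terminal end to get arcs $\alpha_n$ (using auxiliary arcs from the truncation points back to $p$ that can be taken near $\beta$ by high-fillingness), and show $(\alpha_n)$ defines a boundary point $\Psi(\mathcal K)$. The high-filling hypothesis and the clique structure guarantee well-definedness and that $\Psi\circ\Phi=\mathrm{id}$ and $\Phi\circ\Psi=\mathrm{id}$.

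The hard part will be the convergence/limit step: proving that divergent Gromov products for arcs on $\Sigma$ really do force their geodesic lifts in $\widehat\Sigma$ to have convergent endpoints in $\partial\widehat\Sigma$, and conversely that geometric convergence of endpoints on $\partial\widehat\Sigma$ translates to divergent Gromov products in $\mathcal A(\Sigma,p)$. This requires a careful comparison between the hyperbolicity of $\mathcal A(\Sigma,p)$ (as proved by Aramayona--Fossas--Parlier) and the geometry of $\widehat\Sigma$, and is precisely the technical content carried out by Bavard and Walker.
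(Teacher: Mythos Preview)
This theorem is not proved in the paper; it is simply quoted as \cite[Theorem~6.3.1]{BavardWalker2} with no argument given, so there is no ``paper's own proof'' to compare your proposal against. Your outline is a reasonable sketch of the strategy Bavard and Walker use, and you correctly identify that the substantive technical work lies in relating Gromov products in $\mathcal A(\Sigma,p)$ to endpoint convergence on $\partial\widehat\Sigma$; but since the present paper treats this result as a black box, your proposal goes well beyond what is required here.
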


\subsubsection{The limit set of $g_n$}

In \cite[Section 7.1]{BavardWalker2}, Bavard and Walker prove that to any $f\in\MCG(\Sigma,p)$ acting loxodromically on $\mathcal{A}(\Sigma,p)$, there exists an attracting clique of high-filling rays $C^+(f)$ and a repelling clique of high-filling rays $C^-(f)$ in $\mathcal{R}(\Sigma,p)$ that correspond to the attracting and repelling limit points of $f$ in $\partial \mathcal A(\Sigma,p)$, respectively. 
The cliques $C^+(f)$ and $C^-(f)$ have the same (finite) number of vertices, called the \textit{weight of $f$}, denoted by $w(f)$ (see \cite[Definition 7.1.3]{BavardWalker2}).

Following \cite[Example 2.7.1]{BavardWalker}, we have the following lemma.

\begin{lem}
For any $n\geq 1$, the homeomorphism $g_n$ constructed  in Theorem \ref{thm:loxomain} satisfies $w(g_n)=1$.
\end{lem}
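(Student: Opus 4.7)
The plan is to exhibit the forward limit of the $g_n$-orbit of $\alpha_0$ as a ray $\beta^+$ representing the attracting endpoint of $g_n$ in $\partial\mathcal{A}(\Sigma,p)$, and then use the lane machinery of Section~\ref{sec:highways} to show that $C^+(g_n)=\{\beta^+\}$; the argument for $C^-(g_n)$ is identical after reversing orientations.

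First I would construct $\beta^+$. By Proposition~\ref{lem:imageofinit}, $g_n(\mathring\alpha_i^{(n)}0_u)=\mathring\alpha_{i+1}^{(n)}0_u$ as subsegments of $\alpha_{i+1}^{(n)}$, and by Corollary~\ref{cor:inductcor}, $\alpha_{i+1}^{(n)}$ starts like $\alpha_i^{(n)}$. This forces the initial segments $\mathring\alpha_i^{(n)}$ to be nested, with $\mathring\alpha_i^{(n)}$ appearing as an initial subsegment of $\mathring\alpha_{i+1}^{(n)}$. Proposition~\ref{prop:highwayconstruction} shows that $\mathring\alpha_{i+1}^{(n)}$ ends in $\overline\chi_i^{(n)}P_uP_o\chi_i^{(n)}$, so $\ell_c(\mathring\alpha_i^{(n)})$ tends to infinity. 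Define $\beta^+$ to be the direct limit: the simple ray starting at $p$ whose infinite reduced code has every initial string of length $N$ matching an initial substring of $\mathring\alpha_i^{(n)}$ for all sufficiently large $i$.

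Second I would verify that $\beta^+$ is a high-filling long ray and a vertex of $C^+(g_n)$. The quasi-geodesic axis $(\alpha_i^{(n)})_{i\in\Z}$ of Theorem~\ref{thm:gnlox} converges in $\partial\mathcal{A}(\Sigma,p)$ to the attracting endpoint of $g_n$, which under the Bavard--Walker bijection corresponds to a clique $C^+(g_n)$ of high-filling rays. Because each $\alpha_i^{(n)}$ has $\mathring\alpha_i^{(n)}$ as an initial subsegment of $\beta^+$, the ray $\beta^+$ is a vertex of $C^+(g_n)$ (high-filling because the numerical values in its infinite code visit every element of $\Z$, which forces intersection with every short ray of $\Sigma$).

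Third, and most crucially, I would prove that $C^+(g_n)$ has no other vertex. Suppose $\beta'\in C^+(g_n)$; then $\beta'$ admits a representative disjoint from $\beta^+$. By Corollary~\ref{cor:highwayconstruction}, $\mathring\alpha_i^{(n)}$ ends in $\overline\chi_{i-1}^{(n)}P_uP_o\chi_{i-1}^{(n)}$ for every $i\ge 2$, so $\beta^+$ contains this highway pattern near its start: a loop around $p$ flanked by two copies of $\chi_{i-1}^{(n)}$ in reverse orientations. A direct adaptation of Lemma~\ref{lem:stayinyourlane} to rays (using the disk $D_p$ around $p$ exactly as in that lemma) then forces the initial code of $\beta'$ to coincide with $P_s\chi_{i-1}^{(n)}=\mathring\alpha_{i-1}^{(n)}$ for every $i\ge 2$. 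Letting $i\to\infty$, the infinite reduced code of $\beta'$ agrees with that of $\beta^+$, so $\beta'=\beta^+$ in $\mathcal{R}(\Sigma,p)$, giving $w(g_n)=|C^+(g_n)|=1$.

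The principal obstacle is making the lane argument rigorous for the ray $\beta^+$ rather than a closed-up arc. The definitions of left and right lanes in Section~\ref{sec:highways} require both an initial and a terminal overlap of the code of an arc, but for a ray there is no terminal piece; one must instead interpret the ``terminal'' overlap as the reappearance, deeper inside $\beta^+$, of the initial code, which is exactly what the subsegments $\overline\chi_{i-1}^{(n)}P_uP_o\chi_{i-1}^{(n)}$ provided by Corollary~\ref{cor:highwayconstruction} supply. Checking that the innermost-lane argument of Lemma~\ref{lem:stayinyourlane} goes through using only these nested loops inside the disk $D_p$ around $p$, and that a ray $\beta'$ disjoint from $\beta^+$ is indeed trapped in these lanes at arbitrary depth, is the technical heart of the proof.
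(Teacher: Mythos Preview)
Your proposal is essentially correct and follows the same approach as the paper: construct the limiting ray $\beta^+$ (the paper calls it $\alpha_\infty$ and builds it via the conical cover rather than as a direct limit of codes), then use the highway/lane machinery from Section~\ref{sec:highways} to show that any ray disjoint from $\beta^+$ must have the same infinite code, so $\beta^+$ is its own high-filling clique.

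One minor correction: your justification in Step~2 that $\beta^+$ is high-filling ``because the numerical values in its infinite code visit every element of $\Z$'' is not sufficient---short rays heading to ends of $\Sigma$ off the front of $S$ could avoid $\beta^+$ entirely. The paper (and your Step~3) handles this correctly: once you show that every ray disjoint from $\beta^+$ equals $\beta^+$, high-filling follows immediately, so Step~2's argument is both wrong and unnecessary. The paper also sidesteps your stated obstacle about adapting lanes to rays by simply invoking ``the same proof as Theorem~\ref{thm:startslike}''; since $\beta^+$ contains $\mathring\alpha_i^{(n)}$ as an initial segment for every $i$, the highway structure is present verbatim and the argument carries over without modification.
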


\begin{proof}
For notational simplicity, fix $n$ and define $g=g_n$ and $\alpha_i=\alpha_i^{(n)}$.
It suffices to prove that the attracting clique $C^+(g)$ consists of a single high-filling ray.

For each $i$, let $\widehat{\alpha}_i$ be a lift of $\alpha_i$ to the conical cover $\widehat{\Sigma}$.  Then $\widehat\alpha_i$ is a simple geodesic arc with one endpoint on $\widehat{p}$ and the other on some $x_i\in\partial\widehat{\Sigma}$.
By \cite[Lemma~5.2.2]{BavardWalker2}, the set of endpoints of arcs, short rays, and long rays is compact in $\partial\hat\Sigma$, and hence there exists a subsequence $(x_{i_k})$ of $(x_i)$ which limits to a point $x_\infty\in\partial\widehat{\Sigma}$. 
Let $\widehat\alpha_\infty$ be the geodesic ray from $\widehat{p}$ to $x_\infty$ and $\alpha_\infty=\pi(\widehat\alpha_\infty)$, where $\pi\colon\widehat{\Sigma}\to \Sigma$ is the covering map.
The construction of the conical cover $\widehat{\Sigma}$ shows that $\alpha_\infty$ has an infinite-length code with initial segment $\mathring\alpha_{i_k}$ for any $k$.  Since $\ell_c(\mathring\alpha_{i_k})\to\infty$ as $k\to\infty$ and $\mathring\alpha_{i_k}$ has an initial segment equal to $\mathring\alpha_{i_{k-1}}$, this uniquely determines the entire infinite code. 
We claim that $\alpha_\infty$ is a high-filling ray and, moreover, that $\alpha_\infty$ is the sole vertex in $C^+(g)$.

That $\alpha_\infty$ is a ray follows from the fact that the set of endpoints of arcs in $\partial\widehat{\Sigma}$ are isolated \cite[Lemma~5.2.2]{BavardWalker2}.
To see that $\alpha_\infty$ is high-filling, it therefore suffices to show that it intersects every other ray (short or long).
It follows from the same proof as Theorem \ref{thm:startslike} that any ray $\beta$ which is disjoint from $\alpha_{i_k}$ must begin like $\alpha_{i_k-1}$. 
In particular, we see that the first $\ell_c(\mathring{\alpha}_{i_{k-1}})$ characters in a code for $\beta$ must agree with $\mathring{\alpha}_{i_{k-1}}$.
Since $\ell_c(\mathring{\alpha}_{i_{k-1}})<\ell_c(\mathring{\alpha}_{i_{k}})$ for all $k$, taking $k\to\infty$ shows  that any ray $\beta$ which is disjoint from $\alpha_\infty$ must have identical code and hence indeed must be exactly $\alpha_\infty$.

Finally, since $\alpha_\infty$ intersects every other ray, it must, in particular, intersect any other high-filling ray.  By \cite[Lemma 2.7.8]{BavardWalker}, the connected component of any high-filling ray is a clique of high-filling rays, and thus $\alpha_\infty$ is its own connected component in $\mathcal{R}(\Sigma,p)$.
Hence $w(g)=1$, completing the proof.
\end{proof}

We close this section with the remark that if the weight of the limit points of $g_n$ were not all the same, then Bavard--Walker give a method for constructing non-trivial quasimorphisms \cite[Theorem 9.1.1]{BavardWalker2}. 
However, since this is not the case, we must use a different method for showing that the space of quasimorphisms is infinite-dimensional, which is related to Bavard's original proof for the arc graph from \cite{Bavard}.  We do so in Section \ref{sec:qms}.


\section{Infinite-type quasimorphisms}\label{sec:qms}

A \textit{quasimorphism} of a group $G$ is a map $f\colon G\to\R$ such that there exists a real constant $C$ for which $|f(ab)-f(a)-f(b)|\le C$ for all $a,b\in G$.
The set of quasimorphisms forms a vector space $V(G)$ over $\R$, and, moreover,  bounded functions and homomorphisms both form subspaces of $V(G)$.
Let $\widetilde{QH}(G)$ denote the quotient of $V(G)$ by the subspace spanned by bounded functions and homomorphisms. We call $\widetilde{QH}(G)$ the \textit{space of quasimorphisms} of $G$.
The goal of this section is to use the elements constructed in Theorem \ref{thm:loxomain} to prove Theorem \ref{thm:infiniteqms}, which we restate for the convenience of the reader.

\QM*

Several of the ideas in this section closely follow  the strategy and ideas of Bavard \cite{Bavard}, though the production of the elements which give rise to our quasimorphisms differs.
We begin by studying a specific subclass of arcs with the goal of describing a particular homotopy operation and intersection pairing on them. We then use this intersection pairing and a theorem of Bestvina--Fujiwara to prove the theorem.

\subsection{An intersection pairing on symmetric arcs and first properties}\label{sec:intpairing}
Recall that our surface $\Sigma$ has an embedding of the biinfinite flute surface $S$ such that $\Sigma\setminus S$ has infinitely many connected components, a countable collection of which are homeomorphic to a fixed surface $\Sigma_0$.
Moreover, the complement of each arc $\alpha_i$ separates $\Sigma$ into two components, one of which is homeomorphic to $\intr(\Sigma_0)$, the interior  of the fixed surface $\Sigma_0$.
Using $\Sigma_0$, we define $\mathcal{SA}$ to be the set of simple, symmetric arcs $\delta$ (see Definition \ref{defn:symmetric})  such that $\Sigma\setminus\delta$ has two connected components, one of which is homeomorphic to $\intr(\Sigma_0)$.
Notice that $\alpha^{(n)}_i\in\mathcal{SA}$ for all $i\in\Z$, $n\in\N$ and that $\mathcal{SA}$ is a $\MCG(\Sigma,p)$--invariant subset of the set of all arcs.

Since $p$ is isolated, we again fix the small once-punctured disk $D_p$ containing $p$ as in Section \ref{sec:highways}.
This disk is homeomorphic to the closed unit disk minus an interior point.
As in that section, given any element $\delta$ in $\mathcal{SA}$, we put $\delta$ in standard position so that its intersections with $\partial D_p\approx S^1$ are all transverse.
Let $x_s$ and $x_t$ be the initial and terminal point where $\delta$ intersects $\partial D_p$. Let $\lambda_0$ and $\rho_0$ be the subsegments of $\delta$ which connect $x_t$ and $x_s$  to $p$, respectively.

We will modify $\delta$ to form a particular simple closed curve as follows.  We can replace  $\rho_0\cup\lambda_0$ with either $a^+$ or $a^-$, as shown  in Figure \ref{fig:punctureddisk}, forming two distinct simple closed curves, $\delta^+$ and $\delta^-$, respectively.  One of these two curves bounds a surface homeomorphic to $\intr(\Sigma_0)$; in Figure \ref{fig:punctureddisk}, this curve is $\delta^-$. Fixing a hyperbolic metric on the surface, we let $B_\delta$ be the geodesic representative of this curve.

\begin{figure}
\centering
\begin{overpic}[width=3in, trim={2in 7in 2in .5in}, clip]{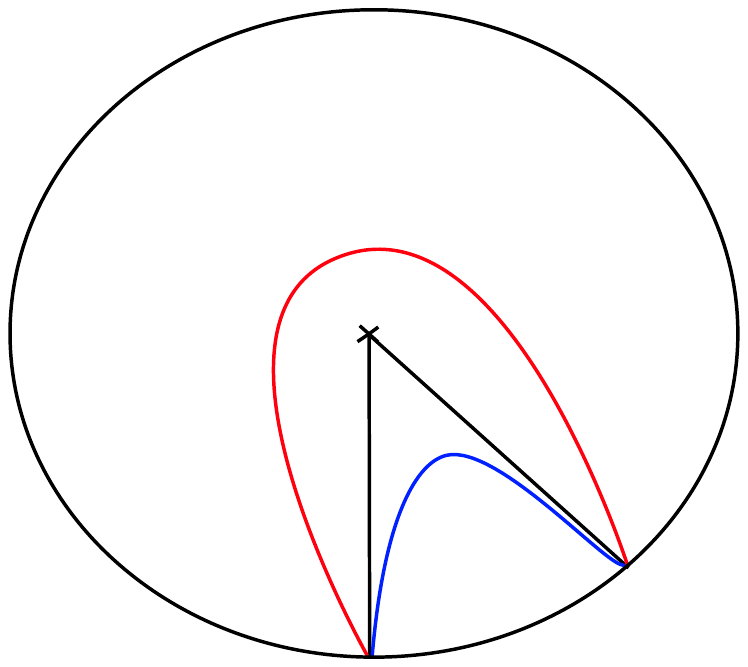}
\put(83,21){$D_p$}
\put(48,30){$\rho_0$}
\put(62,33){$\lambda_0$}
\put(50,49){$a^+$}
\put(59,25){$a^-$}
\put(53,8){$x_s$}
\put(75,16){$x_t$}
\end{overpic}
\caption{A schematic of $D_p$ and the two arcs $a^-$ and $a^+$.}
\label{fig:punctureddisk}
\end{figure}

We now choose the homotopy representative of $\delta$ that will allow us to define an intersection pairing.  
As $\delta$ is symmetric, there exists an arc
\[
\delta'=r_\delta B_\delta r_\delta^{-1}
\] in the homotopy class of $\delta$, where $r_\delta$ is a simple ray from the puncture to $B_\delta$ that intersects $B_\delta$ only at its endpoint.  Intuitively, one can  think of the arc $\delta'$  as being constructed from $\delta$ by ``zipping" the initial and terminal portions of the arc together for as long as they fellow travel to form $r_\delta$.   Particular examples of $r_\delta$ and $\delta'$ are given in Figure \ref{fig:intpairingonback}.
\begin{figure}
\centering
\begin{overpic}[width=3in]{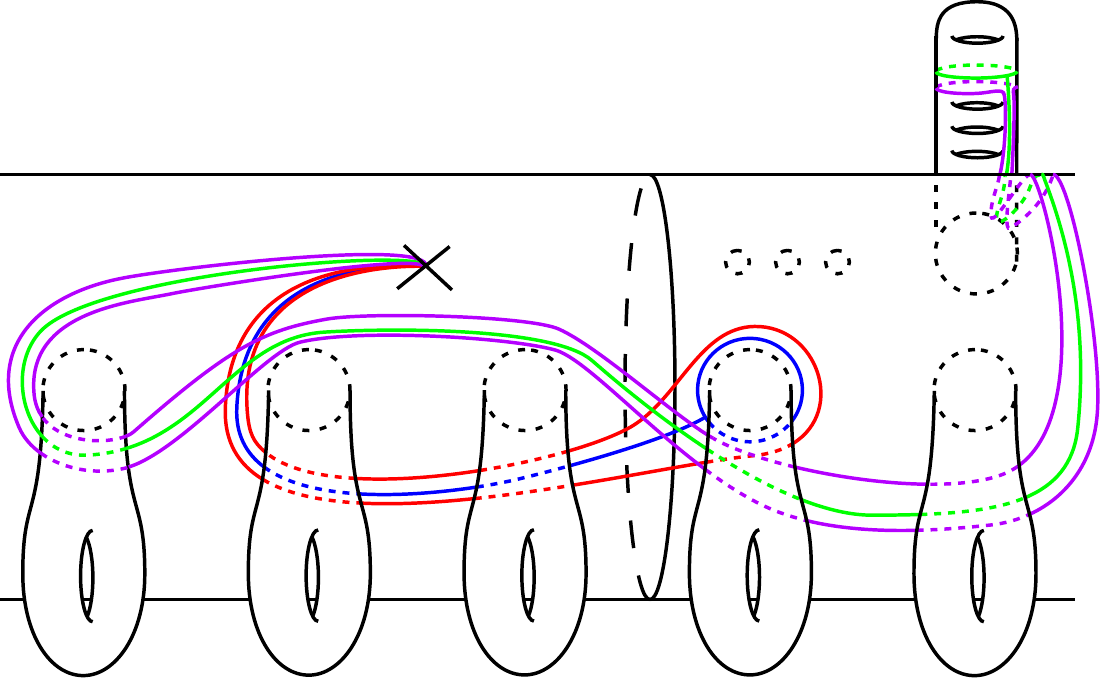}
\end{overpic}
\caption{Two examples of arcs in $\mathcal{SA}$ (in the case that $\Sigma_0$ is a one-holed torus) and their corresponding zippings. The purple and red arcs are zipped to the green and blue arcs, respectively.}
\label{fig:intpairingonback}
\end{figure}

We are now in a position to define the intersection pairings. 
\begin{defn}
We define a map $I^\pm\colon \mathcal{SA}\times\mathcal{SA}\to\Z_{\ge 0}$ as follows.
Let $I^+(\delta,\gamma)$ be the number of positively oriented intersections between minimal position representatives for the homotopy classes of $r_\delta$ and $r_\gamma$ that do not occur on  $B_\delta$ or $B_\gamma$. Here we require that the homotopy fixes the puncture and fixes each of $B_\delta$ and $B_\gamma$ setwise (though not necessarily pointwise).  We define  $I^-(\delta,\gamma)$ similarly using negative intersections.
\end{defn}
Notice that $I^+(-,-)$ and $I^-(-,-)$ are not necessarily symmetric in their arguments.  However, it is straightforward to verify that the relationship
$$I^+(\delta,\gamma)=I^-(\gamma,\delta)$$
holds for any $\delta,\gamma\in\mathcal{SA}$.

\begin{figure}
\centering
\begin{overpic}[width=5in]{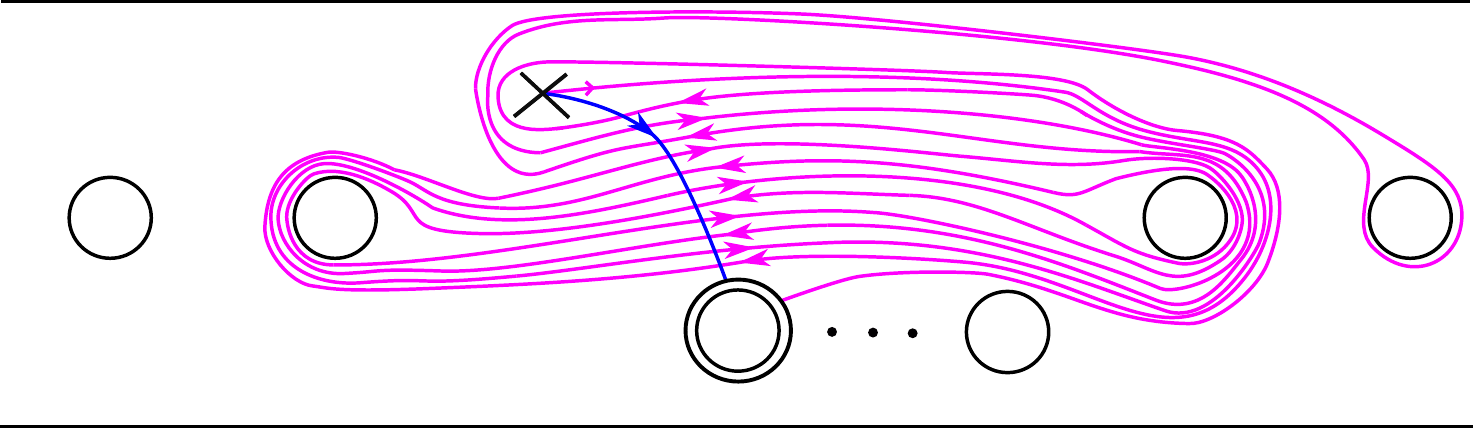}
\put(5,-3){$-2$}
\put(21,-3){$-1$}
\put(49,-3){$0$}
\put(66,-3){$n+1$}
\put(78,-3){$n+2$}
\put(94,-3){$n+3$}
\end{overpic}
\caption{The oriented arc $r_{\alpha_0^{(n)}}$ is shown in blue and the oriented arc $r_{\alpha_2^{(n)}}$ is shown in pink. Each line of $r_{\alpha_0^{(n)}}$ and $r_{\alpha_2^{(n)}}$ represents two strands of $\alpha_0^{(n)}$ and $\alpha_2^{(n)}$, respectively.}
\label{fig:alpha2dagger}
\end{figure}

For the remainder of the section we will fix an $n\in\N$ and use the notation that $g=g_n$, $\alpha_i=\alpha_i^{(n)}$, and $\varphi=\varphi_n$ is the ``starts like" function from Section \ref{sec:loxo}.

\begin{ex}\label{ex:intpairing}
One can readily compute from Figure \ref{fig:alpha2dagger} that we have the following relations:
$$5=I^-(\alpha_0,\alpha_2)=I^+(\alpha_2,\alpha_0)\quad\textrm{and} \quad 6=I^+(\alpha_0,\alpha_2)=I^-(\alpha_2,\alpha_0).$$
These calculations will be relevant later in the section.
\end{ex}

We now collect some useful properties of the intersection pairing and its interaction with $\MCG(\Sigma,p)$.
These preliminary facts are inspired by Bavard \cite{Bavard}, where  similar statements are shown in Bavard's context.

\begin{lem}\label{lem:MCGinv}
The intersection pairing is mapping class group invariant.  That is, for any $\delta,\gamma\in\mathcal{SA}$ and any $f\in\MCG(\Sigma,p)$
$$I^{\pm}(f(\delta),f(\gamma))=I^{\pm}(\delta,\gamma).$$
\end{lem}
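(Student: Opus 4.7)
The plan is to show that each ingredient in the definition of $I^\pm$ is natural with respect to the $\MCG(\Sigma,p)$ action (at the level of the relevant homotopy classes), and then exploit the fact that every mapping class is represented by an orientation-preserving homeomorphism.

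First I would verify that the assignment $\delta \mapsto B_\delta$ is natural: for any $f \in \MCG(\Sigma,p)$ and any $\delta \in \mathcal{SA}$, the free homotopy class of $f(B_\delta)$ agrees with the free homotopy class of $B_{f(\delta)}$. The construction of $B_\delta$ depends only on the homotopy class of $\delta$ and the choice of the complementary component of $\Sigma \setminus \delta$ that is homeomorphic to $\intr(\Sigma_0)$. Since $f$ is a homeomorphism fixing $p$, it induces a homeomorphism between $\Sigma \setminus \delta$ and $\Sigma \setminus f(\delta)$ which preserves the homeomorphism type of each complementary component; in particular, it sends the $\intr(\Sigma_0)$ side to the $\intr(\Sigma_0)$ side. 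After passing to geodesic representatives in the fixed hyperbolic metric, $f(B_\delta)$ is isotopic to $B_{f(\delta)}$. An analogous naturality statement then holds for the zipping: $f(r_\delta)$ is homotopic to $r_{f(\delta)}$ rel the puncture and rel $B_{f(\delta)}$ (where we allow $B_{f(\delta)}$ to move by isotopy), because the zipping ray $r_\delta$ is determined up to homotopy by $\delta$ and $B_\delta$.

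Next, I would use these naturality statements to transport the count defining $I^\pm$. Since $I^\pm(\delta,\gamma)$ depends only on the homotopy classes of $r_\delta,r_\gamma$ (realized in minimal position and with intersections on $B_\delta \cup B_\gamma$ excluded), and since $f$ carries the pair $(r_\delta,r_\gamma)$ to a pair homotopic to $(r_{f(\delta)},r_{f(\gamma)})$ by the previous paragraph, the geometric intersections of minimal position representatives of $r_{f(\delta)}$ and $r_{f(\gamma)}$ are in natural bijection with those of $r_\delta$ and $r_\gamma$, and this bijection sends intersections on $B_\delta \cup B_\gamma$ to intersections on $B_{f(\delta)} \cup B_{f(\gamma)}$.

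Finally, the sign of each intersection is preserved because mapping classes are orientation-preserving: if $x \in r_\delta \cap r_\gamma$ is a positively oriented transverse intersection, then $f(x) \in f(r_\delta) \cap f(r_\gamma)$ is also positively oriented, since $f$ preserves the orientation of $\Sigma$ and transverse intersection sign is computed from the induced basis of the tangent space. Combining this with the bijection above gives $I^+(f(\delta),f(\gamma)) = I^+(\delta,\gamma)$, and the same argument applied to negative intersections gives the equality for $I^-$. The main thing to be careful about is the naturality of the zipping $\delta \mapsto r_\delta$, since this construction is only specified up to the homotopies that fix $p$ and preserve $B_\delta$ setwise; however, exactly these homotopies are absorbed into the definition of $I^\pm$, so the identification is well defined.
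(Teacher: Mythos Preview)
Your proof is correct and follows the same essential idea as the paper's: the pairing is invariant because mapping classes are orientation-preserving and preserve $\mathcal{SA}$. The paper dispatches this in a single sentence, while you carefully unpack the naturality of the assignments $\delta \mapsto B_\delta$ and $\delta \mapsto r_\delta$; this extra care is not strictly necessary but does no harm.
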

\begin{proof}
This is immediate from the fact that $\MCG(\Sigma,p)$ is orientation preserving and preserves $\mathcal{SA}$.
\end{proof}

Recall that in Section \ref{sec:loxo}, we defined the ``starts like" function
$$\varphi \colon \mathcal A(S,p)\to \Z_{\geq 0},$$
by setting $\phi(\delta)$ equal to the largest $i\geq 0$ such that $\delta$ starts like $\alpha_i$.  We now extend $\varphi$ to all of $\mathcal{A}(\Sigma,p)$ by setting $\phi(\delta)=0$ if $\delta$ does not have a homotopy representative that is contained in $S$.  We continue to call this extension $\phi$.

\begin{lem}\label{lem:startslikecontra}
If $\delta,\gamma\in\mathcal{SA}$ are arcs such that
$2+\varphi(\delta)\le\varphi(\gamma),$
then
$$6\le I^{-}(\gamma,\delta).$$
\end{lem}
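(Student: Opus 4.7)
The plan is to reduce via mapping class group invariance to a boundary case, then exploit the rigid structure forced by symmetry together with the explicit intersection count of Example \ref{ex:intpairing}.

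First, I would use Lemma \ref{lem:MCGinv} to reduce to $\varphi(\delta)=0$ and $\varphi(\gamma) \geq 2$. Corollary \ref{cor:inductcor}, combined with Proposition \ref{lem:imageofinit} (which pins down the initial half of $g_n$'s image via $g_n(\mathring\alpha_i^{(n)} 0_u) = \mathring\alpha_{i+1}^{(n)} 0_u$), shows that $\varphi$ is equivariant under $g_n$: $\varphi(g_n\eta)=\varphi(\eta)+1$. Thus after replacing $\gamma,\delta$ by $g_n^{-\varphi(\delta)}\gamma,\, g_n^{-\varphi(\delta)}\delta$, we may assume $\varphi(\delta)=0$ and $\varphi(\gamma)\geq 2$, with $I^-(\gamma,\delta)$ unchanged.

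Next, I would exploit the structural constraint on $\gamma$. Since $\gamma\in\mathcal{SA}$ is symmetric and starts like $\alpha_{\varphi(\gamma)}$, Definition \ref{defn:symmetric} forces $\gamma = \mathring\alpha_{\varphi(\gamma)}\, q_1 q_2\, \overline{\mathring\alpha_{\varphi(\gamma)}}$ for some central characters $q_1 q_2$ of common numerical value. In particular $\gamma$ differs from $\alpha_{\varphi(\gamma)}$ only in its two middle characters, and thus $r_\gamma$ coincides with $r_{\alpha_{\varphi(\gamma)}}$ outside a small neighborhood of $B_\gamma \cup B_{\alpha_{\varphi(\gamma)}}$. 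Because the definition of $I^-$ excludes intersections on the $B$-curves, this upgrades to $I^-(\gamma,\delta) = I^-(\alpha_{\varphi(\gamma)},\delta)$.

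Now Corollary \ref{cor:inductcor} guarantees that $\alpha_{\varphi(\gamma)}$ starts like $\alpha_2$, and Proposition \ref{prop:highwayconstruction} together with Corollary \ref{lem:lanelength} show that the initial (and, by symmetry, terminal) portion of $r_{\alpha_{\varphi(\gamma)}}$ carries the full highway structure of $r_{\alpha_2}$, including the leading loops responsible for $I^-(\alpha_2,\alpha_0)=6$ computed in Example \ref{ex:intpairing}. When $\delta=\alpha_0$, Lemma \ref{lem:MCGinv} immediately yields $I^-(\alpha_{\varphi(\gamma)},\alpha_0)\geq 6$. For a general $\delta\in\mathcal{SA}$ with $\varphi(\delta)=0$, I would argue via Theorem \ref{thm:startslike} in its contrapositive form: if $r_\delta$ failed to cross any one of the six leading strands of $r_{\alpha_{\varphi(\gamma)}}$, a local modification of $\delta$ near that strand would yield an arc disjoint from $\gamma$ that fails to start like $\alpha_{\varphi(\gamma)-1}$, contradicting the theorem. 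The negative sign of each forced crossing is fixed by the uniform orientation of the leading loops of $\alpha_{\varphi(\gamma)}$ relative to the outward orientation of $r_\delta$ at $p$.

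The main obstacle will be the last step. While symmetry cleanly rigidifies $r_\gamma$, the arc $\delta$ with $\varphi(\delta)=0$ is almost unconstrained; it may bound its copy of $\Sigma_0$ at any puncture of $\mathcal{U}$ and need not globally resemble $\alpha_0$. Verifying that $r_\delta$ must nonetheless contribute six \emph{negatively} oriented intersections with the leading loops of $r_{\alpha_{\varphi(\gamma)}}$ requires a careful local analysis of how $r_\delta$ exits a neighborhood of $p$ and weaves through the lane structure of Section \ref{sec:highways}, together with precise accounting of intersection signs that is not automatic from the combinatorics alone.
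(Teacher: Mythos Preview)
There are two genuine gaps. First, the structural claim in your second paragraph is false: symmetry of $\gamma$ gives $\gamma=\mu\,q_1q_2\,\overline\mu$ for some $\mu$, and ``starts like $\alpha_{\varphi(\gamma)}$'' only says that $\mu$ \emph{begins with} $\mathring\alpha_{\varphi(\gamma)}$, not that $\mu=\mathring\alpha_{\varphi(\gamma)}$. The arc $\gamma$ can be arbitrarily longer than $\alpha_{\varphi(\gamma)}$, so your asserted equality $I^-(\gamma,\delta)=I^-(\alpha_{\varphi(\gamma)},\delta)$ has no reason to hold; at best you get an inequality, and establishing even that requires an argument you do not give. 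Second, the equivariance $\varphi(g_n\eta)=\varphi(\eta)+1$ is not what Corollary~\ref{cor:inductcor} provides: that corollary gives only $\varphi(g_n\eta)\ge\varphi(\eta)+1$, and your reduction needs the reverse direction for $\gamma$ (to ensure the new $\varphi(\gamma)$ stays $\ge 2$). An analog of Theorem~\ref{lem:induct} for $g_n^{-1}$ would be needed and is nowhere supplied.

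The paper's argument avoids all of this machinery. It observes directly that (i) by Lemma~\ref{lem:MCGinv}, $I^-(\alpha_j,\alpha_i)$ depends only on $j-i$; (ii) since $\alpha_{k+1}$ starts like $\alpha_k$, the ray $r_{\alpha_{k+1}}$ contains $r_{\alpha_k}$ as an initial subsegment, so $I^-(\alpha_j,\alpha_i)$ is monotone in $j-i$ and hence $\ge I^-(\alpha_2,\alpha_0)=6$ whenever $j-i\ge 2$; and (iii) if $\delta,\gamma\in\mathcal{SA}$ start like $\alpha_i,\alpha_j$ respectively, then $r_\delta,r_\gamma$ contain $r_{\alpha_i},r_{\alpha_j}$ as initial subsegments, so every negative crossing between $r_{\alpha_j}$ and $r_{\alpha_i}$ is already a crossing between $r_\gamma$ and $r_\delta$, giving $I^-(\gamma,\delta)\ge I^-(\alpha_j,\alpha_i)$. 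No reduction, no equivariance, and no special analysis of $\delta$ is required---the point you flag as the ``main obstacle'' dissolves once you realize that $\delta\in\mathcal{SA}$ with $\varphi(\delta)=i$ forces $r_\delta$ to begin exactly as $r_{\alpha_i}$ does.
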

\begin{proof}
By the mapping class group invariance of Lemma \ref{lem:MCGinv}, the quantities $I^\pm(\alpha_i,\alpha_j)$ depend only on $j-i$.
As $\alpha_{i+1}$ starts like $\alpha_i$, the pairing $I^-(\alpha_i,\alpha_j)$ must be monotonically increasing in the difference $j-i$.
In particular, if $2+i\le j,$
then 
$$6= I^{-}(\alpha_2,\alpha_0)\le I^{-}(\alpha_j,\alpha_i).$$
If  $\phi(\delta) = i$ and $\phi(\gamma)= j$, then  $\delta$ starts like $\alpha_i$ and $\gamma$ starts like $\alpha_j$, so the arcs $\gamma$ and $\delta$ must have at least as many negatively oriented intersections as $\alpha_i$ and $\alpha_j$.  Thus we have 
\[
6\le I^{-}(\alpha_j,\alpha_i) \leq  I^{-}(\gamma,\delta),
\]
and the result follows.   
\end{proof}


\subsection{Production of ``infinite-type" quasimorphisms}
We now use the intersection pairing from the previous subsection to show that the elements $g_n$ give rise to non-trivial quasimorphisms.
For this, we need the following theorem of Bestvina and Fujiwara. We explain the two conditions on $h_1$, $h_2$ after the statement.

\begin{thm}[{\cite[Theorem 1]{BestvinaFujiwara}}]\label{thm:BestFuj}
Suppose that a group $G$ has a non-elementary action by isometries on a $\delta$--hyperbolic graph $X$.
If there exist independent loxodromic elements $h_1,h_2\in G$ such that $h_1\not\sim h_2$, then the space of quasimorphisms is infinite dimensional.
\end{thm}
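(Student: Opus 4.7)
The plan is to adapt the Brooks--Fujiwara counting quasimorphism construction to an arbitrary $\delta$--hyperbolic graph. For each loxodromic element $h \in G$, fix a basepoint $x_0 \in X$ and a quasi-axis $\gamma_h$ of $h$ through $x_0$, then choose an oriented subpath $w = w_h$ of $\gamma_h$ of length $L \gg \delta$. Given $g \in G$, pick a geodesic from $x_0$ to $g x_0$ and let $|w|(g)$ denote the maximal number of pairwise disjoint subpaths of this geodesic, each of which $K$--fellow travels $w$ with matching orientation, where $K$ depends only on $\delta$ and the quasi-geodesic constants of $\gamma_h$. Set $q_w(g) := |w|(g) - |w^{-1}|(g)$. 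Via the Morse lemma this signed count is well defined up to bounded error, and it is the candidate quasimorphism.

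The first step is to verify that $q_w$ is a genuine quasimorphism. For each triple $(x_0, f x_0, fg x_0)$, the geodesic triangle is $\delta$--thin, so copies of $w$ detected along $[x_0, fg x_0]$ decompose---up to an additive error depending only on $\delta$ and $L$---into copies along the other two sides. The delicate point is handling copies of $w$ that straddle the Gromov product region where the sides of the triangle pairwise fellow travel. Taking $L$ sufficiently large ensures that each such copy can be unambiguously attributed to a single side of the triangle, yielding the defect bound $|q_w(fg) - q_w(f) - q_w(g)| \leq C$. One then checks that $q_w$ is unbounded, since $q_w(h^n)$ grows linearly in $n$, and not a homomorphism.

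The second step is to exploit the hypothesis $h_1 \not\sim h_2$ to produce two classes in $\widetilde{QH}(G)$ that are linearly independent modulo homomorphisms. The equivalence $\sim$ encodes arbitrarily long matched-orientation fellow-traveling between the axes of (conjugates of) $h_1$ and $h_2$; its negation, combined with $L$ large, guarantees that $q_{w_{h_1}}(h_2^n)$ is bounded in $n$, while $q_{w_{h_1}}(h_1^n)$ grows linearly, and symmetrically for $q_{w_{h_2}}$. This yields the distinctness of the classes $[q_{w_{h_1}}]$ and $[q_{w_{h_2}}]$.

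To upgrade this to an infinite-dimensional space, I would invoke the non-elementary hypothesis together with independence to run a ping-pong argument on the Gromov boundary $\partial X$: sufficiently high powers $h_1^N, h_2^N$ generate a free subgroup, and within it one can produce an infinite sequence of pairwise non-equivalent loxodromic elements, for instance the conjugates $f_k := h_2^{Nk} h_1^N h_2^{-Nk}$, whose attracting/repelling fixed point pairs on $\partial X$ are pairwise disjoint. The associated counting quasimorphisms $q_{w_{f_k}}$ yield linearly independent classes in $\widetilde{QH}(G)$, because $q_{w_{f_j}}$ grows linearly on $\langle f_j \rangle$ but remains bounded on $\langle f_k \rangle$ for $k \neq j$. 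The chief obstacle throughout is the defect estimate in the first step: turning the intuitively clear ``count copies of $w$'' procedure into a bona fide quasimorphism requires a careful combinatorial argument tracking how maximal disjoint collections of $w$--copies interact across a thin triangle, and the constants must be controlled uniformly in $g$.
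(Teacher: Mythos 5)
The paper does not actually prove this statement; it is quoted verbatim as \cite[Theorem~1]{BestvinaFujiwara} and used as a black box in the proof of Theorem~\ref{thm:infiniteqms}. There is therefore no ``paper's own proof'' to compare against. Your Steps~1 and~2 are broadly faithful to the Fujiwara--Bestvina--Fujiwara machinery: the counting quasimorphism $q_w(g)=|w|(g)-|w^{-1}|(g)$ along geodesics $[x_0,gx_0]$, the defect bound via thin triangles and the Morse lemma, and the role of $\not\sim$ in preventing $q_{w_{h_1}}$ from detecting powers of $h_2$.

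However, Step~3 contains a genuine gap that the argument cannot recover from. You propose the family $f_k := h_2^{Nk}h_1^N h_2^{-Nk}$. These are all conjugate to $h_1^N$, and conjugate elements are always $\sim$-equivalent: if $f_k = g f_j g^{-1}$, then the quasi-axis of $f_k$ is $g\ell_{f_j}$, and $g$ carries any segment of $\ell_{f_j}$ into $\ell_{f_k}$ with matching orientation (conjugation preserves the translation direction). The paper itself notes this fact explicitly just before the proof of Theorem~\ref{thm:infiniteqms}: ``since conjugate elements are equivalent under the relationship $\sim$.'' Consequently, your claim that $q_{w_{f_j}}$ remains bounded on $\langle f_k\rangle$ for $k\neq j$ is false: the element $g=h_2^{N(k-j)}$ transplants oriented copies of $w_{f_j}$ onto $\ell_{f_k}$, so $q_{w_{f_j}}(f_k^n)$ also grows linearly in $n$, and you do not obtain linear independence of the classes $[q_{w_{f_k}}]$ in $\widetilde{QH}(G)$ by this route.

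What Bestvina--Fujiwara actually do at this step is construct a sequence of hyperbolic elements $f_1,f_2,\dots$ as \emph{products} (not conjugates) of high powers of $h_1$ and $h_2$ with distinct combinatorial block patterns, and then prove the key technical lemma that $f_i\not\sim f_j^{\pm 1}$ for $i\neq j$. This is precisely where the hypotheses of independence and $h_1\not\sim h_2$ are consumed: independence gives ping-pong and ensures the $f_i$ are hyperbolic with distinguishable axes, while $h_1\not\sim h_2$ rules out an orientation-preserving group element re-gluing one block pattern onto another. Once pairwise non-$\sim$-equivalence is established, the rest of your Step~3 argument (linear growth on $\langle f_i\rangle$, boundedness on $\langle f_j\rangle$ for $j\neq i$, hence linear independence modulo bounded maps and homomorphisms) goes through. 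The missing ingredient in your sketch is therefore the construction and the non-equivalence lemma for the $f_i$, which is the technical heart of the theorem.
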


Two loxodromic isometries $h_1$ and $h_2$ are \textit{independent} if their limit sets in the boundary $\partial X$ of $X$ are disjoint.  For the second condition, fix constants $K\geq 1$ and $K'\geq 0$ so that $h_i$ has a $(K,K')$--quasi-geodesic axis $\ell_i$ in $X$ for $i\in\{1,2\}$. A fundamental property of $\delta$--hyperbolic spaces is that there exists $B = B(K,K',\delta)$ such that any two finite $(K,K')$–quasi-geodesics with common endpoints are within distance $B$ of each other. Define an equivalence relation on elements $h_1,h_2\in G$ so that $h_1\sim h_2$ if the following holds:  for any arbitrarily long segment $L$  of $\ell_1$, there exists an $f\in G$ such that $f(L)$ is contained in the $B$--neighborhood of $\ell_2$ and the map $f\colon L\to f(L)$ is orientation-preserving with respect to the $h_i$--orientation on $\ell_i$ for $i\in\{1,2\}$.
For the definition of the $h_i$--orientation on $L$ and $f(L)$, see \cite[Page 72]{BestvinaFujiwara}.

We now recall some arguments from \cite[Section 4.3]{Bavard} which, when adapted into our language, show that $g\not\sim g^{-1}$ for our loxodromic isometries $g=g_n$.
Fix $B\geq 1$ to be the constant defined above for all $(4,0)$--quasi-geodesics in $\mathcal{A}(\Sigma,p)$. 
Let $\ell=\{g^i(\alpha_0)\}_{i\in\Z}$, so that $\ell$ is a $(4,0)$--quasi-geodesic axis of $g$ by Theorem \ref{thm:gnlox}. We then have the following statements that are similar to \cite[Lemmas 4.6 \& 4.7]{Bavard}. We supply the proofs for the reader's convenience.

\begin{lem}\label{lem:Bavard}
Let $L$ be a subpath of $\ell$ from $\alpha_i$ to $\alpha_j$ for $0<i<j$. Let $f\in\MCG(\Sigma,p)$  be such that $d_{\mathcal{A}(\Sigma,p)}(\alpha_i,f(\alpha_j))\le B$ and such that $f (L)\subset N_B(\ell)$ with the opposite orientation.
If $j-i>8B+3$, then there exists some $k$ such that $i \leq k < j$ and $\varphi(f(\alpha_{k+2}))\le\varphi(f(\alpha_k))-2$.
\end{lem}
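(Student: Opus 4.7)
The plan is to use the $1$-Lipschitz property of the ``starts like'' function $\varphi$ together with the orientation-reversal hypothesis on $f(L)$ to derive incompatible bounds on $\hat\varphi(k):=\varphi(f(\alpha_k))$ at the two endpoints $k=i$ and $k=j$, and then to contradict the assumption that no large two-step drop occurs via a telescoping argument.

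First one extends Lemma \ref{cor:dist} from $\mathcal{A}(S,p)$ to $\mathcal{A}(\Sigma,p)$, noting that $\varphi$ may be defined on all of $\mathcal{A}(\Sigma,p)$ by setting $\varphi(\delta)=0$ when $\delta$ does not start like any $\alpha_k^{(n)}$, and that Theorem \ref{thm:startslike} still yields $|\varphi(\gamma)-\varphi(\delta)|\leq d_{\mathcal{A}(\Sigma,p)}(\gamma,\delta)$. Since $\varphi(\alpha_i)=i$ and $d(\alpha_i,f(\alpha_j))\leq B$, the Lipschitz estimate immediately gives $\hat\varphi(j)\leq i+B$. The more delicate step is to produce a lower bound of the form $\hat\varphi(i)\geq j-cB$ for some small constant $c$. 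For each $k\in[i,j]$ the hypothesis $f(L)\subset N_B(\ell)$ yields $s_k\in\Z$ with $d(f(\alpha_k),\alpha_{s_k})\leq B$, and the Bestvina--Fujiwara orientation-reversal condition makes the sequence $(s_k)$ coarsely monotone decreasing in $k$; since $s_j$ is forced to lie within $B$ of $i$, quantitative use of the isometry $f$ (which gives $d(\alpha_{s_i},\alpha_{s_j})\approx d(\alpha_i,\alpha_j)\geq j-i$) together with the fact that $f(L)$ traverses $\ell$ in reverse forces $s_i$ to be close to $j$, and Lipschitzness of $\varphi$ transfers this to the desired lower bound on $\hat\varphi(i)$.

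Assuming for contradiction that no $k\in[i,j-1]$ satisfies $\hat\varphi(k+2)\leq\hat\varphi(k)-2$ means $\hat\varphi(k+2)\geq\hat\varphi(k)-1$ for all such $k$. Telescoping along the subsequence $k=i,i+2,\ldots$, and using $|\hat\varphi(k+1)-\hat\varphi(k)|\leq d(f(\alpha_{k+1}),f(\alpha_k))=d(\alpha_{k+1},\alpha_k)\leq 2$ to handle the parity of $j-i$, yields $\hat\varphi(i)-\hat\varphi(j)\leq \lceil(j-i)/2\rceil+O(1)$. Combining this inequality with the two endpoint bounds produces an inequality of the form $(j-i)-O(B)\leq (j-i)/2+O(1)$, so that $(j-i)/2\leq O(B)$; tracking the constants carefully recovers exactly the stated threshold $j-i\leq 8B+3$, contradicting the hypothesis and yielding the desired $k$.

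The heart of the argument is the quantitative lower bound on $\hat\varphi(i)$. While the qualitative monotonicity of $(s_k)$ is immediate from the Bestvina--Fujiwara orientation condition, upgrading this to ``$s_i$ is close to $j$'' rather than merely $s_i>s_j$ requires using both that $f$ is an isometry and that the quasi-geodesic parametrization $n\mapsto\alpha_n$ is bi-Lipschitz with controlled constants. It is precisely at this step that all the geometric hypotheses of the lemma must be put to work, and this step is the direct analogue of the corresponding argument in Bavard's \cite[Lemma~4.6]{Bavard}.
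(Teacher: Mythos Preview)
Your overall architecture is exactly that of the paper: bound $\hat\varphi(j)$ from above by $i+B$, bound $\hat\varphi(i)$ from below by roughly $j$, assume no two-step drop and telescope, then derive a contradiction with $j-i>8B+3$. The telescoping and parity correction are identical to the paper's.

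The gap is in your lower bound on $\hat\varphi(i)$. You propose to get it from ``isometry of $f$ plus bi-Lipschitz parametrization of $\ell$'': pick $s_k$ with $d(f(\alpha_k),\alpha_{s_k})\le B$, use the isometry to get $d(\alpha_{s_i},\alpha_{s_j})\ge d(\alpha_i,\alpha_j)-2B\ge (j-i)-2B$, and convert this to a bound on $|s_i-s_j|$. But that last conversion uses $d(\alpha_{s_i},\alpha_{s_j})\le 2|s_i-s_j|$, which costs you a factor of $2$: you only obtain $s_i\gtrsim i+(j-i)/2$, hence $\hat\varphi(i)\gtrsim i+(j-i)/2-O(B)$. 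Plugging this into the telescoping inequality $\hat\varphi(j)\ge\hat\varphi(i)-(j-i)/2$ gives $i+B\ge i-O(B)$, which is \emph{always} true and produces no contradiction whatsoever, let alone the threshold $8B+3$.

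The paper avoids this loss by invoking \cite[Lemma~4.4]{Bavard} (for quasi-geodesics rather than geodesics): applied to $L$ and the reverse of $f(L)$, both $(4,0)$--quasi-geodesics with one pair of endpoints $B$-close and one contained in the $B$-neighborhood of $\ell$, it gives $d(\alpha_j,f(\alpha_i))\le 3B$ directly, hence $\hat\varphi(i)\ge j-3B$ with no multiplicative loss. This is precisely the ``endpoint-matching'' statement your sketch needs but does not supply; it does not follow from the isometry and quasi-geodesic constants alone, and without it the argument does not close.
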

\begin{proof}

Since $d_{\mathcal{A}(\Sigma,p)}(f(\alpha_j), \alpha_i)\le B$, we conclude by Lemma~\ref{cor:dist} that $\varphi(f (\alpha_j))\le i+B$. Since $f(L)\subset N_B(\ell)$ with the opposite orientation, we may apply \cite[Lemma 4.4]{Bavard} to $L$ and the reverse of $f(L)$ to conclude that $d_{\mathcal{A}(\Sigma,p)}(\alpha_j,f(\alpha_i))\le 3B$.  Note that \cite[Lemma 4.4]{Bavard} is stated for geodesics, but the exact same proof goes through for quasi-geodesics. Again applying Lemma~\ref{cor:dist}, we see that $\varphi(f(\alpha_i))\ge j-3B$.

Suppose towards a contradiction that $\varphi(f(\alpha_{k+2}))>\varphi(f(\alpha_k))-2$ for all $1 \leq k < j$.  Equivalently, $\varphi(f(\alpha_{k+2}))\ge\varphi(f(\alpha_k))-1$ for every $k$.
Then, if $j-i$ is even,
$$i+B\ge \varphi(f(\alpha_j))\ge\varphi(f(\alpha_i))-\frac{j-i}{2}\ge j-3B-\frac{j-i}{2},$$ where the second inequality is obtained by applying $\varphi(f(\alpha_{k+2}))\ge\varphi(f(\alpha_k))-1$ repeatedly starting with $j = k+2$.
If $j-i$ is odd, then by the same reasoning we have
$$i+B\ge \varphi(f(\alpha_j))\ge\varphi(f(\alpha_{i+1}))-\frac{j-i-1}{2}.$$ 
Since 
$$|\varphi(f(\alpha_i))-\varphi(f(\alpha_{i+1}))|\le d(f(\alpha_i),f(\alpha_{i+1}))=d(\alpha_i,\alpha_{i+1})=2,$$
it follows that
$$i+B\ge\varphi(f(\alpha_{i+1}))-\frac{j-i-1}{2}\ge\varphi(f(\alpha_{i}))-\frac{3}{2}-\frac{j-i}{2}\ge j-3B-\frac{3}{2}-\frac{j-i}{2}.$$
Hence we conclude that, in either case, 
$$4B+\frac{3}{2}\ge \frac{j-i}{2},$$
which contradicts that $j-i>8B+3$.
Thus there must be some $k$ for which $\varphi(f(\alpha_{k+2}))\le\varphi(f(\alpha_k))-2$, as required.
\end{proof}

\begin{prop}\label{prop:nosim}
For any segment $L$ of $\ell$ whose length is greater than $32B+12$ and any $f\in\MCG(\Sigma,p)$, if $f(L)\subset N_B(\ell)$ then $f(L)$ has the same orientation as $\ell$.
\end{prop}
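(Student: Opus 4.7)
The strategy is to prove the statement by contradiction: I will assume that $f(L)\subset N_B(\ell)$ but has the opposite orientation to $\ell$, and derive a contradiction using the intersection pairings computed in Example \ref{ex:intpairing}, mediated through Lemmas \ref{lem:Bavard}, \ref{lem:startslikecontra}, and \ref{lem:MCGinv}.

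First, I would parameterize $L$ as the subpath of $\ell$ joining $\alpha_i$ and $\alpha_j$ for some $0<i<j$; by precomposing $f$ with an appropriate power of $g$ (which preserves $\ell$ and alters neither the neighborhood nor the orientation of $f(L)$) we may arrange $i>0$. Because $d_{\mathcal A(\Sigma,p)}(\alpha_k,\alpha_{k+1})\le 2$ for all $k$, the length of $L$ is at most $2(j-i)$, so the hypothesis that $L$ has length greater than $32B+12$ forces $j-i>16B+6>8B+3$, which is the combinatorial bound required by Lemma \ref{lem:Bavard}. Next, since $f$ is an isometry, $f(L)$ is itself a $(4,0)$--quasi-geodesic joining $f(\alpha_i)$ to $f(\alpha_j)$. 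The assumption that $f(L)$ lies in $N_B(\ell)$ with reversed orientation implies, after absorbing a further translate by a power of $g$, that the endpoints $f(\alpha_i)$ and $f(\alpha_j)$ are $B$--close to $\alpha_j$ and $\alpha_i$, respectively. In particular $d_{\mathcal A(\Sigma,p)}(\alpha_i,f(\alpha_j))\le B$, and the orientation-reversing hypothesis of Lemma \ref{lem:Bavard} is satisfied. Lemma \ref{lem:Bavard} then produces an index $k$ with $i\le k<j$ such that
\[
\varphi(f(\alpha_{k+2}))\le \varphi(f(\alpha_k))-2.
\]

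To finish, I would apply Lemma \ref{lem:startslikecontra} with $\gamma=f(\alpha_k)$ and $\delta=f(\alpha_{k+2})$ to conclude that $I^-(f(\alpha_k),f(\alpha_{k+2}))\ge 6$. On the other hand, by the mapping class group invariance of Lemma \ref{lem:MCGinv} applied first to $f$ and then to $g^{-k}$, we obtain
\[
I^-(f(\alpha_k),f(\alpha_{k+2}))=I^-(\alpha_k,\alpha_{k+2})=I^-(\alpha_0,\alpha_2)=5,
\]
where the last equality is recorded in Example \ref{ex:intpairing}. The resulting inequality $6\le 5$ is the desired contradiction, so $f(L)$ must have the same orientation as $\ell$.

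The main technical subtlety lies in the first step, namely in carefully verifying that the hypothesis ``$f(L)\subset N_B(\ell)$ with opposite orientation'' can be converted, via suitable translates by powers of $g$, into the distance bound $d_{\mathcal A(\Sigma,p)}(\alpha_i,f(\alpha_j))\le B$ demanded by Lemma \ref{lem:Bavard}. This reduction uses that $\ell$ is preserved setwise by $\langle g\rangle$ and that $B$ is the Morse-lemma constant controlling fellow-travel of $(4,0)$--quasi-geodesics in the $\delta$--hyperbolic space $\mathcal A(\Sigma,p)$; the factor of $4$ implicit in the length threshold $32B+12=4(8B+3)$ reflects precisely the quasi-isometry constants of the axis $\ell$. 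Once this reduction is in hand, the remainder of the argument is a clean combination of the intersection-pairing estimates established earlier in the section.
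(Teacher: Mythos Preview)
Your proposal is correct and follows essentially the same argument as the paper's proof: both reduce to Lemma~\ref{lem:Bavard} and then derive the contradiction $6\le I^-(f(\alpha_k),f(\alpha_{k+2}))=I^-(\alpha_0,\alpha_2)=5$ via Lemmas~\ref{lem:startslikecontra} and~\ref{lem:MCGinv}. You are in fact slightly more explicit than the paper about arranging the hypothesis $d_{\mathcal A(\Sigma,p)}(\alpha_i,f(\alpha_j))\le B$ of Lemma~\ref{lem:Bavard} by post-composing with a power of $g$, and you use the sharper step bound $d(\alpha_k,\alpha_{k+1})\le 2$ rather than the $(4,0)$--quasi-geodesic constant, but these are minor variations on the same route.
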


\begin{proof}
After possibly increasing the length of $L$, we may assume that $L$ is a subpath of $\ell$ from $\alpha_i$ to $\alpha_j$ for some $i<j$.  Since the length of $L$ is greater than $32B+12$ and $\ell$ is a $(4,0)$--quasi-geodesic edge path by Theorem \ref{thm:gnlox}, we have that $j-i>8B+3$.  By precomposing $f$ with a suitable power of $ g$, we can and do assume that $i,j> 0$.

Assume for contradiction that $f(L)$ has the opposite orientation as $\ell$.
By Example \ref{ex:intpairing} and Lemma \ref{lem:MCGinv} we have that
\begin{equation}\label{eqn:I-02}
I^{-}(f(\alpha_{k}),f(\alpha_{k+2}))=I^{-}(\alpha_{k}, \alpha_{k+2})=I^{-}(\alpha_0,\alpha_{2})=5
\end{equation}
for all $k\in\Z$.
On the other hand, by Lemma \ref{lem:Bavard} there is some fixed index $i\leq k<j$ for which
\begin{equation}\label{eqn:badstartslike}
2+\varphi(f(\alpha_{k+2}))\le \varphi(f(\alpha_{k})).
\end{equation}
Applying Lemma \ref{lem:startslikecontra} to  $f(\alpha_k)$ and $f(\alpha_{k+2})$ for this index $k$ shows that
$$6\leq I^{-}(f (\alpha_{k}),f(\alpha_{k+2}))=I^{-}( \alpha_{k}, \alpha_{k+2})=I^{-}(\alpha_0, \alpha_{2}).$$
However, this contradicts \eqref{eqn:I-02}, and so we conclude that $f(L)$ has the same orientation as $L$.
\end{proof}

\noindent Proposition~\ref{prop:nosim} implies that $g \not\sim g^{-1}$ since the axis $\ell$ has opposite orientations for $g$ and $g^{-1}$. Additionally, since conjugate elements are equivalent under the relationship $\sim$, we have the following immediate corollary of Proposition \ref{prop:nosim}.

\begin{cor}\label{cor:nosim}
For fixed $n\in\N$, the loxodromic elements $g=g_n$ have the property that $g\not\sim hg^{-1}h^{-1}$ for any $h\in\MCG(\Sigma,p)$.
\end{cor}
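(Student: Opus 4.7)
The plan is to reduce the claim $g \not\sim h g^{-1} h^{-1}$ to Proposition~\ref{prop:nosim} via a conjugation argument. Recall that $g = g_n$ acts loxodromically on $\mathcal{A}(\Sigma,p)$ with the $(4,0)$--quasi-geodesic axis $\ell = \{g^i(\alpha_0)\}_{i \in \Z}$, and that the $g$--orientation on $\ell$ is the direction in which $g$ translates. For any $h \in \MCG(\Sigma,p)$, the conjugate $hg^{-1}h^{-1}$ is also loxodromic (with the same quasi-geodesic constants, since $h$ acts by isometries), and a quasi-geodesic axis for $hg^{-1}h^{-1}$ is $h(\ell)$. Crucially, the $hg^{-1}h^{-1}$--orientation on $h(\ell)$ is the $h$--image of the $g^{-1}$--orientation on $\ell$, which is the reverse of the $h$--image of the $g$--orientation on $\ell$.

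Suppose for contradiction that $g \sim hg^{-1}h^{-1}$ for some $h \in \MCG(\Sigma,p)$. By definition of $\sim$, for every segment $L$ of $\ell$ there exists $f \in \MCG(\Sigma,p)$ with $f(L) \subset N_B(h(\ell))$ such that $f\colon L \to f(L)$ is orientation-preserving with respect to the $g$--orientation on $L$ and the $hg^{-1}h^{-1}$--orientation on $f(L)$. Choose $L$ to be a subpath of $\ell$ of length greater than $32B + 12$, where $B$ is as in Proposition~\ref{prop:nosim}, and let $f$ be the associated element of $\MCG(\Sigma,p)$.

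Now set $f' = h^{-1} \circ f$. Since $h$ is an isometry of $\mathcal{A}(\Sigma,p)$, we have $f'(L) = h^{-1}(f(L)) \subset h^{-1}(N_B(h(\ell))) = N_B(\ell)$. Moreover, because the $hg^{-1}h^{-1}$--orientation on $h(\ell)$ pulls back under $h^{-1}$ to the $g^{-1}$--orientation on $\ell$, which is opposite to the $g$--orientation on $\ell$, the map $f'\colon L \to f'(L)$ is orientation-reversing with respect to the $g$--orientation on $\ell$. This directly contradicts Proposition~\ref{prop:nosim}, which asserts that for any segment of $\ell$ of length greater than $32B+12$ whose image under an element of $\MCG(\Sigma,p)$ lies in $N_B(\ell)$, the image must carry the same orientation as $\ell$. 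Since such an $h$ cannot exist, $g \not\sim hg^{-1}h^{-1}$ for every $h \in \MCG(\Sigma,p)$.

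The only subtle point, and essentially the entire content of the corollary, is verifying that conjugation by $h$ reverses the induced orientation on the axis when one passes from $g$ to $g^{-1}$; this is purely a bookkeeping step and the real work is already contained in Proposition~\ref{prop:nosim}, which is why the authors label this an immediate corollary.
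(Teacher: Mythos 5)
Your proof is correct and follows essentially the same approach as the paper, which deduces $g \not\sim g^{-1}$ from Proposition~\ref{prop:nosim} and then observes that conjugate elements are $\sim$-equivalent; setting $f' = h^{-1}\circ f$ is precisely the explicit witness that unpacks the transitivity step $hg^{-1}h^{-1} \sim g^{-1} \not\sim g$, so the two arguments coincide.
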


\noindent With this in hand, we can prove the main result of this section.

\begin{proof}[Proof of Theorem~\ref{thm:infiniteqms}]
Fix $n\in\N$, and continue to use the notation that $g=g_n$.
By Theorem \ref{thm:BestFuj} and Corollary \ref{cor:nosim} it  suffices to show that there exists an $h\in\MCG(\Sigma,p)$ such that $g$ and $hg^{-1}h^{-1}$ are independent loxodromic elements. For this we can use any $h \in \MCG(\Sigma,p)$ which does not fix the limit set of $g$ (which is the same as the limit set of $g^{-1}$). For example, fix any finite-type subsurface $\Pi_0$ with boundary of sufficient complexity which contains the puncture $p$, and take any pseudo-Anosov $h\in\MCG(\Pi_0,p)$. Extending $h$ by the identity outside of $\Pi_0$, we may consider $h$ as an element of $\MCG(\Sigma,p)$. By Lemma \ref{lem:AsqiembsAsigma}, there is a $(2,0)$--quasi-isometric embedding $\iota\colon\mathcal{A}(\Pi_0,P)\hookrightarrow\mathcal{A}(\Sigma,p)$. Since $h$ is loxodromic with respect to the action of $\MCG(\Pi_0,p)$ on  $\mc A(\Pi_0,p)$, it is therefore also loxodromic with respect to the action of $\MCG(\Sigma,p)$ on  $\mathcal{A}(\Sigma,p)$. In addition, out of all such pseudo-Anosovs, there is a choice of $h$ whose limit points are different from the limit points $\xi_{\pm}$ of $g$. (Note that the quasi-isometric embedding $\iota$ ensures that two pseudo-Anosovs with distinct limit points in the boundary of $\mc A(\Pi_0,p)$ also have distinct limit points in the boundary of $\mathcal{A}(\Sigma,p)$.) Therefore, the limit points $h(\xi_{\pm})$ of $hg^{-1}h^{-1}$ are distinct from those of $g$.
In particular, $g$ and $hg^{-1}h^{-1}$ are independent loxodromic elements, as required.
\end{proof}


\section{Convergence to a geodesic lamination}\label{sec:lamination}

The goal of this section is to prove Theorem \ref{thm:geodlam}, which we restate for the convenience of the reader. As in \cite{Saric}, we equip $\Sigma$ with its unique conformal hyperbolic metric. Additionally, we require that with this metric, $\Sigma$ is equal to its convex core, which is equivalent to eliminating hyperbolic funnels and half-planes in $\Sigma$.  This is necessary in order to consider geodesic laminations on an infinite-type surface; see~\cite{Saric}.
 
\GL*


\subsection{Geodesic laminations}
We begin by reviewing some facts about geodesic laminations on infinite-type surfaces.  For a complete treatment of the subject, we refer the reader to \cite{Saric}.
\begin{defn}
A \textit{geodesic lamination} $\lambda$ on $\Sigma$ is a foliation of a closed subset of $\Sigma$ by complete geodesics.  
\end{defn}

Fix a locally finite geodesic pants decomposition $\{P_n\}$ of $\Sigma$ and a train track $\Theta$ on $\Sigma$ constructed as in \cite[Section~4]{Saric}.  Denote by $\tilde\Theta$ the lift of $\Theta$ to $\tilde\Sigma$, the universal cover of $\Sigma$.  An \textit{edge path} of $\tilde \Theta$ is a finite, infinite, or bi-infinite sequence of edges of $\tilde\Theta$ such that consecutive edges meet smoothly at each vertex.  Every bi-infinite edge path has two distinct accumulation points on $\partial_\infty\tilde\Sigma$ by \cite[Proposition~4.5]{Saric}.

Given a bi-infinite edge path $\tilde\gamma$ of $\tilde \Theta$, let $G(\tilde\gamma)$ be the geodesic of $\tilde \Sigma$ whose endpoints on $\partial_\infty\tilde\Sigma$ are the two distinct endpoints of $\tilde\gamma$.  A geodesic $g$ of $\tilde \Sigma$ is \textit{weakly carried} by $\tilde\Theta$ if there exists a bi-infinite path $\tilde\gamma$ in $\tilde\Theta$ such that $G(\tilde\gamma)=g$.

We now gather various results from \cite{Saric} which will be useful in what follows.  These are all analogous to the situation for finite-type surfaces (see, for example, \cite{Bonahon, PennerHarer, Thurston}).  The first gives a correspondence between geodesics weakly carried by $\tilde\Theta$ and bi-infinite edge paths. 
\begin{prop} [{\cite[Proposition~4.7]{Saric}}] \label{prop:pathtogeod}
There is a one-to-one correspondence between bi-infinite edge paths of $\tilde\Theta$ and geodesics weakly carried by $\tilde\Theta$.
\end{prop}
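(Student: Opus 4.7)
The plan is to establish the correspondence via the obvious map
\[
\Phi \colon \{\tilde\gamma \mid \tilde\gamma \text{ a bi-infinite edge path of }\tilde\Theta\} \longrightarrow \{g \mid g \text{ weakly carried by } \tilde\Theta\}, \qquad \Phi(\tilde\gamma) = G(\tilde\gamma).
\]
This map is well defined by the immediately preceding Proposition~4.5 of Saric, which guarantees that every bi-infinite edge path has two \emph{distinct} accumulation points on $\partial_\infty\tilde\Sigma$, so that the bi-infinite geodesic $G(\tilde\gamma)$ makes sense. Surjectivity is immediate from the definition of ``weakly carried'': any such geodesic is by definition of the form $G(\tilde\gamma)$ for some bi-infinite edge path.

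The substantive content is injectivity, and my first step would be to promote each bi-infinite edge path $\tilde\gamma$ to its concatenation curve $c(\tilde\gamma)$ in $\tilde\Sigma$ and show that $c(\tilde\gamma)$ is a quasi-geodesic within bounded Hausdorff distance of $G(\tilde\gamma)$. This should follow from the local structure of Saric's train track $\Theta$: each branch meets its neighbors smoothly at switches, and a regular neighborhood of $\tilde\Theta$ can be foliated by geodesic transversal arcs, so carrying a geodesic $g$ amounts to recording the ordered sequence of transversals it crosses.

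With this fellow-traveling in hand, suppose $\Phi(\tilde\gamma_1) = \Phi(\tilde\gamma_2) = g$ with $\tilde\gamma_1 \neq \tilde\gamma_2$. I would locate the first switch $v$ at which the two paths diverge (going forward, say), so that $\tilde\gamma_1$ and $\tilde\gamma_2$ exit $v$ along distinct branches lying on opposite sides of some complementary region $R$ of $\tilde\Theta$ incident to $v$. The contradiction comes from observing that the two subsequent edge paths lie on opposite sides of $R$ and hence $c(\tilde\gamma_1)$ and $c(\tilde\gamma_2)$ cannot both remain within bounded Hausdorff distance of the same geodesic $g$. The hardest step will be making this last assertion precise, since in Saric's setting the complementary regions of $\tilde\Theta$ need not be compact: for the bounded complementary regions the separation argument is standard, but for the unbounded ones (cusped polygons, or regions escaping to $\partial_\infty \tilde\Sigma$) one must verify that the two diverging sides of $R$ are eventually separated by an arc of $\tilde\Theta$ projecting into the fixed pants decomposition $\{P_n\}$, which in turn forces $G(\tilde\gamma_1)$ and $G(\tilde\gamma_2)$ to have distinct endpoints on $\partial_\infty \tilde\Sigma$.
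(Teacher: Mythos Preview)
The paper does not give a proof of this proposition. It is quoted verbatim as \cite[Proposition~4.7]{Saric} and used as a black box, along with the surrounding results (Propositions~4.5, 4.9, 4.11, 4.12) from that reference. There is therefore nothing in the paper to compare your attempt against.

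Your outline is a plausible sketch of how one might argue such a result: the map $\Phi(\tilde\gamma)=G(\tilde\gamma)$ is well defined by the cited Proposition~4.5, surjective by the definition of ``weakly carried,'' and for injectivity one separates two diverging edge paths by a complementary region. But since the paper defers entirely to \cite{Saric} for the proof, you would need to consult that reference to see how the argument is actually carried out there, in particular how the possibly unbounded complementary regions in the infinite-type setting are handled.
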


Let $\gamma$ be an edge path in $\Theta$ and $\tilde\gamma$  a single component of the lift of $\gamma$ to $\tilde\Sigma$.
Then, as above, we construct a geodesic $G(\tilde\gamma)$ with the same endpoints as $\tilde\gamma$ and denote by $G(\gamma)$ its projection to $\Sigma$.
We then say that the geodesic $G(\gamma)$ is \textit{weakly carried by $\Theta$} if $G(\tilde\gamma)$ is weakly carried by $\tilde \Theta$.  A geodesic lamination $\lambda$ on $\Sigma$ is \textit{weakly carried by $\Theta$} if every geodesic of $\lambda$ is weakly carried by $\Theta$.
The next result gives a correspondence between geodesic laminations and certain families of bi-infinite edge paths.

\begin{prop} [{\cite[Proposition~4.11]{Saric}}] \label{prop:pathtolamin}
The set of geodesic laminations on $\Sigma$ that are weakly carried by $\Theta$ is in one-to-one correspondence with the families $\Gamma$ of bi-infinite edge paths of $\tilde\Theta$  that satisfy:
	\begin{itemize}
	\item any two bi-infinite edge paths $\gamma$ and $\gamma'$ in $\Gamma$ do not cross; and 
	\item if $\gamma$ is a bi-infinite edge path such that for any finite edge subpath there is a bi-infinite edge path in $\Gamma$ that contains it, then $\gamma\in \Gamma$.
	\end{itemize}
\end{prop}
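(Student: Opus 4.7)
The plan is to construct a train track $\Theta$ on $\Sigma$ together with a simple closed curve $c_0$ carried by $\Theta$ such that $g_n$ sends every curve carried by $\Theta$ to another curve carried by $\Theta$. Once this is established, each iterate $g_n^i(c_0)$ corresponds to a closed edge path $\gamma_i$ in $\Theta$, and we extract a geodesic lamination as a Hausdorff limit of the $g_n^i(c_0)$ via the edge-path framework of Propositions~\ref{prop:pathtogeod} and~\ref{prop:pathtolamin}.

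First I would construct $\Theta$ by gluing two pieces. Inside the finite-type subsurface $\Pi_n$ from Theorem~\ref{thm:pAshift}, I would use the pseudo-Anosov train track $\tau_n$, which is preserved by $\phi_n$ with Perron--Frobenius transition matrix. Along the infinite strip through the punctures corresponding to elements of $\mathcal{U}$, I would add a collection of parallel ``highway'' branches invariant under the shift $h$. The gluing along $\partial\Pi_n$ must be chosen to be compatible with the factorization $g_n = \overline{\phi}_n h$: after $h$ translates a strand one step along the strip, $\phi_n$ takes over inside $\Pi_n$. A natural choice for $c_0$ is the boundary of a regular neighborhood of the arc $\alpha_0 = P_s 0_o 0_u P_s$, which is an essential simple closed curve bounding a twice-punctured disk (containing $p$ and $p_0$) and is carried by $\Theta$ by construction.

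Next I would show by induction on $i$ that each $g_n^i(c_0)$ is carried by $\Theta$. The inductive step uses that $h$ sends strip branches of $\Theta$ to strip branches of $\Theta$ and that $\phi_n$ preserves $\tau_n$. I would then apply the ``starts like'' machinery from Section~\ref{sec:arcsthatstartlike}: Corollary~\ref{cor:highwayconstruction} implies that each finite edge subpath appearing in $\gamma_i$ also appears in $\gamma_j$ for all sufficiently large $j$ (up to a bounded power of $g_n$). Lifting to $\tilde\Sigma$, a compactness argument in the space of edge paths of $\tilde\Theta$ with the obvious topology then yields a well-defined family $\Gamma$ of bi-infinite edge paths consisting of the accumulations of the periodic lifts $\tilde\gamma_i$.

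I would then verify the two hypotheses of Proposition~\ref{prop:pathtolamin}: the non-crossing hypothesis follows from the simplicity of each $g_n^i(c_0)$ combined with a limiting argument, while the saturation hypothesis is automatic from defining $\Gamma$ as the closure under the ``any finite subpath is contained in some element of $\Gamma$'' condition. Proposition~\ref{prop:pathtolamin} then produces a geodesic lamination $\lambda$ weakly carried by $\Theta$; Proposition~\ref{prop:pathtogeod} identifies its leaves with $\{G(\gamma) : \gamma \in \Gamma\}$. Promoting the edge-path convergence to Hausdorff convergence $g_n^i(c_0) \to \lambda$ of the underlying subsets of $\Sigma$ uses that the geodesic representative of each $g_n^i(c_0)$ fellow travels the leaves of $\lambda$ on longer and longer compact pieces, which follows from the control on finite edge subpaths.

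The hard part will be constructing $\Theta$ so that it is simultaneously preserved by $\phi_n$ on $\Pi_n$ and by $h$ along the strip. The pseudo-Anosov $\phi_n$ acts nontrivially on $\partial\Pi_n$ because the arcs $\alpha_i^{(n)}$ repeatedly exit and reenter $\Pi_n$ through the strip (see Figures~\ref{fig:alphain=1} and~\ref{fig:alphain}), so identifying gluing rules along the separating curves $R_1^{(n)}, R_2^{(n)}, R_3^{(n)}$ used to cut out $\Pi_n$ that remain consistent under both $\phi_n$ and $h$ will require a careful analysis using the standard position machinery of Section~\ref{sec:standardposition} and a direct examination of how the branches of $\tau_n$ in Figure~\ref{fig:tau} meet these separating curves.
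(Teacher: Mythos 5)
You have misidentified the target. The statement you were asked to prove, Proposition~\ref{prop:pathtolamin}, is a general structural result about train tracks and geodesic laminations on infinite-type hyperbolic surfaces that the paper quotes verbatim from Sari\'c \cite[Proposition~4.11]{Saric}. The paper supplies no proof of it; it is used as a black box. What you have sketched is not a proof of Proposition~\ref{prop:pathtolamin} at all --- it is an outline of a possible proof of Theorem~\ref{thm:geodlam}, the theorem in Section~\ref{sec:lamination} that \emph{uses} Proposition~\ref{prop:pathtolamin} as one of its ingredients. A proof of the proposition itself would have to work entirely within Sari\'c's framework (pants decompositions with bounded-length cuffs, the induced train track $\Theta$, the lift $\tilde\Theta$, and the boundary correspondence from Proposition~\ref{prop:pathtogeod}), establishing the bijection between non-crossing saturated families of bi-infinite edge paths in $\tilde\Theta$ and laminations weakly carried by $\Theta$; none of the specific machinery of $g_n$, $\alpha_i^{(n)}$, $\Pi_n$, or $\tau_n$ would appear, since the proposition makes no reference to any mapping class.

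Even setting that aside, your outline diverges from the paper's actual argument for Theorem~\ref{thm:geodlam} in a way worth flagging. You propose to build $\Theta$ so that it is simultaneously preserved by the shift $h$ along the strip and by the pseudo-Anosov piece $\phi_n$ inside $\Pi_n$, and you yourself identify the gluing along $\partial\Pi_n$ as ``the hard part.'' The paper sidesteps this difficulty entirely: it takes $\Theta$ to be the Sari\'c-style train track associated to an essentially arbitrary locally finite geodesic pants decomposition of $\Sigma$, with no invariance requirement under $g_n$ whatsoever, and proves convergence of the edge paths $\tilde\gamma_i$ purely from the ``starts like'' property of the $\alpha_i^{(n)}$ (Corollary~\ref{cor:inductcor}) together with the linear comparison $\ell_c(\alpha_i)\le\ell_\Theta(c_i)\le 5\ell_c(\alpha_i)$ of Lemma~\ref{lem:Thetacode}. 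It then applies Proposition~\ref{prop:pathtolamin} to the singleton family $\Gamma=\{\tilde\gamma\}$, so the non-crossing and saturation hypotheses are trivial. Your route, if it could be carried out, would give a stronger statement (a $g_n$-invariant carrying train track), but the invariance you need is not something the paper establishes and is not needed for the result.
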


The following proposition describes when a sequence of geodesics carried by $\Theta$ converge.  Let $G(\tilde\Sigma)$ be the set of unoriented geodesics on $\tilde\Sigma$.

\begin{prop} [{\cite[Proposition~4.9]{Saric}}] \label{prop:convergence}
Let $f_n,f\in G(\tilde \Sigma)$ be weakly carried by a train track $\tilde\Theta$, and denote by $\tilde\gamma_n,\tilde\gamma$ the corresponding bi-infinite edge paths in $\tilde\Theta$.  Then $f_n$ converges to $f$ as $n\to\infty$ if and only if for each finite subpath $\tilde\gamma'$ of $\tilde\gamma$  there exists $n_0\geq 0$ such that $\tilde\gamma'$ is contained in the path $\tilde\gamma_n$ for all $n\geq n_0$.
\end{prop}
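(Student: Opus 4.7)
The plan is to prove both implications directly from the bijection in Proposition \ref{prop:pathtogeod} combined with the standard characterization of convergence in $G(\tilde\Sigma)$: a sequence of unoriented geodesics converges if and only if the associated unordered pairs of endpoints on $\partial_\infty \tilde\Sigma$ converge. Throughout, I will exploit that every branch of $\tilde\Theta$ comes equipped with a transverse foliation by short \emph{ties}, and that the edge path traced by a weakly carried geodesic $g = G(\tilde\gamma)$ through the interior of a branch is recorded exactly by the ordered sequence of ties that $g$ crosses.

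For the forward direction, suppose $f_n \to f$ and let $\tilde\gamma'$ be a finite subpath of $\tilde\gamma$. First I would pick a compact neighborhood $K \subset \tilde\Sigma$ containing the image of $\tilde\gamma'$ and small enough that no branch of $\tilde\Theta$ other than those appearing in $\tilde\gamma'$ meets $K$. Because $\tilde\gamma'$ is a smooth edge path in $\tilde\Theta$, the geodesic segment $G(\tilde\gamma') \subset f$ crosses a definite finite ordered collection of ties of $\tilde\Theta$ while inside $K$. Convergence $f_n \to f$ in $G(\tilde\Sigma)$ yields $C^0$-convergence on every compact set, so for $n$ sufficiently large $f_n$ crosses exactly the same ordered collection of ties inside $K$. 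That ordered collection of tie crossings reads off the portion of $\tilde\gamma_n$ contained in $K$, and by the one-to-one correspondence of Proposition \ref{prop:pathtogeod} this portion must equal $\tilde\gamma'$; hence $\tilde\gamma' \subset \tilde\gamma_n$ for $n \geq n_0$.

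For the reverse direction, I would fix an exhaustion $\tilde\gamma'_1 \subset \tilde\gamma'_2 \subset \cdots$ of $\tilde\gamma$ by finite edge subpaths whose two endpoints in $\tilde\Sigma$ escape toward the two distinct accumulation points $x^\pm \in \partial_\infty \tilde\Sigma$ of $\tilde\gamma$, which by Proposition \ref{prop:pathtogeod} are precisely the endpoints of $f$. By hypothesis, for each $k$ there exists $n_k$ so that $\tilde\gamma'_k \subset \tilde\gamma_n$ for all $n \geq n_k$. Consequently $\tilde\gamma_n$ extends $\tilde\gamma'_k$ on both sides, forcing the two accumulation points of $\tilde\gamma_n$ on $\partial_\infty \tilde\Sigma$ to lie in the two visual ``shadows'' cast through the endpoints of $\tilde\gamma'_k$. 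These shadows shrink to the single points $x^\pm$ as $k \to \infty$, so the endpoints of $f_n = G(\tilde\gamma_n)$ converge to $x^\pm$, giving $f_n \to f$ in $G(\tilde\Sigma)$.

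The hard part will be making the forward direction airtight: I need to justify that a geodesic weakly carried by $\tilde\Theta$ is determined, inside a compact region $K$, by the combinatorics of its tie crossings, and that $C^0$-close weakly carried geodesics have the same tie-crossing sequence inside $K$. This requires care at the switches of $\tilde\Theta$, where branches are tangent and a small perturbation could in principle jump between branches; the rigidity built into the construction of $\Theta$ from the locally finite pants decomposition $\{P_n\}$ (definite incidence angles at switches, ties of uniformly positive length across each branch, and locally finiteness in $\tilde\Sigma$) is exactly what one needs to rule this out on any fixed compact $K$ and thereby close the argument.
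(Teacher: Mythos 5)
This proposition is not proved in the paper; it is quoted verbatim from Saric (\cite[Proposition~4.9]{Saric}), so there is no in-paper argument to compare against, and I will evaluate your sketch on its own terms. Your reverse direction is essentially sound: the exhaustion $\tilde\gamma_1'\subset\tilde\gamma_2'\subset\cdots$ has endpoints escaping to the two accumulation points $x^\pm$ of $\tilde\gamma$ (Proposition~\ref{prop:pathtogeod}), and once one knows that bi-infinite edge paths of $\tilde\Theta$ are uniform quasi-geodesics in $\tilde\Sigma$, the shadow argument does force the endpoint pairs of $\tilde\gamma_n$ to converge to $\{x^+,x^-\}$, which is endpoint-pair convergence $f_n\to f$ in $G(\tilde\Sigma)$. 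You should say explicitly that you are using this uniform quasi-geodesicity; without it the tails of $\tilde\gamma_n$ beyond $\tilde\gamma_k'$ could wander and the shadows would not pin down the endpoints.

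The forward direction, however, has two concrete problems. First, the opening step is impossible: you cannot choose a compact $K$ containing the image of $\tilde\gamma'$ and meeting no branch of $\tilde\Theta$ other than those in $\tilde\gamma'$, since $\tilde\gamma'$ passes through switches where several branches are incident, and $K$ must contain open neighborhoods of those switches. Second, and more fundamentally, you read off the edge path from the ordered sequence of ties that the \emph{geodesic} $f$ crosses, but ``weakly carried'' only says that $f$ and $\tilde\gamma$ share endpoints on $\partial_\infty\tilde\Sigma$ --- it does not say $f$ lies inside the tie neighborhood of $\tilde\Theta$, so $f$ need not cross any tie of $\tilde\gamma'$ at all. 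To make your plan go through you first need that edge paths of $\tilde\Theta$ and the geodesics with the same endpoints fellow-travel within a uniform Hausdorff distance (again, quasi-geodesicity of edge paths), so that $C^0$-closeness of $f_n$ to $f$ on a compact set translates into $\tilde\gamma_n$ and $\tilde\gamma$ staying within a small Hausdorff distance there; only then does local finiteness and the definite separation of branches of $\tilde\Theta$ let you conclude that the two edge paths actually coincide on $\tilde\gamma'$. You correctly identify this as the hard part, but the fellow-traveling step is the missing ingredient, not just the transversality at switches that you flag.
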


The final proposition we will need shows that every geodesic lamination is weakly carried by a train track.
\begin{prop} [{\cite[Proposition~4.12]{Saric}}]
Every geodesic lamination $\lambda$ on a hyperbolic surface $X$ is weakly carried by a train track $\Theta$ that is constructed as above starting from a fixed locally finite geodesic pants decomposition. 
\end{prop}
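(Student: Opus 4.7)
The plan is to exploit the fact that the train track $\Theta$ is built pants-piece by pants-piece from the pants decomposition $\{P_n\}$, so that inside each $P_n$ the train track records, up to free homotopy rel boundary, every topological type of simple geodesic arc with endpoints on $\partial P_n$ together with geodesics that spiral onto a boundary curve. Given a geodesic lamination $\lambda$ on $\Sigma$, I would argue locally first: for each leaf $g$ of $\lambda$ and each pants $P_n$, the intersection $g\cap P_n$ decomposes into a (possibly bi-infinite) concatenation of simple arcs in $P_n$, each of which is determined up to isotopy rel $\partial P_n$ by the two boundary components it connects (or by the boundary curve it spirals onto). By construction of $\Theta$, each such isotopy class corresponds to a unique edge path in $\Theta\cap P_n$, so the geodesic $g$ determines an edge path $\gamma_g^{(n)}$ in $\Theta\cap P_n$.

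Next I would glue these local edge paths. Because $\{P_n\}$ is locally finite and any leaf $g$ crosses the $1$-skeleton of the pants decomposition transversely (or asymptotes to one of its curves), the local edge paths $\gamma_g^{(n)}$ match up across the pants curves: the switches of $\Theta$ on a pants curve $c$ are arranged to record exactly how an arc on one side of $c$ may continue on the other side. Concatenating, I obtain a bi-infinite edge path $\gamma_g$ in $\Theta$. Lifting to $\tilde\Sigma$ and passing to a component $\tilde\gamma_g$ of $\pi^{-1}(\gamma_g)$, I would check that the two endpoints of $\tilde\gamma_g$ in $\partial_\infty\tilde\Sigma$ coincide with those of a chosen lift $\tilde g$. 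Since $\tilde g$ is the unique geodesic with those endpoints, Proposition~4.7 (applied via $G(\tilde\gamma_g)=\tilde g$) yields that $g$ is weakly carried by $\Theta$.

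Finally, I would verify that the family $\Gamma=\{\tilde\gamma_g : g\subset\lambda\}$ of all such lifted edge paths satisfies the two hypotheses of Proposition~4.11. Non-crossing of elements of $\Gamma$ follows from the non-crossing of the leaves of $\lambda$: if two edge paths $\tilde\gamma_{g_1}$ and $\tilde\gamma_{g_2}$ crossed, one could find a switch of $\tilde\Theta$ at which incompatible branch choices are made, and tracing this back through the local-to-global construction in a pants piece would produce a transverse intersection of $\tilde g_1$ and $\tilde g_2$, contradicting that $\lambda$ is a geodesic lamination. The closedness condition is handled by the standard fact that $\lambda$, being a geodesic lamination, is closed in the Hausdorff topology on closed subsets of $\Sigma$; combined with Proposition~4.9, any bi-infinite edge path of $\tilde\Theta$ all of whose finite subpaths appear in some $\tilde\gamma_g$ corresponds to a limit geodesic of $\lambda$, which lies in $\lambda$ and so contributes its own element to $\Gamma$. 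By Proposition~4.11, $\Gamma$ corresponds to a geodesic lamination weakly carried by $\Theta$, and by construction this lamination is exactly $\lambda$.

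The main obstacle I expect is the spiraling behavior at the pants curves: leaves of $\lambda$ may spiral onto a boundary curve of some $P_n$ from either side, and one must be sure that $\Theta$ has been set up with the right infinite edge paths asymptotic to these curves so that both the existence (realizing each spiraling leaf as an edge path) and the matching across pants curves go through. In the locally finite but non-compact setting of an infinite-type $\Sigma$, I would also need to check that the concatenation process does not produce ``edge paths'' that are only one-sided infinite or that fail to have two distinct endpoints in $\partial_\infty\tilde\Sigma$; here the hypothesis that $\Sigma$ equals its convex core, together with \cite[Proposition~4.5]{Saric}, is what guarantees the required bi-infiniteness and two-endpoint property.
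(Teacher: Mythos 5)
This statement is not proved in the paper you are reading; it is quoted verbatim as \cite[Proposition~4.12]{Saric} and used as a black box. So there is no internal proof to compare your sketch against, and fidelity to \v{S}ari\'{c}'s argument would have to be checked against that source directly.

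That said, your reconstruction follows the standard shape of such carrying arguments and is plausible: pass to the pants decomposition, note that each leaf meets each pair of pants in arcs whose isotopy type (with endpoints free on the cuffs) is determined by the unordered pair of boundary components they join, realize these as edge paths in the pants-local piece of $\Theta$, glue across cuffs, lift, match endpoints at infinity via Proposition~4.7, and finally verify the two conditions of Proposition~4.11 for the resulting family. You correctly flag the two genuine technical points in the infinite-type setting, namely spiraling onto cuffs and the need for lifted edge paths to be genuinely bi-infinite with two distinct ideal endpoints, and you correctly identify that the convex-core hypothesis together with \cite[Proposition~4.5]{Saric} is what controls the latter. Two things in your sketch are stated loosely and would need care if this were written out: (i) ``determined up to isotopy rel $\partial P_n$'' should really be ``up to isotopy permitting endpoints to slide in $\partial P_n$''; with endpoints literally fixed there is extra twisting and the classification by boundary pair fails; and (ii) the non-crossing verification (``tracing this back through the local-to-global construction'') is the crux, and in \v{S}ari\'{c}'s framework it is handled by choosing the edge path for a leaf to be \emph{the} path determined by the leaf's actual intersection pattern with the dual switches, not merely some path with the right endpoints at infinity, so that disjoint leaves automatically give non-crossing paths. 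As a sketch it is reasonable, but again, nothing here can be checked against the paper itself since the paper cites rather than proves the result.
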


\subsection{Construction of the train track $\Theta$}  We now construct a train track on our surface $\Sigma$. In the next section, we will define our simple closed curve $c_0$ and show that $g^i_n(c_0)$ converges as $i\to\infty$ to a geodesic lamination  that is weakly carried by this train track.

First, fix a simple closed curve $\gamma_p'$ on $S$ so that the bounded component of $S\setminus \gamma_p$ is the pair of pants  with cuffs $\gamma_p'$, $p_{-1}$, and $p$ (recall that punctures are allowed to be cuffs).  Let $\gamma_p$  be the image of $\gamma_p'$ under the embedding $S\hookrightarrow \Sigma$.   We construct our first pair of pants $Q_p$ to have cuffs $\gamma_p$, $B_{-1}$, and $p$. Next choose two simple closed separating curves $\gamma_{-1,l}$ and $\gamma_{-1,r}$ on the embedded copy of $S$ in $\Sigma$ so that one component of $\Sigma\setminus (\gamma_{-1,l}\cup\gamma_{-1,r})$ contains two pairs of pants: $Q_p$ and a pair of pants $Q_{-1}$ with cuffs $\gamma_{-1,l},\gamma_{-1,r}$, and $\gamma_p$.  Let $\gamma_{-1,l}$ be the curve which bounds the connected component of $\Sigma\setminus (\gamma_{-1,l}\cup\gamma_{-1,r})$ containing $B_{-2}$.  For each $i\in \Z\setminus\{-1\}$, choose two  simple closed separating curves $\gamma_{i,l}$ and $\gamma_{i,r}$ on the embedded copy of $S$ in $\Sigma$ so that one component of $\Sigma \setminus(\gamma_{i,l}\cup\gamma_{i,r})$ contains a pair of pants, $Q_i$, with cuffs $\gamma_{i,l},\gamma_{i,r}$, and $B_i$.  Similarly, let $\gamma_{i,l}$ be the curve which bounds the connected component of $\Sigma\setminus (\gamma_{-1,l}\cup\gamma_{-1,r})$ containing $B_{i-1}$.

Next, for all $i\in\Z$, consider the component $C_i$ of $\Sigma\setminus(\gamma_{i,r}\cup\gamma_{i+1,l})$ which does not contain $\gamma_{i,l}$ for any $i$.  If $C_i$ is a cylinder, that is, if $\gamma_{i,r}$ is homotopic to $\gamma_{i+1,l}$, then modify $\gamma_{i,r}$ and $Q_i$ so that $\gamma_{i,r}=\gamma_{i+1,l}$.  If $C_i$ is a pair of pants, let $Q_{i,r}=C_i$.  If $C_i$ is neither a pair of pants nor a cylinder, fix a simple closed curve $\delta_i$ so that one component of $C_i\setminus \delta_i$ is a pair of pants $Q_{i,r}$ with cuffs $\gamma_{i,r},  \gamma_{i+1,l}$, and $\delta_i$.  See Figure \ref{fig:pantsdecomp}.

\begin{figure} 
\centering
\begin{overpic}[width=4in, trim={1.7in 7.55in 1in 1.65in},clip]{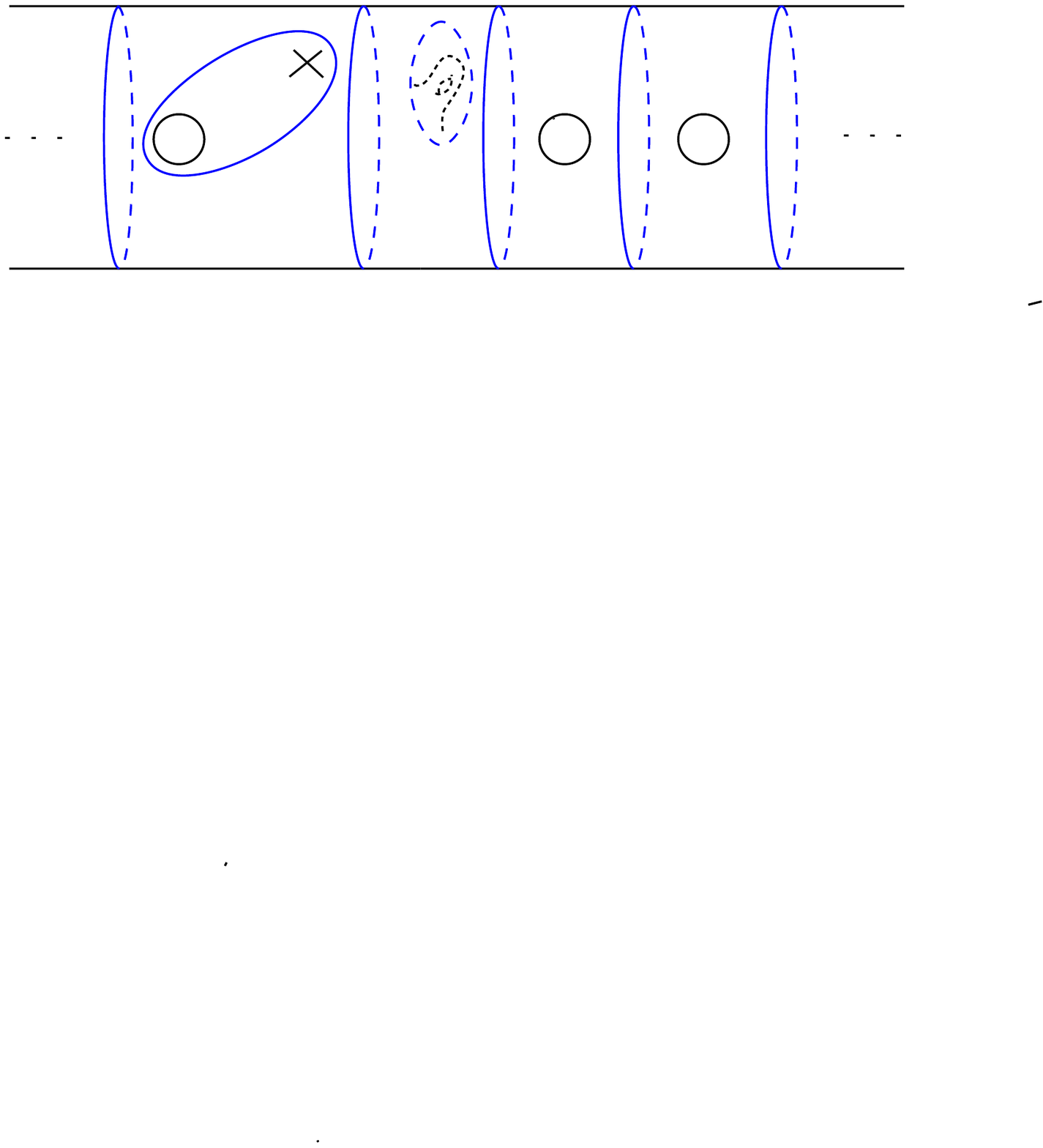}
\put(22,26){\textcolor{blue}{$\gamma_p$}}
\put(43,27.5){\textcolor{blue}{$\delta_{-1}$}}
\put(12,-2){$\gamma_{-1,\ell}$}
\put(40,-2){$\gamma_{-1,r}$}
\put(55,-2){$\gamma_{0,\ell}$}
\put(70,-2){\footnotesize$\gamma_{0,r}=\gamma_{1,\ell}$}
\put(87,-2){$\gamma_{1,r}$}
\put(24,15){$B_{-1}$}
\put(65,18.5){$B_{0}$}
\put(82,12){$B_{1}$}
\put(27,20){\textcolor{red}{$Q_p$}}
\put(22,6){\textcolor{red}{$Q_{-1}$}}
\put(45,6){\textcolor{red}{$Q_{-1,r}$}}
\put(62,6){\textcolor{red}{$Q_{0}$}}
\put(79,6){\textcolor{red}{$Q_{1}$}}
\put(34,21){$p$}
\end{overpic}
\caption{The cuffs of the pants decomposition are in blue and black, and the pairs of pants are labeled in red.  Anything dotted occurs on the back of the embedded copy of $S$ in $\Sigma$.}
\label{fig:pantsdecomp}
\end{figure}

We replace each cuff with a geodesic representative of the same homotopy class; by a slight abuse of notation, we continue to call the resulting geodesic pairs of pants $Q_i$, $Q_p$, and $Q_{i,r}$.  This is a locally finite geodesic pants decomposition of a subsurface of the embedded copy of $S$ in $\Sigma$, and we extend it to a locally finite geodesic pants decomposition of $\Sigma$, which we denote $\mc Q$.

We next construct the train track $\Theta$ as follows. The specific connectors on the front of the surface for all $Q_i$, $Q_p$, and $Q_{i,r}$ are as in Figure \ref{fig:cuffs}.  For all other pairs of pants, we choose any connectors that satisfy the conditions of \cite{Saric}.  

We code a subset of the connectors and cuff segments of $\Theta$ which lie on the front of $S$ in the following way (see Figure \ref{fig:cuffs}).
\begin{itemize}
\item In each pair of pants $Q_i$ with $i\neq -1$, denote the connectors from $\gamma_{i,l}$ and $\gamma_{i,r} $ to $B_i$ by $i_L$ and $i_R$, respectively.  The two segments of the cuff $B_0$ are denoted $0_o,0_u$, as in our standard code.
\item In each pair of pants $Q_{i,r}$, denote the single connector on the front of $S$ by $i_{RR}$. 
\item In the pair of pants $Q_p$, denote the connector which has both endpoints on $\gamma_p$ by $P_u$.  This connector divides the cuff $\gamma_p$ into two segments, denote these by  $P_o$ and $(-1)_u$, as in Figure \ref{fig:cuffs}.  We will not code the second connector or the segments on the cuff $B_{-1}$ in this pair of pants.
\item In the pair of pants $Q_{-1}$, denote the connectors from $\gamma_{-1,l}$ and $\gamma_{-1,r}$ to $\gamma_p$ by $(-1)_L$ and $(-1)_R$, respectively.
\end{itemize}

\begin{figure} 
\centering
\begin{overpic}[width=4in, trim={1.7in 7.55in 1in 1.65in},clip]{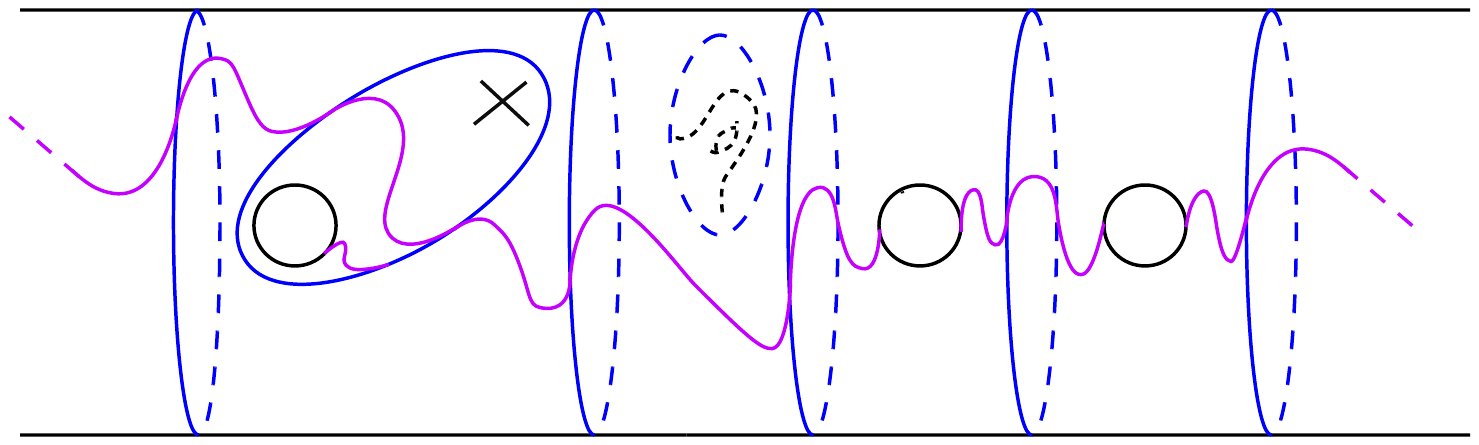}
\put(22,25){$\scriptstyle\gamma_p$}
\put(12,-1){$\scriptstyle\gamma_{-1,\ell}$}
\put(40,-1){$\scriptstyle\gamma_{-1,r}$}
\put(55,-1){$\scriptstyle\gamma_{0,\ell}$}
\put(70,-1){$\scriptstyle\gamma_{0,r}=\gamma_{1,\ell}$}
\put(87,-1){$\scriptstyle\gamma_{1,r}$}
\put(29,28){$\scriptstyle P_o$}
\put(28.75,20){$\scriptstyle P_u$}
\put(22.5,20){$\scriptstyle-1_o$}
\put(18,9){$\scriptstyle-1_u$}
\put(14,21){$\scriptstyle-1_L$}
\put(31,11){$\scriptstyle-1_R$}
\put(23,17){$\scriptstyle B_{-1}$}
\put(66,12){$\scriptstyle B_{0}$}
\put(82,12){$\scriptstyle B_{1}$}
\put(43,8){$\scriptstyle-1_{RR}$}
\put(58.5,10.5){$\scriptstyle0_{L}$}
\put(74,10){$\scriptstyle1_{L}$}
\put(66,18.5){$\scriptstyle0_{R}$}
\put(82,18.5){$\scriptstyle1_{R}$}
\put(62.5,19){$\scriptstyle0_{o}$}
\put(62.5,10.5){$\scriptstyle0_{u}$}
\put(78.5,19){$\scriptstyle1_{o}$}
\put(78.5,10.5){$\scriptstyle1_{u}$}
\put(34.3,21.9){$\scriptstyle p$}
\end{overpic}
\caption{A portion of the train track $\Theta$.  The connectors are in purple, and the labels are the codes for certain connectors and cuffs.}
\label{fig:cuffs}
\end{figure}

\noindent If a simple closed curve is carried by this train track and does not intersect any subpaths without a code, then we call this a \textit{$\Theta$--code} for the given simple closed curve. 
We say a $\Theta$--code is \textit{reduced} if no two adjacent characters are the same.

\subsection{The simple closed curves $c_i$}\label{sec:ci}
For the remainder of this section, we fix $n\in \mathbb N$ and use the notation $g=g_n$ and $\alpha_i=\alpha_i^{(n)}$.

In \cite{Saric}, is it assumed that the relevant geodesic laminations do not contain geodesics that run out a cusp at one (or both) ends (see the discussion before \cite[Proposition 4.12]{Saric}). It is impossible for our sequence $\alpha_i$ to converge to a geodesic lamination of this type.  In this section, we describe how to associate simple closed curves $c_i$ to each $\alpha_i$ so that $g(c_{i-1})=c_i$.  In the following subsection, we prove that they converge to a geodesic lamination as in \cite{Saric}.  

Let $D_p$ be a small disk around the puncture $p$ which is invariant under our homeomorphism $g$, as in Section \ref{sec:highwayprelim}.   As before, we choose $D_p$ small enough so that, for each $i$, $\alpha_i\cap D_p$ consists of two segments, one starting at $p$ and one ending at $p$, with endpoints $z_{i,1},z_{i,2}$ on $\partial D_p$.  Up to homotopy, we may assume without loss of generality that $z_{i,1}=z_{j,1}$ and $z_{i,2}=z_{j,2}$ for all $i,j$.  To form the simple closed curves $c_i$, we start with $\alpha_i$ and remove $\alpha_i\cap \intr(D_p)$.  We then add an arc of $\partial D_p$ from $ z_{i,1}$ to $z_{i,2}$; there two possible choices of arc, and we choose the one so that one connected component of $S\setminus c_0$ contains $p$ and $p_0$.
Note that this is the opposite choice than the one made in Section \ref{sec:intpairing}.
See Figure \ref{fig:curvesci}.
\begin{figure} 
\centering
\begin{overpic}[width=3in, trim={.5in 6.25in 0in  0in },clip]{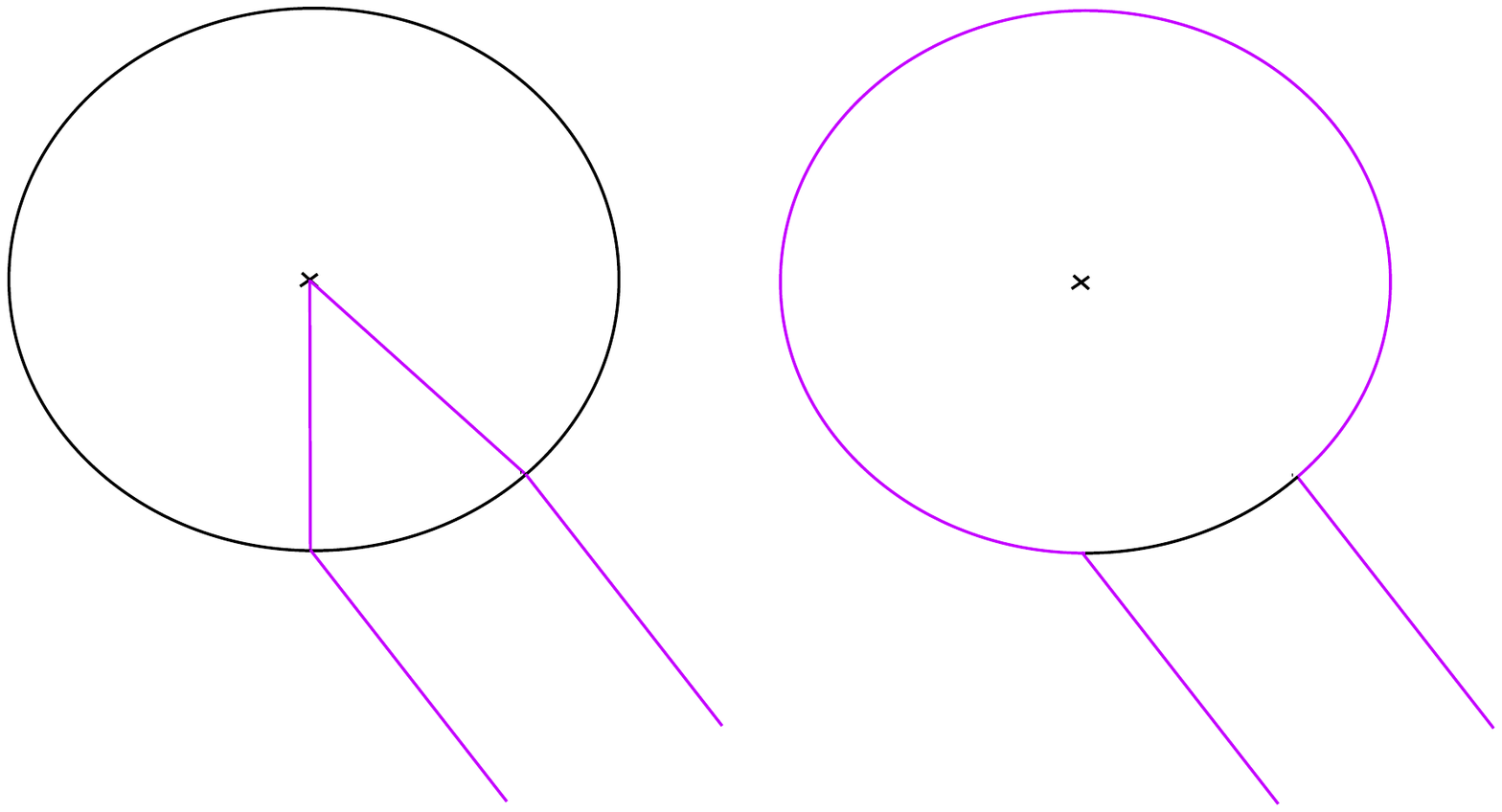}
\put(37,44){$D_p$}
\put(83,44){$D_p$}
\put(25,32){$p$}
\put(71,32){$p$}
\put(19,13){$z_{i,1}$}
\put(64,13){$z_{i,1}$}
\put(38,20){$z_{i,2}$}
\put(84,20){$z_{i,2}$}
\end{overpic}
\caption{Forming $c_i$ (right) from $\alpha_i$ (left).  The initial and terminal segments of $\alpha_i$ are in purple (left), and the corresponding segment of $c_i$ is in purple (right).}
\label{fig:curvesci}
\end{figure}

For example, given the train track in Figure \ref{fig:cuffs}, a $\Theta$--code of $c_0$ is 
\begin{equation}\label{eqn:c0code}
P_o(-1)_R(-1)_{RR}0_L0_o0_u0_L(-1)_{RR}(-1)_RP_u.
\end{equation}
Note that since curves do not have a well-defined starting point, any cyclic permutation of this $\Theta$--code for $c_0$ is also a $\Theta$--code for $c_0$.  For the rest of the section, we fix the starting point $P_o$ for the $\Theta$--code for $c_0$ as in \eqref{eqn:c0code}.

Our curves $c_i$ and the arcs $\alpha_i$ agree outside of $D_p$, and since $D_p$ is invariant under $g$, it follows that $g(c_i)=c_{i+1}$.  For each $i$, we fix the starting point $P_o$ for a $\Theta$--code for $c_i$ in such a way that applying $g$ to the $\Theta$--code for $c_i$ yields the $\Theta$--code for $c_{i+1}$.

\subsection{Proof of Theorem \ref{thm:geodlam}}

We will show that the simple closed curves $c_i$ defined in the previous section converge to a geodesic lamination $T$ on $\Sigma$.  Our strategy is as follows. We will first fix a lift $\tilde\gamma_i$ of each curve $c_i$ in $\tilde \Sigma$ and show that the sequence $\tilde\gamma_i$ converges to some $\tilde\gamma$.
We then show that if $f_i= G(\tilde\gamma_i)$ and $f= G(\tilde\gamma)$ are the corresponding geodesics that are weakly carried by $\tilde\Theta$, then $\lim_{i\to\infty} f_i=f$.  Finally, we take the geodesic lamination $T$ to be the image of $f$ in $\Sigma$.

For each $i$, let $\ell_\Theta(c_i)$ be the length of any reduced $\Theta$--code for $c_i$.  Recall that $\ell_c(\alpha_i)$ is the code length of $\alpha_i$, as in Definition \ref{def:codelength}.

\begin{lem} \label{lem:Thetacode}
For each $i$, $\ell_c(\alpha_i)\leq \ell_\Theta(c_i)\leq5\ell_c(\alpha_i)$.  
\end{lem}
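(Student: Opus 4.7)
The plan is to prove both bounds via a direct, character-by-character comparison between a reduced code for $\alpha_i$ and a reduced $\Theta$--code for $c_i$, using the construction of $c_i$ from Section~\ref{sec:ci}.

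First I would set up the correspondence. By construction, $c_i$ coincides with $\alpha_i$ outside the invariant disk $D_p$, so each crossing of $\alpha_i$ with a cuff (either some $B_k$, contributing $k_{o/u}$ to the code, or $\gamma_p$, contributing $P_{o/u}$) is inherited by $c_i$ and produces the same character in its $\Theta$--code. The two $P_s$ endpoints of $\alpha_i$ at $p$ are replaced in $c_i$ by an arc on $\partial D_p$, which contributes at least two additional characters to the $\Theta$--code; in the explicit example \eqref{eqn:c0code} these are a cuff character $P_o$ and a connector $P_u$ from the pair of pants $Q_p$. This yields an injection from characters of $\alpha_i$'s code to distinct characters of $c_i$'s $\Theta$--code, which proves the lower bound $\ell_c(\alpha_i)\leq\ell_\Theta(c_i)$.

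For the upper bound, I would count the number of connector characters that appear in $c_i$'s $\Theta$--code between two consecutive cuff characters. Inspecting Figure~\ref{fig:cuffs}, one sees that for a generic pair of adjacent characters $k_{o/u}, (k+1)_{o/u}$ in $\alpha_i$, the curve $c_i$ must exit $Q_k$ via $k_R$, cross the intermediate pair of pants $Q_{k,r}$ along its single front connector $k_{RR}$, and enter $Q_{k+1}$ via $(k+1)_L$, contributing three additional connectors. Near the exceptional indices $\{-1,P,0\}$, where the cuff $\gamma_p$ and the pairs of pants $Q_p, Q_{-1}$ intervene, a slightly more involved but analogous analysis gives at most four additional connectors between any adjacent pair of cuff characters. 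Since the arcs $\alpha_i^{(n)}$ contain no back loops (which follows by induction from the fact that $\alpha_0$ has no back loops and each of our shifts $h_1, h_2^{(n)}, h_3^{(n)}$ preserves this property), we conclude that each character of $\alpha_i$ contributes at most five characters to $c_i$'s $\Theta$--code, yielding $\ell_\Theta(c_i)\leq 5\ell_c(\alpha_i)$.

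The main obstacle is carrying out the case analysis for the upper bound near the exceptional indices $\{-1,P,0\}$, since the connectors in $Q_p$ and $Q_{-1}$ follow a slightly different pattern from the generic pairs of pants $Q_k, Q_{k,r}$, and one must verify that the uniform bound of five characters per $\alpha_i$-character still holds in each of these boundary situations.
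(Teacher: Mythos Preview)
Your proposal is correct and takes essentially the same approach as the paper: both argue via a direct character-by-character comparison, with the lower bound immediate from the fact that every cuff crossing of $\alpha_i$ persists in $c_i$, and the upper bound obtained by listing, for each character $k_{o/u}$ (and the exceptional $P_{o/u}$, $P_s$), the at most five $\Theta$--characters (cuff plus adjacent connectors $k_L$, $k_R$, $k_{RR}$) that replace it. The paper carries out the case analysis by explicitly enumerating the possible replacement strings rather than phrasing it as ``connectors between consecutive cuff characters,'' but the content is the same; your observation that the $\alpha_i^{(n)}$ contain no back loops is implicitly used in the paper as well.
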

\begin{proof}
It is clear that the $\Theta$--code for $c_i$ is at least as long as the code for $\alpha_i$.  
For the second inequality, notice that each instance of $k_{o/u}$ with $k\neq-1,P$ in $\alpha_i$ is replaced with one of the following strings, depending on what precedes/follows the character $k_{o/u}$ and on the chosen pants decomposition: $k_Lk_{o/u}k_R$, $k_Lk_{o/u}$, $k_Rk_{o/u}$, $(k-1)_{RR}k_Lk_{o/u}k_R$, $(k-1)_{RR}k_Lk_{o/u}k_Rk_{RR}$, or $k_Lk_{o/u}k_Rk_{RR}$.  If $k=-1$, then $(-1)_{o/u}$ is replaced with $(-1)_L(-1)_{o/u}$, $(-1)_{o/u}(-1)_R$, $(-2)_{RR}(-1)_L(-1)_{o/u}$, $(-1)_{o/u}(-1)_R(-1)_{RR}$, or $(-2)_{RR}(-1)_L(-1)_{o/u}(-1)_{RR}$.  Finally, if $k=P$, then each $P_{o/u}$ remains the same and $P_s$ is replaced with $P_{o/u}$.  Therefore, each character in the code for $\alpha_i$ is replaced with at most 5 characters in the $\Theta$--code for $c_i$, which gives the upper bound.
\end{proof}

For each $i$, let $\ell_i=\ell_\Theta(c_i)$, and consider the $\Theta$--code for $c_i$  
\begin{equation}\label{eqn:cicode}
c_i=c^i_1c^i_2\dots c^i_{\ell_i}.
\end{equation}
For each $i$, fix the lift $\tilde \gamma_i$ of $c_i$ that is the periodic bi-infinite edge path 
\[
\tilde\gamma_i=(\dots, b_{-1}^i,b_{0}^i,b_1^i,b_2^i,\dots),
\]
 with period $\ell_i$, where  $b_j^i=c_j^i$ for each $1\leq j\leq \ell_i$, and  $b_j^i=b_{j-\ell_i}^i$  for all $j$. 
The codes for $\alpha_i$ and $\alpha_{i+1}$ agree on the first $\frac12\ell_c(\alpha_i)$ characters. Thus it follows from Lemma \ref{lem:Thetacode}  that the $\Theta$--codes of $c_i$ and $c_{i+1}$ defined by \eqref{eqn:cicode} agree on at least the first $L_i=\left\lfloor\frac{1}{10}\ell_i\right\rfloor$ characters. Therefore, $b_j^i=b_j^{i+1}$ for all $1\leq j\leq L_i$. 

We define a bi-infinite path 
\[
\tilde\gamma=(\dots,d_{-1},d_{0},d_1,d_2,\dots),
\] as follows. Intuitively, our goal is to define $\tilde\gamma$ so that it agrees with each $\tilde\gamma_i$ from $d_{-L_i}$ to $d_{L_i}$.  In the first step, we  define the characters $d_{-L_0}$ to $d_{L_0}$ of $\tilde \gamma$ so that they agree with $\tilde \gamma_0$.  In the second step, we define the characters $d_{-L_1}$ to $d_{L_1}$ of $\tilde \gamma$ so that they agree with $\tilde \gamma_1$. The key point here is that $\tilde\gamma_0$ and $\tilde\gamma_1$ agree on the characters of $\tilde\gamma$ that we have already defined in the first step.  Thus we are not redefining $d_{i}$ if $-L_0\leq i\leq L_0$.  Rather, these characters remain, and the additional information from the second step is the definition of $d_i$ if $-L_1\leq i < -L_0$ or $L_0<i\leq L_1$.  We then continue this process.

Formally, this is equivalent to the following definition. For each $i\geq 0$ and each $1\leq j\leq L_i$, define  
\[
d_1=b_1^i, \qquad\dots \qquad d_j=b^i_j,\qquad \dots \qquad d_{L_i}=b_{L_i}^i,
\] 
and define 
\[d_{0}=b_{1}^i,\qquad\dots \qquad d_{-j+1}=b_j^i,\qquad\dots\qquad d_{-L_i+1}=b_{L_i}^i.\]  For each $i$ and all $1\leq j\leq L_i$, since $b^i_j=b^{i+1}_j$, there is no conflict with the previous defined edges as $i$ increases.

By construction, $\tilde\gamma$ is a bi-infinite path in $\tilde\Theta$.  Let $f_i= G(\tilde\gamma_i)$ and $f= G(\tilde\gamma)$ be the corresponding geodesics which are weakly carried by $\tilde \Theta$,  the existence of which is guaranteed by Proposition \ref{prop:pathtogeod}.

\begin{lem}
$\lim_{i\to\infty}f_i=f$.
\end{lem}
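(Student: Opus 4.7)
The plan is to verify the criterion for convergence given by Proposition~\ref{prop:convergence}: we need to show that every finite subpath of $\tilde\gamma$ is eventually contained in $\tilde\gamma_i$ for all sufficiently large $i$. Given the explicit construction of $\tilde\gamma$ as a ``diagonal limit'' of the $\tilde\gamma_i$, this should follow essentially from the observation that $\tilde\gamma$ and $\tilde\gamma_i$ are defined to agree on a window of width proportional to $L_i$, combined with the fact that $L_i \to \infty$.

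The first step I would carry out is to verify that $L_i \to \infty$ as $i \to \infty$. By Lemma~\ref{lem:Thetacode}, $\ell_i \geq \ell_c(\alpha_i)$, so it suffices to show $\ell_c(\alpha_i^{(n)}) \to \infty$. For this, I would invoke Corollary~\ref{cor:highwayconstruction}: for each $i \geq 2$, the arc $\alpha_i^{(n)}$ contains $\overline{\chi}_{i-1}^{(n)} P_u P_o \chi_{i-1}^{(n)}$ as a terminal subsegment of $\mathring\alpha_i^{(n)}$, which immediately gives the recursion $\ell_c(\mathring\alpha_i^{(n)}) \geq 2\ell_c(\mathring\alpha_{i-1}^{(n)})$. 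Hence $\ell_c(\mathring\alpha_i^{(n)})$ grows at least exponentially in $i$, and therefore so does $\ell_c(\alpha_i^{(n)}) \geq 2\ell_c(\mathring\alpha_i^{(n)})$. In particular, $L_i = \lfloor \ell_i/10 \rfloor \to \infty$.

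The second step is the convergence argument itself. Let $\tilde\gamma'$ be any finite subpath of $\tilde\gamma$. Then $\tilde\gamma'$ involves only finitely many edges of $\tilde\gamma$, which, in the indexing used to define $\tilde\gamma$ above, lie in some window $[-M+1, M]$ of indices. By the construction of $\tilde\gamma$, whenever $L_i \geq M$, the edges $d_j$ of $\tilde\gamma$ in this window coincide with the edges $b_j^i$ of $\tilde\gamma_i$ (since both are determined by the first $L_i$ characters of the $\Theta$-code for $c_i$, which in turn agree with the corresponding characters of $c_j$ for all $j \geq i$ by Proposition~\ref{lem:imageofinit} together with Lemma~\ref{lem:Thetacode}). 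Taking $i_0$ so that $L_i \geq M$ for all $i \geq i_0$, which is possible by the first step, we conclude that $\tilde\gamma'$ is a subpath of $\tilde\gamma_i$ for every $i \geq i_0$. Applying Proposition~\ref{prop:convergence} yields $f_i \to f$, as desired.

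The only delicate point I anticipate is making the indexing argument airtight: one must check that the way $\tilde\gamma$ was assembled really does produce agreement with $\tilde\gamma_i$ on a window whose size is at least $L_i$ on \emph{both} sides of the basepoint, and not only in the forward direction. This uses that the $\Theta$-codes for the $c_i$'s inherit enough symmetry from the palindromic structure of the arcs $\alpha_i^{(n)}$ (these are symmetric in the sense of Definition~\ref{defn:symmetric}) that the ``starts like'' agreement produced by Proposition~\ref{lem:imageofinit} in fact propagates to the reverse reading as well. Once this bookkeeping is in place, the rest of the argument is a direct application of Proposition~\ref{prop:convergence}.
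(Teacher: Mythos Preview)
Your proposal is correct and takes essentially the same approach as the paper: both verify the hypothesis of Proposition~\ref{prop:convergence} by noting that $L_i\to\infty$ and that any finite subpath of $\tilde\gamma$ is, by construction, eventually contained in $\tilde\gamma_i$. The paper is more terse (it simply asserts $\ell_c(\alpha_i)\to\infty$ rather than deriving growth from Corollary~\ref{cor:highwayconstruction}), and your ``delicate point'' about two-sided agreement is absorbed in the paper's explicit definition $d_{-j+1}=b_j^i$ of the negative-index edges rather than argued separately via symmetry.
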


\begin{proof}

This is  almost immediate from the construction of $\tilde\gamma$.  Fix any finite subpath $T\subset \tilde\gamma$.  Then $T$ is supported on  $[-k,l]$ for some $k,l\geq 1$.  Let $L=\max\{k,l\}$, and fix $N$ such that $L_N\geq L$.  Such an $N$ exists since  $\ell_c(\alpha_i)\to\infty$ implies that $\lim_{i\to\infty}L_i=\infty$.  Then by construction $T$ appears in all $\tilde\gamma_i$ with $i\geq N$. Convergence follows by Proposition \ref{prop:convergence}.
\end{proof}

Let $T$ be the image of $f$ in $\Sigma$.  Figure \ref{fig:T} shows the   train track which weakly carries the lamination, that is, the image of $\tilde{\gamma}$ in $\Sigma$.

\begin{figure} 
\centering
\begin{overpic}[width=5in]{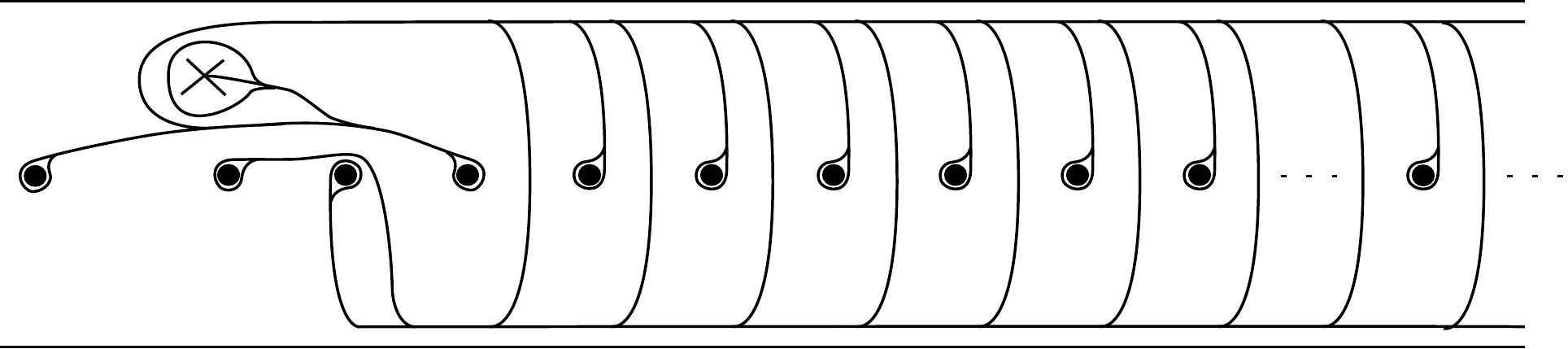}
\put(0,-2.5){$-1$}
\put(14,-2.5){$0$}
\put(21.5,-2.5){$1$}
\put(29,-2.5){$2$}
\put(36.5,-2.5){$3$}
\put(44.5,-2.5){$4$}
\put(52.5,-2.5){$5$}
\put(60,-2.5){$6$}
\put(67.5,-2.5){$7$}
\put(75.5,-2.5){$8$}
\put(89.5,-2){$n$}
\end{overpic}
\caption{The train track on $\Sigma$ which weakly carries the lamination $T$ when $n=1$. Note that the train track is, in fact, contained on the front of the embedded copy of $S$ in $\Sigma$.}
\label{fig:T}
\end{figure}

\begin{lem}\label{lem:geodlam}
$T$ is a geodesic lamination on $\Sigma$.
\end{lem}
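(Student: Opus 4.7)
The plan is to apply Proposition~\ref{prop:pathtolamin} to a carefully chosen family $\Gamma$ of bi-infinite edge paths in $\tilde\Theta$. Let $\Gamma_0 = \{\sigma \cdot \tilde\gamma : \sigma \in \pi_1(\Sigma)\}$ be the orbit of $\tilde\gamma$ under the deck group action, and let $\Gamma$ be the closure of $\Gamma_0$ under the second condition of the proposition: a bi-infinite edge path $\mu$ belongs to $\Gamma$ if every finite edge subpath of $\mu$ occurs as a subpath of some element of $\Gamma_0$. The second hypothesis of Proposition~\ref{prop:pathtolamin} then holds by construction.

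The key step is to verify that no two paths in $\Gamma$ cross. It suffices to establish this for $\Gamma_0$, since any crossing of two paths in $\Gamma$ is witnessed by a pair of finite subpaths, each of which is contained in some element of $\Gamma_0$, and those elements would then cross as well. Suppose, toward a contradiction, that $\tilde\gamma$ and $\sigma \cdot \tilde\gamma$ cross for some nontrivial $\sigma \in \pi_1(\Sigma)$. This crossing is witnessed by finite edge subpaths $\tau \subset \tilde\gamma$ and $\sigma \cdot \tau' \subset \sigma \cdot \tilde\gamma$ with $\tau' \subset \tilde\gamma$. By construction of $\tilde\gamma$, its central subpath $(d_{-L_i+1},\ldots,d_{L_i})$ agrees with the corresponding subpath of $\tilde\gamma_i$, and $L_i \to \infty$ as $i \to \infty$. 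Hence for $i$ sufficiently large, both $\tau$ and $\tau'$ are subpaths of $\tilde\gamma_i$, so $\tilde\gamma_i$ and $\sigma \cdot \tilde\gamma_i$ cross as edge paths of $\tilde\Theta$. This contradicts the simplicity of $c_i$, since distinct lifts of a simple closed curve to the universal cover cannot cross.

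Applying Proposition~\ref{prop:pathtolamin} to $\Gamma$ yields a geodesic lamination $\mathcal{L}$ on $\Sigma$ weakly carried by $\Theta$. Since $\tilde\gamma \in \Gamma$, the geodesic $f = G(\tilde\gamma)$ is weakly carried by $\tilde\Theta$ and its projection is a leaf of $\mathcal{L}$; thus $T$ is contained in $\mathcal{L}$. To identify $T$ with $\mathcal{L}$ (up to taking closure, as laminations are by definition closed), one notes that each path in $\Gamma$ is either a $\pi_1(\Sigma)$-translate of $\tilde\gamma$, whose projection agrees with $T$, or a limit of such translates, whose projection lies in the closure of $T$. I expect the main obstacle to be the passage from an abstract crossing in $\tilde\Theta$ to a finite subpath witnessing that crossing, and then to a geometric crossing of the corresponding lifts of $c_i$; this is a careful but essentially combinatorial argument within the Saric framework.
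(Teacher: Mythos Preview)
Your proof is correct in structure and considerably more detailed than the paper's. The paper dispatches this lemma in a single sentence: it applies Proposition~\ref{prop:pathtolamin} with $\Gamma = \{\tilde\gamma\}$ and says nothing further. You instead take the $\pi_1(\Sigma)$-orbit of $\tilde\gamma$, close it under condition~(2), and then verify non-crossing by approximating with the simple closed curves $c_i$. Your route is essentially what one would write to check the hypotheses of Proposition~\ref{prop:pathtolamin} carefully (the singleton $\{\tilde\gamma\}$ does not obviously satisfy the closure condition, and the paper suppresses the $\pi_1$-equivariance implicit in passing from a lamination on $\Sigma$ to a family of paths in $\tilde\Theta$). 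What your approach buys is a genuinely self-contained argument; what the paper's one-liner buys is brevity, at the cost of leaving those verifications to the reader.

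One remark on your non-crossing step: rather than asserting that a crossing of $\tilde\gamma$ and $\sigma\cdot\tilde\gamma$ is ``witnessed by finite subpaths'' (which is not immediate in \v{S}ari\'c's framework, where crossing is a condition on endpoints at infinity), a cleaner argument uses the convergence $f_i \to f$ already established just before this lemma. If $G(\tilde\gamma)$ and $\sigma\cdot G(\tilde\gamma)$ cross, their endpoint pairs link in $\partial_\infty\tilde\Sigma$; linking is an open condition, and since $G(\tilde\gamma_i)\to G(\tilde\gamma)$, the endpoints of $G(\tilde\gamma_i)$ and $\sigma\cdot G(\tilde\gamma_i)$ also link for all large $i$, contradicting the simplicity of $c_i$. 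This bypasses the combinatorial obstacle you flag at the end.
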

\begin{proof}
This follows immediately from Proposition \ref{prop:pathtolamin} applied to $\Gamma=\{\tilde\gamma\}$.
\end{proof}

Finally, Theorem \ref{thm:geodlam}  follows from  Lemma  \ref{lem:geodlam} 


\bibliographystyle{abbrv}
\bibliography{Biblio}

\begin{thebibliography}{10}

\bibitem{AIM}
{A}im{PL}: Surfaces of infinite type.
\newblock Available at \url{http://aimpl.org/genusinfinity}, 2019.

\bibitem{AFP}
J.~Aramayona, A.~Fossas, and H.~Parlier.
\newblock Arc and curve graphs for infinite-type surfaces.
\newblock {\em Proc. Amer. Math. Soc.}, 145(11):4995--5006, 2017.

\bibitem{APV}
J.~Aramayona, P.~Patel, and N.~G. Vlamis.
\newblock The first integral cohomology of pure mapping class groups.
\newblock {\em Int. Math. Res. Not. IMRN}, 2020(22):8973--8996, 2020.

\bibitem{survey}
J.~Aramayona and N.~G. Vlamis.
\newblock Big mapping class groups: an overview.
\newblock In K.~Ohshika and A.~Papadopoulos, editors, {\em In the Tradition of
  Thurston: Geometry and Topology}, chapter~12, pages 459--496. Springer, 2020.

\bibitem{Bavard}
J.~Bavard.
\newblock Hyperbolicit\'{e} du graphe des rayons et quasi-morphismes sur un
  gros groupe modulaire.
\newblock {\em Geom. Topol.}, 20(1):491--535, 2016.

\bibitem{BavardWalker2}
J.~Bavard and A.~Walker.
\newblock Two simultaneous actions of big mapping class groups.
\newblock Preprint: arXiv:1806.10272.

\bibitem{BavardWalker}
J.~Bavard and A.~Walker.
\newblock The {G}romov boundary of the ray graph.
\newblock {\em Trans. Amer. Math. Soc.}, 370(11):7647--7678, 2018.

\bibitem{BBF}
M.~Bestvina, K.~Bromberg, and K.~Fujiwara.
\newblock Constructing group actions on quasi-trees and applications to mapping
  class groups.
\newblock {\em Publ. Math. Inst. Hautes Etudes Sci.}, 122:1–64, 2015.

\bibitem{BestvinaFujiwara}
M.~Bestvina and K.~Fujiwara.
\newblock Bounded cohomology of subgroups of mapping class groups.
\newblock {\em Geom. Topol.}, 6:69--89, 2002.

\bibitem{Bonahon}
F.~Bonahon.
\newblock Closed curves on surfaces.
\newblock Available at:
  \url{https://dornsife.usc.edu/assets/sites/1191/docs/Preprints/Bouquin.pdf}.

\bibitem{Calegari}
D.~Calegari.
\newblock Geometry and the imagination: Big mapping class groups and dynamics.
\newblock Available at
  \url{https://lamington.wordpress.com/2009/06/22/big-mapping-class-groups-and-dynamics/},
  2009.

\bibitem{HPW}
S.~Hensel, P.~Przytycki, and R.~C.~H. Webb.
\newblock 1-slim triangles and uniform hyperbolicity for arc graphs and curve
  graphs.
\newblock {\em J. Eur. Math. Soc. (JEMS)}, 17(4):755--762, 2015.

\bibitem{LanierLoving}
J.~Lanier and M.~Loving.
\newblock Centers of subgroups of big mapping class groups and the {T}its
  alternative.
\newblock {\em Glas. Mat. Ser. III}, 55(75)(1):85--91, 2020.

\bibitem{Lickorish}
W.~B.~R. Lickorish.
\newblock A representation of orientable combinatorial {$3$}-manifolds.
\newblock {\em Ann. of Math. (2)}, 76:531--540, 1962.

\bibitem{MoralesValdez}
I.~Morales and F.~Valdez.
\newblock Loxodromic elements in big mapping class groups via the
  {H}ooper--{T}hurston--{V}eech construction.
\newblock Preprint: arXiv:2003.00102.

\bibitem{Nielsen}
J.~Nielsen.
\newblock Surface transformation classes of algebraically finite type.
\newblock {\em Danske Vid. Selsk. Math.-Phys. Medd.}, 21 (2): 89, 1944.

\bibitem{PatelVlamis}
P.~Patel and N.~G. Vlamis.
\newblock Algebraic and topological properties of big mapping class groups.
\newblock {\em Algebr. Geom. Topol.}, 18(7):4109--4142, 2018.

\bibitem{PennerHarer}
R.~Penner and J.~Harer.
\newblock {\em Combinatorics of train tracks}.
\newblock Annals of Mathematics Studies. Princeton University Press, 1992.

\bibitem{Ras}
A.~Rasmussen.
\newblock {WWPD} elements of big mapping class groups.
\newblock {\em Groups Geom. Dyn.}, 2019.
\newblock To appear.

\bibitem{Richards}
I.~Richards.
\newblock On the classification of noncompact surfaces.
\newblock {\em Trans. Amer. Math. Soc.}, 106:259--269, 1963.

\bibitem{Saric}
D.~Saric.
\newblock Train tracks and measured laminations on infinite surfaces.
\newblock Preprint: arXiv:1902.03437.

\bibitem{Thurston}
W.~P. Thurston.
\newblock On the geometry and dynamics of diffeomorphisms of surfaces.
\newblock {\em Bulletin (New Series) of the American Mathematical Society},
  19(2):417 -- 431, 1988.

\bibitem{Verberne}
Y.~Verberne.
\newblock A construction of pseudo-{A}nosov homeomorphisms using positive
  twists.
\newblock Preprint: arXiv:1907.05467.

\bibitem{Kerekjarto}
L.~Vietoris.
\newblock Literaturberichte: {V}orlesungen \"{u}ber {T}opologie.
\newblock {\em Monatsh. Math. Phys.}, 34(1):A22--A24, 1926.
\newblock von B. v. Ker\'{e}kj\'{a}rt\'{o}, I. Fl\"{a}chentopologie, 270 S., 60
  Fig., Sammlung Courant VIII, J. Springer, 1925. Preis 11$\cdot$50 M.

\end{thebibliography}

\end{document}